\newtheorem{theorem}{Theorem}[section]
\newtheorem*{thm}{Theorem}
\newtheorem{proposition}[theorem]{Proposition}
\newtheorem{lemma}[theorem]{Lemma}
\newtheorem{corollary}[theorem]{Corollary}
\theoremstyle{definition}
\newtheorem{definition}[theorem]{Definition}
\newtheorem{example}[theorem]{Example}
\newtheorem{remark}[theorem]{Remark}
\newtheorem*{convention}{Convention}
\newtheorem{problem}{Open problem}
\newcommand{\enproof}{\hspace*{\stretch{1}}\qedsymbol}  
\newcommand{\norm}[1]{\left\Vert#1\right\Vert}  
\newcommand{\Lspace}{\mathit{L}}
\newcommand{\Lone}{\Lspace^1}
\newcommand{\Ltwo}{\Lspace^2}
\newcommand{\Linfty}{\Lspace^\infty}
\newcommand{\lspace}{\ell}
\newcommand{\lone}{\lspace^1}
\newcommand{\linfty}{\lspace^\infty}
\newcommand{\C}{\mathcal{C}}
\newcommand{\set}[1]{\left\{#1\right\}}  
\newcommand{\closure}[1]{\overline{#1}}  
\newcommand{\Fa}{\mathit{A}}
\newcommand{\FSa}{\mathit{B}}
\newcommand{\Measures}{\mathit{M}}
\newcommand{\VN}{\mathit{VN}}
\newcommand{\spectrum}{\sigma}
\newcommand{\complexs}{\mathbb{C}}    
\newcommand{\conjugate}[1]{\overline {#1}} 
\newcommand{\unitcircle}{\mathbb{T}}
\newcommand{\reals}{\mathbb{R}}    
\newcommand{\naturals}{\mathbb{N}}    
\newcommand{\integers}{\mathbb{Z}}    
\newcommand{\tuple}[1]{\boldsymbol{#1}}
\newcommand{\Acal}{\mathcal{A}}
\newcommand{\Fcal}{\mathcal{F}}
\newcommand{\Mcal}{\mathcal{M}}
\newcommand{\Hcal}{\mathcal{H}}
\newcommand{\Jcal}{\mathcal{J}}
\newcommand{\Ucal}{\mathcal{U}}
\newcommand {\Bcal}{{\mathcal B}}
\newcommand {\Mfrak}{{\mathfrak M}}
\newcommand {\Scal}{{\mathcal S}}
\newcommand {\Tcal}{{\mathcal T}}
\newcommand {\Tfrak}{{\mathfrak T}}
\newcommand{\abs}[1]{\left|#1\right|}
\newcommand{\dual}[1]{#1^\prime}
\newcommand{\bidual}[1]{#1^{\prime\prime}}
\newcommand{\tripledual}[1]{#1^{\prime\prime\prime}}
\newcommand{\predual}[1]{#1_*}
\newcommand{\closedball}[2]{#1_{[#2]}}
\newcommand{\cstar}{C$^*$}
\newcommand{\wstar}{W$^*$}
\newcommand{\duality}[2]{\left\langle #1,#2 \right\rangle}
\newcommand{\operators}{\mathcal{B}}
\newcommand{\id}{\mathrm{id}}
\newcommand{\inner}[2]{\left( #1|#2\right)}
\renewcommand{\Re}{\mathop{\rm Re}}
\newcommand{\init}{\dot{\imath}}
\newcommand{\emath}{\mathrm{e}}
\newcommand{\support}{\mathop{\rm supp\,}}
\newcommand{\dd}{\,{\rm d}}
\begin{document}

\title[Characterisations of Fourier and Fourier--Stieltjes algebras]
{Characterisations of Fourier and Fourier--Stieltjes algebras on locally compact groups} 

\author[A.\ T.-M.\ Lau]{Anthony To-Ming Lau}
\address{Department of Mathematical and Statistical Sciences\\
University of Alberta\\
Edmonton, AB T6G 2G1\\
Canada}
\email{anthonyt@ualberta.ca}
\thanks{The first author is supported by an NSERC--grant MS100.}

\author[H.\ L.\ Pham]{Hung Le Pham}
\address{School of Mathematics and Statistics\\
Victoria University of Wellington \\ Wellington 6140, New Zealand}
\email{hung.pham@vuw.ac.nz}
\thanks{The second author is supported by a Fast Start Marsden grant while undertaking part of this research.}

\date{}

\begin{abstract}
Motivated by the beautiful work of M. A. Rieffel (1965) and of M. E. Walter (1974), we obtain characterisations of the Fourier algebra $\Fa(G)$ of a locally compact group $G$ in terms of the class of $F$-algebras (i.e. a Banach algebra $A$ such that its dual $\dual{A}$ is a \wstar-algebra whose identity is multiplicative on $A$). 
For example, we show that the Fourier algebras are precisely those commutative semisimple $F$-algebras that are Tauberian, contain a nonzero real element, and possess a dual semigroup that acts transitively on their spectrums. Our characterisations fall into three flavours, where the first one will be the basis of the other two.     
The first flavour also implies a simple characterisation of when the predual of a Hopf-von Neumann algebra   is the Fourier algebra of a locally compact group. We also obtain similar characterisations of the Fourier--Stieltjes algebras of $G$. En route, we prove some new results on the problem of when a subalgebra of $\Fa(G)$ is the whole algebra and on representations of discrete groups.
\end{abstract}

\subjclass[2010]{Primary 43A30, 22D25; Secondary 46J05, 22D10}

\keywords{locally compact group, Fourier algebra, Fourier--Stieltjes algebra, $F$-algebra, subdual, Hopf--von Neumann algebra}

\maketitle

\section{Introduction}

\noindent In \cite{Rieffel}, M. Rieffel characterises the group algebra $\Lone(\Gamma)$ of a locally compact \emph{abelian} group $\Gamma$ in terms of $L'$-inducing characters on a Banach algebra, which in turn based on his concept of $L$-inducing functional $\phi$ on a Banach space. In this paper, we extend this to the notion of noncommutative $L'$-inducing for a character $\phi$ on a Banach algebra and show in Theorem \ref{a characterisation of Fourier algebras 5} that if $A$ is a commutative semisimple Banach algebra such that (i) every character of $A$ is noncommutative $L'$-inducing, and (ii) $A$ is Tauberian and conjugation-closed, then $A\cong \Fa(G)$ (i.e. isometrically isomorphic) for a locally compact group $G$. In fact, a stronger result will be proved where the assumption that $A$ is conjugation-closed is replaced by a weaker one that $A$ contains a nonzero real element. 

A key step in the proof of this result is Theorem \ref{a characterisation of Fourier algebras} where we characterise $\Fa(G)$ as an $F$-algebra $A$ which is also a conjugation-closed Tauberian subalgebra of $\C(\Omega)$  for some topological space $\Omega$ with an additional group structure such that the left-translations by elements of $\Omega$ preserve $A$ and are collectively controlled by a natural inequality on their norms as operators on $A$. This theorem is also applied to obtain, in Theorem \ref{a characterisation of Fourier algebras 4}, a similar characterisation but now,  instead of a norming condition collectively imposed on all left-translation operators, each of these left-translations $L_s$ is assumed to satisfy individually the inequality
\begin{align*}
			\norm{f-\emath^{\init\theta}L_sf}^2+\norm{f+\emath^{\init\theta}L_sf}^2\le 4\norm{f}^2 \qquad(f\in A,\ \theta\in\reals)\,,
\end{align*}
which was discovered by M. Walter in \cite{Walter74} to characterise the left-translations and the right-translations of a Fourier algebra $\Fa(G)$ among its  isometric automorphisms. We remark here that although characterisation results in \S \ref{Noncommutative L'-inducing property and characterisations of Fourier algebras} do not require the full power of Theorem \ref{a characterisation of Fourier algebras}, our proofs of the characterisations in \S \ref{Special isometries and characterisations of Fourier algebras} do.

A noteworthy variation of Theorem \ref{a characterisation of Fourier algebras 4}, which is Theorem \ref{a characterisation of Fourier algebras 4b}, characterises $\Fa(G)$ as a commutative semisimple $F$-algebra that is also Tauberian, contains a nonzero real element, and possesses a dual semigroup that acts transitively on $\spectrum(A)$. Note that \emph{a dual semigroup} is a concept defined for any Banach algebra $A$ (cf. Definition \ref{dual semigroup}), a dual semigroup always exists and acts canonically on $\spectrum(A)$ by transposition.

We also give similar characterisations of the Fourier--Stieltjes algebra $\FSa(G)$ of a locally compact group $G$ in \S\ref{Fourier--Stieltjes algebras}, where we replace the Tauberian condition, hold by all Fourier algebras, by the Eberlein condition, hold by all Fourier--Stieltjes algebras; for details of these conditions, see \S\ref{Preliminaries}. However, in the most general situation, our characterisations will actually capture a whole class of subalgebras of $\FSa(G)$ that contain the reduced Fourier--Stieltjes algebra $\FSa_r(G)$,  and thus capture precisely $\FSa(G)$ only when $G$ is amenable.  

We remark that $\Fa(G)$ as well as $\FSa(G)$, despite also having an adjoint operation induced from the \cstar-algebraic structure of their dual, are actually $*$-algebras whose natural involution is the pointwise complex-conjugation, recalling the involution of $\Fa(G)=\Lone(\widehat{G})$ and of $\FSa(G)=\Measures(\widehat{G})$ when $G$ is abelian. Under this context, the conjugation-closed condition in some of our characterisations becomes natural. In some other characterisations, we could weaken this assumption to require a priory only that the algebra under consideration contains a nonzero real element/function. In fact, this could be relaxed even further to the requirement that our algebra \emph{approximately contains a nontrivial real element/function}, in the (rather weak) sense of Definition \ref{nontrivial real function} (see also Example \ref{nontrivial real function, example 1} for some common situations where this happens). In some special cases, we shall be able to remove these conditions completely. Tools to achieve these will be developed in \S\ref{Invariant subalgebras of Fourier algebras}.

Among the auxiliary results proved in \S\ref{Invariant subalgebras of Fourier algebras}, we note here two particular ones. The first, which is Corollary \ref{when A=A(G), compact connected component case}, implies that if $G$ is a locally compact group whose connected component of the identity is compact, and if $A$ is a closed translation-invariant subalgebra of $\Fa(G)$ with $\spectrum(A)=G$, then $A=\Fa(G)$. The second, which is Corollary \ref{weak*-closed invariant subalgebra of B(G) when G is discrete 2}, says that, for any amenable discrete group $G$, every weak$^*$-closed translation-invariant subalgebra of $\FSa(G)$ is conjugation-closed. In other words, any sub-semidual of an amenable discrete group $G$ is actually a subdual, which could be considered as a dual version to a well-known result that a closed sub-semigroup of a compact group must be a subgroup.

A Banach algebra $A$ is \emph{an $F$-algebra} if it is the predual of a \wstar-algebra $\Mfrak$ and the identity $1$ of $\Mfrak$ is a \emph{character} (i.e.  nonzero multiplicative linear functional) on $A$. This latter condition is equivalent to $P(A)$, the set of all positive normal functionals of $\Mfrak$, forms a semigroup under the multiplication of $A$. Examples of $F$-algebras include the predual algebra of a Hopf--von Neumann algebra, in particular, a quantum group algebra, the group algebra $\Lone(G)$ or the Fourier algebra $\Fa(G)$ of a locally compact group $G$. They also include the measure algebra $\Measures(S)$ of a locally compact semigroup $S$, and the Fourier--Stieltjes algebra $\FSa(G)$ of a topological group $G$. (See \cite{DLS}, \cite{Lau83}, \cite{LL2012}.) Moreover, the hypergroup algebra $\Lone(H)$ and the measure algebra $\Measures(H)$ of a locally compact hypergroup $H$ with a left Haar measure are $F$-algebras. In this case, Willson shows in \cite[Theorem 5.2.2]{Willson2011} (see also \cite[Remark 5.3]{Willson2014}) that $\dual{(\Lone(H))}=\Linfty(H)$ is not a Hopf--von Neumann algebra unless $H$ is a locally compact group.  $F$-algebras are also referred to as Lau algebras (see \cite{Pier}). 

Note that \emph{all of our characterisations are necessary and sufficient}; the other directions are simply omitted from the statements (but some of them are recorded in the text). Note also that, except for the results in \S\ref{Noncommutative L'-inducing property and characterisations of Fourier algebras} and the corresponding ones in \S\ref{Fourier--Stieltjes algebras} for which the abelian case has already been treated in \cite{Rieffel},  our results provide new characterisations of $\Fa(G)$ and $\FSa(G)$ (i.e. of $\Lone(\Gamma)$ and $\Measures(\Gamma)$) for the class of locally compact abelian groups; however, except for Corollary \ref{a characterisation of Fourier algebras 4b, abelian}, we shall not explicitly give the other statements for this special case.

\section{Preliminaries}
\label{Preliminaries}

\noindent In this paper, all compact or locally compact topologies are assumed to be Hausdorff. 

A subspace of complex-valued functions on a set $X$ is \emph{conjugation-closed} if it is closed under pointwise complex-conjugation. It is obvious that a conjugation-closed subspace $A$ of functions on $X$ always contains a nonzero real function unless $A=\set{0}$.

For a subset $X$ of a Banach space $E$ and $r>0$, we set $\closedball{X}{r}:=\set{x\in X\colon \norm{x}\le r}$.

Let $\Omega$ be a semigroup. 
The left translations of a function $f:\Omega\to \complexs$ are
\begin{align*}
	&L_sf:t\mapsto f(st)\,,\Omega\to \complexs\qquad(s\in \Omega)\,.
\end{align*}

Let $G$ be a locally compact group. For each continuous unitary representation $\pi$ of $G$, denote by $\C^*_\pi(G)$ the \emph{group \cstar-algebra associated with $\pi$}, i.e. the norm closure of $\set{\pi(f)\colon\ f\in \Lone(G)}$ in $\operators(\Hcal_\pi)$, where $\Hcal_\pi$ is the Hilbert space associated with $\pi$. Then $\C^*_\pi(G)$ is naturally a quotient of the \emph{full group \cstar-algebra $\C^*(G)$} of $G$. Another continuous unitary representation  $\rho$ of $G$ 
is said to be \emph{weakly contained} in $\pi$, and write $\rho\preccurlyeq\pi$, if $\C^*_\rho(G)$ is a quotient of $\C^*_\pi(G)$ in the natural setting as quotients of $\C^*(G)$ (see \cite[18.1.3]{Dixmier} or \cite[\S 1.6]{KT}). 

The dual of $\C^*_\pi(G)$, denoted by $\FSa_\pi(G)$, is identified naturally with the linear space of functions on $G$ spanned by the coefficient functions of all continuous unitary representations $\rho$ that are weakly contained in $\pi$. For each set $\Scal$ of (equivalent classes of) continuous unitary representations of $G$, $\C^*_\Scal(G)$ and $\FSa_\Scal(G)$ are defined as $\C^*_\pi(G)$ and $\FSa_\pi(G)$ where $\pi:=\bigoplus_{\rho\in\Scal}\rho$. The dual of $\C^*(G)$ is simply denoted as $\FSa(G)$ and it is the \emph{Fourier--Stieltjes algebra} of $G$. 
We also denote by $P(G)$ the set of continuous positive definite functions on $G$, and by $P_\Scal(G)$ the set of continuous positive definite functions belonging to $\FSa_\Scal(G)$.

The \emph{left regular representation} of $G$ is denoted by $\lambda_G$. The corresponding \cstar-algebra, the \emph{reduced group \cstar-algebra} of $G$, and its dual, the \emph{reduced Fourier--Stieltjes algebra} of $G$, are denoted by $\C^*_r(G)$ and $\FSa_r(G)$, respectively. The \emph{group von Neumann algebra} of $G$, denoted as $\VN(G)$, is the weak$^*$-closure of $\C^*_r(G)$ in $\operators(\Ltwo(G))$, and its predual  $\Fa(G)$, again identified naturally with a space of functions on $G$, is the \emph{Fourier algebra} of $G$.

For more details of the theory of the Fourier and Fourier--Stieltjes algebras on general locally compact groups, we refer the reader to the seminal paper \cite{Eymard} by Eymard where these objects were introduced.

Denote by $\widehat{G}$ \emph{the dual space} of $G$, which is the set of all equivalence classes of irreducible continuous unitary representations of $G$, endowed with Fell's topology; when $G$ is abelian, $\widehat{G}$ is just the dual group of $G$. For each (class of) continuous unitary representation $\pi$ of $G$, the \emph{support} of $\pi$, denoted by $\support\pi$, is the subset of $\widehat{G}$ consisting of those classes of representations that are weakly contained in $\pi$. A subset $\Scal$ of $\widehat{G}$ is a \emph{subdual} if $\support\left(\bigoplus_{\pi\in \Scal} \pi\right)\subseteq \Scal$, \footnote{In other words, $\Scal$ is a closed subset of $\widehat{G}$.} and if $\conjugate{\pi}\in \Scal$ and $\support (\pi\otimes\rho)\subseteq\Scal$ whenever $\pi,\rho\in\Scal$ (see \cite{Kaniuth}, but note that we do not require $\Scal$ to contain the trivial representation), where $\conjugate{\pi}$ is the conjugate representation of $\pi$. More generally, we shall call a subset $\Scal$ of $\widehat{G}$ is a \emph{sub-semidual} if $\support\left(\bigoplus_{\pi\in \Scal} \pi\right)\subseteq \Scal$  and if $\support (\pi\otimes\rho)\subseteq\Scal$ whenever $\pi,\rho\in\Scal$.

It is standard that the spaces of the form $\FSa_\Scal(G)$, for closed subsets $\Scal\subseteq \widehat{G}$, are precisely the weak$^*$-closed translation-invariant subspaces  of $\FSa(G)$. 
For such a space to be a subalgebra, simply because the linear combinations of products of the coefficient functions of two representations are precisely those of the coefficient functions of their tensor product, we need the following:

\begin{lemma}\label{algebra of sub-semidual}
For a closed subset $\Scal$ of $\widehat{G}$, $\FSa_\Scal(G)$ is an algebra on $G$ if and only if $\Scal$ is a sub-semidual of $G$. 
\end{lemma}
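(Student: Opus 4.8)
The plan is to reduce the statement to the single identity recalled just before the lemma: the pointwise product of a coefficient function of $\pi$ and a coefficient function of $\rho$ is a coefficient function of the (inner) tensor product $\pi\otimes\rho$, and the linear span of all such products is norm-dense in $\FSa_{\pi\otimes\rho}(G)=\FSa_{\support(\pi\otimes\rho)}(G)$. Since $\Scal$ is assumed closed, the first defining condition of a sub-semidual, namely $\support\bigl(\bigoplus_{\pi\in\Scal}\pi\bigr)\subseteq\Scal$, holds automatically; hence the entire content of the lemma is the equivalence of the multiplicativity of $\FSa_\Scal(G)$ with the tensor condition $\support(\pi\otimes\rho)\subseteq\Scal$ for all $\pi,\rho\in\Scal$.

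Two standard facts will carry the argument. \emph{(a)} Every element of $\FSa_\Scal(G)$ is itself a coefficient function of a single representation whose support lies in $\Scal$: a finite sum of coefficient functions of representations weakly contained in $\pi_\Scal:=\bigoplus_{\pi\in\Scal}\pi$ is a coefficient function of their direct sum, which is again weakly contained in $\pi_\Scal$, and $\FSa_\Scal(G)$ is precisely the set of such functions. \emph{(b)} Writing $\FSa_\Tcal(G)=(\ker q_\Tcal)^\perp$ for the quotient map $q_\Tcal\colon\C^*(G)\to\C^*_\Tcal(G)$ (annihilator taken in $\FSa(G)$), the inclusion $\FSa_\Tcal(G)\subseteq\FSa_\Scal(G)$ is equivalent to $\ker q_\Scal\subseteq\ker q_\Tcal$, i.e.\ to $\pi_\Tcal\preccurlyeq\pi_\Scal$, i.e.\ to $\Tcal\subseteq\Scal$ for closed $\Tcal,\Scal$; thus $\Tcal\mapsto\FSa_\Tcal(G)$ is an inclusion-preserving injection. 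I will also use the standard functoriality of weak containment under tensoring: $\sigma\preccurlyeq\sigma'$ and $\tau\preccurlyeq\tau'$ imply $\sigma\otimes\tau\preccurlyeq\sigma'\otimes\tau'$.

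For the direction ($\Leftarrow$), I assume the tensor condition and take $u,v\in\FSa_\Scal(G)$. By \emph{(a)} I write $u,v$ as coefficient functions of representations $\sigma,\tau$ with $\support\sigma,\support\tau\subseteq\Scal$, so $\sigma,\tau\preccurlyeq\pi_\Scal$. Then $uv$ is a coefficient function of $\sigma\otimes\tau$, and by functoriality $\sigma\otimes\tau\preccurlyeq\pi_\Scal\otimes\pi_\Scal\cong\bigoplus_{\pi,\rho\in\Scal}\pi\otimes\rho$; each summand satisfies $\support(\pi\otimes\rho)\subseteq\Scal$ by hypothesis, and since $\Scal$ is closed the whole direct sum is weakly contained in $\pi_\Scal$. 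Hence $\sigma\otimes\tau\preccurlyeq\pi_\Scal$ and $uv\in\FSa_\Scal(G)$, so $\FSa_\Scal(G)$ is an algebra. For the direction ($\Rightarrow$), I assume $\FSa_\Scal(G)$ is an algebra and fix $\pi,\rho\in\Scal$; their coefficient functions lie in $\FSa_\Scal(G)$, hence so do all their products. By the product identity these products span a norm-dense subspace of $\FSa_{\pi\otimes\rho}(G)=\FSa_{\support(\pi\otimes\rho)}(G)$, and since $\FSa_\Scal(G)$ is norm-closed we get $\FSa_{\support(\pi\otimes\rho)}(G)\subseteq\FSa_\Scal(G)$; then \emph{(b)} yields $\support(\pi\otimes\rho)\subseteq\Scal$, which together with the automatic closedness is exactly the sub-semidual condition.

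The main obstacle—indeed the only point that is not bookkeeping—is passing from the irreducible-level tensor condition (stated for $\pi,\rho\in\Scal$) to arbitrary representations weakly contained in $\pi_\Scal$ in the ($\Leftarrow$) direction. This is precisely where the functoriality of weak containment under tensor products enters, combined with the fact that a direct sum of representations supported in the closed set $\Scal$ is again supported in $\Scal$. Everything else is the dictionary between coefficient functions and their generating representations together with the order-isomorphism \emph{(b)}, both standard in Eymard's framework.
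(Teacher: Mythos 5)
Your direction ($\Leftarrow$) is essentially correct: fact (a), Fell's functoriality of weak containment under tensor products, and the identification of $\FSa_\Scal(G)$ with the coefficient functions of representations weakly contained in $\pi_\Scal:=\bigoplus_{\pi\in\Scal}\pi$ do carry it through (the step from $\support(\pi\otimes\rho)\subseteq\Scal$ to $\pi\otimes\rho\preccurlyeq\pi_\Scal$ uses that a closed ideal of a \cstar-algebra is the intersection of the primitive ideals containing it, so that any representation is weakly equivalent to the direct sum of its support; it is this fact, not the closedness of $\Scal$, that makes the summands behave). The gap is in direction ($\Rightarrow$): your claim that the products of coefficient functions of $\pi$ and $\rho$ span a \emph{norm}-dense subspace of $\FSa_{\pi\otimes\rho}(G)$ is false in general. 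Those products span norm-densely only the closed linear span of the coefficient functions of the single representation $\pi\otimes\rho$ (Arsac's Fourier space of $\pi\otimes\rho$), which is in general a \emph{proper} closed subspace of $\FSa_{\pi\otimes\rho}(G)$. The basic instance of this failure is $\sigma=\lambda_G$: the norm-closed span of the coefficients of $\lambda_G$ is $\Fa(G)$, not $\FSa_{\lambda_G}(G)=\FSa_r(G)$, and these differ for every noncompact amenable $G$ (the constant function $\tuple{1}$ lies in the latter but not in the former, since $\Fa(G)\subseteq\C_0(G)$). The failure persists for tensor products of irreducibles, which is exactly the case you need: for the Heisenberg group $G$ with centre $Z$ and $\pi$ an infinite-dimensional irreducible representation, $\pi\otimes\conjugate{\pi}$ is (a lift of) $\lambda_{G/Z}$ with $G/Z\cong\reals^2$, so the norm-closed span of the products is the lift of $\Fa(\reals^2)$, whereas $\FSa_{\support(\pi\otimes\conjugate{\pi})}(G)$ is the lift of $\FSa(\reals^2)\supsetneq\Fa(\reals^2)$. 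Hence the inference ``the products lie in $\FSa_\Scal(G)$, which is norm-closed, so $\FSa_{\support(\pi\otimes\rho)}(G)\subseteq\FSa_\Scal(G)$'' does not follow as written.

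The repair is short and stays inside your own framework (b). Either note that $\FSa_\Scal(G)=(\ker q_\Scal)^\perp$ is weak$^*$-closed (being an annihilator) and that the products are weak$^*$-dense in $\FSa_{\pi\otimes\rho}(G)$; or, more directly, avoid density altogether: if every product $s\mapsto\duality{\pi(s)\xi}{\xi'}\,\duality{\rho(s)\eta}{\eta'}$ annihilates $\ker q_\Scal$, then for each $a\in\ker q_\Scal$ one has $\duality{(\pi\otimes\rho)(a)(\xi\otimes\eta)}{\xi'\otimes\eta'}=0$ for all elementary tensors; these are total in $\Hcal_\pi\otimes\Hcal_\rho$, so $(\pi\otimes\rho)(a)=0$. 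Thus $\ker q_\Scal\subseteq\ker(\pi\otimes\rho)$, i.e. $\pi\otimes\rho\preccurlyeq\pi_\Scal$, and therefore $\support(\pi\otimes\rho)\subseteq\support\pi_\Scal=\Scal$, the last equality using the closedness of $\Scal$. With this substitution your argument is complete. For what it is worth, the paper records this lemma without proof, as a standard consequence of the coefficient-function/tensor-product dictionary, so there is no argument of the authors to compare against; but the dictionary only gives equality of \emph{weak$^*$-closed} spans, which is precisely the point your norm-topology shortcut misses.
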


For a topological space $\Omega$, we shall write $\C(\Omega)$ the algebra of complex-valued continuous functions on $\Omega$, and write $\C_c(\Omega)$ its subalgebra consisting of those functions with compact supports in $\Omega$, while denote by $\C_0(\Omega)$ the subalgebra of $\C(\Omega)$ consisting of those functions that vanish at the infinity, it is actually a commutative \cstar-algebra whose natural norm is the uniform norm (and whose spectrum is $\Omega$ in the case where $\Omega$ is a  locally compact space).

Let $A$ be a commutative Banach algebra. We denote by $\spectrum(A)$ the \emph{spectrum} of $A$, which is a locally compact space with respect to the relative weak$^*$-topology of $\dual{A}$, and by $\hat{f}$, which is a function in $\C_0(\spectrum(A))$, the \emph{Gelfand transform} of $f$. The image of $A$ via the Gelfand transform is denoted as $\widehat{A}$, a subalgebra of $\C_0(\spectrum(A))$, which is also a Banach algebra under the norm induced from $A$. Thus we shall call $A$ \emph{conjugation-closed} if $\widehat{A}$ is, and call an element $f$ of $A$ \emph{a real element} if $\hat{f}$ is a real function.

For brevity, we shall use the following terminology. 

\begin{definition}\label{nontrivial real function}
Let $\Omega$ be a topological space. A subset $A\subseteq \C(\Omega)$ is said to \emph{approximately contain a nontrivial real function} if there is a bounded real function $\phi$ on $\Omega$ with the following properties:
\begin{enumerate}
	\item  for each compact subset $K\subseteq\Omega$, there exist a compact subset $K_0$ of $K$ and an $\alpha_K\in\reals$ such that, for every compact subset $L$ of $K\setminus K_0$, there is a sequence $(f_n)_{n=1}^\infty\subseteq A$ that converges pointwise to $\phi$ on $K_0\cup L$ and each $\abs{f_n}\le \alpha_K$ on $K$, and 
	\item there exists a downward directed collection $\Ucal$ of $A$-sharp open subsets of $\Omega$ such that $\phi(U)\cap \phi(\Omega\setminus \closure{U})=\emptyset$ for each $U\in \Ucal$ and that $\bigcap_{U\in \Ucal} \closure{U}$ is a nonempty compact set;
\end{enumerate}
here, $U$ is \emph{$A$-sharp} means that for every $s\in U$ and $t\in \Omega\setminus U$ there is $f\in A$ such that $f(s)\neq f(t)$, and so every subset of $\Omega$ is automatically $A$-sharp whenever $A$ separates points of $\Omega$.

A commutative Banach algebra $A$ \emph{approximately contains a nontrivial real element} if $\widehat{A}$ approximately contains a nontrivial real function.
\end{definition}

\begin{remark}
The technical condition (ii) in the definition above implies particularly that, for each $U\in \Ucal$, there is a set $E$ of values for $\phi$ such that $U\subseteq\phi^{-1}(E)\subseteq\closure{U}$. Condition (ii) is easily satisfied, for examples:
\begin{itemize}
	\item when $\phi\in\C_0(\Omega)\setminus\set{0}$; in this case, we can take $\Ucal$ to  consist of just a single open set $\set{t\in\Omega\colon \abs{\phi(t)}>\varepsilon}$ for any positive number $\varepsilon<\abs{\phi}_\Omega$, or 
	\item more generally, when $\phi\in \C(\Omega)$ having a compact level set, i.e. a set of the form $\phi^{-1}(r)$ for some value $r$ of $\phi$; in this case, we can take $\Ucal:=\set{\set{t\in\Omega\colon \abs{\phi(t)-r}<\varepsilon}\colon \varepsilon>0}$; 
\end{itemize}
the fact that these open sets are $A$-sharp is a consequence of condition (i) of Definition \ref{nontrivial real function}. Note that the approximation condition (i) implies that $\phi$ is the pointwise limit of some sequence in $A$ on each finite subset of $\Omega$. 
\end{remark}

The following examples show various simple situations where the property of approximate containment of a nontrivial real function holds:

\begin{example} \label{nontrivial real function, example 1}
Let $\Omega$ be a topological space, and let $A\subseteq \C(\Omega)$. Either:
\begin{enumerate}
	\item if $A\subseteq\C_0(\Omega)$ and the closure of $A$ in the uniform topology on $\Omega$ contains a nonzero real function $\phi$; or  
	\item more generally, if there is a real continuous bounded function $\phi$ with a compact level set that, on each compact subset of $\Omega$, is the pointwise limit of a bounded sequence in $A$; or 
	\item if $\Omega$ is locally compact, $A$ separates points of $\Omega$, and there is a compact subset $K$ of $\Omega$ with a nonempty interior and with the property that for any neighbourhood $W$ of $K$  there exists a function $f\in A$ such that $f=1$ on $K$, $f=0$ on $\Omega\setminus W$, and $\abs{f}\le 1$ on $W\setminus K$; 
\end{enumerate}
then $A$ approximately contains a nontrivial real function. 
\end{example}

\begin{lemma}\label{real element vs real function}
Let $A$ be a Banach algebra that is a subalgebra of $\C_0(\Omega)$ for some topological space $\Omega$. Suppose that every character of $A$ is implemented by an element of $\Omega$ and $A$ approximately contains a nontrivial real function on $\Omega$. Then the Banach algebra $A$ approximately contains a nontrivial real element. 
\end{lemma}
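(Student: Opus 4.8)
The plan is to transport the real function witnessing the property for $A$ on $\Omega$ to one on $\spectrum(A)$ via the canonical evaluation map. Let $\Omega_0:=\{s\in\Omega\colon \delta_s|_A\neq 0\}$, an open subset of $\Omega$, and let $\eta\colon\Omega_0\to\spectrum(A)$, $\eta(s)=\delta_s|_A$, be the evaluation map; it is continuous and, by hypothesis, surjective, and it satisfies $\widehat f\circ\eta=f|_{\Omega_0}$ for every $f\in A$. Since every character is an evaluation at a point of $\Omega$, an element of $A$ lies in the radical iff it vanishes identically, so $A$ is semisimple and the Gelfand map $A\to\widehat A$ is an isometric isomorphism. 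Let $\phi$ be the bounded real function on $\Omega$ provided by Definition~\ref{nontrivial real function} for $A$. First I would show, using the approximation condition (i), that $\phi$ vanishes on $\Omega\setminus\Omega_0$ and is constant on each fibre of $\eta$: applying (i) to a two-point set produces a sequence in $A$ converging pointwise to $\phi$ there, and elements of $A$ can neither separate two points with the same evaluation nor take a nonzero value at a point of $\Omega\setminus\Omega_0$. Consequently $\phi$ descends to a well-defined bounded real function $\psi$ on $\spectrum(A)$ with $\psi\circ\eta=\phi|_{\Omega_0}$; this $\psi$ is the candidate witness for $\widehat A$.

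The technical heart is to prove that $\eta$ is a proper, hence closed, map. Given a compact $\hat K\subseteq\spectrum(A)$, a net in $\eta^{-1}(\hat K)$ has, after passing to a subnet, its $\eta$-images converging to some $\chi\in\hat K$; choosing $f_0\in A$ with $\chi(f_0)\neq0$ confines the net eventually to the compact set $\{|f_0|\ge|\chi(f_0)|/2\}$ (here is where $A\subseteq\C_0(\Omega)$ is used decisively), and a convergent subnet then has a limit $s$ with $\delta_s=\chi$, so $s\in\eta^{-1}(\hat K)$. Thus $\eta^{-1}(\hat K)$ is compact, $\eta$ is proper, and, the target being locally compact Hausdorff, $\eta$ is a closed map.

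With properness in hand, condition (i) for $(\widehat A,\psi)$ transfers cleanly. Given compact $\hat K$, set $K:=\eta^{-1}(\hat K)$ (compact, $\subseteq\Omega_0$), take the $K_0\subseteq K$ and $\alpha_K$ from condition (i) for $\phi$, and put $\hat K_0:=\eta(K_0)$ (compact) and $\alpha_{\hat K}:=\alpha_K$. For a compact $\hat L\subseteq\hat K\setminus\hat K_0$ the set $\eta^{-1}(\hat L)$ is a compact subset of $K\setminus K_0$, so condition (i) yields $(f_n)\subseteq A$ converging to $\phi$ on $K_0\cup\eta^{-1}(\hat L)$ with $|f_n|\le\alpha_K$ on $K$; pushing these forward by Gelfand, $(\widehat{f_n})$ converges to $\psi$ on $\hat K_0\cup\hat L$ and is bounded by $\alpha_{\hat K}$ on $\hat K$, since $\widehat{f_n}(\chi)=f_n(s)$ for any $s\in\eta^{-1}(\chi)$.

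For condition (ii) I would exploit that $A$-sharpness of $U\in\Ucal$ forces $U\cap\Omega_0$ to be saturated under $\eta$: two points of a fibre cannot be separated by $A$, so a fibre lies entirely inside or outside $U$. Hence $\widehat U:=\eta(U\cap\Omega_0)=\spectrum(A)\setminus\eta(\Omega_0\setminus U)$ is open (as $\eta$ is closed), the assignment $U\mapsto\widehat U$ is monotone so $\widehat\Ucal$ is downward directed, and every subset of $\spectrum(A)$ is automatically $\widehat A$-sharp because $\widehat A$ separates characters. Using the value set $E$ with $U\subseteq\phi^{-1}(E)\subseteq\overline U$ from the remark after Definition~\ref{nontrivial real function}, one checks $\widehat U\subseteq\psi^{-1}(E)\subseteq\overline{\widehat U}$---the second inclusion needing that $\Omega_0$ is open, so that boundary points of $U$ lying in $\Omega_0$ are limits of points of $U\cap\Omega_0$---which gives the required $\psi(\widehat U)\cap\psi(\spectrum(A)\setminus\overline{\widehat U})=\emptyset$. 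I expect the main obstacle to be the final clause of (ii): showing that $\bigcap_{U}\overline{\widehat U}$ is nonempty and compact. This should follow from the corresponding property of $\bigcap_U\overline U$ together with the closedness and properness of $\eta$, but matching the closures across $\eta$---and locating a point of $\bigcap_U\overline U$ that lies in, or is approximable within, $\Omega_0$---is the delicate point that will require the most care.
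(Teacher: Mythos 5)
Your route is the paper's route: descend $\phi$ along the evaluation map $\eta$ to a function $\psi$ on $\spectrum(A)$, prove $\eta$ is proper and hence closed, and transfer conditions (i) and (ii) of Definition \ref{nontrivial real function}. Up to the last step your argument is correct, and in places more careful than the paper's (the explicit restriction to $\Omega_0$; the written-out transfer of condition (i), which the paper dismisses as ``tedious but easy''; the value-set argument for the separation half of (ii)). But the clause you leave unproved --- that $\bigcap_{U\in\Ucal}\closure{\eta(U\cap\Omega_0)}$ is nonempty and compact --- is exactly where the paper has to do real work, so the proposal is incomplete at its only genuinely hard point. The paper proves the identity $\eta(L)=\bigcap_{U\in\Ucal}\closure{\eta(U)}$, where $L:=\bigcap_{U\in\Ucal}\closure{U}$. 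One inclusion is continuity; for the other, given $x\notin\eta(L)$ one chooses an open $W\ni x$ with $\closure{W}$ compact (local compactness of $\spectrum(A)$) and disjoint from the compact set $\eta(L)$; then $\eta^{-1}(\closure{W})$ is compact by properness, the downward directed family $\set{\eta^{-1}(\closure{W})\cap\closure{U}\colon U\in\Ucal}$ of closed subsets of it has empty total intersection, so already a single member is empty, say $\eta^{-1}(\closure{W})\cap\closure{U_0}=\emptyset$; the saturation $U_0=\eta^{-1}(\eta(U_0))$ supplied by $A$-sharpness then forces $W\cap\eta(U_0)=\emptyset$, hence $x\notin\closure{\eta(U_0)}$. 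Nonemptiness and compactness of the intersection then come for free, since it equals the continuous image $\eta(L)$ of a nonempty compact set. Note that this uses local compactness of $\spectrum(A)$ and the directedness of $\Ucal$; ``closedness and properness of $\eta$'' alone, which is all your sketch invokes, will not deliver it.

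Moreover, the second worry you flag is not a technicality but a genuine failure mode of your setup: once $\eta$ is defined only on $\Omega_0$, nonemptiness requires $L\cap\Omega_0\neq\emptyset$, and the hypotheses of the lemma do not guarantee this. Take $\Omega=[0,1]$, $A=\set{f\in\C(\Omega)\colon f(0)=0}$ (here $\C_0(\Omega)=\C(\Omega)$ since $\Omega$ is compact), $\phi(x)=x$, and $\Ucal=\set{[0,\varepsilon)\colon 0<\varepsilon<1}$. All hypotheses hold, $\Omega_0=\spectrum(A)=(0,1]$, but $L=\set{0}$ misses $\Omega_0$, and $\bigcap_{\varepsilon}\closure{\eta([0,\varepsilon)\cap\Omega_0)}=\bigcap_{\varepsilon}(0,\varepsilon]=\emptyset$ in $\spectrum(A)$. (The conclusion of the lemma still holds in this example, but only via a \emph{different} witness family on $\spectrum(A)$, e.g.\ a single set $\set{\chi\in\spectrum(A)\colon \psi(\chi)>\varepsilon}$ as in the Remark following Definition \ref{nontrivial real function}.) So to finish, you must either work under the paper's tacit standing assumption that no point of $\Omega$ annihilates all of $A$ --- i.e.\ $\Omega_0=\Omega$, so that $\eta$ is everywhere defined and the compactness-and-directedness argument above applies verbatim --- or else treat the case $L\cap\Omega_0=\emptyset$ by constructing a new witness collection for $\widehat{A}$; the latter is a missing idea, not a missing detail.
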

\begin{proof}
Denote by $\eta:\Omega\to\spectrum(A)$ the natural map, which is continuous. This map is also surjective by assumption. For each compact subset $L$ of $\spectrum(A)$,  since $A\subseteq \C_0(\Omega)$, it is easy to see that $\eta^{-1}(L)$ is compact. Thus $\eta$ is proper, and so it is closed \cite{Whyburn65}. Let $\phi$ be given as in Definition \ref{nontrivial real function} for $A$. Then since $\phi$ is the pointwise limit of some sequence in $A$ on each finite subset of $\Omega$  by condition (i) in Definition \ref{nontrivial real function}, it follows that  $\phi(t)=\phi(s)$ whenever $\eta(s)=\eta(t)$. Thus $\phi=\psi\circ\eta$ for a bounded real function $\psi$ on $\spectrum(A)$. The fact that $\psi$ satisfies condition (i) in Definition \ref{nontrivial real function} for $\widehat{A}$ is a bit tedious but easy to check. To prove condition (ii) in Definition \ref{nontrivial real function} for $\widehat{A}$ and $\psi$, let $\Ucal$ be  a collection of $A$-sharp open subsets of $\Omega$ such that $\phi(U)\cap \phi(\Omega\setminus \closure{U})=\emptyset$ for each $U\in \Ucal$ and that $L:=\bigcap_{U\in \Ucal} \closure{U}$ is a nonempty compact set, which exists since $A$ approximately contains $\phi$. Since each $U\in\Ucal$ is $A$-sharp, we see that $U=\eta^{-1}(\eta(U))$, and so, $\eta(U)$ is open. Since $\eta$ is closed, $\eta(\closure{U})=\closure{\eta(U)}$, and so 
\[
	\psi(\eta(U))\cap\psi(\spectrum(A)\setminus \closure{\eta(U)})\subseteq\psi(\eta(U))\cap\psi(\eta(\Omega\setminus\closure{U}))=\phi(U)\cap \phi(\Omega\setminus\closure{U})=\emptyset.
\]
Finally, we \emph{claim} that $\eta(L)=\bigcap_{U\in\Ucal}\closure{\eta(U)}$. Indeed, it is obvious that $\eta(L)\subseteq\bigcap_{U\in\Ucal}\closure{\eta(U)}$. Now take any $x\in\spectrum(A)\setminus \eta(L)$, and choose an open set $W\ni x$ such that $\closure{W}$ is compact and disjoint from $\eta(L)$. Then $\eta^{-1}(\closure{W})$ is compact and
\[
	\emptyset=\eta^{-1}(\closure{W})\cap L=\eta^{-1}(\closure{W})\cap \bigcap_{U\in \Ucal} \closure{U}\,.
\]
Thus, since $\Ucal$ is downward directed, there exists $U_0\in \Ucal$ such that
\[
	\emptyset=\eta^{-1}(\closure{W})\cap \closure{U_0}\supseteq\eta^{-1}(W)\cap U_0=\eta^{-1}(W)\cap  \eta^{-1}(\eta(U_0))=\eta^{-1}\left(W\cap  \eta(U_0)\right),
\]
and so, $W\cap \eta(U_0)=\emptyset$. This implies that $W\cap \closure{\eta(U_0)}=\emptyset$, and so in particular $x\notin \bigcap_{U\in\Ucal}\closure{\eta(U)}$. Hence, the claim is proved, and so $\set{\eta(U)\colon U\in\Ucal}$ is a downward directed collection of open subsets of $\spectrum(A)$ that witnesses condition (ii)  in Definition \ref{nontrivial real function} for $\widehat{A}$ and $\psi$.
\end{proof}

\begin{lemma}\label{approximate containment and integral}
Let $\Omega$ be a locally compact space, and let $A\subseteq \C(\Omega)$. Suppose that $A$ approximately contains a nontrivial real function $\phi$. Then:
\begin{enumerate}
	\item $\phi$ is $\mu$-measurable for every regular Borel measure $\mu$ on $\Omega$.
	\item For each $\nu\in\Measures(\Omega)$ with a compact support, there is $(h_n)_{n=1}^\infty\subseteq A$ such that $\int h_n\dd\nu\to\int \phi\dd\nu$.
\end{enumerate}
\end{lemma}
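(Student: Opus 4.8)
The plan is to use only condition (i) of Definition \ref{nontrivial real function} (the approximation clause); condition (ii) plays no role in this lemma. The guiding idea for part (i) is that, for a regular Borel measure, $\mu$-measurability can be checked on compact sets, and on each compact set the approximation clause exhibits $\phi$ as a pointwise limit of continuous functions off a $\mu$-null set. For part (ii) the idea is to run a dominated-convergence argument on the portion of $K:=\support\nu$ where an approximating sequence actually converges, to absorb the remaining portion into a set of arbitrarily small $\abs{\nu}$-measure controlled by the uniform bound $\alpha_K$, and finally to pass to a single sequence by a diagonal selection. Part (i) will be used to guarantee that $\int\phi\dd\nu$ makes sense in part (ii).

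For part (i), fix a compact $K\subseteq\Omega$ and let $K_0\subseteq K$ and $\alpha_K\in\reals$ be as provided by condition (i). Taking $L=\emptyset$ shows $\phi|_{K_0}$ is a pointwise limit of a sequence in $A\subseteq\C(\Omega)$, hence Borel measurable; the same holds on $K_0\cup L$ for every compact $L\subseteq K\setminus K_0$. Since $\mu$ restricted to $K$ is finite, I would use inner regularity of $\mu$ on the finite-measure set $K\setminus K_0$ to choose an increasing sequence of compacta $L_n\subseteq K\setminus K_0$ with $\mu(L_n)\uparrow\mu(K\setminus K_0)$. On the $\sigma$-compact set $K_0\cup\bigcup_n L_n$ the function $\phi$ is measurable (being measurable on each $K_0\cup L_n$), while the residual set $(K\setminus K_0)\setminus\bigcup_n L_n$ is Borel and $\mu$-null; hence $\phi|_K$ is $\mu$-measurable for the completion. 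As a regular Borel measure is inner regular by compacta on sets of finite measure, measurability of $\phi$ on every compact set yields $\mu$-measurability of $\phi$ on all of $\Omega$.

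For part (ii), set $K:=\support\nu$ and work with the finite positive measure $\abs{\nu}$, noting that $\phi$ is $\abs{\nu}$-measurable and bounded by $M:=\norm{\phi}_\infty$ thanks to part (i), and that $\int_\Omega\,\cdot\,\dd\nu=\int_K\,\cdot\,\dd\nu$. Fix $\varepsilon>0$; with $K_0,\alpha_K$ as in condition (i), inner regularity of $\abs{\nu}$ yields a compact $L\subseteq K\setminus K_0$ with $\abs{\nu}\bigl((K\setminus K_0)\setminus L\bigr)<\varepsilon$, and condition (i) supplies $(f_n)\subseteq A$ with $f_n\to\phi$ pointwise on $K_0\cup L$ and $\abs{f_n}\le\alpha_K$ on $K$. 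Writing $K$ as the disjoint union of $K_0\cup L$ and $E:=(K\setminus K_0)\setminus L$, dominated convergence gives $\int_{K_0\cup L}f_n\dd\nu\to\int_{K_0\cup L}\phi\dd\nu$, whereas on $E$ the bounds $\abs{f_n}\le\alpha_K$ and $\abs{\phi}\le M$ give $\bigl|\int_E(f_n-\phi)\dd\nu\bigr|\le(\alpha_K+M)\abs{\nu}(E)<(\alpha_K+M)\varepsilon$. Hence $\limsup_n\abs{\int f_n\dd\nu-\int\phi\dd\nu}\le(\alpha_K+M)\varepsilon$. Since $\alpha_K$ and $M$ are independent of $\varepsilon$, applying this with $\varepsilon=1/m$ produces for each $m$ a sequence $(f_n^{(m)})_n\subseteq A$ whose associated $\limsup$ is at most $(\alpha_K+M)/m$; choosing $n_m$ large and setting $h_m:=f_{n_m}^{(m)}$ gives the required sequence with $\int h_m\dd\nu\to\int\phi\dd\nu$. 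The main obstacle is precisely that the approximating sequences from condition (i) converge only on $K_0\cup L$ and not on all of $K$; the argument must therefore quarantine the uncontrolled region into a set of small measure via inner regularity and exploit that the bound $\alpha_K$ is valid on the whole of $K$ before the diagonal selection assembles a single convergent sequence.
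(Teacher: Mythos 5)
Your proof is correct and follows essentially the same route as the paper's: for (i), decomposing $K$ into $K_0$ plus a $\sigma$-compact union of compacta exhausting $K\setminus K_0$ up to a $\mu$-null set, and for (ii), the same regularity--dominated-convergence estimate on $K_0\cup L$ with the error on the small-measure remainder controlled by $\alpha_K+\abs{\phi}_\Omega$, followed by a selection of a single sequence. (One cosmetic slip: the bound $\abs{\phi}\le M$ comes from the boundedness of $\phi$ built into Definition \ref{nontrivial real function}, not from part (i), but this affects nothing.)
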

\begin{proof} (i) Take a compact subset $K$ of $\Omega$. By condition (i) of Definition \ref{nontrivial real function} for $A$ and $\phi$, we see that $K$ has a compact subset $K_0$ such that $\phi$ is Borel on $K_0\cup L$ for every compact subset $L$ of $K\setminus K_0$. Since $\mu$ is regular, we can find a sequence $(L_n)$ of compact subsets of $K\setminus K_0$ such that $\mu\left(K\setminus K_0\setminus\bigcup_{n=1}^\infty L_n\right)=0$. Since $\phi$ is Borel on $K_0\cup\bigcup_{n=1}^\infty L_n$, it follows that $\phi$ is $\mu$-measurable on $K$. Statement (i) then follows from standard measure theory (see, for example, \cite[Proposition 7.5.1]{Cohn}).

(ii) Let $\nu$ be a complex regular Borel measure on $\Omega$ with support $K$ being compact. Note that because $\phi$ is $\abs{\nu}$-measurable, the integral $\int\phi\dd\nu$ makes sense. Let $K_0$ and $\alpha_K$ be as specified in condition (i) of Definition \ref{nontrivial real function} for $A$, $\phi$, and $K$. Take $\varepsilon>0$ arbitrary. The regularity of $\nu$ gives us a compact subset $L\subseteq K\setminus K_0$ such that $\abs{\nu}(K\setminus K_0\setminus L)<\varepsilon/(\alpha_K+\abs{\phi}_\Omega+1)$, where $\abs{\phi}_\Omega$ is the uniform norm of $\phi$ on $\Omega$. By condition (i) of Definition \ref{nontrivial real function} again, there is a sequence $(f_n)_{n=1}^\infty\subseteq A$ that converges pointwise to $\phi$ on $K_0\cup L$ and each $\abs{f_n}\le \alpha_K$ on $K$. Thus we see that
\begin{align*}
	\abs{\int f_n\dd\nu-\int\phi\dd\nu}\le\int_{K_0\cup L}\abs{f_n-\phi}\dd\abs{\nu}+(\alpha_K+\abs{\phi}_\Omega)\frac{\varepsilon}{\alpha_K+\abs{\phi}_\Omega+1}\to \frac{(\alpha_K+\abs{\phi}_\Omega)\varepsilon}{\alpha_K+\abs{\phi}_\Omega+1}\,, 
\end{align*} 
by the bounded convergence theorem. Hence, there is an $h\in A$ such that $\abs{\int h\dd\nu-\int \phi\dd\nu}<\varepsilon$, and so the result follows.
\end{proof}

\begin{definition}\label{spectrum in a concrete representation}
Let $A$ be a Banach algebra that is at the same time a subalgebra of $\C(\Omega)$ for some topological space $\Omega$. Then we write  $\spectrum(A)=\Omega$ if the natural mapping $\Omega\to\spectrum(A)$ is a homeomorphism. 
\end{definition}

\begin{lemma}\label{spectrum in a concrete representation 2}
Let $A$ be a Banach algebra that is also a subalgebra of $\C(\Omega)$ for some topological space $\Omega$. Then the following are equivalent
\begin{enumerate}
	\item $\spectrum(A)=\Omega$. 
	\item $A\subseteq \C_0(\Omega)$ and the natural mapping $\Omega\to\spectrum(A)$ is a bijection.
\end{enumerate} 
\end{lemma}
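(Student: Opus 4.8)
The plan is to study the natural evaluation map $\eta\colon\Omega\to\spectrum(A)$, $\eta(t)\colon f\mapsto f(t)$, and to recognise that the requirement $\spectrum(A)=\Omega$ of Definition \ref{spectrum in a concrete representation}---namely that $\eta$ be a homeomorphism---decomposes exactly into bijectivity of $\eta$ together with $A\subseteq\C_0(\Omega)$. Two facts are available at no cost. First, since $A$ is a subalgebra of the commutative algebra $\C(\Omega)$ it is itself commutative, so $\spectrum(A)$ is locally compact Hausdorff and $\widehat{A}\subseteq\C_0(\spectrum(A))$. Second, $\eta$ is continuous, because the weak$^*$-subbasic sets $\set{\chi\colon\abs{\chi(f)-\chi_0(f)}<\varepsilon}$ pull back under $\eta$ to $\set{t\colon\abs{f(t)-\chi_0(f)}<\varepsilon}$, which are open as each $f\in A$ lies in $\C(\Omega)$.

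For the direction (i)$\Rightarrow$(ii), a homeomorphism is in particular a bijection, so only $A\subseteq\C_0(\Omega)$ needs attention. For each $f\in A$ one has $f=\hat{f}\circ\eta$, since $f(t)=\eta(t)(f)=\hat{f}(\eta(t))$. Because $\hat{f}\in\C_0(\spectrum(A))$ and a homeomorphism is proper, the set $\set{\abs{f}\ge\varepsilon}=\eta^{-1}\!\left(\set{\abs{\hat{f}}\ge\varepsilon}\right)$ is compact for every $\varepsilon>0$; as $f$ is continuous, this gives $f\in\C_0(\Omega)$.

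The substance is (ii)$\Rightarrow$(i), and here the plan mirrors the properness argument already exploited in the proof of Lemma \ref{real element vs real function}. Given that $\eta$ is a continuous bijection, it suffices to prove it is closed, for then $\eta^{-1}$ is continuous and $\eta$ is a homeomorphism. To show $\eta$ is closed I would verify it is \emph{proper} and invoke that a continuous proper map into a locally compact Hausdorff space is closed \cite{Whyburn65}. For properness, fix a compact $L\subseteq\spectrum(A)$. Each $\chi\in L$ is a \emph{nonzero} character, so $\abs{\hat{f}(\chi)}>0$ for some $f\in A$; hence the open sets $\set{\abs{\hat{f}}>\varepsilon}$ cover $L$, and compactness yields a finite subcover indexed by pairs $(f_i,\varepsilon_i)$. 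Pulling back, $\eta^{-1}(L)\subseteq\bigcup_i\set{\abs{f_i}>\varepsilon_i}\subseteq\bigcup_i\set{\abs{f_i}\ge\varepsilon_i}=:C$, and this is where the hypothesis $A\subseteq\C_0(\Omega)$ enters decisively: each $\set{\abs{f_i}\ge\varepsilon_i}$ is compact by the definition of vanishing at infinity, so $C$ is compact. Since $L$ is closed in the Hausdorff space $\spectrum(A)$ and $\eta$ is continuous, $\eta^{-1}(L)$ is a closed subset of $C$, hence compact.

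The main obstacle is exactly this final upgrade from a continuous bijection to a homeomorphism, as a continuous bijection need not be open. The real content is that the given topology on $\Omega$ cannot be strictly finer than the weak topology induced by $A$, and it is the vanishing-at-infinity condition $A\subseteq\C_0(\Omega)$ that forces this, through the compactness of the superlevel sets $\set{\abs{f_i}\ge\varepsilon_i}$. Everything else is routine; I note in passing that injectivity of $\eta$ already forces $A$ to separate the points of $\Omega$ (so $\Omega$ is automatically Hausdorff), although this is not strictly required by the closed-map argument above.
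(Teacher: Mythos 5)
Your proposal is correct and takes essentially the same route as the paper: the paper dismisses (i)$\Rightarrow$(ii) as obvious and obtains (ii)$\Rightarrow$(i) by noting that $A\subseteq\C_0(\Omega)$ makes $\eta$ proper, hence closed by \cite{Whyburn65}, exactly as in its proof of Lemma \ref{real element vs real function}. Your write-up simply fills in the details (the finite cover of $L$ by sets $\set{\abs{\hat{f_i}}>\varepsilon_i}$ and the compactness of the superlevel sets $\set{\abs{f_i}\ge\varepsilon_i}$) that the paper leaves as ``easy to see.''
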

\begin{proof}The implication (i)$\Rightarrow$(ii) is obvious. The implication (ii)$\Rightarrow$(i) follows from \cite{Whyburn65} as in the proof of Lemma \ref{real element vs real function}; note that this implies in particular that $\Omega$ must be a locally compact space. 
\end{proof}

Following Rieffel \cite{Rieffel} and Walter \cite{Walter74}, we define:

\begin{definition}\label{Tauberian algebra}
Let $A$ be a Banach algebra and let $\Omega$ be a topological space. Then $A$ is \emph{a Tauberian subalgebra of $\C(\Omega)$} or \emph{a Tauberian algebra on $\Omega$} if $A$ is a subalgebra of $\C(\Omega)$ and $A\cap \C_c(\Omega)$ is dense in $A$. Note that a Tauberian algebra on $\Omega$ must necessarily a subalgebra of $\C_0(\Omega)$.

A commutative Banach algebra $A$ is \emph{Tauberian} if $\widehat{A}$ is a Tauberian algebra on $\spectrum(A)$.
\end{definition}

For example, the Fourier algebra $\Fa(G)$ on any locally compact group $G$ is a Tauberian algebra (on $G$). This result was proved by Wiener for $G=\reals$, by Godement and Segal, independently, for abelian $G$ (see \cite[p. 170]{RS} for details\footnote{In \cite[Definition 2.2.9]{RS}, Reiter and Stegeman call  an algebra that satisfies the assumption in our Definition \ref{Tauberian algebra} \emph{in addition to} some other conditions a \emph{Wiener algebra}.}), and by Eymard  for general $G$ \cite[Corollaire 3.38]{Eymard}.

We also need another concept with a harmonic analysis origin. Following Rieffel \cite[Definition 7.1]{Rieffel}, we define:

\begin{definition}\label{Eberlein algebra}
Let $A$ be a Banach algebra, and let $\Omega$ be a topological space. Suppose that $A$ is also a subalgebra of $\C(\Omega)$. An \emph{$\Omega$-Eberlein function for $A$} is a continuous function $\phi$ on $\Omega$ with a property that there is a constant $k$ such that $\abs{\sum_{i=1}^m \alpha_i \phi(s_i)}\le k$ whenever $\alpha_i\in\complexs$ and $s_i\in \Omega$ with
\[	
			\abs{\sum_{i=1}^m\alpha_i f(s_i)}\le \norm{f}\quad(f\in A)\,.
\]
We say that $A$ is \emph{an Eberlein subalgebra of $\C(\Omega)$} or \emph{an Eberlein algebra on $\Omega$} if in addition it contains every  $\Omega$-Eberlein function for itself. 
\end{definition}

A similar, but different, concept was introduced by Takahasi and Hatori in \cite{TH}: They call a commutative Banach algebra $A$ a \emph{BSE-algebra} if the $\spectrum(A)$-Eberlein functions for $\widehat{A}$ (given the quotient norm induced from $A$) \footnote{$\spectrum(A)$-Eberlein functions for $\widehat{A}$ is called \emph{BSE-functions} (for $A$) in \cite{KU}.} are precisely the restriction to $\spectrum(A)$ of the Gelfand transforms of elements of the multiplier algebra $\Mcal(A)$ of $A$. An important difference between this and Definition \ref{Eberlein algebra} is that, in the definition of BSE-algebras, the functions have $\spectrum(A)$ as their domain instead of some given topological space $\Omega$ for which $A$ could be regarded as a subalgebra of $\C(\Omega)$, and so to compare a function with an element of $A$ one needs to evaluate them at all of $\spectrum(A)$ not just at points of $\Omega$. This is one of the reasons why the classes of Eberlein algebras and of BSE-algebras could be quite different as discussed below.\footnote{We remark that a completely different concept with a similar name, that of \emph{the Eberlein algebra of} a locally compact group $G$, is defined by Elgun \cite{Elgun} as the uniform closure of $\FSa(G)$.}

Both of the concepts of Eberlein functions/algebras and of BSE-algebras have origin in the Bochner--Schoenberg--Eberlein theorem, proved by Bochner and Schoenberg for $\reals$ and by Eberlein for all locally compact abelian groups. We give the statement of this theorem in the Fourier algebra setting for the ease of discussion (see \cite[p. 32]{Rudin}).

\begin{thm}[Bochner--Schoenberg--Eberlein]
Let $G$ be a locally compact abelian group, and let $\phi\in \C(G)$. Then the following are equivalent:
\begin{enumerate}
	\item $\phi\in \FSa(G)$.
	\item there is a constant $k$ such that 
$\abs{\sum_{i=1}^m \alpha_i \phi(s_i)}\le k\cdot \norm{\sum_{i=1}^m\alpha_i s_i}_{\VN(G)}$ for every $\alpha_i\in\complexs$ and $s_i\in G$.
\end{enumerate}
Here we interpret $G$ as left-translation operators on $\Ltwo(G)$, a subset of $\VN(G)=\Linfty(\widehat{G})$, which is the dual of $\Fa(G)=\Lone(\widehat{G})$, in a natural manner. 
\end{thm}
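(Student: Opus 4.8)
The plan is to treat the two implications separately: (i)$\Rightarrow$(ii) is a one-line duality estimate, while (ii)$\Rightarrow$(i) carries all of the content.

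For (i)$\Rightarrow$(ii): since $G$ is abelian, $\FSa(G)=\Measures(\widehat{G})$, so any $\phi\in\FSa(G)$ has the form $\phi(s)=\int_{\widehat{G}}\langle\chi,s\rangle\dd\mu(\chi)$ for some $\mu\in\Measures(\widehat{G})$ with $\norm{\mu}=\norm{\phi}_{\FSa(G)}$. I would then simply write $\sum_i\alpha_i\phi(s_i)=\int_{\widehat{G}}\bigl(\sum_i\alpha_i\langle\chi,s_i\rangle\bigr)\dd\mu(\chi)$ and bound the modulus by $\norm{\mu}\sup_{\chi}\abs{\sum_i\alpha_i\langle\chi,s_i\rangle}$. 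Since $\sum_i\alpha_i s_i$, read inside $\VN(G)=\Linfty(\widehat{G})$, is exactly the multiplication operator by $\chi\mapsto\sum_i\alpha_i\langle\chi,s_i\rangle$, that supremum is its $\VN(G)$-norm, and (ii) follows with $k=\norm{\phi}_{\FSa(G)}$. (Invariantly, this is the pairing $\FSa(G)=\C^*(G)'$ together with the equality of the full and reduced norms of $\sum_i\alpha_i s_i$, valid because $G$ is amenable.)

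For (ii)$\Rightarrow$(i) the goal is to manufacture a measure $\mu\in\Measures(\widehat{G})$ with $\hat\mu=\phi$. Taking a single summand in (ii) gives $\abs{\phi(s)}\le k$, so $\phi$ is bounded. The inequality in (ii) says precisely that $\langle\cdot,s\rangle\mapsto\phi(s)$ is a well-defined linear functional $F$, of norm at most $k$ for the uniform norm on $\widehat{G}$, on the span $W$ of the characters $\set{\langle\cdot,s\rangle:s\in G}$ inside $\C_b(\widehat{G})$: if $\sum_i\alpha_i\langle\cdot,s_i\rangle$ vanishes on $\widehat{G}$ then the right-hand side of (ii) is $0$, forcing $\sum_i\alpha_i\phi(s_i)=0$. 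The entire problem is to represent $F$ by a measure carried by $\widehat{G}$ itself.

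This is where I expect the main obstacle. Extending $F$ by Hahn--Banach to $\C_b(\widehat{G})$ only produces a measure on a compactification of $\widehat{G}$ --- the uniform closure of $W$ is the algebra of almost periodic functions, that is, the continuous functions on the Bohr compactification of $\widehat{G}$ --- which records no more than the values of $\phi$ on $G$ made discrete, i.e. its membership in $\FSa(G_d)$. The genuine step is the descent of this representation to a regular Borel measure on $\widehat{G}$, and it is here, and only here, that the continuity of $\phi$ on $G$ must be used, in exact analogy with the way continuity turns the abstract invariant mean attached to a positive-definite function into a true measure in Bochner's theorem. I would carry out the descent by upgrading the finite-sum estimate of (ii) to the integrated estimate $\abs{\int_{G}\phi f}\le k\,\norm{\lambda(f)}_{\C^*_r(G)}$ for $f$ with $\hat f\in\C_c(\widehat{G})$ (the band-limited functions, for which $\norm{\lambda(f)}_{\C^*_r(G)}=\norm{\hat f}_\infty$ and for which the $\lambda(f)$ are dense in $\C^*_r(G)=\C_0(\widehat{G})$): approximating $\int_G\phi f$ by Riemann sums $\sum_i\phi(s_i)c_i$ and applying (ii) to the coefficients $c_i$ reduces matters to controlling $\sup_\chi\abs{\sum_i c_i\langle\chi,s_i\rangle}$, and the band-limiting, together with a Fej\'er-type choice of weights, is exactly what prevents aliasing from inflating this supremum above $\norm{\hat f}_\infty$ as the mesh shrinks; continuity of $\phi$ is what makes the Riemann sums converge. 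Granting this, $\lambda(f)\mapsto\int_G\phi f$ extends to a functional of norm $\le k$ on $\C^*_r(G)$, so $\phi\in\FSa_r(G)$; and since $G$ is abelian, hence amenable, $\C^*_r(G)=\C^*(G)$ and $\FSa_r(G)=\FSa(G)$, giving $\phi\in\FSa(G)$ with $\norm{\phi}_{\FSa(G)}\le k$. A final check that the functional so produced agrees pointwise with $\phi$ --- both being continuous and having the same integral against every band-limited $f$ --- completes the argument.
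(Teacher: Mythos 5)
The paper never proves this statement: it is quoted as classical background, with a pointer to \cite[p. 32]{Rudin}, so there is no internal proof to compare against and your argument has to stand on its own. Your direction (i)$\Rightarrow$(ii) does stand: writing $\phi$ as the Fourier--Stieltjes transform of some $\mu\in\Measures(\widehat{G})$ and identifying $\norm{\sum_i\alpha_i\lambda(s_i)}_{\VN(G)}$ with $\sup_{\chi\in\widehat{G}}\abs{\sum_i\alpha_i\langle\chi,s_i\rangle}$ (the character sum is continuous and Haar measure on $\widehat{G}$ has full support, so essential sup equals sup) is a complete proof, with $k=\norm{\mu}$. You have also diagnosed the hard direction exactly right: condition (ii) is, via Hahn--Banach on the uniform closure of the span of the characters (which is $\C(b\widehat{G})$ for the Bohr compactification $b\widehat{G}$), equivalent to $\phi\in\FSa(G_d)$ with $\norm{\phi}_{\FSa(G_d)}\le k$, and the theorem is precisely the descent statement $\FSa(G_d)\cap\C(G)=\FSa(G)$, which is where continuity of $\phi$ must enter.

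The gap is that this descent --- which is the entire content of the theorem --- is asserted rather than carried out. Concretely, the missing lemma is: for every $f$ in a dense class with $\hat f\in\C_c(\widehat{G})$, one can choose finitely many points $s_i$ and weights $c_i$ so that simultaneously $\sum_i c_i\phi(s_i)$ approximates $\int_G\phi f$ and $\sup_{\chi}\abs{\sum_i c_i\langle\chi,s_i\rangle}\le\norm{\hat f}_\infty+\varepsilon$; your proposal offers only the slogan that ``band-limiting together with a Fej\'er-type choice of weights prevents aliasing''. For $G=\reals^n$ the slogan can indeed be converted into a proof: taking $\hat f\in\C_c^\infty$, Poisson summation gives $\sum_{j\in\integers^n}\delta^n f(j\delta)\,\langle\chi,j\delta\rangle=\sum_{m\in\integers^n}\hat f(\chi+2\pi m/\delta)$, whose sup norm is exactly $\norm{\hat f}_\infty$ once $2\pi/\delta$ exceeds the diameter of $\support\hat f$; Fej\'er tapering reduces the infinite lattice sum to finite sums without increasing the sup, and rapid decay of $f$ plus continuity and boundedness of $\phi$ make the weighted sums converge to $\int\phi f$. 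But none of this appears in the proposal, and --- more seriously --- the vocabulary of ``mesh'', ``Riemann sums'' and lattices has no meaning on a general locally compact abelian group, which is what the theorem claims. To cover general $G$ you must either invoke the structure theorem (an open subgroup $H\cong\reals^n\times K$ with $K$ compact, followed by a genuinely nontrivial patching of the measures obtained on $\widehat{H}$ over the discrete quotient $G/H$), or abandon discretization for the abstract routes of the classical proofs (Eberlein's weak-compactness argument, or a decomposition of the representing measure on $b\widehat{G}$ over the open dense subset $\widehat{G}$). So: the strategy and the identification of the crux are correct, but the decisive estimate is missing, and the argument as written only makes sense for $\reals^n$-like groups.
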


The generalisation of the above statement of  the Bochner--Schoenberg--Eberlein theorem (BSE theorem for short) to a general locally compact group $G$ was shown by Kaniuth and \"{U}lger to be true if and only if $G$ is amenable \cite[Theorem 5.1]{KU}; in particular,  $\Fa(G)$ is a BSE-algebra if and only if $G$ is amenable. They also showed that, for nilpotent $G$, $\FSa(G)$ is a BSE-algebra if and only if $G$ is compact, but that there are non-nilpotent non-compact $G$ for which $\FSa(G)$ is also a BSE-algebra. 
 
On the other hand, noting that actually $G\subset AP(\widehat{G})=\C(\widehat{G_d})=\C^*(G_d)$ when $G$ is abelian, and so in the BSE theorem, we could replace  $\norm{\sum_{i=1}^m\alpha_i s_i}_{\VN(G)}$ by $\norm{\sum_{i=1}^m\alpha_i s_i}_{\C^*(G_d)}$, where $G_d$ is the group $G$ with the discrete topology. In this different interpretation, the BSE theorem has a generalisation to all $G$ given in \cite[Corollaire 2.24]{Eymard}. In particular, this shows that the Fourier--Stieltjes algebra $\FSa(G)$ is always an Eberlein subalgebra of $\C(G)$, for any $G$. The class of closed translation-invariant Eberlein subalgebras of $\FSa(G)$ will be described in \S\ref{Eberlein subalgebras of Fourier--Stieltjes algebras}.

To simplify our terminologies, we shall follow the next convention:

\begin{convention}
Let $\Omega$ be a topological space, and let $B$ be a subalgebra of $\C(\Omega)$; the association between $B$ and $\Omega$ should be clear from the context. We shall call $A$ \emph{a Tauberian/Eberlein subalgebra of $B$} if it is a Tauberian/Eberlein algebra on $\Omega$ that is contained in $B$. 
\end{convention}

\section{Invariant subalgebras of Fourier--Stieltjes algebras}
\label{Eberlein subalgebras of Fourier--Stieltjes algebras}
\label{Invariant subalgebras of Fourier algebras}

\noindent In this section, we shall prove some auxiliary results about translation-invariants subalgebras of the Fourier--Stieltjes algebra $\FSa(G)$ of a locally compact group $G$; we believe that some of these results are of independent interest.

First, we shall characterise when a closed subalgebra of $\Fa(G)$ is the whole of $\Fa(G)$. A starting point for us is the following immediate consequence of \cite[Theorem 2.1]{BLSchl} or of \cite[Theorem 9]{TT}.

\begin{theorem}
Let $G$ be a locally compact group, and let $A$ be a closed translation-invariant subalgebra of $\Fa(G)$ that is   conjugation-closed and separates points of $G$. Then $A=\Fa(G)$.
\end{theorem}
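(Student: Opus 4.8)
The plan is to derive the equality from a structural classification of the closed translation-invariant subalgebras of $\Fa(G)$, which is precisely what \cite[Theorem 2.1]{BLSchl} and \cite[Theorem 9]{TT} supply: a closed, conjugation-closed, translation-invariant subalgebra is completely determined by the $G$-invariant coset relation it induces on $G$ (equivalently, by a closed subgroup through which its functions factor), and the hypothesis that $A$ separates the points of $G$ should collapse this relation to the diagonal, leaving $A=\Fa(G)$. The argument therefore divides into an elementary reduction, identifying $\spectrum(A)$ with $G$, followed by the substantive appeal to this duality theory.

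First I would check that $A$ vanishes nowhere on $G$. Choosing $f\in A$ with $f(s_0)\neq 0$, translation invariance puts $L_{s_0 t^{-1}}f$ in $A$ for every $t\in G$, and this function takes the value $f(s_0)\neq 0$ at $t$. Hence the natural map $\eta\colon G\to\spectrum(A)$ is defined and continuous, it is proper since $A\subseteq\C_0(G)$ forces preimages of compacta to be compact, and it is injective because $A$ separates points. The one genuinely non-formal point here is the surjectivity of $\eta$---that every character of $A$ is an evaluation at a point of $G$---which is where translation invariance must be exploited; granting it, Lemma~\ref{spectrum in a concrete representation 2} promotes $\eta$ to a homeomorphism, so that $\spectrum(A)=G$.

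With $\spectrum(A)=G$ established, $A$ is exactly the kind of closed, conjugation-closed, translation-invariant subalgebra to which \cite[Theorem 2.1]{BLSchl} (or \cite[Theorem 9]{TT}) applies; that result realises $A$ as the set of functions in $\Fa(G)$ constant on the cosets of an associated closed subgroup. Separation of points forces this subgroup to be trivial, and the theorem then returns $A=\Fa(G)$.

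I expect the real difficulty to sit entirely inside the imported classification, that is, in the Takesaki--Tatsuuma / Bekka--Lau--Schlichting duality between invariant subalgebras and subgroup data, rather than in the soft steps above. It is worth stressing why something this deep is needed: since $A$ is conjugation-closed, nowhere vanishing, and separates points, the Stone--Weierstrass theorem already makes $A$ uniformly dense in $\C_0(G)$, but the $\Fa(G)$-norm is strictly finer, so upgrading uniform density to genuine equality is exactly the fine-structure statement that the duality theorems encode.
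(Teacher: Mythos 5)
Your substantive step is the same as the paper's: the paper gives no argument for this theorem at all, recording it as an immediate consequence of \cite[Theorem 2.1]{BLSchl} or of \cite[Theorem 9]{TT}, which is exactly the classification you invoke in your third paragraph. The problem is the detour you take before invoking it. You present the identification $\spectrum(A)=G$ as a necessary preliminary, and the crux of that identification --- surjectivity of $\eta$, i.e.\ that every character of $A$ is evaluation at a point of $G$ --- you explicitly leave unproven (``granting it''). As written, this is a genuine gap: for a closed translation-invariant subalgebra of $\Fa(G)$ this is not a soft fact. Translation invariance gives a $G$-action on $\spectrum(A)$ and makes $\eta(G)$ an invariant subset, but nothing elementary prevents $\spectrum(A)$ from containing characters that are not point evaluations; the assertion that it cannot is, in effect, known only \emph{after} one knows $A=\Fa(G)$ (where it is Eymard's theorem \cite{Eymard}), so as a preparatory step it is circular.

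Fortunately, the gap sits inside a step you do not need, because you have misread the shape of the imported theorem. The classification in \cite[Theorem 2.1]{BLSchl} (equivalently \cite[Theorem 9]{TT}) carries no hypothesis whatsoever on $\spectrum(A)$: it states that \emph{every} closed translation-invariant conjugation-closed subalgebra of $\Fa(G)$ consists precisely of the functions in $\Fa(G)$ that are constant on the cosets of an associated compact (normal) subgroup $K$ of $G$. Separation of points then forces $K=\set{1}$ directly, and $A=\Fa(G)$ follows at once. So your first two paragraphs should simply be deleted; what remains is exactly the paper's one-line proof. Your closing observation, on the other hand, is correct and worth keeping: Stone--Weierstrass only yields uniform density, and since the $\Fa(G)$-norm strictly dominates the uniform norm, uniform density says nothing about equality of norm-closed subalgebras --- which is precisely why the duality theorem, and not any soft argument, carries all the weight here.
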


Since complex-conjugation is actually the natural involution for $\Fa(G)$, the above result characterises $\Fa(G)$ among certain $*$-subalgebras of itself.

Without the conjugation-closedness assumption, the above result fails: take $G:=\unitcircle$ and let $A$ be the closure of the set of functions in $\Fa(\unitcircle)$ associated with complex polynomials of one variable. Then $A$ is a proper closed translation-invariant subalgebra of $\Fa(\unitcircle)$ that separates points of $\unitcircle$. Thus we would need to replace the conjugation-closedness assumption by some other conditions if we want the conclusion above still holds.

When $G$ is a locally compact \emph{abelian} group, it was proved by Rieffel in \cite[\S 6]{Rieffel} that if $A$ is a closed translation-invariant Tauberian subalgebra of $\Fa(G)$ such that $\spectrum(A)=G$ (in the sense of Definition \ref{spectrum in a concrete representation}, but see also Lemma \ref{spectrum in a concrete representation 2}), then $A=\Fa(G)$; this precise statement is not presented as such in \cite{Rieffel} but could be easily extracted, and, for convenience, it is also translated to the Fourier algebra formulation here. In fact, this was strengthened by Taylor in \cite{Taylor69}, using his deep work on Laplace transform in \cite{Taylor68}: the result, when again formulated in the Fourier algebra language, states that if $A$ is a closed translation-invariant subalgebra of $\FSa(G)$ such that $\spectrum(A)=G$, then $\Fa(G)\subseteq A\subseteq \FSa_0(G)$, where 
\[
	\FSa_0(G):=\FSa(G)\cap \C_0(G)
\]
is \emph{the Rajchman algebra of $G$} \footnote{This algebra, for an abelian locally compact group, has been studied for a long time; see, for example, the book \cite{GM} -- a standard reference in commutative harmonic analysis. A recent study, for the nonabelian case, was carried out by Ghandehari in \cite{Ghandehari}.}; remark here that a Tauberian subalgebra of $\FSa(G)$ is necessarily a subalgebra of $\Fa(G)$. The proofs of both of these results rely heavily on the structure theory of locally compact abelian groups, applied to the dual group of the group $G$, and thus are not amenable to generalisation to nonabelian situation. Here, we shall not be able to generalise these theorems completely, but we shall prove some partial results that extend them to a certain class of locally compact (nonabelian) groups.

\begin{remark}
Remarks on the condition $\spectrum(A)=G$: 
\begin{enumerate}
\item For Taylor's type of problem, requiring merely that the natural mapping $G\to\spectrum(A)$ is bijective is too weak an assumption: Take any nondiscrete locally compact group $\Gamma$, set $G:=\Gamma_d$ and  $A:=\Fa(\Gamma)$. Then $A$ is a closed translation invariant subalgebra of $\FSa(G)$ and the natural mapping $G\to\spectrum(A)$ is bijective, but $A$ neither contains $\Fa(G)$ nor is contained in $\FSa_0(G)$. 
\item On the other hand, for Rieffel's type of problem (even without the Tauberian assumption), requiring that the natural mapping $G\to\spectrum(A)$ is bijective would be sufficient to imply the stronger condition that $\spectrum(A)=G$ as shown by Lemma \ref{spectrum in a concrete representation 2}.
\end{enumerate}
\end{remark}

In the following series of lemmas, let $G$ be a locally compact group, and let $A$ be a closed translation-invariant Tauberian subalgebra of $\Fa(G)$. 

\begin{lemma}\label{when A=A(G), compact-convergence dense case}
If $A$ is dense in $\C_0(G)$ in the compact-convergence topology on $G$, then $A=\Fa(G)$. 
\end{lemma}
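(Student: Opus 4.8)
The plan is to deduce the result from the theorem stated at the beginning of this section (the consequence of \cite[Theorem 2.1]{BLSchl} and \cite[Theorem 9]{TT}). Since $A$ is already a closed translation-invariant subalgebra of $\Fa(G)$, it suffices to check the two remaining hypotheses of that theorem: that $A$ separates the points of $G$ and that $A$ is conjugation-closed. Once these are verified, the quoted theorem gives $A=\Fa(G)$. Throughout I use the norm-decreasing inclusions $A\subseteq \Fa(G)\subseteq \C_0(G)$.

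That $A$ separates points is immediate from the density hypothesis. Given $s\neq t$ in $G$, choose $g\in \C_0(G)$ with $g(s)\neq g(t)$; by compact-convergence density there is $f\in A$ with $\abs{f-g}$ as small as we wish on the compact set $\set{s,t}$, and hence $f(s)\neq f(t)$. Applying the same density to functions that do not vanish at a prescribed point, one also sees that $A$ vanishes nowhere on $G$, so that $A$ has empty hull.

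The substantive step is conjugation-closedness, and I would approach it through the annihilator $A^\perp\subseteq \VN(G)$. Because $A$ is left-translation-invariant and $\langle T,L_sf\rangle=\langle T\lambda(s),f\rangle$ for $f\in\Fa(G)$ and $T\in\VN(G)$, the space $A^\perp$ is a weak$^*$-closed right ideal of $\VN(G)$, hence of the form $A^\perp=q\VN(G)$ for a projection $q$. The density hypothesis now enters through duality: the functionals on $\C_0(G)$ that are continuous for the compact-convergence topology are precisely the compactly supported measures, so compact-convergence density of $A$ says exactly that $A^\perp$ contains no nonzero element $\lambda(\mu)$ with $\mu\in\Measures(G)$ of compact support. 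Let $J$ denote the canonical conjugate-linear $*$-automorphism of $\VN(G)$ that fixes each $\lambda(s)$ and implements complex conjugation on $\Fa(G)$, so that $(\overline{A})^\perp=J(q)\VN(G)$; then conjugation-closedness of $A$ is equivalent to $J(q)=q$. Since $J$ sends $\lambda(\mu)$ to $\lambda(\overline{\mu})$, the ``no compactly supported measure'' condition is itself $J$-symmetric, and to pin down $J(q)=q$ — indeed to force $q=0$, which by Hahn--Banach already yields $A=\Fa(G)$ directly — I would feed in the Tauberian hypothesis: $A\cap\C_c(G)$ is norm-dense in $A$, a property manifestly stable under conjugation, and I would play this density off against the rigidity of the support projection $q$.

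The main obstacle is exactly this last deduction, and it is a genuine mismatch of topologies. Compact-convergence density only supplies, for a target in $\C_0(G)$ such as $\overline{f}$ or a product $fg$, approximants drawn from $A$ that converge uniformly on compacta with no control on their $\Fa$-norms; yet membership in the norm-closed algebra $A$ requires $\Fa$-norm convergence. Bridging this gap is where the Tauberian property must do real work: I expect one should restrict to the compactly supported elements of $A$ and combine their uniform boundedness under the relevant slice maps with the structure of $q$ (equivalently, argue against the regular representation rather than in the sup-norm), so as to promote the weak compact-convergence information to the $\Fa$-norm. This promotion is precisely what excludes the one-sided, analytic phenomena (the polynomial/disc-algebra example of the preceding remark) that the density hypothesis is designed to rule out, after which the quoted theorem closes the proof.
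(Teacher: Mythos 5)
Your reduction to the Bekka--Lau--Schlichting/Takesaki--Tatsuuma theorem is not completed, and the gap is fatal to the route you chose, not a technicality. Everything up to the annihilator setup is fine ($A^\perp=q\VN(G)$, and compact-convergence density is equivalent, by Hahn--Banach, to the statement that $A^\perp$ contains no nonzero $\lambda(\mu)$ with $\mu\in\Measures(G)$ of compact support), but at the decisive moment you only write that you ``would play this density off against the rigidity of $q$'' and that you ``expect'' slice maps and boundedness to promote the topology. No argument is given, and in fact no argument of this shape can exist, because the data you retain are insufficient. Take $G=\reals$ and let $A$ be the image in $\Fa(\reals)$ of $\Lone[0,\infty)\subseteq\Lone(\widehat{G})$ under the Fourier transform. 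This $A$ is a closed translation-invariant subalgebra of $\Fa(\reals)$ (multiplication by characters and convolution both preserve the half-line support), and it \emph{is} compact-convergence dense in $\C_0(\reals)$: if a compactly supported measure $\mu$ annihilates $A$, then $\hat\mu$ vanishes a.e.\ on $[0,\infty)$, and $\hat\mu$ extends to an entire function by Paley--Wiener, so $\mu=0$. Nevertheless $q=\chi_{(-\infty,0)}\neq 0$, $A$ is not conjugation-closed, and $A\neq\Fa(\reals)$. The only standing hypothesis this example violates is the Tauberian one ($A\cap\C_c(\reals)=\set{0}$, again by analyticity). So, contrary to your closing sentence, compact-convergence density does \emph{not} by itself exclude one-sided analytic phenomena; only the Tauberian hypothesis does, and your proposal never uses it in any concrete, load-bearing way.

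Compare with the paper's proof, which is built around the Tauberian property from the start and never goes near conjugation-closedness or the quoted theorem. Setting $\check f(t):=f(t^{-1})$, one first uses compact-convergence density, multiplied against a fixed $f\in A\cap\C_c(G)$, to show that $\check{A}\cap\C_c(G)$ is dense in $\Ltwo(G)$ (uniform convergence on a fixed compact support gives $\Ltwo$-convergence; Tauberianness is what supplies the compactly supported elements of $A$). Then Walter's argument shows that $f*g\in A$ for $f\in\C_c(G)$ and $g\in A\cap\C_c(G)$ (vector-valued integration, using the continuity of $t\mapsto L_tg$ into $A$), and the factorization $\Fa(G)=\set{u*\check v\colon u,v\in\Ltwo(G)}$ with the norm estimate $\norm{u*\check v}\le\norm{u}_2\norm{v}_2$ converts the $\Ltwo$-density into $\Fa(G)$-norm density of such convolutions, giving $A=\Fa(G)$ outright. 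That is the mechanism by which the ``weak'' compact-convergence information gets promoted to the $\Fa$-norm: through $\Ltwo$ and convolution, not through the ideal structure of $A^\perp$.
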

\begin{proof}
For each function $f$ on $G$, let us denote by $\check{f}$ the function $t\mapsto f(t^{-1})$ on $G$. Set 
\[
	\check{A}:=\set{\check{f}\colon f\in A}\,.
\]
Take $\phi\in\C_c(G)$ and $f\in A\cap \C_c(G)$. The assumption implies that $\phi \check{f}$ is the uniform limit of functions in $\check{A}\check{f}$. Hence, $\phi \check{f}$ is the limit in $\Ltwo(G)$ of functions in $\check{A}\cap \C_c(G)$. By a similar argument and the Tauberian property of $A$, we obtain that $\phi$ is the limit in $\Ltwo(G)$ of functions in $\check{A}\cap \C_c(G)$, and so $\check{A}\cap \C_c(G)$ is dense in $\Ltwo(G)$. The result then follows from an argument in \cite[p. 157]{Walter74} that we repeat here briefly for the reader's convenience: for each $g\in A$, the mapping $t\mapsto L_tg, G\to A,$ is continuous, and so $f*g$ defines an element in $A$ whenever $f\in \C_c(G)$ and $g\in A\cap \C_c(G)$. This, the density of $\check{A}\cap \C_c(G)$ in $\Ltwo(G)$, and the fact \cite{Eymard} that $\Fa(G)=\set{f*\check{g}\colon f,g\in \Ltwo(G)}$, then complete the argument.
\end{proof}

\begin{lemma}\label{when A=A(G), compact case}
If $G$ is compact and $\spectrum(A)=G$, then $A=\Fa(G)$.
\end{lemma}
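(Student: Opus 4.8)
The plan is to reduce the statement, via the theorem at the beginning of \S\ref{Invariant subalgebras of Fourier algebras}, to the single assertion that $A$ is conjugation-closed: for then $A$ is a closed translation-invariant conjugation-closed subalgebra of $\Fa(G)$ which, by the hypothesis $\spectrum(A)=G$, separates the points of $G$, and that theorem yields $A=\Fa(G)$ at once. (Since $G$ is compact, $\C_0(G)=\C(G)$, the compact-convergence and uniform topologies coincide, and the Tauberian condition is automatic; so one could equally finish by showing that $A$ is uniformly dense in $\C(G)$ and invoking Lemma \ref{when A=A(G), compact-convergence dense case}, but the two routes meet at the same core point.) First I would record that $\spectrum(A)=G$ forces $A$ to separate points: distinct $s,t\in G$ determine distinct characters of $A$, which are therefore distinguished by some $f\in A$.

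Next I would bring in the representation theory of the compact group $G$. Here $\widehat{G}$ is discrete and $\FSa(G)=\Fa(G)$ decomposes as the $\lone$-direct sum of the finite-dimensional coefficient spaces $\mathcal{E}_\pi$ ($\pi\in\widehat{G}$). Using that $A$ is norm-closed and translation-invariant, the central (block) projections $f\mapsto d_\pi\,\conjugate{\chi_\pi}*f$ map $A$ into itself, and each $A\cap\mathcal{E}_\pi$ is a translation-invariant subspace of the $(G\times G)$-irreducible module $\mathcal{E}_\pi$, hence is $\set{0}$ or all of $\mathcal{E}_\pi$. Consequently $A=\FSa_\Scal(G)$ for the set $\Scal:=\set{\pi\in\widehat{G}\colon \mathcal{E}_\pi\subseteq A}$, which is a sub-semidual by Lemma \ref{algebra of sub-semidual}, and $\Scal$ separates the points of $G$. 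Since pointwise complex-conjugation carries $\mathcal{E}_\pi$ onto $\mathcal{E}_{\conjugate{\pi}}$, the algebra $A$ is conjugation-closed precisely when $\conjugate{\Scal}=\Scal$, i.e. precisely when $\Scal$ is a subdual. Thus the whole problem is reduced to showing that the hypothesis $\spectrum(A)=G$ forces $\Scal$ to be conjugation-closed.

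This last implication is the heart of the matter, and the main obstacle. It genuinely needs the assumption $\spectrum(A)=G$: the sub-semidual $\Scal=\integers_{\ge 0}\subseteq\integers=\widehat{\unitcircle}$ is separating but not conjugation-closed, and the associated algebra is the disc algebra, whose spectrum is $\closure{\unitdisk}$ rather than $\unitcircle$. I would argue the contrapositive: if $\conjugate{\Scal}\neq\Scal$, then $\FSa_\Scal(G)$ fails to be self-adjoint and therefore admits characters not implemented by points of $G$, so that $\spectrum(A)\supsetneq G$. The conceptual picture guiding this step is that $\spectrum(\FSa_\Scal(G))$ is a compact semigroup having $G$ as its group of units --- the circle example being the model $\unitcircle\subseteq\closure{\unitdisk}$ --- which coincides with $G$ exactly when $\Scal$ is conjugation-closed; the extra, non-invertible spectral points are produced by a power-series/analytic argument on the coefficient functions of a representation $\pi\in\Scal$ with $\conjugate{\pi}\notin\Scal$, modelled on the abelian (semigroup-algebra) theory. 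Establishing that such extra characters genuinely exist, uniformly across the nonabelian situation, is where the real work lies. Once $\conjugate{\Scal}=\Scal$ is in hand, $A$ is conjugation-closed and the reduction of the first paragraph completes the proof.
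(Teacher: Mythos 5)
Your first two reduction steps are sound: the Peter--Weyl block decomposition (valid because the standing hypothesis gives \emph{two-sided} translation invariance, so each block $A\cap\mathcal{E}_\pi$ is a $G\times G$-invariant subspace of an irreducible $G\times G$-module) does show $A=\FSa_\Scal(G)$ for a sub-semidual $\Scal$ of $\widehat{G}$ that separates points, and conjugation-closedness of $A$ is indeed equivalent to $\Scal=\conjugate{\Scal}$. But the proposal then stops exactly at the point you yourself call ``the heart of the matter'': the claim that $\conjugate{\Scal}\neq\Scal$ forces $\spectrum(\FSa_\Scal(G))\supsetneq G$. Your justification is the inference ``not self-adjoint, therefore extra characters exist,'' supported only by the model $\unitcircle\subseteq\closure{\unitdisk}$ and a hoped-for power-series argument. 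That inference is not routine, and in fact the general principle behind it is false: there exist proper (hence, by Stone--Weierstrass, non-self-adjoint) uniform algebras on a compact space $X$ which separate points and whose maximal ideal space is exactly $X$ (Cole's counterexamples to the peak point conjecture). So any proof of your step must genuinely exploit the translation structure, not function-algebra generalities; moreover, in the nonabelian case the products of coefficient functions of a single $\pi$ with $\conjugate{\pi}\notin\Scal$ spread over the whole sub-semidual generated by $\pi$, so the abelian semigroup-algebra model does not transfer by itself. As written, the lemma is not proved.

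For comparison, here is how the paper closes precisely this hole, with an operator-algebraic rather than analytic argument. Put $\Scal_1:=\Scal\cap\conjugate{\Scal}$ (a subdual) and $B:=\FSa_{\Scal_1}(G)$. Both directions of Fell's weak-containment theorem \cite{Fell} (namely $1_G\preccurlyeq\mu\otimes\conjugate{\mu}$, plus cancellation) show that $\Scal\setminus\Scal_1$ is absorbing: $\support(\pi\otimes\rho)\subseteq\Scal\setminus\Scal_1$ whenever $\pi\in\Scal\setminus\Scal_1$ and $\rho\in\Scal$. This tensor-ideal property implies that compression by the (central) support projection $z_K$ of $B$ in $\VN(G)$ is multiplicative on $A$, i.e. $(fg)\cdot z_K=(f\cdot z_K)(g\cdot z_K)$, so that $z_Kz$ is a character of $A$, where $z$ is the support projection of $A$. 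This is exactly your ``extra character,'' produced concretely. The hypothesis $\spectrum(A)=G$ then forces $z_Kz=z\lambda_t$ for some $t\in G$, and a projection can be of this form only if $z_Kz=z$; hence $\Scal=\Scal_1$ is a subdual, which is your conjugation-closedness, and separation of points gives $K=\set{1}$, $B=\Fa(G)$. (The paper finishes by weak$^*$-density of $A$ in $\Fa(G)$, \cite[Proposition 1.21]{Eymard}, and Lemma \ref{when A=A(G), compact-convergence dense case}; your alternative ending via the theorem of \cite{BLSchl} quoted at the head of \S\ref{Invariant subalgebras of Fourier algebras} would work equally well once $\Scal=\conjugate{\Scal}$ is known.) So your frame is close in spirit to the paper's, but step four needs this Fell/support-projection argument, or something equally substantive, before the proof stands.
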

\begin{proof}
Denote by $\Scal$ the union of the supports in $\widehat{G}$ of the continuous unitary representations associated with positive definite functions in $A$. Since $A$ is a closed translation-invariant subalgebra of $\Fa(G)$, $\Scal$ is a sub-semidual of $\widehat{G}$; noting also that since $G$ is compact, $\widehat{G}$ is discrete (see, for example, \cite[Proposition 1.70]{KT}). Set 
\[
	\Scal^*:=\set{\conjugate{\pi}\colon\ \pi\in\Scal}\quad\textrm{and}\quad \Scal_1:=\Scal\cap \Scal^*\,.
\]
Then $\Scal_1$ is a subdual of $\widehat{G}$. Note that the trivial representation $1_G$ belongs to $\Scal_1$ for $A$ is necessarily unital by \v{S}hilov's idempotent theorem (see \cite[Theorem 2.4.33]{Dales}).  We \emph{claim} that if $\pi\in\Scal\setminus\Scal_1$ and $\rho\in\Scal$, then the support of $\pi\otimes\rho$ must be contained in $\Scal\setminus\Scal_1$. Indeed, assume towards a contradiction that $\mu\preccurlyeq \pi\otimes\rho$ for some $\mu\in\Scal_1$. Then by applying both directions of a theorem of Fell \cite[Theorem 4]{Fell} we obtain
\[
	1_G\preccurlyeq \mu\otimes\conjugate{\mu}\preccurlyeq  \pi\otimes\rho\otimes\conjugate{\mu}\,,
\]
and then $\conjugate{\pi}\preccurlyeq \rho\otimes\conjugate{\mu}$. This implies that $\conjugate{\pi}\in\Scal$, and so $\pi\in\Scal_1$; a contradiction to the assumption that $\pi\in \Scal\setminus\Scal_1$.

Set $B:=\FSa_{\Scal_1}(G)$. Then $B$ is a closed translation-invariant subalgebra of $\Fa(G)$; indeed, we have
\[
	B=\set{f\in\Fa(G)\colon\ f\ \textrm{is constant on cosets of}\ K}\,,
\]
where $K$ is the common kernel of $\Scal_1$. Denote by $z$ and $z_K$ the support projections of $A$ and $B$ in $\VN(G)=\dual{\Fa(G)}$, respectively. Take $f\in A\cap P(G)$. Then $f\cdot (1-z_K)$ is also a positive definite function in $A$, whose associated continuous unitary representation of $G$ is supported in $\Scal\setminus\Scal_1$. It follows from a remark above that the continuous unitary representation of $G$ associated with $(f\cdot (1-z_K))g$ is supported in $\Scal\setminus \Scal_1$ for every $g\in A\cap P(G)$. Thus
\[
	[(f\cdot (1-z_K))g]\cdot z_K=0
\]
for every $f,g\in A\cap P(G)$, and hence, by linearity, for every $f,g\in A$. It follows that
\[
	(fg)\cdot z_K=[(f\cdot z_K)(g\cdot z_K)]\cdot z_K=(f\cdot z_K)(g\cdot z_K) 
\]
for every $f,g\in A$; the second equality is because $B$ is a subalgebra of $\Fa(G)$. Since the constant function $\tuple{1}\in A\cap B$, we see that $z_Kz\neq 0$. Thus it follows that $z_Kz\in z\VN(G)=\dual{A}$ is a character of $A$. The hypothesis then implies that $z_Kz=z\lambda_t$ for some $t\in G$. This can only happen if $z_Kz=z$, and so $A\subseteq B$. In particular, we must have $K=\set{1}$ and $B=\Fa(G)$. Now, by \cite[Proposition 1.21]{Eymard}, every function in $B=\Fa(G)$ is a uniform limit of functions in $A$. Hence, $A$ is uniformly dense in $\C(G)$. The result then follows from the previous lemma.
\end{proof}

\begin{remark}
After the paper has been submitted, we discovered that Lemma \ref{when A=A(G), compact case} follows immediately from the main theorem of \cite{Izzo} and Lemma \ref{when A=A(G), compact-convergence dense case}. We are grateful to Alexander Izzo for explaining his result to us. The paper \cite{Izzo} actually deals with a more general situation, and so its proof is deeper than the simple argument above. We decide to keep the above proof for completeness. 
\end{remark}

\begin{theorem}\label{when A=A(G), nonzero real function case}
Let $G$ be a locally compact group, and let $A$ be a closed translation-invariant Tauberian subalgebra of $\Fa(G)$. Suppose that $\spectrum(A)=G$ and that $A$ approximately contains a nontrivial real function. Then $A=\Fa(G)$.
\end{theorem}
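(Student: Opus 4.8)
The plan is to deduce the result from Lemma \ref{when A=A(G), compact-convergence dense case} by showing that $A$ is dense in $\C_0(G)$ in the compact-convergence topology. Since $\spectrum(A)=G$, the algebra $A$ separates the points of $G$ and vanishes at no point of $G$; hence, fixing a compact set $K\subseteq G$, the uniform closure $\overline{A|_K}$ of the restricted functions is a point-separating, nowhere-vanishing closed subalgebra of $\C(K)$. By the Stone--Weierstrass theorem it equals all of $\C(K)$ as soon as it is self-adjoint, and letting $K$ range over all compact subsets of $G$ then yields the desired compact-convergence density. So everything reduces to producing, on each compact set, enough \emph{real} functions inside (the uniform closure of) $A$. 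This is exactly what the real-function hypothesis should supply, and it is also what rules out the ``analytic'' counterexamples such as the disk algebra on $\unitcircle$, where the only real elements are constants.

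\textbf{Role of the approximate real function.} Here the function $\phi$ of Definition \ref{nontrivial real function} and the translation-invariance of $A$ enter. First I would invoke Lemma \ref{approximate containment and integral} to pass from the pointwise approximation of Definition \ref{nontrivial real function}(i) to approximation of $\phi$ against compactly supported measures, so that $\phi$ is captured by $A$ in a genuinely functional-analytic sense rather than merely pointwise. Condition (ii) supplies the crucial localisation: the compact set $L=\bigcap_{U\in\Ucal}\closure U$ together with the separation $\phi(U)\cap\phi(G\setminus\closure U)=\emptyset$ means that $\phi$ takes an isolated set of values on $L$, so a Weierstrass polynomial approximation of a bump around those values produces, approximately inside $A$, a real function concentrated near $L$. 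Translating such localised real functions by elements of $G$ --- legitimate since $A$ is translation-invariant --- and forming products and sums yields a real subalgebra, approximately contained in $A$, whose members I would show separate points of $G$ and vanish nowhere. A real Stone--Weierstrass argument then forces these real functions to be uniformly dense on each $K$, delivering the self-adjointness required in the first paragraph.

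\textbf{Main obstacle.} The hard part is precisely this last step: converting the weak/pointwise containment of $\phi$ (all that Definition \ref{nontrivial real function} and Lemma \ref{approximate containment and integral} provide) into the uniform-on-compacta statements that Stone--Weierstrass consumes, and ensuring the real functions so obtained separate \emph{every} pair of points rather than only points lying in distinct cosets of the closed subgroup $H=\set{g:\phi(\cdot\, g)=\phi}$, the right-stabiliser of $\phi$. Because $\phi$ has a compact level set, $H$ is compact, and I expect to dispose of the residual separation along $H$-cosets by applying the compact-group result Lemma \ref{when A=A(G), compact case} to the relevant compact piece (or by feeding the measure-theoretic approximation of Lemma \ref{approximate containment and integral} directly into the $\Ltwo$-convolution argument used to prove Lemma \ref{when A=A(G), compact-convergence dense case}), and then propagating density across $G$ by translation. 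Reconciling these two approximation regimes and handling the stabiliser cleanly is the principal technical burden; should the Stone--Weierstrass bookkeeping on the non-discrete dual prove intractable, the representation-theoretic input via Fell's theorem exploited in the proof of Lemma \ref{when A=A(G), compact case} is the natural fallback for forcing the support sub-semidual of $A$ to be conjugate-closed.
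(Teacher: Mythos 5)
Your skeleton is the right one --- reduce to Lemma \ref{when A=A(G), compact-convergence dense case} by proving $A|_K$ is uniformly dense in $\C(K)$ for each compact $K$, with Lemma \ref{approximate containment and integral} and Lemma \ref{when A=A(G), compact case} as the supporting inputs --- and this is exactly the skeleton of the paper's proof. But the engine you propose, classical Stone--Weierstrass, cannot run on the hypotheses, and the step you yourself flag as the ``main obstacle'' is not a deferrable technicality: it is the theorem. Two concrete failures. First, $\phi$ is only a \emph{bounded Haar-measurable} function (that is all Lemma \ref{approximate containment and integral}(i) provides); it need not be continuous. Hence translates of $\phi$, polynomials in $\phi$, or ``bumps around its values'' are not elements of $\C(K)$, and no Hahn--Banach/annihilator argument can place them in the uniform closure $\closure{A|_K}\subseteq\C(K)$: Lemma \ref{approximate containment and integral}(ii) yields only that every compactly supported measure annihilating $A$ also annihilates $\phi$, which, since $\phi\notin\C(K)$, produces not a single continuous real function inside $\closure{A|_K}$. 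Second, even granting continuity, one real function together with its translates, sums and products does not give self-adjointness of the closed algebra $\closure{A|_K}$, which is what classical Stone--Weierstrass consumes; establishing that self-adjointness is essentially equivalent to the theorem being proved, so the plan is circular at its central point.

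The missing idea --- and the paper's actual mechanism --- is Bishop's generalisation of the Stone--Weierstrass theorem \cite{Bishop}, run through annihilating measures via the extreme-point (de Branges) argument (cf.\ \cite[Theorem 13.1]{Rudin2}); this machinery is designed to consume precisely the weak, integral-level approximation that you have. One first smooths: $B:=\set{(L_s\phi)*f\colon f\in\C_c(G),\ s\in G}$ consists of genuine bounded \emph{continuous} functions. Using Fubini, the translation-invariance of $A$ (both to see $h*f\in A$ and to translate), and Lemma \ref{approximate containment and integral}(ii), one shows $\psi\mu\in M$ for every $\psi\in B$ and every $\mu$ in the annihilator $M\subseteq\Measures(K)$ of $A$. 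If $M\neq\set{0}$, an extreme point $\mu$ of $\closedball{M}{1}$ has support $S$ on which every $\psi\in B$, and every real function in $A$, is constant; the common level sets of $B$ are cosets of a closed normal subgroup $F$, which is compact by condition (ii) of Definition \ref{nontrivial real function} (this vindicates your stabiliser intuition, but the proof goes through $sL\subseteq L$ and $F\subseteq LL^{-1}$ --- a compact level set for $\phi$ is \emph{not} assumed). Then, after using Herz's restriction theorem to see that $A|_F$ is a \emph{closed} translation-invariant Tauberian subalgebra of $\Fa(F)$ with $\spectrum(A|_F)=F$ (a step absent from your sketch), Lemma \ref{when A=A(G), compact case} gives $A|_F=\Fa(F)$, so real functions in $A|_S$ separate points unless $S$ is a singleton, and a singleton support contradicts $\spectrum(A)=G$. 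Hence $M=\set{0}$ and $A|_K$ is uniformly dense in $\C(K)$. In short, your proposal assembles the correct supporting lemmas but lacks the two decisive moves --- convolution-smoothing of $\phi$ into $B$, and the Bishop--de Branges localisation of extremal annihilating measures --- which together eliminate any need for the self-adjointness that your Stone--Weierstrass plan cannot supply.
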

\begin{proof}
Our idea here is based on the proof of Bishop's generalisation of the Stone--Weierstrass theorem \cite{Bishop} (cf. also \cite[Theorem 13.1]{Rudin2}).

Let $\phi$ be the bounded real function specified in Definition \ref{nontrivial real function} for $A$. Then $\phi$ is measurable with respect to the Haar measure on $G$ by Lemma \ref{approximate containment and integral}(i), and so defines an element of $\Linfty(G)$. Set
\[
	B:=\set{L_s(\phi*f)=(L_s\phi)*f\colon f\in \C_c(G)\,, s\in G}\,.
\]
Then $B\subseteq \C^b(G)$ (cf. \cite[Proposition 2.39]{Folland}). Denote by $\Fcal$ the collection of subsets $F$ of $G$ that are contained in some level set of $\psi$ for every $\psi\in B$. It is easy to see that $\closure{\bigcup_j F_j}\in \Fcal$ whenever $F_j\in \Fcal$ with $\bigcap_j F_j\neq\emptyset$. Also, $\Fcal$ is translation-invariant. It follows that the maximal sets in $\Fcal$, which must exist, are the cosets of some closed normal subgroup $F$ of $G$. We \emph{claim} that $F$ is compact. Indeed, take $s\in F$. The definition of $F$ implies that
\[
	(L_s\phi)*f=\phi*f\qquad(f\in \C_c(G))\,,
\]
and so $L_s\phi=\phi$ as elements of $\Linfty(G)$. Condition (ii) in Definition \ref{nontrivial real function} then implies that $U \setminus s^{-1} \closure{U}$ is locally null with respect to the Haar measure on $G$, for each $U$ belonging to the collection $\Ucal$ associated with $\phi$. Since each such $U$ is open, we see that $\closure{U}\subseteq s^{-1}\closure{U}$ for every $U\in \Ucal$. So $s L\subseteq L$, where $L:=\bigcap_{U\in\Ucal}\closure{U}$ is a nonempty compact set by condition (ii) of Definition \ref{nontrivial real function}. Thus $F\subseteq LL^{-1}$, and so must be compact.

Consider $A|_{F}:=\set{f|_{F}\colon f\in A}$. Then $A|_{F}$ is a translation-invariant subalgebra of $\Fa(F)$. We \emph{claim} that $A|_{F}$ is closed in $\Fa(F)$. Indeed, certainly $A|_{F}$ is a Banach algebra with the quotient norm $\norm{\cdot}_{A|_{F}}$ induced from that of $A$. By Herz's restriction theorem \cite{Herz}, the restriction $f\mapsto f|_{F}, \Fa(G)\to \Fa(F),$ is a quotient map whose dual gives a natural $*$-isomorphism from $\VN(F)$ onto the \wstar-subalgebra $W$ of $\VN(G)$ generated by $\set{\lambda_G(t)\colon t\in F}$, mapping $\lambda_{F}(t)\mapsto \lambda_G(t)$. Take $f\in A$. Then $f=f\cdot z$, where $z$ is the support projection of $A$ in $\VN(G)$, and so 
\begin{align*}
	\norm{f|_{F}}_{\Fa(F)}&=\sup\set{\abs{\duality{f}{x}}\colon\ x\in \closedball{W}{1}}\\
	&=\sup\set{\abs{\duality{f}{zx}}\colon\ x\in \closedball{W}{1}}\\
	&=\sup\set{\abs{\duality{f}{x}}\colon\ x\in \closedball{(zW)}{1}}=\norm{f|_{F}}_{A|_{F}}
\end{align*}
where the third equality is because $x\mapsto zx$ is a $*$-epimorphism from a \wstar-algebra $W$ onto necessarily another one $zW$. This gives the claim. Since $\spectrum(A)=G$, it is easy to see that $\spectrum(A|_{F})=F$. Lemma \ref{when A=A(G), compact case} then can be applied to show that $A|_{F}=\Fa(F)$.

Now, consider a fixed compact subset $K$ of $G$ and set
\[
	M:=\set{\mu\in\Measures(K)=\dual{\C(K)}\colon \ \int_K f\dd\mu=0\quad (f\in A)}\,.
\]
Take $\mu\in M$. We \emph{claim} that $\psi\mu\in M$ for any $\psi\in B$.  Indeed, says $\psi=L_s(\phi*f)$ for some $s\in G$ and $f\in\C_c(G)$. Take $g\in A$. We see, for every bounded Haar measurable function $h$ on $G$, that
\begin{align*}
	\int_KgL_s(h*f)\dd\mu &=\int_K\int_Gg(t)h(u)f(u^{-1}st)\dd u\dd{\mu}(t)\\
	&=\int_Gh(u)\left(\int_Kg(t)f(u^{-1}st)\dd{\mu}(t)\right)\dd u=:\int h\dd\nu\,,
\end{align*}
where $\nu\in\Measures(G)$ has a compact support. Now, for each $h\in A$, the right-translation $t\mapsto R_th, G\to A,$ is continuous, and so $h*f\in A$ since $f\in \C_c(G)$. It follows that $gL_s(h*f)\in A$, and hence 
\[
	\int h\dd\nu=\int_KgL_s(h*f)\dd\mu=0\qquad(h\in A)\,,
\]
since $\mu\in M$. This and Lemma \ref{approximate containment and integral}(ii) then imply that 
\[
	\int_Kg\psi\dd\mu=\int_KgL_s(\phi*f)\dd\mu=\int \phi\dd\nu=0\,.
\]
This proves the claim.

Next, assume towards a contradiction that $M\neq \set{0}$. Then, by the Kre\u{\i}n--Milman theorem, an extreme point $\mu$ of $\closedball{M}{1}$ exists with $\norm{\mu}=1$. We \emph{claim} that $\mu$ is supported at a singleton. Indeed, denote by $S$ the support of $\mu$; translating the whole setting if necessary, we assume that $1\in S$. Take any $\psi\in B$. By scaling if necessary, we may and shall assume that $-1<\psi<1$ on $K$. Consider two measures
\[
	\mu_1:=\frac{1}{2}(1+\psi)\mu\quad\textrm{and}\quad \mu_2:=\frac{1}{2}(1-\psi)\mu\,.
\]
Then, by the previous claim, we see that $\mu_1,\mu_2\in M$ and 
\[
	\norm{\mu_1}+\norm{\mu_2}=\int_K\frac{1}{2}(1+\psi)\dd\abs{\mu}+\int_K\frac{1}{2}(1-\psi)\dd\abs{\mu}=\abs{\mu}(K)=\norm{\mu}=1\,,
\]
while $\mu=\mu_1+\mu_2$. The extremal property of $\mu$ then implies that $\mu_1$ is a scalar multiplication of $\mu$. Thus $\psi$ is constant $\abs{\mu}$-a.e. on $K$. In other words, since $\psi$ is continuos, the support $S$ of $\mu$ must be contained in a level set of $\psi$. Hence $S\in \Fcal$, and so $S\subseteq F$. On the other hand, the same argument shows that any real function in $A|_S$ must be constant. But since $A|_F=\Fa(F)$, real functions in $A|_S$ are constant if and only if $S$ is a singleton. Says $S=\set{s}$. This then means that $f(s)=0$ for every $f\in A$; a contradiction. 

Thus we have shown that $A|_K$ is uniformly dense in $\C(K)$ for every compact subset $K$ of $G$. In particular, $A$ is compact-convergence dense in $\C_0(G)$. The result then follows by Lemma \ref{when A=A(G), compact-convergence dense case}.
\end{proof}

We have the following extension of Taylor's result \cite[Theorem 1]{Taylor69}. 

\begin{corollary}\label{when A=A(G), compact connected component case}
Let $G$ be a locally compact group whose connected component of the identity is compact, and let $A$ be a closed translation-invariant subalgebra of $\FSa(G)$ with $\spectrum(A)=G$. Then 
\[
	\Fa(G)\subseteq A\subseteq \FSa_0(G)\,.
\]
\end{corollary}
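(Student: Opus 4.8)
The inclusion $A\subseteq \FSa_0(G)$ is immediate: the hypothesis $\spectrum(A)=G$ forces $A\subseteq \C_0(G)$ by Lemma \ref{spectrum in a concrete representation 2}, and since $A\subseteq \FSa(G)$ by assumption we obtain $A\subseteq \FSa(G)\cap \C_0(G)=\FSa_0(G)$. So the entire content is the reverse inclusion $\Fa(G)\subseteq A$, and the structural input I would exploit is that a locally compact group with compact identity component has a compact open subgroup $H$: apply van Dantzig's theorem to the totally disconnected quotient $G/G_0$ to get a compact open subgroup there and pull it back, using compactness of $G_0$. The plan is to use $H$ to reduce everything to the compact case already settled in Lemma \ref{when A=A(G), compact case}.

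The first step is to place $\mathbf{1}_H$ inside $A$. Indeed, $H$ being a compact open subgroup is a compact clopen subset of $\spectrum(A)=G$, so \v{S}hilov's idempotent theorem (\cite[Theorem 2.4.33]{Dales}) produces an idempotent of $A$ whose Gelfand transform is $\mathbf{1}_H$; as the Gelfand transform on $A$ is just restriction of functions to $G=\spectrum(A)$, this idempotent is $\mathbf{1}_H$ itself. Then $B:=\mathbf{1}_H A$ is a closed ideal of $A$ with unit $\mathbf{1}_H$, consisting of functions supported on the compact set $H$. Since $\mathbf{1}_H\in\Fa(G)$ (a standard fact, e.g. $\mathbf{1}_H=\mu(H)^{-1}\,\mathbf{1}_H*\check{\mathbf{1}}_H$) and $\Fa(G)$ is a closed ideal of $\FSa(G)$, we get $B\subseteq\Fa(G)$. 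Using the standard isometric identification of $\Fa(H)$ with the ideal $I_H:=\set{u\in\Fa(G)\colon\support u\subseteq H}$ via extension by zero (valid because $H$ is open, and compatible with the norm $B$ inherits from $A$), I regard $B$ as a closed, translation-invariant subalgebra of $\Fa(H)$ that separates the points of $H$ (point separation descends from $\spectrum(A)=G$).

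The key verification is $\spectrum(B)=H$. For this I would take any character $\chi$ of $B$ and extend it to the functional $a\mapsto \chi(\mathbf{1}_H a)$ on $A$; this is a character of $A$ because $\mathbf{1}_H$ is a central idempotent, so by $\spectrum(A)=G$ it is evaluation at some $t\in G$, and $\mathbf{1}_H(t)=1$ forces $t\in H$. Hence $\chi$ is evaluation at a point of $H$, and together with point separation this makes $H\to\spectrum(B)$ a bijection, so a homeomorphism by Lemma \ref{spectrum in a concrete representation 2}. Lemma \ref{when A=A(G), compact case} then applies on the compact group $H$ and gives $B=\Fa(H)$, that is, $I_H\subseteq A$. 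Translating by coset representatives and using translation-invariance of $A$ yields $\set{u\in\Fa(G)\colon\support u\subseteq sH}\subseteq A$ for every left coset $sH$. Finally, any $u\in\Fa(G)\cap\C_c(G)$ meets only finitely many cosets $s_1H,\dots,s_nH$, so $u=\sum_{i=1}^n \mathbf{1}_{s_iH}u$ is a finite sum of $\Fa(G)$-functions each supported on a single coset, hence $u\in A$; since $\Fa(G)$ is Tauberian and embeds isometrically in $\FSa(G)$, we conclude $\Fa(G)=\overline{\Fa(G)\cap\C_c(G)}\subseteq A$.

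The main obstacle I anticipate is the middle step: transferring the global hypotheses on $A$ to the ideal $B=\mathbf{1}_H A$ and correctly identifying its spectrum with $H$. One must check carefully that (i) the extension-by-zero identification $\Fa(H)\cong I_H$ is isometric and agrees with the norm $B$ inherits from $A$, so that $B$ genuinely is a \emph{closed} subalgebra of $\Fa(H)$, and (ii) the character-extension argument really uses the full strength of $\spectrum(A)=G$ and not merely point separation. Everything else (van Dantzig, \v{S}hilov, the coset decomposition, and the final Tauberian density) is routine once $B=\Fa(H)$ is established. I note that this route bypasses Theorem \ref{when A=A(G), nonzero real function case}, relying only on the compact case of Lemma \ref{when A=A(G), compact case}; alternatively one could feed $\mathbf{1}_H$ — a nonzero real function in $\C_0(G)$, hence an approximate nontrivial real function by Example \ref{nontrivial real function, example 1}(i) — into that theorem applied to $\overline{A\cap\C_c(G)}$, but the direct reduction above seems cleaner.
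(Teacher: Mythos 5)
Your proof is correct, but it follows a genuinely different route from the paper's. The paper also starts with van Dantzig and \v{S}hilov's idempotent theorem to place $\chi_K\in A$ for a compact open subgroup $K$, but then uses $\chi_K$ only as a witness that $A$ approximately contains a nontrivial real function: it re-runs the Bishop-type measure-theoretic argument of Theorem \ref{when A=A(G), nonzero real function case} (observing that the Tauberian hypothesis there is needed only in its final step) to conclude that $A$ is compact-convergence dense in $\C_0(G)$, then multiplies by the indicators $\chi_U$ of finite unions of cosets of $K$ to make $A\cap\C_c(G)$ compact-convergence dense in $\C_0(G)$, and finally invokes the $\Ltwo$-density argument of Lemma \ref{when A=A(G), compact-convergence dense case} to obtain $\Fa(G)\subseteq A$. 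You instead localise: the idempotent $\mathbf{1}_H$ cuts out the closed ideal $B=\mathbf{1}_H A$ (closed because it is the kernel of the bounded projection $a\mapsto a-\mathbf{1}_H a$), which, via the isometric identification of $\Fa(H)$ with the functions in $\Fa(G)$ supported in the open subgroup $H$ and your character-extension argument $a\mapsto\chi(\mathbf{1}_H a)$ giving $\spectrum(B)=H$, satisfies all hypotheses of the compact case, Lemma \ref{when A=A(G), compact case} (Tauberianness being automatic on compact $H$, and two-sided $H$-translation-invariance inherited from $A$ since $\mathbf{1}_{sH}=\mathbf{1}_{Hs}=\mathbf{1}_H$ for $s\in H$); then translation-invariance, the coset decomposition of compactly supported functions, and the Tauberian property of $\Fa(G)$ recover $\Fa(G)\subseteq A$. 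Your route bypasses Theorem \ref{when A=A(G), nonzero real function case} entirely -- and with it the Kre\u{\i}n--Milman/Bishop machinery and the approximate-real-function apparatus -- at the cost of the bookkeeping you correctly flag, all of which does check out. The paper's route is shorter to write once Theorem \ref{when A=A(G), nonzero real function case} is available, and it records the useful observation that the Bishop argument does not require Tauberianness; yours is more self-contained and exploits the near-total-disconnectedness of $G$ more directly, reducing everything to the compact case.
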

\begin{proof}
Since $G$ must contain an open compact group $K$, \v{S}hilov's idempotent theorem again implies that $\chi_K\in A$, and so Theorem \ref{when A=A(G), nonzero real function case} can be applied if $A$ is assumed to be Tauberian. In the general case, as proved in that theorem, we could still conclude that $A$ is compact-convergence dense in $\C_0(G)$. Using $\chi_U$ for the unions $U$ of finitely many cosets of $K$, we obtain that $A\cap \C_c(G)$ is compact-convergence dense in $\C_0(G)$. The argument of the proof of Lemma \ref{when A=A(G), compact-convergence dense case} then shows that $\Fa(G)\subseteq A$. The conclusion that $A\subseteq \FSa_0(G)$ is obvious.
\end{proof}

Next, we shall give a detailed description of the class of closed translation-invariant Eberlein subalgebras of $\FSa(G)$. This will be needed in  \S \ref{Fourier--Stieltjes algebras}.

\begin{proposition}\label{Eberlein is weak*-closed}
Let $G$ be a locally compact group, and let $B$ be a closed translation-invariant Eberlein subalgebra of $\FSa(G)$. Then $B$ is weak$^*$-closed in $\FSa(G)$, and so $B=\FSa_\Scal(G)$ for some sub-semidual $\Scal$ of $G$.
\end{proposition}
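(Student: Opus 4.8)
The plan is to show that $B$ coincides with its own weak$^*$-closure. First I would set $W:=\overline{B}^{\,w*}$. Since the (two-sided) translations act weak$^*$-continuously on $\FSa(G)$ and $B$ is translation-invariant, $W$ is a weak$^*$-closed translation-invariant subspace, so by the standard classification recalled above $W=\FSa_\Scal(G)$ for a closed $\Scal\subseteq\widehat G$. As $B\subseteq W$ is automatic, the whole statement reduces to the reverse inclusion $W\subseteq B$; once that is known, $B=\FSa_\Scal(G)$ is weak$^*$-closed and Lemma \ref{algebra of sub-semidual} identifies $\Scal$ as a sub-semidual.

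For $W\subseteq B$ I would exploit that $B$ is an Eberlein algebra on $G$: it suffices to prove that every $h\in W$ is a $G$-Eberlein function for $B$, since then $h\in B$. Unwinding Definition \ref{Eberlein algebra}, a tuple $(\alpha_i,s_i)$ with $|\sum_i\alpha_i f(s_i)|\le\norm{f}$ for all $f\in B$ is exactly an element $u=\sum_i\alpha_i\delta_{s_i}$ of $\C^*(G_d)$ whose canonical image restricts to $B$ with norm $\le 1$; write $p_B(u)$ and $p_W(u)$ for the norms of the restrictions of this image to $B$ and to $W=\FSa_\Scal(G)=\C^*_\Scal(G)'$. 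Because $h\in\C^*_\Scal(G)'$ one has $|\langle h,u\rangle|\le\norm{h}\,p_W(u)$, so $h$ is $G$-Eberlein for $B$ (with constant $\norm{h}$) as soon as $p_W(u)\le p_B(u)$. Since $B\subseteq W$ gives $p_B\le p_W$ trivially, everything comes down to the reverse inequality, i.e. to the equality $p_B=p_W$ on $\C^*(G_d)$.

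To attack this I would identify $p_W(u)=\norm{\rho_{\Scal,d}(u)}$, the operator norm of $u$ in the discretised representation $\rho_{\Scal,d}$ of $G_d$ (using that $W'$ is the von Neumann algebra generated by $\rho_\Scal(G)$, and that weak containment of representations of $G$ passes to their discretisations). I would then realise this operator norm through coefficient functions of $\rho_\Scal$ that actually lie in $B$: if $g=\langle\rho_\Scal(\cdot)\eta,\zeta\rangle\in B$, then for $v,v'\in\complexs[G_d]$ the shifted coefficient $\langle\rho_\Scal(\cdot)\rho_\Scal(v)\eta,\rho_\Scal(v')\zeta\rangle$ is a finite combination of two-sided translates of $g$, hence again lies in $B$ with norm at most $\norm{\rho_\Scal(v)\eta}\,\norm{\rho_\Scal(v')\zeta}$, and pairing it with $u$ gives $|\langle\rho_{\Scal,d}(u)\rho_\Scal(v)\eta,\rho_\Scal(v')\zeta\rangle|\le p_B(u)\,\norm{\rho_\Scal(v)\eta}\,\norm{\rho_\Scal(v')\zeta}$. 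Since $\support B=\Scal$, the vectors arising from coefficient functions in $B$ generate $\Hcal_{\rho_\Scal}$, and the aim is that sweeping these estimates over the resulting $\rho_\Scal(G)$-cyclic subspaces recovers $\norm{\rho_{\Scal,d}(u)}\le p_B(u)$.

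The hard part will be exactly this last passage from "matched" bounds on cyclic subspaces to the full operator norm. The natural device, the $C^*$-identity $\norm{\rho_{\Scal,d}(u)}^2=\norm{\rho_{\Scal,d}(u^*u)}$, wants diagonal (positive-definite) coefficients, but $B$ is not assumed conjugation-closed and so need not contain them; a non-self-adjoint $B$ may supply only off-diagonal coefficient data at a given representation. Overcoming this is where I expect to need the two hypotheses not yet fully used — that $B$ is a subalgebra and is invariant under both left and right translations — in order to show that the coefficient information present in $B$ nonetheless exhausts enough of $\Hcal_{\rho_\Scal}$ (equivalently, carries enough positive-definite content over all of $\Scal$) to force $p_W\le p_B$. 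With $p_B=p_W$ in hand, the reduction in the second paragraph yields $W\subseteq B$, so that $B=W=\FSa_\Scal(G)$ is weak$^*$-closed, completing the proof.
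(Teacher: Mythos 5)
Your reduction in the second paragraph is correct as far as it goes, but it leaves the entire content of the proposition unproven: given that $B$ is Eberlein, the inequality $p_W(u)\le p_B(u)$ for $u\in\mathrm{span}\set{\delta_s\colon s\in G}$ is not a stepping stone towards the statement, it is \emph{equivalent} to it (your own second paragraph shows it implies $W\subseteq B$, and conversely $B=W$ trivially gives $p_B=p_W$). So the third paragraph has to carry all the weight, and there the argument genuinely breaks down, exactly where you say it does: translating a coefficient function $g=\langle\rho_\Scal(\cdot)\eta,\zeta\rangle\in B$ only yields the bound $\abs{\langle\rho_{\Scal,d}(u)\xi,\xi'\rangle}\le p_B(u)\norm{\xi}\,\norm{\xi'}$ for \emph{matched} pairs $(\xi,\xi')$ drawn from the two cyclic subspaces tied to one and the same $g$, whereas an operator-norm estimate needs the pairs to range independently — or, via the \cstar-identity, needs diagonal pairs $\xi=\xi'$, i.e.\ positive definite coefficients in $B$. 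Since $B$ is not assumed conjugation-closed, nothing you have written produces these, and the hope that "subalgebra plus two-sided invariance" will is unfounded: translates of an off-diagonal coefficient stay off-diagonal in the same representation, and products move you to tensor products rather than supplying the diagonal coefficients you need. As it stands, the proposal is an honest reformulation of the problem, not a proof.

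The idea you are missing is that one need not show that \emph{every} element of $W$ is $G$-Eberlein for $B$; it suffices to control the positive definite functions. The paper invokes \cite[Proposition 1.21]{Eymard}, by which weak$^*$-closedness of the norm-closed translation-invariant subspace $B$ follows once $B\cap P(G)$ is shown to be closed in $P(G)$ under compact convergence. That closedness is where the Eberlein hypothesis applies painlessly, precisely because positive definiteness substitutes for the missing self-adjointness: if $(f_\alpha)\subseteq B\cap P(G)$ converges pointwise to $f\in\C(G)$, then $f\in P(G)$, and for any tuple $(\alpha_i,s_i)$ with $\abs{\sum_{i=1}^m\alpha_i g(s_i)}\le\norm{g}$ for all $g\in B$ one has
\[
	\abs{\sum_{i=1}^m\alpha_i f(s_i)}=\lim_\alpha\abs{\sum_{i=1}^m\alpha_i f_\alpha(s_i)}\le\lim_\alpha\norm{f_\alpha}=\lim_\alpha f_\alpha(1)=f(1)=\norm{f}\,,
\]
because the $\FSa(G)$-norm of a positive definite function equals its value at the identity — the one situation in which pointwise convergence controls norms. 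Hence $f$ is $G$-Eberlein for $B$ with constant $\norm{f}$, so $f\in B$, and Eymard's criterion finishes the proof; no estimate of the form $p_W\le p_B$ on $\complexs[G_d]$ ever has to be established directly.
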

\begin{proof}
It is sufficient by \cite[Proposition 1.21]{Eymard} to show that $B\cap P(G)$ is closed in $P(G)$ under the compact--convergence topology on $G$. In fact,  $B\cap P(G)$ is closed in $\C(G)$ under the pointwise--convergence topology on $G$ (but $B$ itself is not necessarily closed under that topology). Indeed, suppose that $(f_\alpha)$ is a net in $B\cap P(G)$ that converges to some $f\in\C(G)$ under the pointwise--convergence topology on $G$. Then $f$ must be a continuous positive definite function, i.e. $f\in P(G)$. To prove that $f$ is an $G$-Eberlein function for $B$, take  $\alpha_i\in\complexs$ and $s_i\in \Omega$ with
\[	
			\abs{\sum_{i=1}^m\alpha_i g(s_i)}\le \norm{g}\quad(g\in B)\,.
\]
Then since $(f_\alpha)\subset B$ and converges pointwise to $f$, we obtain
\begin{align*}
	\abs{\sum_{i=1}^m\alpha_i f(s_i)}&=\lim_\alpha \abs{\sum_{i=1}^m\alpha_i f_\alpha(s_i)}\le \lim_\alpha \norm{f_\alpha}=\lim_\alpha f_\alpha(1)=f(1)=\norm{f}_{\FSa(G)}\,.
\end{align*}
Thus $f$ is an $G$-Eberlein function for $B$, and hence, $f\in B$.
\end{proof}

To describe closed translation-invariant Eberlein subalgebras of $\FSa(G)$ completely, we have to actually consider sub-semiduals of $G_d$ instead of those of $G$. Recall that $G_d$ is the group $G$ with the discrete topology. 

\begin{definition}
Let $G$ be a locally compact group. For each sub-semidual $\Scal$ of $G_d$, we define $\FSa_\Scal^c(G):=\FSa_\Scal(G_d)\cap \C(G)$. A sub-semidual $\Scal$ of $G_d$ is called \emph{thick} if $\FSa_\Scal^c(G)$ separates points of $G$.
\end{definition}

Note that $\FSa_\Scal^c(G)$ is a closed subalgebra of $\FSa(G)$, since the uniform norm on $G$ is majorized by the norm of $\FSa(G_d)$.

\begin{lemma}\label{Eberlein cover}
Let $G$ be a locally compact group, and let $A$ be a closed translation-invariant subalgebra of $\FSa(G)$. Denote by $\Scal$ the closure in $\widehat{G_d}$ of the union of the supports, also in $\widehat{G_d}$, of the unitary representations of $G$ associated with positive definite functions in $A$. Then $\Scal$ is a sub-semidual of $G_d$ and 
\[
	\set{\phi\in\C(G)\colon\ \phi\ \textrm{is a $G$-Eberlein function for}\ A}=\FSa_\Scal^c(G)
\]
is an Eberlein subalgebra of $\FSa(G)$.
\end{lemma}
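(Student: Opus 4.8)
The plan is to convert the Eberlein condition into a single $C^*$-norm estimate on the group algebra of $G_d$. Write $\rho:=\bigoplus_{f\in P(G)\cap A}\pi_f$ for the direct sum of the GNS representations of the positive definite functions in $A$, viewed as a representation of the discrete group $G_d$; since the support of a direct sum is the closure of the union of the supports, $\Scal=\support(\rho)$ in $\widehat{G_d}$. For a finitely supported family $(\alpha_i,s_i)$ I record the two quantities that drive the statement: the functional norm $\|\mu\|_{A}:=\sup\{|\sum_i\alpha_i f(s_i)|:f\in A,\ \|f\|\le1\}$ and the $C^*$-norm $\|\mu\|_{C^*_\Scal(G_d)}=\|\sum_i\alpha_i\rho(s_i)\|$. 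The whole lemma will follow once I prove that these two agree, for then $\phi\in\C(G)$ is a $G$-Eberlein function for $A$ exactly when $\phi$, read on $G_d$, is bounded against $\|\cdot\|_{C^*_\Scal(G_d)}$, i.e. extends to a bounded functional on $C^*_\Scal(G_d)$, which is precisely the condition $\phi|_{G_d}\in\FSa_\Scal(G_d)$; with $\phi$ continuous this reads $\phi\in\FSa_\Scal(G_d)\cap\C(G)=\FSa_\Scal^c(G)$.

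First I would show $\Scal$ is a sub-semidual. It is closed by construction, so only the tensor condition is at issue. Using that $A$ is invariant under \emph{both} left and right translations and is norm-closed, the translate computation $\sum_{i,j}c_{ij}L_{r_j^{-1}}R_{s_i}f$ shows that for each $f\in P(G)\cap A$ every coefficient function $\langle\pi_f(\cdot)\eta,\zeta\rangle$ lies in $A$, with $\FSa(G)$-norm at most $\|\eta\|\|\zeta\|$. Since $A$ is an algebra, for $f,g\in P(G)\cap A$ every diagonal coefficient of $\pi_f\otimes\pi_g$ is a finite sum of products of coefficients of $\pi_f$ and of $\pi_g$, hence lies in $A$ and is positive definite; taking norm-limits, every positive definite coefficient of $\pi_f\otimes\pi_g$ lies in $P(G)\cap A$ and is thus a coefficient of $\rho$. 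Fell's criterion for weak containment then yields $\pi_f\otimes\pi_g\preccurlyeq\rho$, and for arbitrary $\pi,\pi'\in\Scal$ one has $\pi,\pi'\preccurlyeq\rho$, so $\pi\otimes\pi'\preccurlyeq\rho\otimes\rho=\bigoplus_{f,g}\pi_f\otimes\pi_g\preccurlyeq\rho$. Hence $\support(\pi\otimes\pi')\subseteq\Scal$, and Lemma \ref{algebra of sub-semidual} (applied to $G_d$) confirms that $\FSa_\Scal^c(G)$ is an algebra.

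The heart is the identity $\|\mu\|_{A}=\|\mu\|_{C^*_\Scal(G_d)}$. The inequality $\ge$ is the easy half: the coefficient functions $\langle\rho(\cdot)\eta,\zeta\rangle$ lie in $A$ (as $\FSa(G)$-norm convergent sums of coefficients of the $\pi_f$) with norm $\le\|\eta\|\|\zeta\|$, so testing $\mu$ against them and optimising over $\|\eta\|,\|\zeta\|\le1$ recovers $\|\sum_i\alpha_i\rho(s_i)\|$. For the reverse inequality I would aim to show that $A$ sits isometrically inside $\FSa_\rho(G)=\FSa_{\support_G(\rho)}(G)$; then each $f\in A$ with $\|f\|\le1$ is a coefficient $\langle\rho(\cdot)\eta,\zeta\rangle$ with $\|\eta\|\|\zeta\|\le1$, whence $|\sum_i\alpha_i f(s_i)|=|\langle(\sum_i\alpha_i\rho(s_i))\eta,\zeta\rangle|\le\|\mu\|_{C^*_\Scal(G_d)}$, using that $\FSa_\rho(G)\hookrightarrow\FSa(G)$ is isometric (because $C^*_\rho(G)$ is a quotient of $C^*(G)$) and that $\|\sum_i\alpha_i\rho(s_i)\|$ is the same whether $\rho$ is read over $G$ or over $G_d$.

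I expect this reverse inequality — equivalently, the claim that the representation $\rho$ attached to $A$ controls the \emph{entire} seminorm $\|\cdot\|_A$ and that $\support_{G_d}(\rho)$ is exactly $\Scal$ — to be the main obstacle, since one must verify that the positive definite part of $A$ already captures all of $\|\cdot\|_A$; this is where the algebra structure, two-sided translation invariance, and the isometric chain $\FSa_\rho(G)\hookrightarrow\FSa(G)\hookrightarrow\FSa(G_d)$ must be used in full. Granting the identity, the set of $G$-Eberlein functions for $A$ equals $\FSa_\Scal^c(G)$ as above. Finally, to see that $\FSa_\Scal^c(G)$ is itself an Eberlein algebra: it is a closed subalgebra (already noted in the text), and I would apply the equivalence just proved with $B:=\FSa_\Scal^c(G)$ in place of $A$; checking that $B$ generates the same $\Scal$, the $G$-Eberlein functions for $B$ are identified with $\FSa_\Scal^c(G)=B$, so $B$ contains every $G$-Eberlein function for itself.
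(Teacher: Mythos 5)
Your outline is viable and most of the peripheral steps are correct: the identification $\Scal=\support_{G_d}(\rho)$, the sub-semidual argument via Fell's theorem, the ``easy half'' $\norm{\mu}_{A}\ge\norm{\mu}_{\C^*_\Scal(G_d)}$ (coefficients of $\rho$ do lie in the unit ball of $A$, by two-sided invariance and norm-closedness), and the duality conversion from the norm identity to the Eberlein statement. But the proposal has a genuine gap exactly at the step you flag and then postpone: the ``hard half'' $\norm{\mu}_{A}\le\norm{\mu}_{\C^*_\Scal(G_d)}$, which is equivalent to the isometric containment $A\subseteq\FSa_\Scal(G_d)$, i.e. $A\subseteq\FSa_\Scal^c(G)$. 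Everything after ``Granting the identity'' is conditional on this, so as written there is no proof. And this is not a routine verification one can wave at: a priori a norm-closed (not weak$^*$-closed) translation-invariant subalgebra of $\FSa(G)$ could fail to be controlled by its positive definite part, and the isometric chain $\FSa_\rho(G)\hookrightarrow\FSa(G)\hookrightarrow\FSa(G_d)$ you cite contributes nothing toward the set-theoretic containment $A\subseteq\FSa_\rho(G)$ itself.

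The missing ingredient can be supplied, and it is the same fact the paper uses silently when it asserts $A\subseteq\FSa_\Scal^c(G)$ and that $\dual{\FSa_\Scal^c(G)}$ is a \wstar-algebra: every norm-closed two-sided translation-invariant subspace of $\FSa(G_d)=\dual{\C^*(G_d)}$ has the form $z\cdot\FSa(G_d)$ for a central projection $z$ in $W^*(G_d)=\bidual{\C^*(G_d)}$. Indeed, such an $A$ is a norm-closed, hence weakly closed, $\C^*(G_d)$-sub-bimodule of $\dual{\C^*(G_d)}$, so $A=(A^\perp)_\perp$; and $A^\perp\subseteq W^*(G_d)$ is a weak$^*$-closed $\C^*(G_d)$-sub-bimodule, hence (by weak$^*$-density of $\C^*(G_d)$ and separate weak$^*$-continuity of multiplication in a von Neumann algebra) a weak$^*$-closed two-sided ideal, i.e. $A^\perp=(1-z)W^*(G_d)$. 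Thus $A=z\cdot\FSa(G_d)$ is the predual of the von Neumann algebra $zW^*(G_d)$, so it is the closed span of its positive elements; these are precisely the functions in $A\cap P(G)$ (positive definite elements of $A$ are automatically continuous), each of which is a coefficient of a representation supported in $\Scal$, whence $A\subseteq\FSa_\Scal(G_d)$, and isometrically so since $\norm{\cdot}_{\FSa(G)}=\norm{\cdot}_{\FSa(G_d)}$ on $\FSa(G)$ and $\FSa_\Scal(G_d)$ sits isometrically in $\FSa(G_d)$. With this inserted, your route closes and is in fact somewhat more streamlined than the paper's: the paper never proves the norm identity for $A$ itself; instead it shows that $\FSa_\Scal^c(G)$ is contained in the set of $G$-Eberlein functions for $A$ by Jordan-decomposing elements of $\FSa_\Scal(G_d)\cap\C(G)$ (using Eymard's theorems to see that the four pieces are continuous, positive definite, and pointwise approximable by $A\cap P(G)$), proves the norm identity only for $\FSa_\Scal^c(G)$ (as the isometry of the injective $*$-homomorphism $\C^*_\Scal(G_d)\to\dual{\FSa_\Scal^c(G)}$), and then deduces the reverse inclusion from the monotonicity of the Eberlein condition when the algebra is enlarged from $A$ to $\FSa_\Scal^c(G)$.
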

\begin{proof}
First of all, since $A$ is a closed translation-invariant subalgebra of $\FSa(G)$, and hence of $\FSa(G_d)$, it follows that $\Scal$ is a sub-semidual of $G_d$. Denote by $B$ the left-hand side of the equation to be shown above. Then $A\subseteq B\cap \FSa_\Scal^c(G)$.

Take $f\in \FSa_\Scal(G_d)\cap \C(G)$. Then $f\in \FSa(G)$. Write $f=u_1-u_2+\init (u_3-u_4)$, by first breaking it into the real and imaginary parts as a bounded linear functional on $\C^*_\Scal(G_d)$, and then using the Jordan decomposition of self-adjoint linear functionals on that \cstar-algebra. We see that this decomposition is precisely the same one when we view $f\in \FSa(G_d)$ as a bounded linear functional on $\C^*(G_d)$ (see \cite[Remark 2.6(2)]{Eymard}). But then, it follows from \cite[Theorem 2.20(2)]{Eymard} that each $u_k\in P(G)$;  in particular, $u_k\in \C(G)$. However, since $u_k\in P_\Scal(G_d)$, it is the limit of a net in $A\cap P(G)$ in the compact-convergence (i.e. pointwise-convergence) topology of $G_d$. It follows as in the previous proposition then that each $u_k\in B$. Hence $f\in B$.

It remains to prove that if $\phi$ is a $G$-Eberlein function for $\FSa_\Scal^c(G)$, then $\phi\in \FSa_\Scal(G_d)$.  Consider the natural map $\Phi:\C^*_\Scal(G_d)\to \dual{\FSa_\Scal^c(G)}$, defined by dualities with the inclusion $\FSa_\Scal^c(G)\subseteq \FSa_\Scal(G_d)$. Note that $\dual{\FSa_\Scal^c(G)}$ is a \wstar-algebra (in fact, a natural quotient of $W^*(G):=\bidual{\C^*(G)}$, the universal von Neumann algebra of $G$ (see \cite{Walter72}), by a normal ideal) since $\FSa_\Scal^c(G)$ is a closed translation-invariant subalgebra of $\FSa(G)$. Moreover, the natural image of $G$ in $\sigma(\FSa_\Scal^c(G))\subseteq\dual{\FSa_\Scal^c(G)}$ is a subgroup of $\Ucal(\dual{\FSa_\Scal^c(G)})$, the unitary group of $\dual{\FSa_\Scal^c(G)}$. Thus $\Phi:\C^*_\Scal(G_d)\to \dual{\FSa_\Scal^c(G)}$ is a $*$-homomorphism. Since $\FSa_\Scal^c(G)$ is weak$^*$-dense in $\FSa_\Scal(G_d)$ (because $A$ is), $\Phi$ is injective, and hence, isometric. This and the discreteness of the topology of $G_d$ imply that $\phi$, being a $G$-Eberlein function for $\FSa_\Scal^c(G)$, defines an element of $\dual{\C^*_\Scal(G_d)}=\FSa_\Scal(G_d)$. Hence $\phi\in \FSa_\Scal(G_d)\cap \C(G)=\FSa_\Scal^c(G)$.  This shows that $\FSa_\Scal^c(G)$ is an Eberlein algebra on $G$. Moreover, if $\phi\in B$, then certainly $\phi$ is $G$-Eberlein for $B_S^c(G)$ since the latter contains A, and so $\phi$ belongs to $\FSa_\Scal^c(G)$. This also shows that $B\subseteq\FSa_\Scal^c(G)$.
\end{proof}

Consequently, we have the following description of closed translation-invariant Eberlein subalgebras of $\FSa(G)$.

\begin{theorem}\label{when translation-invariant subalgebra is Eberlein}
Let $G$ be a locally compact group, and let $B$ be a closed translation-invariant subalgebra of $\FSa(G)$. Then the following are equivalent:
\begin{enumerate}
	\item $B$ is Eberlein. 
	\item $B=\FSa_\Scal^c(G)$ for  some sub-semidual $\Scal$ of $G_d$.
\end{enumerate}
If moreover $B$ is conjugation-closed, then $\Scal$ can be chosen to be a subdual of $G_d$.
\end{theorem}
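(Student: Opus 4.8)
The plan is to deduce everything from Lemma~\ref{Eberlein cover}, which already does the heavy lifting. For the implication (i)$\Rightarrow$(ii), I would apply that lemma with $A:=B$ to produce a sub-semidual $\Scal$ of $G_d$ — namely the closure in $\widehat{G_d}$ of the union of the supports of the representations attached to the positive definite functions in $B$ — together with the identity
\[
	\set{\phi\in\C(G)\colon\ \phi\ \textrm{is a $G$-Eberlein function for}\ B}=\FSa_\Scal^c(G).
\]
Since $B$ is by definition contained in the left-hand side, we already have $B\subseteq\FSa_\Scal^c(G)$; and the Eberlein hypothesis says precisely that $B$ contains every $G$-Eberlein function for itself, i.e. $B\supseteq\FSa_\Scal^c(G)$. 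The two inclusions give $B=\FSa_\Scal^c(G)$, which is (ii).

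For (ii)$\Rightarrow$(i) I would again invoke Lemma~\ref{Eberlein cover}, this time to compute the $G$-Eberlein functions for $B=\FSa_\Scal^c(G)$: they form $\FSa_{\Scal'}^c(G)$, where $\Scal'$ is the sub-semidual built from the positive definite functions of $B$. The key observation is that $\Scal'\subseteq\Scal$. Indeed, every $f\in B\cap P(G)$ lies in $\FSa_\Scal(G_d)$, so the cyclic representation of $f$ is weakly contained in $\bigoplus_{\pi\in\Scal}\pi$ and hence has support inside $\support\bigl(\bigoplus_{\pi\in\Scal}\pi\bigr)\subseteq\Scal$, the latter inclusion because $\Scal$ is closed. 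Taking the union over such $f$ and then the closure (and using that $\Scal$ is already closed) yields $\Scal'\subseteq\Scal$, whence $\FSa_{\Scal'}^c(G)\subseteq\FSa_\Scal^c(G)=B$. Thus $B$ contains all of its $G$-Eberlein functions and is therefore Eberlein.

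Finally, for the conjugation-closed refinement, I would show that the very $\Scal$ produced in (i)$\Rightarrow$(ii) is closed under conjugation, so that the sub-semidual is in fact a subdual. The mechanism is that for a positive definite $f$ with a coefficient expansion $f(s)=\inner{\pi(s)\xi}{\xi}$ one has $\conjugate{f}(s)=\overline{\inner{\pi(s)\xi}{\xi}}$, which is again positive definite and is a coefficient of the conjugate representation $\conjugate{\pi}$; consequently the support of $\conjugate{f}$ is the image of the support of $f$ under the (homeomorphic) conjugation map on $\widehat{G_d}$. If $B$ is conjugation-closed then $\conjugate{f}\in B\cap P(G)$ whenever $f\in B\cap P(G)$, so the union of supports defining $\Scal$ is invariant under conjugation; since conjugation is a homeomorphism it commutes with closure, and therefore $\Scal$ itself is conjugation-invariant. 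Combined with the sub-semidual properties already established, this makes $\Scal$ a subdual of $G_d$.

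I expect no serious obstacle, since the theorem is essentially a repackaging of Lemma~\ref{Eberlein cover}. The one place demanding care is the inclusion $\Scal'\subseteq\Scal$ in (ii)$\Rightarrow$(i), where one must legitimately pass from membership of a positive definite function in $\FSa_\Scal(G_d)$ to a statement about its support; this rests on the standard fact that the cyclic representation of a positive definite function in $\FSa_\Scal(G_d)$ is weakly contained in $\bigoplus_{\pi\in\Scal}\pi$.
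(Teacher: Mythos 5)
Your proof is correct and takes essentially the same route as the paper's: both implications are deduced from Lemma \ref{Eberlein cover}, with (ii)$\Rightarrow$(i) resting on the inclusion $\Scal'\subseteq\Scal$ for the sub-semidual $\Scal'$ generated by $B$ (which the paper declares obvious and you justify properly via weak containment of cyclic representations). Your argument for the conjugation-closed refinement, via conjugation-invariance of the defining set of supports, correctly supplies a detail the paper's proof leaves implicit.
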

\begin{proof}
(i)$\Rightarrow$(ii): This is immediate from Lemma \ref{Eberlein cover}.

(ii)$\Rightarrow$(i):  
Under the assumption of (ii), $B$ is a closed translation-invariant subalgebra of $\FSa(G)$. Denote by $\Tcal$ the closure in $\widehat{G_d}$ of the union of the supports, also in $\widehat{G_d}$, of the unitary representations of $G$ associated with positive definite functions in $B$. Then obviously $\Tcal\subseteq\Scal$. Thus we see that  $B\subseteq\FSa_\Tcal^c(G)\subseteq\FSa_\Scal^c(G)=B$. The result then follows again from Lemma \ref{Eberlein cover}, applied to $\Tcal$.
\end{proof}

It is rather straight forward to determine when a subdual of $G_d$ is thick:

\begin{proposition}\label{when a subdual is thick}
Let $G$ be a locally compact group, and let $\Scal$ be a subdual of $G_d$. Then the following are equivalent:
\begin{enumerate}
	\item $\Scal$ is thick.
	\item $\lambda_G\preccurlyeq \Scal$ {\rm(}as representations of $G_d${\rm)}.
	\item $\FSa_r(G)\subseteq \FSa_\Scal^c(G)$.
\end{enumerate}
\end{proposition}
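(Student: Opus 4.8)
The plan is to prove the cycle (iii)$\Rightarrow$(i)$\Rightarrow$(ii)$\Rightarrow$(iii), with essentially all the weight on (i)$\Rightarrow$(ii) (which I would deduce via (i)$\Rightarrow$(iii)). Throughout I would lean on the standard dictionary between weak containment and inclusion of Fourier--Stieltjes algebras: for representations $\rho,\pi$ of $G_d$ one has $\rho\preccurlyeq\pi$ exactly when $\FSa_\rho(G_d)\subseteq\FSa_\pi(G_d)$, and this weak containment is detected on positive-definite functions, i.e.\ it holds as soon as every function in $P_\rho(G_d)$ lies in $P_\pi(G_d)$.

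For (ii)$\Leftrightarrow$(iii): the continuous positive-definite functions associated with $\lambda_G$ are exactly the positive-definite functions in $\FSa_r(G)$, and, viewed as functions on the \emph{set} $G$, they are coefficient functions of $\lambda_G$ regarded as a representation of $G_d$, with closed linear span $\FSa_r(G)$. If (ii) holds then $\FSa_{\lambda_G}(G_d)\subseteq\FSa_\Scal(G_d)$, so these coefficients lie in $\FSa_\Scal(G_d)\cap\C(G)=\FSa_\Scal^c(G)$, and norm-closedness of the latter gives $\FSa_r(G)\subseteq\FSa_\Scal^c(G)$. Conversely, if (iii) holds then each $p\in P_r(G)$ lies in $\FSa_\Scal(G_d)$ and is positive-definite, hence lies in $P_\Scal(G_d)$, and the detection criterion forces $\lambda_G\preccurlyeq\Scal$ as representations of $G_d$. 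The implication (iii)$\Rightarrow$(i) is immediate, since $\Fa(G)\subseteq\FSa_r(G)$ and $\Fa(G)$ already separates the points of $G$.

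The heart is (i)$\Rightarrow$(iii). First, by Lemma~\ref{Eberlein cover} the algebra $\FSa_\Scal^c(G)$ is a closed translation-invariant Eberlein subalgebra of $\FSa(G)$, so Proposition~\ref{Eberlein is weak*-closed} renders it weak$^*$-closed and of the form $\FSa_\Tcal(G)$ for a sub-semidual $\Tcal$ of $\widehat{G}$; as $\Scal$ is a subdual, $\Tcal$ is a subdual too. Thickness says $\FSa_\Tcal(G)$ separates the points of $G$, and the goal becomes $\Fa(G)\subseteq\FSa_\Tcal(G)$, since $\FSa_r(G)$ is the weak$^*$ closure of $\Fa(G)$ inside $\FSa(G)$ and $\FSa_\Tcal(G)$ is weak$^*$-closed. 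To this end set $A:=\Fa(G)\cap\FSa_\Tcal(G)$, a closed translation-invariant conjugation-closed subalgebra of $\Fa(G)$. Because every element of $\FSa(G)$ is a multiplier of $\Fa(G)$, the product of an element of $\FSa_\Tcal(G)$ with one of $A$ lands in $\Fa(G)$, and—$\FSa_\Tcal(G)$ being an algebra—also in $\FSa_\Tcal(G)$; thus $A$ is an ideal of $\FSa_\Tcal(G)$. Its common zero set $Z(A)=\set{x\in G\colon v(x)=0\ (v\in A)}$ is left-translation-invariant, hence $\emptyset$ or $G$. A short computation with the ideal property shows that if $A$ fails to separate a pair $s\neq t$ then both $s,t\in Z(A)$; consequently, as soon as $A\neq\set{0}$ we have $Z(A)=\emptyset$, so $A$ separates the points of $G$, and the theorem stated at the beginning of this section gives $A=\Fa(G)$, i.e.\ $\Fa(G)\subseteq\FSa_\Tcal(G)$.

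The main obstacle is exactly the nontriviality $A\neq\set{0}$, equivalently that $\Tcal$ meets the reduced dual $\support\lambda_G$. When $G$ is amenable this is painless: $\Tcal$ is nonempty (as $\FSa_\Tcal(G)$ separates points), so Fell's theorem \cite{Fell} gives $1_G\preccurlyeq\pi\otimes\conjugate{\pi}\in\Tcal$ and hence $1_G\in\Tcal$, while amenability gives $1_G\in\support\lambda_G$, whence $A\neq\set{0}$. In general I would attack it through Fell's absorption principle: for $\pi\in\Tcal$ the representation $\pi\otimes\lambda_G$ is quasi-equivalent to a multiple of $\lambda_G$, so products of coefficient functions of $\pi_\Tcal$ with those of $\lambda_G$ span a dense subspace of $\Fa(G)$; coupling this with the fact that $\FSa_\Tcal(G)$, being a point-separating conjugation-closed unital subalgebra of $\C(G)$, is dense in $\C(G)$ for compact convergence (Stone--Weierstrass) should yield a nonzero reduced element of $\FSa_\Tcal(G)$. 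The delicate point—where I expect the argument must genuinely exploit the full subdual structure of $\Tcal$ (closure under tensoring together with Fell-closedness) rather than mere point separation—is to keep such a product inside $\Tcal$ and not merely inside $\support\lambda_G$.
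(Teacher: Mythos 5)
Your handling of (ii)$\Leftrightarrow$(iii) and (iii)$\Rightarrow$(i) matches what the paper dismisses as standard/obvious, and your reduction of (i) to the inclusion $\Fa(G)\subseteq \FSa_\Tcal(G)$ (using that $\FSa_\Scal^c(G)$ is weak$^*$-closed, via Lemma \ref{Eberlein cover} and Proposition \ref{Eberlein is weak*-closed}, and that $\FSa_r(G)$ is the weak$^*$-closure of $\Fa(G)$) is exactly the paper's reduction. But at this point the paper simply invokes \cite[Theorem 1.3]{BLSchl}: a weak$^*$-closed, translation-invariant, conjugation-closed subalgebra of $\FSa(G)$ that separates the points of $G$ contains $\Fa(G)$. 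That citation carries all the weight of the proposition. You instead attempt to reprove this statement. Your ideal trick is correct as far as it goes: $A:=\Fa(G)\cap\FSa_\Tcal(G)$ is a closed translation-invariant conjugation-closed ideal of $\FSa_\Tcal(G)$, and if $A\neq\set{0}$ then your $Z(A)$ argument together with the theorem quoted at the head of \S\ref{Invariant subalgebras of Fourier algebras} gives $A=\Fa(G)$. The genuine gap is the nontriviality $A\neq\set{0}$, and neither of your two arguments for it works.

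First, the asserted equivalence ``$A\neq\set{0}$ iff $\Tcal$ meets $\support\lambda_G$'' fails in the direction you need: a point $\pi\in\Tcal\cap\support\lambda_G$ only places the coefficient functions of $\pi$ inside $\FSa_\Tcal(G)\cap\FSa_r(G)$, and coefficients of representations weakly contained in $\lambda_G$ need not lie in $\Fa(G)$ at all. Your amenable case founders on exactly this: $1_G\in\Tcal\cap\support\lambda_G$ produces only the constant functions, and constants belong to $\Fa(G)$ only when $G$ is compact, so for noncompact amenable $G$ you have exhibited no nonzero element of $A$. (For $G=\reals$, a single character lies in the reduced dual, yet the span of that character meets $\Fa(\reals)$ only in $\set{0}$.) Second, your general-case sketch via Fell absorption produces, from a coefficient $u$ of some $\pi\in\Tcal$ and $v\in\Fa(G)$, a product $uv\in\Fa(G)$; but, as you yourself concede, nothing forces $uv$ into $\FSa_\Tcal(G)$, since $v$ is a coefficient of $\lambda_G$ and not of anything known to be supported in $\Tcal$ --- this is not a ``delicate point'' to be smoothed over but the entire difficulty, equivalent to what is being proved. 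In short, what your proposal leaves open is precisely the content of \cite[Theorem 1.3]{BLSchl}, a genuine theorem (proved there using the structure of the universal von Neumann algebra of $G$); without citing it or supplying a proof of it, the proposal does not establish (i)$\Rightarrow$(ii).
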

\begin{proof}
(i)$\Rightarrow$(ii): It is sufficient to show that $\Fa(G)\subseteq \FSa_\Scal^c(G)$; this is because $\Fa(G)\cap P(G)$ is dense in $\FSa_r(G)\cap P(G)$ in the compact-convergence topology of $G$. But this follows from \cite[Theorem 1.3]{BLSchl}; $\FSa_\Scal^c(G)$ is weak$^*$-closed in $\FSa(G)$ by Theorem \ref{when translation-invariant subalgebra is Eberlein} and Proposition \ref{Eberlein is weak*-closed}. 

The implication (ii)$\Leftrightarrow$(iii) is standard while (iii)$\Rightarrow$(i) is obvious. 
\end{proof}

For the question of when a sub-semidual of $G_d$ is thick, we could only give a partial answer. To help in determining this, we need some auxiliary results about discrete groups. But before that, we have the following.

\begin{lemma}\label{group Cstar-algebra wrt a sub-semidual}
Let $G$ be a locally compact group, and let $\Scal$ be a sub-semidual of $G$. Then $\C^*_\Scal(G)$ admits a comultiplication that is induced from the comultiplication of $\C^*(G)$ through the natural mapping $\pi:\C^*(G)\to \C^*_\Scal(G)$. 
\end{lemma}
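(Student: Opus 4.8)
The plan is to exhibit the comultiplication on $\C^*_\Scal(G)$ as the unique $*$-homomorphism making the natural diagram commute, and the first thing I would do is recall the structure on $\C^*(G)$. The full group \cstar-algebra carries a comultiplication $\Delta:\C^*(G)\to \C^*(G)\otimes\C^*(G)$ (the minimal or maximal tensor product being the same on the relevant generators, but I would use the one dual to the multiplication on $\FSa(G)$), which on the canonical unitaries is determined by $\Delta(u_s)=u_s\otimes u_s$ for $s\in G$; equivalently $\Delta$ is induced by the group homomorphism $s\mapsto(s,s)$ from $G$ into $G\times G$ through the universal property of $\C^*(G)$. The goal is to produce $\Delta_\Scal:\C^*_\Scal(G)\to \C^*_\Scal(G)\otimes\C^*_\Scal(G)$ so that $(\pi\otimes\pi)\circ\Delta=\Delta_\Scal\circ\pi$, where $\pi:\C^*(G)\to\C^*_\Scal(G)$ is the canonical quotient map.

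The main content is to verify that this descends to the quotient, i.e. that $(\pi\otimes\pi)\circ\Delta$ annihilates $\ker\pi$, so that it factors through $\pi$ to give a well-defined map on $\C^*_\Scal(G)$. First I would note that $\ker\pi$ is the closed ideal determined by the condition of weak containment: a representation of $\C^*(G)$ factors through $\C^*_\Scal(G)$ exactly when it is weakly contained in $\bigoplus_{\pi\in\Scal}\pi$. So it suffices to check that the representation $(\pi\otimes\pi)\circ\Delta$ of $\C^*(G)$ is weakly contained in this direct sum. Concretely, $(\pi\otimes\pi)\circ\Delta$ sends $u_s\mapsto \pi(u_s)\otimes\pi(u_s)$, so the associated unitary representation of $G$ is $s\mapsto \bigl(\bigoplus_{\rho\in\Scal}\rho(s)\bigr)\otimes\bigl(\bigoplus_{\sigma\in\Scal}\sigma(s)\bigr)$, which is unitarily equivalent to $\bigoplus_{\rho,\sigma\in\Scal}(\rho\otimes\sigma)$. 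Here is exactly where the sub-semidual hypothesis enters: by Lemma \ref{algebra of sub-semidual} (or directly from the definition of a sub-semidual), $\support(\rho\otimes\sigma)\subseteq\Scal$ for all $\rho,\sigma\in\Scal$, and hence $\bigoplus_{\rho,\sigma\in\Scal}(\rho\otimes\sigma)\preccurlyeq\bigoplus_{\pi\in\Scal}\pi$. This gives the required weak containment, and therefore $(\pi\otimes\pi)\circ\Delta$ kills $\ker\pi$, so $\Delta_\Scal$ is well-defined as a $*$-homomorphism.

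I expect the genuine obstacle to be the tensor-product bookkeeping: one must ensure the weak-containment argument is made at the right level (representations of $G_d$ versus the topology matters, but here the statement is for a sub-semidual of $G$ itself, so $\support$ and $\otimes$ are taken in $\widehat{G}$ as in the Preliminaries), and one must confirm that $\C^*_\Scal(G)\otimes\C^*_\Scal(G)$ is the correct target, i.e. that the tensor \cstar-norm used is compatible with the one on $\C^*(G)\otimes\C^*(G)$ under $\pi\otimes\pi$. Since $\pi\otimes\pi$ is a quotient $*$-homomorphism of the maximal tensor products, and the representation $\bigoplus(\rho\otimes\sigma)$ is precisely a representation of $\C^*_\Scal(G)\otimes_{\max}\C^*_\Scal(G)$, this is where I would be careful; once weak containment is established the factorisation is formal. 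Finally, $\Delta_\Scal$ inherits coassociativity automatically: both $(\Delta_\Scal\otimes\id)\circ\Delta_\Scal$ and $(\id\otimes\Delta_\Scal)\circ\Delta_\Scal$ agree after precomposition with the surjection $\pi$ (because $\Delta$ is coassociative and $\pi$ intertwines), and since $\pi$ is surjective this forces the two to coincide. This completes the construction of the comultiplication induced from that of $\C^*(G)$.
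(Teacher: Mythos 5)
Your route is genuinely different from the paper's. The paper never argues with representations at all: it works on the dual side, using Lemma \ref{algebra of sub-semidual} to view $B:=\FSa_\Scal(G)$ as a weak$^*$-closed subalgebra of $\FSa(G)$, taking the weak$^*$-closure $\Bcal$ of $B\otimes B$ inside the dual of $\C^*(G)\otimes_{\min}\C^*(G)$, identifying $\Bcal$ as the dual of a quotient $(\C^*(G)\otimes_{\min}\C^*(G))/\Jcal$, and then assembling the comultiplication as a composite $\rho\circ\Phi$ of maps obtained by pre-adjoints (a diagram it deliberately sets up for reuse in Lemma \ref{weak*-closed invariant subalgebra of B(G) when G is discrete}). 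Your argument instead goes through weak containment: with $\pi:=\bigoplus_{\rho\in\Scal}\rho$, the sub-semidual hypothesis forces $\pi\otimes\pi\simeq\bigoplus_{\rho,\sigma\in\Scal}\rho\otimes\sigma\preccurlyeq\pi$, so the composite kills $\ker\pi$. That computation is correct (each $\rho\otimes\sigma$ is weakly contained in the direct sum of its support, since every closed ideal of a \cstar-algebra is an intersection of primitive ideals, and weak containment passes to direct sums), it is more direct than the paper's argument, and it isolates exactly where the hypothesis enters.

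The gap is in the step you yourself flagged as delicate, and your resolution of it goes the wrong way: you settle the tensor-norm question by invoking \emph{maximal} tensor products and assert that ``once weak containment is established the factorisation is formal''. For the max norm this is false, not merely unjustified. Weak containment of unitary representations of $G$ is, by definition, a statement about kernels of integrated forms on Hilbert space, i.e.\ about the spatial (minimal) \cstar-norm; it gives no control over kernels of maps into maximal tensor products, and indeed the statement itself fails at the max level. Take $G$ a nonamenable discrete group (say the free group on two generators) and $\Scal:=\support\lambda_G$, which is a subdual, hence a sub-semidual, with $\C^*_\Scal(G)=\C^*_r(G)$. The left and right regular representations both extend to $\C^*_r(G)$ and have commuting ranges on $\ltwo(G)$, so they define a representation $\theta$ of $\C^*_r(G)\otimes_{\max}\C^*_r(G)$ with $\theta(\lambda_s\otimes\lambda_s)=\lambda_s\rho_s$. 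If an induced comultiplication $\Delta_\Scal:\C^*_r(G)\to\C^*_r(G)\otimes_{\max}\C^*_r(G)$ existed (necessarily $\Delta_\Scal(\lambda_s)=\lambda_s\otimes\lambda_s$), then $\theta\circ\Delta_\Scal$, precomposed with the quotient $\C^*(G)\to\C^*_r(G)$, would exhibit the conjugation representation $s\mapsto\lambda_s\rho_s$ as weakly contained in $\lambda_G$; but $\delta_e$ is a fixed vector for conjugation, so $1_G\preccurlyeq\lambda_G$ and $G$ would be amenable --- a contradiction. (There is also an internal inconsistency in your write-up: the $\Delta$ you chose is the one ``dual to the multiplication on $\FSa(G)$'', i.e.\ the min-valued one, and there is no canonical map from a minimal to a maximal tensor product with which to compose it.) The repair is to stay with $\otimes_{\min}$ throughout, as the paper does: the minimal tensor product is functorial for $*$-homomorphisms, so $\pi\otimes\pi:\C^*(G)\otimes_{\min}\C^*(G)\to\C^*_\Scal(G)\otimes_{\min}\C^*_\Scal(G)$ makes sense, and since $\C^*_\Scal(G)\otimes_{\min}\C^*_\Scal(G)$ sits faithfully in $\operators(\Hcal_\pi\otimes\Hcal_\pi)$, the kernel of the composite coincides with the kernel of the integrated form of the unitary representation $s\mapsto\pi(s)\otimes\pi(s)$, which your weak-containment argument does control. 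With that change (and with the shared caveat that for nondiscrete $G$ the canonical unitaries live only in multiplier algebras, so both comultiplications are really valued in $M(\cdot\otimes_{\min}\cdot)$ --- an imprecision the paper's own proof also glosses over), your proof goes through.
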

\begin{proof}
The natural $*$-epimorphism $\pi:\C^*(G)\to \C^*_\Scal(G)$ could be considered as the quotient mapping by some closed ideal $I$. Denote by $\Delta:\C^*(G)\to \C^*(G)\otimes_{\min}\C^*(G)$ the natural comultiplication of $\C^*(G)$. Also, denote by $\Acal$ the dual of $\C^*(G)\otimes_{\min}\C^*(G)$, so that the (algebraic) tensor product $\FSa(G)\otimes \FSa(G)$ is weak$^*$-dense in $\Acal$. Note that
\[
	\dual{\Delta}:\Acal\to \FSa(G), f\otimes g\mapsto fg,\qquad(f,g\in \FSa(G))\,.
\]
Set $B:=\FSa_\Scal(G)$, and let $\Bcal$ be the weak$^*$-closure of $B\otimes B$ in $\Acal$. Since $B$ is a $\C^*(G)$-submodule of $\FSa(G)$, it is easy to see that $\Bcal$ is a $\C^*(G)\otimes_{\min}\C^*(G)$-submodule of $\Acal$. It follows that $\Bcal=\dual{(\predual{\Bcal})}$ for some \cstar-algebra $\predual{\Bcal}$ which is the quotient of  $\C^*(G)\otimes_{\min}\C^*(G)$ by a closed ideal $\Jcal$. Since obviously $I\otimes \C^*(G)+\C^*(G)\otimes I\subseteq \Jcal$, we obtain a natural mapping $j$ in the following commutative diagram
\begin{equation*}
\SelectTips{eu}{12}\xymatrix{\C^*(G)\otimes\C^*(G) \ar[r]^-{} \ar@{->}[d]_{\pi\otimes\pi} & \C^*(G)\otimes_{\min}\C^*(G)\ar@{->}[d]^{\textrm{natural quotient}}  \\ 
            \C^*_\Scal(G)\otimes\C^*_\Scal(G)\ar[r]^-{j}  & (\C^*(G)\otimes_{\min}\C^*(G))/\Jcal\,.}
\end{equation*} 
The mapping $j$ has dense range, and is easily seen to be injective (by dualizing with $B\otimes B\subseteq \Bcal$). Thus it follows that there must be a natural $*$-epimorphism
\[
	\rho:(\C^*(G)\otimes_{\min}\C^*(G))/\Jcal\to \C^*_\Scal(G)\otimes_{\min}\C^*_\Scal(G)\,,
\]
extending $j^{-1}$. Since $B$ is an algebra, $\dual{\Delta}$ maps $\Bcal$ into $B$, and so $\Delta$ induces a $*$-homomorphism 
\[
	\Phi:\C^*_\Scal(G)\to (\C^*(G)\otimes_{\min}\C^*(G))/\Jcal\,.
\]
Thus we obtain a commutative diagram
\begin{equation*}
\SelectTips{eu}{12}\xymatrix{\C^*(G)\ar[r]^-{\Delta} \ar@{->}[d]_{\pi} & \C^*(G)\otimes_{\min}\C^*(G)\ar@{->}[d]^{\pi\otimes\pi}  \\ 
            \C^*_\Scal(G)\ar[r]^-{\rho\circ\Phi}  & \C^*_\Scal(G)\otimes_{\min}\C^*_\Scal(G)\,.}
\end{equation*} 
From there, it is easy to see that $\rho\circ\Phi$ is the desired comultiplication on $\C^*_\Scal(G)$. 
\end{proof}

\begin{lemma}\label{weak*-closed invariant subalgebra of B(G) when G is discrete}
Let $G$ be any discrete group, and let $B$ be a weak$^*$-closed translation-invariant subalgebra of $\FSa(G)$ that separates points of $G$. Then $\FSa_r(G)\subseteq B\subseteq \FSa(G)$.
\end{lemma}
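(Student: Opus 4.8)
The plan is to reduce the whole statement to showing that the point mass $\delta_e$ (the function $[s=e]$, which for discrete $G$ lies in $\Fa(G)\subseteq\FSa_r(G)$) belongs to $B$. Since $B$ is a weak$^*$-closed translation-invariant subalgebra, $B=\FSa_\Scal(G)$ for a sub-semidual $\Scal$ of $G$, by the standard correspondence together with Lemma~\ref{algebra of sub-semidual}; write $\pi:=\bigoplus_{\rho\in\Scal}\rho$. As $\delta_e$ is the positive-definite coefficient of the cyclic vector $\delta_e$ of the left regular representation, membership $\delta_e\in\FSa_\Scal(G)$ is equivalent to $\lambda_G\preccurlyeq\pi$, i.e. to $\FSa_r(G)\subseteq B$; the inclusion $B\subseteq\FSa(G)$ is trivial. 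Throughout I will use that, $G$ being discrete, the point-evaluations $\delta_s$ lie in the predual $\C^*(G)$ and span it densely, so on the unit ball $\closedball{B}{1}$ the weak$^*$-topology coincides with pointwise convergence on $G$; in particular $\closedball{B}{1}$ is weak$^*$-compact and weak$^*$-cluster points of bounded nets are computed pointwise.

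First I would run a ``power trick'' on positive-definite functions, in the spirit of the classical fact that the closure of $\set{x^n}$ in a compact group contains the identity. Fix a normalized $\phi\in B\cap P(G)$ (so $\phi(e)=1$), say $\phi=\duality{\rho(\cdot)\xi}{\xi}$ with $\support\rho\subseteq\Scal$ and $\norm\xi=1$. Because $\Scal$ is a sub-semidual, $\support(\rho^{\otimes n})\subseteq\Scal$, so each power $\phi^n\in B$ with $\norm{\phi^n}=\phi^n(e)=1$. The set $H_\phi:=\set{s\in G\colon\abs{\phi(s)}=1}$ is a subgroup of $G$ on which $\phi$ restricts to a character $\chi$ (equality in Cauchy--Schwarz forces $\rho(s)\xi\in\unitcircle\xi$, and such $s$ form a group with $s\mapsto\phi(s)$ multiplicative), while $\phi^n(s)\to0$ for $s\notin H_\phi$. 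Since the identity lies in the closure of $\set{\chi^n\colon n\ge1}$ in the compact group $\operatorname{Hom}(H_\phi,\unitcircle)$, a subnet satisfies $\chi^{n_\alpha}\to\mathbf 1$ pointwise on $H_\phi$; then $\phi^{n_\alpha}\to\mathds 1_{H_\phi}$ pointwise on $G$, whence $\mathds 1_{H_\phi}\in B$.

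Next I would intersect these idempotents. For finitely many such $\phi_1,\dots,\phi_k$ the product $\mathds 1_{H_{\phi_1}}\cdots\mathds 1_{H_{\phi_k}}=\mathds 1_{H_{\phi_1}\cap\cdots\cap H_{\phi_k}}$ again lies in $B$; letting the finite family grow, the resulting decreasing net of idempotents converges pointwise, so $\mathds 1_{H}\in B$, where $H:=\bigcap_{\phi}H_\phi$ over all normalized $\phi\in B\cap P(G)$. I claim $H$ is abelian. Indeed, given $s\neq t$ in $H$, separation of points furnishes $f\in B$ with $f(s)\neq f(t)$; decomposing $f$ into positive-definite pieces of $B$ by the Jordan decomposition on $\C^*_\Scal(G)$ (as in the proof of Lemma~\ref{Eberlein cover}) produces a normalized $\phi\in B\cap P(G)$ with $\phi(s)\neq\phi(t)$, i.e. $\chi_\phi(s)\neq\chi_\phi(t)$ for the character $\chi_\phi=\phi|_H$. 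Thus the homomorphisms $\set{\chi_\phi|_H}$ into $\unitcircle$ separate the points of $H$, forcing $[H,H]=\set e$.

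Finally I would finish with a Pontryagin-duality argument for the abelian group $H$, which is where the real work lies. The set $T:=\set{\chi_\phi|_H\colon\phi\in B\cap P(G)\text{ normalized}}$ is a subsemigroup of the compact dual $\widehat H$ (products of positive-definite functions stay in the algebra $B$, and $\chi_{\phi\psi}|_H=\chi_\phi\chi_\psi|_H$) and it separates points of $H$; hence its closure is a closed subsemigroup of the compact group $\widehat H$, so a closed subgroup, and being point-separating it must equal $\widehat H$. Consequently the Haar measure of $\widehat H$ is a weak$^*$-limit of uniform averages of point masses at elements of $T$ (point masses at a dense subset have closed convex hull all probability measures, and rational convex combinations are uniform averages). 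Transporting this through the idempotent, the functions $\mathds 1_H\,\phi\in B$ (the extension by zero of $\chi_\phi$) have averages $\frac1N\sum_i\mathds 1_H\phi_i$ converging pointwise on $G$ to $\mathds 1_{\set e}=\delta_e$; as these lie in $\closedball{B}{1}$, we get $\delta_e\in B$, i.e. $\FSa_r(G)\subseteq B$. The main obstacle is exactly this last stage: isolating the ``unimodular directions'' $H$, proving they form an abelian group, and then invoking the compact-group fact that a point-separating closed subsemigroup of $\widehat H$ exhausts $\widehat H$ in order to recover $\delta_e$.
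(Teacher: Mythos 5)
Your proof is correct, but it takes a genuinely different route from the paper's. The paper deduces the lemma from compact quantum group theory: writing $B=\FSa_\Scal(G)$, it invokes Lemma \ref{group Cstar-algebra wrt a sub-semidual} to transfer the comultiplication of $\C^*(G)$ to the quotient $\C^*_\Scal(G)$, observes that (because $G$ is discrete, so $\C^*(G)$ is unital) this makes $\C^*_\Scal(G)$ a compact quantum group in the sense of Woronowicz, and then cites the existence of a Haar state (Woronowicz, Van Daele) to get $\omega\in B\cap P(G)$ with $\omega(1)=1$ and $f\omega=f(1)\omega$ for all $f\in B$; point-separation forces $\omega=\delta_1$, which gives $\lambda_G\preccurlyeq\Scal$. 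You instead construct $\delta_e$ by hand with classical tools: the power trick $\phi^{n_\alpha}\to\mathds 1_{H_\phi}$ producing idempotents in $B$, intersecting these to isolate the subgroup $H$ of ``unimodular directions'', proving $H$ abelian because the restricted characters separate its points, and finally combining the closed-subsemigroup-of-a-compact-group fact with Pontryagin duality and orthogonality of characters to average the extensions $\mathds 1_H\phi$ down to $\delta_e$. In effect you re-prove, in this special case, the existence of the Haar state that the paper imports as a black box. Your argument is longer, but it is entirely elementary and self-contained (it does not even need Lemma \ref{group Cstar-algebra wrt a sub-semidual}), and it makes visible exactly where the point-separation hypothesis enters; the paper's proof is shorter and places the result in a conceptual framework that explains why such an idempotent must exist.

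One step needs tightening. When you pass from ``$\mathbf 1$ lies in the closure of $\set{\chi^n\colon n\ge1}$'' to ``$\phi^{n_\alpha}\to\mathds 1_{H_\phi}$ pointwise on $G$'', you need the exponents $n_\alpha$ to tend to infinity: off $H_\phi$ the convergence $\phi(s)^{n_\alpha}\to0$ uses $\abs{\phi(s)}<1$ \emph{together with} $n_\alpha\to\infty$, and a net in $\set{\chi^n}$ converging to $\mathbf 1$ may have bounded exponents (for instance when $\chi$ has finite order, the constant net $\chi^{N}=\mathbf 1$ converges but $\phi^N$ need not vanish off $H_\phi$). The repair is routine: for each $k$ the tail $\set{\chi^n\colon n\ge k}$ is again a subsemigroup, so its closure is a closed subsemigroup, hence a closed subgroup, of the compact group $\operatorname{Hom}(H_\phi,\unitcircle)$ and therefore contains $\mathbf 1$; this lets you choose the approximating net with $n_\alpha\to\infty$, after which the rest of your argument goes through unchanged.
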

\begin{proof}
Since $B$ is a weak$^*$-closed and translation-invariant subalgebra of $\FSa(G)$, it must have the form $\FSa_\Scal(G)=\dual{\C^*_\Scal(G)}$ for some sub-semidual $\Scal$ of $G$ (cf. Lemma \ref{algebra of sub-semidual}). Continuing with the notation in the previous proof, with $G$ discrete, $(\C^*(G),\Delta)$ is a compact quantum group in the sense of Woronowicz \cite{Woronowicz}, i.e. $\C^*(G)$ is unital and the sets
\[
	\Delta(\C^*(G))(\C^*(G)\otimes 1) \quad\textrm{and}\quad (\C^*(G)\otimes 1)\Delta(\C^*(G))
\]
are dense in $\C^*(G)\otimes_{\min}\C^*(G)$. It can then be seen from the preceding commutative diagram that the same properties are hold by $(\C^*_\Scal(G),\rho\circ\Phi)$. It follows from \cite{Woronowicz} for the separable case and \cite{VanDaele} for the general case that there exists a Haar state of $\C^*_\Scal(G)$, i.e. a positive definite function $\omega\in B=\FSa_\Scal(G)$ with $\omega(1)=1$ such that $f\omega=f(1)\omega$ for every $f\in B$. Since $B$ separates points of $G$, we see that $\omega=\delta_1$, the point mass at the identity of $G$. The conclusion then follows.
\end{proof}

The following interesting result seems to be new (cf. \cite[Corollary 1.4]{BLSchl}). 

\begin{corollary}\label{weak*-closed invariant subalgebra of B(G) when G is discrete 2}
Let $G$ be any amenable discrete group. Then every weak$^*$-closed translation-invariant subalgebra of $\FSa(G)$ is conjugation-closed and has the form
\[
	\set{f\in \FSa(G)\colon\ f\ \textrm{is constant on cosets of}\ H}\quad\textrm{for some normal subgroup $H$ of $G$}\,.
\]
\end{corollary}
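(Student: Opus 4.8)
The plan is to realise any such subalgebra as the \emph{full} Fourier--Stieltjes algebra of a quotient group and then invoke Lemma~\ref{weak*-closed invariant subalgebra of B(G) when G is discrete} together with amenability. First I would fix a weak$^*$-closed translation-invariant subalgebra $B$ of $\FSa(G)$. Being weak$^*$-closed and translation-invariant, $B$ has the form $\FSa_\Scal(G)=\dual{\C^*_\Scal(G)}$ for a sub-semidual $\Scal$ of $G$ (recall $G=G_d$ here), by Lemma~\ref{algebra of sub-semidual}. Set $H:=\bigcap_{\pi\in\Scal}\ker\pi$, a normal subgroup of $G$. Since every coefficient function of a representation weakly contained in $\bigoplus_{\pi\in\Scal}\pi$ is constant on cosets of $H$, this gives the inclusion $B\subseteq\set{f\in\FSa(G)\colon f\ \textrm{is constant on cosets of}\ H}$.

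Next I would identify the right-hand side with $\FSa(G/H)$, inflated to $G$ along the quotient map $q\colon G\to G/H$: the inflation $q^*\colon \FSa(G/H)\to\FSa(G)$ is an isometric algebra isomorphism onto the weak$^*$-closed subspace of functions that factor through $q$. The topologies match because $\C^*(G/H)$ is a quotient of $\C^*(G)$, and the subspace is weak$^*$-closed because weak$^*$-convergence in $\dual{\C^*(G)}$ forces pointwise convergence on the discrete group $G$. Viewing $B$ inside $\FSa(G/H)$ in this way, $B$ remains weak$^*$-closed and translation-invariant, and---this is the step that uses the precise choice of $H$---it separates the points of $G/H$: if $s^{-1}t\notin H$ there is $\pi\in\Scal$ with $\pi(s^{-1}t)\neq I$, hence a coefficient function $f$ of $\pi$ (so $f\in B$) with $f(s^{-1}t)\neq f(1)$, and the translate $L_{s^{-1}}f\in B$ then distinguishes $s$ and $t$.

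Finally, since $G/H$ is a quotient of the amenable discrete group $G$, it is itself amenable, so $\FSa_r(G/H)=\FSa(G/H)$. Applying Lemma~\ref{weak*-closed invariant subalgebra of B(G) when G is discrete} to the group $G/H$ and the point-separating algebra $B$ yields $\FSa_r(G/H)\subseteq B\subseteq\FSa(G/H)$, and amenability collapses this to $B=\FSa(G/H)=\set{f\in\FSa(G)\colon f\ \textrm{is constant on cosets of}\ H}$, which is the asserted form. Conjugation-closedness is then immediate, since $\FSa(G/H)$ is closed under complex conjugation (the conjugate of a coefficient function is a coefficient function of the conjugate representation); equivalently, the resulting $\Scal=\set{\pi\colon H\subseteq\ker\pi}$ is visibly a subdual. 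The main obstacle I anticipate is the bookkeeping in the middle step---verifying that $B$ sits inside $\FSa(G/H)$ as a genuinely weak$^*$-closed, point-separating subalgebra so that Lemma~\ref{weak*-closed invariant subalgebra of B(G) when G is discrete} is applicable---rather than the final amenability argument, which is routine.
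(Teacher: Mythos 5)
Your proposal is correct and follows essentially the same route as the paper: the paper defines $H:=\set{t\in G\colon f(t)=f(1)\ (f\in B)}$ directly (which coincides with your $\bigcap_{\pi\in\Scal}\ker\pi$), identifies the functions constant on cosets of $H$ with $\FSa(G/H)$, observes that $B$ sits inside it as a weak$^*$-closed translation-invariant point-separating subalgebra, and then applies Lemma~\ref{weak*-closed invariant subalgebra of B(G) when G is discrete} together with amenability of the quotient $G/H$ to get $B=\FSa(G/H)$, hence conjugation-closedness. Your detour through the sub-semidual description $B=\FSa_\Scal(G)$ is an equivalent repackaging of the same argument, not a genuinely different proof.
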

\begin{proof}
Let $B$ be a weak$^*$-closed translation-invariant subalgebra of $\FSa(G)$. Set 
\[
	H:=\set{t\in G\colon\ f(t)=f(1)\ (f\in B)}\,.
\]
Then it is easy to see that $H$ is a normal subgroup of $G$, and that $B$ is a weak$^*$-closed translation-invariant subalgebra of $A$, the subalgebra of $\FSa(G)$ consisting of those functions that are constant on cosets of $H$. Thus, by identifying the latter with $\FSa(G/H)$, $B$ becomes a weak$^*$-closed translation-invariant subalgebra of $\FSa(G/H)$ that separates points of $G/H$. Since $G/H$ is (discrete and) amenable, it follows from the previous lemma that $B=\FSa(G/H)$. Considering $B$ as a subalgebra of $\FSa(G)$ again, this means that $B=A$, which is conjugation-closed.
\end{proof}

Put this differently, we have

\begin{corollary}\label{weak*-closed invariant subalgebra of B(G) when G is discrete 3}
Let $G$ be any amenable discrete group. Then every sub-semidual of $G$ is actually a subdual. \enproof
\end{corollary}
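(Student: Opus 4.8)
The plan is to read Corollary \ref{weak*-closed invariant subalgebra of B(G) when G is discrete 2} through the standard dictionary between closed subsets of $\widehat{G}$ and weak$^*$-closed translation-invariant subspaces of $\FSa(G)$, recorded just before Lemma \ref{algebra of sub-semidual}. Concretely, let $\Scal$ be a sub-semidual of $G$. By Lemma \ref{algebra of sub-semidual}, $\FSa_\Scal(G)$ is then a weak$^*$-closed, translation-invariant \emph{subalgebra} of $\FSa(G)$. Since $G$ is discrete and amenable, Corollary \ref{weak*-closed invariant subalgebra of B(G) when G is discrete 2} applies and tells us that $\FSa_\Scal(G)$ is conjugation-closed. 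All that remains is to see that conjugation-closedness of $\FSa_\Scal(G)$ is exactly the condition that upgrades the sub-semidual $\Scal$ to a subdual.

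For this last step I would use the elementary fact that the pointwise complex conjugate of a coefficient function of a representation $\pi$ is a coefficient function of the conjugate representation $\conjugate{\pi}$, together with $\conjugate{\bigoplus_{\pi\in\Scal}\pi}=\bigoplus_{\pi\in\Scal}\conjugate{\pi}$ and the fact that $\conjugate{\rho}\preccurlyeq\conjugate{\sigma}$ whenever $\rho\preccurlyeq\sigma$. These identities give $\overline{\FSa_\Scal(G)}=\FSa_{\conjugate{\Scal}}(G)$, where $\conjugate{\Scal}:=\set{\conjugate{\pi}\colon\pi\in\Scal}$ is again a closed subset of $\widehat{G}$ (conjugation being a homeomorphic involution of $\widehat{G}$). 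Thus conjugation-closedness of $\FSa_\Scal(G)$ reads $\FSa_{\conjugate{\Scal}}(G)=\FSa_\Scal(G)$, and by the uniqueness in the correspondence $\Scal\mapsto\FSa_\Scal(G)$ for closed $\Scal$, this is equivalent to $\conjugate{\Scal}=\Scal$, i.e.\ $\conjugate{\pi}\in\Scal$ for every $\pi\in\Scal$. Combined with the sub-semidual hypotheses already in force, this is precisely the definition of a subdual.

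I expect no genuine obstacle: the entire analytic content sits in Corollary \ref{weak*-closed invariant subalgebra of B(G) when G is discrete 2}, and the present statement is essentially a reformulation of it. The only point to handle with a little care is the passage from the equality of the function algebras $\FSa_{\conjugate{\Scal}}(G)$ and $\FSa_\Scal(G)$ back to the equality of the underlying closed subsets $\conjugate{\Scal}$ and $\Scal$ of $\widehat{G}$, which relies solely on the injectivity of the map $\Scal\mapsto\FSa_\Scal(G)$ on closed subsets.
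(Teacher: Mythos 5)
Your proof is correct and takes essentially the same route as the paper: the paper presents this corollary as a direct reformulation (``Put this differently'') of Corollary \ref{weak*-closed invariant subalgebra of B(G) when G is discrete 2}, via exactly the dictionary you use between closed subsets $\Scal\subseteq\widehat{G}$ and weak$^*$-closed translation-invariant subspaces $\FSa_\Scal(G)$, with conjugation-closedness corresponding to $\conjugate{\Scal}=\Scal$. Your spelled-out verification of that dictionary (conjugates of coefficient functions, preservation of weak containment under conjugation, and injectivity of $\Scal\mapsto\FSa_\Scal(G)$) is precisely the implicit content behind the paper's \enproof.
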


Returning to locally compact groups, we obtain the following.

\begin{proposition}\label{when a sub-semidual is thick}
Let $G$ be a locally compact group that contains an open subgroup $H$ with $H_d$ amenable, and let $\Scal$ be a sub-semidual of $G_d$. Then the following are equivalent:
\begin{enumerate}
	\item $\Scal$ is thick.
	\item $\lambda_G\preccurlyeq \Scal$ {\rm(}as representations of $G_d${\rm)}.
	\item $\FSa_r(G)\subseteq \FSa_\Scal^c(G)$.
\end{enumerate}
\end{proposition}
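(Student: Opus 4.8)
The plan is to prove the three implications (i)$\Rightarrow$(ii)$\Rightarrow$(iii)$\Rightarrow$(i), reusing the machinery already assembled for the subdual case in Proposition \ref{when a subdual is thick}. Observe first that the equivalences (ii)$\Leftrightarrow$(iii) and the implication (iii)$\Rightarrow$(i) are purely formal: they do not use the subdual/sub-semidual distinction nor the amenability hypothesis, and the arguments are verbatim the same as in Proposition \ref{when a subdual is thick}. Indeed, (ii)$\Leftrightarrow$(iii) is the standard fact that $\lambda_G\preccurlyeq\Scal$ (as representations of $G_d$) exactly when the coefficient functions of $\lambda_G$ lie in $\FSa_\Scal(G_d)$, and since $\FSa_r(G)=\FSa_{\lambda_G}(G)\subseteq\C(G)$, this is equivalent to $\FSa_r(G)\subseteq\FSa_\Scal(G_d)\cap\C(G)=\FSa_\Scal^c(G)$; meanwhile (iii)$\Rightarrow$(i) holds because $\FSa_r(G)$ already separates the points of $G$, so the larger algebra $\FSa_\Scal^c(G)$ does too. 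So the only real content is the implication (i)$\Rightarrow$(ii), where the sub-semidual must be promoted to something containing $\lambda_G$, and this is precisely where the amenability of $H_d$ and Corollary \ref{weak*-closed invariant subalgebra of B(G) when G is discrete 3} enter.

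For (i)$\Rightarrow$(ii), suppose $\Scal$ is thick, so $B:=\FSa_\Scal^c(G)$ separates points of $G$. The idea is to restrict attention to the open subgroup $H$ with $H_d$ amenable, apply the discrete-group machinery there, and then transport the conclusion back to $G$. First I would restrict the setting to $H$: the restriction of functions in $B$ gives a closed translation-invariant subalgebra $B|_H$ of $\FSa(H)$ that still separates points of $H$. Since $H$ is open in $G$, the restriction behaves well (using Herz's restriction theorem as in the proof of Theorem \ref{when A=A(G), nonzero real function case}, or the fact that $\FSa(H)$ embeds naturally when $H$ is open). Passing to the discrete topology, $B|_H$ determines a sub-semidual of $H_d$; by Corollary \ref{weak*-closed invariant subalgebra of B(G) when G is discrete 3}, every sub-semidual of the amenable discrete group $H_d$ is actually a subdual, and by Proposition \ref{when a subdual is thick} applied to $H_d$ (or directly to Lemma \ref{weak*-closed invariant subalgebra of B(G) when G is discrete}), thickness forces $\lambda_{H_d}\preccurlyeq\Scal|_H$, equivalently $\FSa_r(H)\subseteq B|_H$, and in particular $\Fa(H)\subseteq B$.

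The remaining step is to lift the inclusion $\Fa(H)\subseteq B$ on the open subgroup $H$ up to $\lambda_G\preccurlyeq\Scal$ on the whole group. Here I would exploit that $H$ is open: the left regular representation $\lambda_G$ of $G$ is induced from $\lambda_H$ of $H$, i.e. $\lambda_G=\mathrm{Ind}_H^G\lambda_H$, so by continuity of weak containment under induction, $\lambda_H\preccurlyeq\Scal|_H$ (as representations of $H_d$) together with the translation-invariance of $\Scal$ under all of $G_d$ should give $\lambda_G\preccurlyeq\Scal$ as representations of $G_d$. Concretely, since $H$ is open, $\chi_{tH}\cdot\Fa(G)\subseteq\FSa(G)$ relates coefficient functions of $\lambda_G$ to translates of coefficient functions of $\lambda_H$, and the translation-invariance of $\Scal$ distributes the support condition across the cosets. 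This final transfer is the step I expect to be the main obstacle: one must be careful that weak containment relative to the \emph{discrete} topology $G_d$ is genuinely what gets produced, and that inducing from an open subgroup interacts correctly with passing to $G_d$ and with Fell's continuity of induction. Once $\lambda_G\preccurlyeq\Scal$ is established, we have (ii), closing the cycle.
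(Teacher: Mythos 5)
Your handling of (ii)$\Leftrightarrow$(iii) and (iii)$\Rightarrow$(i) is correct and agrees with the paper, which simply refers these back to Proposition \ref{when a subdual is thick}. The genuine gap is in (i)$\Rightarrow$(ii), at exactly the step you flag as the main obstacle: the transfer from the open subgroup $H$ back to $G$ does not go through. From $\lambda_{H_d}\preccurlyeq\Scal|_{H_d}$, continuity of induction under weak containment together with the projection formula $\mathrm{Ind}_{H_d}^{G_d}(\pi|_{H_d})\cong\pi\otimes\lambda_{G_d/H_d}$ gives only
\[
	\lambda_{G_d}=\mathrm{Ind}_{H_d}^{G_d}\lambda_{H_d}\preccurlyeq\bigoplus_{\pi\in\Scal}\pi\otimes\lambda_{G_d/H_d}\,,
\]
and there is no way to absorb the quasi-regular representation $\lambda_{G_d/H_d}$ into $\Scal$: a sub-semidual is closed under tensor products and supports, but it need not contain, or weakly contain, $1_{G_d}$ or $\lambda_{G_d/H_d}$, and "translation-invariance" of $\FSa_\Scal(G_d)$ adds nothing here. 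Trying to use amenability of $H_d$ at this point is circular: it gives $\lambda_{G_d/H_d}\preccurlyeq\lambda_{G_d}$, and Fell absorption then returns the tautology $\lambda_{G_d}\preccurlyeq\lambda_{G_d}$. What the transfer actually requires is \cite[Theorem 2]{BKLSchl} (if $G$ has an open subgroup $H$ with $H_d$ amenable, then $\lambda_G\preccurlyeq\lambda_{G_d}$), a genuine theorem that your sketch neither cites nor re-derives. A secondary gap: the claim "in particular $\Fa(H)\subseteq B$" needs the extension by zero of functions in $B|_H$ to lie in $B$; the inclusion $\FSa_r(H)\subseteq B|_H$ gives no such thing.

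The repair, which is the paper's actual proof, makes the detour through $H$ unnecessary and shows you misallocated the amenability hypothesis. Lemma \ref{weak*-closed invariant subalgebra of B(G) when G is discrete} holds for \emph{every} discrete group (its proof is the compact-quantum-group Haar-state argument, with no amenability assumption; amenability only enters in Corollaries \ref{weak*-closed invariant subalgebra of B(G) when G is discrete 2} and \ref{weak*-closed invariant subalgebra of B(G) when G is discrete 3}, which are not needed here). So apply it directly to $\FSa_\Scal(G_d)$, a weak$^*$-closed translation-invariant subalgebra of $\FSa(G_d)$ that separates points of $G_d$ because $\Scal$ is thick; this yields $\lambda_{G_d}\preccurlyeq\Scal$ outright, with no restriction to $H$ and no weak$^*$-closure technicalities. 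The hypothesis on $H$ is then spent in a single citation: by \cite[Theorem 2]{BKLSchl}, $\lambda_G\preccurlyeq\lambda_{G_d}$ as representations of $G_d$, whence $\lambda_G\preccurlyeq\Scal$ by transitivity of weak containment. In short: the discrete-group lemma should be run on $G_d$ itself, and the amenability of $H_d$ is needed precisely for the $\lambda_{G_d}$-to-$\lambda_G$ step that your restriction-plus-induction scheme cannot supply.
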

\begin{proof}
(i)$\Rightarrow$(ii): By Lemma \ref{weak*-closed invariant subalgebra of B(G) when G is discrete}, we see that $\lambda_{G_d}\preccurlyeq\Scal$. Condition (ii) then follows from \cite[Theorem 2]{BKLSchl}.

The rest is similar to the proof of Proposition \ref{when a subdual is thick}.
\end{proof}

\section{Characterisations of Fourier algebras}

\noindent In this section, we shall establish our most fundamental key result of this paper.

\begin{theorem}\label{a characterisation of Fourier algebras}
Let $A$ be an $F$-algebra that is also a Tauberian subalgebra of $\C(\Omega)$ that approximately contains a nontrivial real function, for some topological space $\Omega$. Suppose also that:
\begin{enumerate}
	\item every character of $A$ is implemented by some element of $\Omega$;
	\item $\Omega$ is a group and $A$ is left translation-invariant; 
	\item $\norm{\sum_{i=1}^m \alpha_i L_{s_i}:A\to A}\le 1$ whenever $\alpha_i\in\complexs$ and $s_i\in \Omega$ with
		\[	
			\abs{\sum_{i=1}^m\alpha_i f(s_i)}\le \norm{f}\quad(f\in A)\,.
		\]
\end{enumerate}
Then $A\cong \Fa(G)$ for some locally compact group $G$.
\end{theorem}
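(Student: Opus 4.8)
The plan is to manufacture the group $G$ as the spectrum $\spectrum(A)$, to realise $A$ isometrically as a closed translation-invariant subalgebra of $\Fa(G)$, and then to let Theorem~\ref{when A=A(G), nonzero real function case} upgrade this inclusion to an equality. First I would set $G:=\spectrum(A)$. By hypothesis (i) the canonical map $\eta\colon\Omega\to G$, $s\mapsto\hat s$ (evaluation), is onto, and since every character of a Banach algebra has norm $\le 1$ the single-term instance of (iii) gives $\norm{L_s\colon A\to A}\le 1$ for every $s$; as $L_sL_{s^{-1}}=L_{s^{-1}}L_s=\id$, each $L_s$ is in fact a surjective isometry, hence an isometric algebra automorphism of $A$. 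Applying (iii) to $(c,-c,s,t)$ for all $c>0$ shows that $\hat s=\hat t$ forces $L_s=L_t$, so the fibres of $\eta$ form a congruence on the group $\Omega$ and $G$ inherits a group structure making $\eta$ a homomorphism; moreover $(L_s)^{*}\colon\Mfrak\to\Mfrak$ is the weak\(^*\)-continuous map carrying $\hat t\mapsto\widehat{st}$. The multiplication of $A$ dualises to a comultiplication $\Gamma:=m_A^{*}\colon\Mfrak\to\dual{(A\projectivetensor A)}$ for which \emph{every} character is group-like, $\Gamma(\hat s)=\hat s\otimes\hat s$, simply because $\hat s$ is multiplicative on $A$.

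Next I would promote $s\mapsto\hat s$ to a unitary representation. From group-likeness, the $W^{*}$-product $\hat s\hat t$ and the adjoint $\hat s^{*}$ are again group-like, hence lie in $\spectrum(A)\cup\set{0}$; thus the characters form a semigroup under the $W^{*}$-product with identity $1_{\Mfrak}$, and each $\hat s$ has $\norm{\hat s}\le 1$. The \textbf{crux} of the whole argument is to show that this $W^{*}$-product coincides with the group law coming from $\Omega$ — equivalently that $\widehat{st}=\hat s\hat t$, that $\hat s^{*}=\widehat{s^{-1}}$, and that $1_{\Mfrak}=\hat e$ — so that each $\hat s$ is a \emph{unitary} in $\Mfrak$ and $\pi\colon s\mapsto\hat s$ is a unitary representation. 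Here I expect to combine the equality $\norm{\sum_i\alpha_iL_{s_i}}_{\operators(A)}=\norm{\sum_i\alpha_i\hat s_i}_{\Mfrak}$ (the inequality $\ge$ is automatic by pairing $\sum_i\alpha_iL_{s_i}f$ with $\hat e$, and (iii) supplies $\le$) with the $F$-algebra semigroup property $P(A)\,P(A)\subseteq P(A)$ and the presence of a nonzero real element to pin down the involution. Once this is done, $\pi(G)\subseteq\Ucal(\Mfrak)$ is a group homomorphism; the $W^{*}$-product being separately weak\(^*\)-continuous makes multiplication on $G$ separately continuous, and as $\spectrum(A)$ is locally compact Hausdorff, Ellis' theorem renders $G$ a locally compact group with $\pi$ a continuous unitary representation.

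With $\pi$ in hand the embedding is essentially formal. Writing $f\in A$ as a linear combination of elements of $P(A)=\predual{\Mfrak}{}^{+}$ and using the GNS construction, each $\hat f(s)=\langle f,\hat s\rangle=\langle f,\pi(s)\rangle$ is a linear combination of coefficient functions of $\pi$, so $\widehat A\subseteq\FSa_\pi(G)\subseteq\FSa(G)$, and $f\mapsto\hat f$ intertwines the translations, so $\widehat A$ is translation-invariant. Semisimplicity of the commutative algebra $A$ makes the characters $\set{\hat s}$ total in $A$, hence weak\(^*\)-total in $\Mfrak$, whence $\pi(G)''=\Mfrak$; the induced $*$-homomorphism $\Pi\colon\C^{*}(G)\to\Mfrak$ then has weak\(^*\)-dense range, and Kaplansky density gives $\norm{\hat f}_{\FSa(G)}=\sup_{\norm a\le 1}\abs{\langle f,\Pi(a)\rangle}=\norm f_A$. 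Thus $f\mapsto\hat f$ is an isometric isomorphism of $A$ onto a closed translation-invariant subalgebra $\widehat A$ of $\FSa(G)$ with $\spectrum(\widehat A)=G$.

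Finally I would feed this into the earlier theorem. Being Tauberian, $\widehat A$ is a Tauberian subalgebra of $\FSa(G)$ and therefore a subalgebra of $\Fa(G)$; and since $A$ approximately contains a nontrivial real function, so does $\widehat A$ on $G$ (Lemma~\ref{real element vs real function}). Theorem~\ref{when A=A(G), nonzero real function case}, applied to the closed translation-invariant Tauberian subalgebra $\widehat A\subseteq\Fa(G)$ with $\spectrum(\widehat A)=G$, then yields $\widehat A=\Fa(G)$, i.e.\ $A\cong\Fa(G)$ isometrically. The genuine difficulty is concentrated in the second paragraph — reconciling the abstract $W^{*}$-product of the evaluation characters with the group law inherited from $\Omega$, equivalently verifying that $\Gamma$ is a bona fide Hopf--von Neumann comultiplication into $\Mfrak\,\bar\otimes\,\Mfrak$ under which the $\hat s$ are group-like unitaries; everything downstream of that is comparatively routine.
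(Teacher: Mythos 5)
Your overall skeleton is the same as the paper's (characters become unitaries in a von Neumann algebra, $G$ becomes a locally compact group, $A$ embeds isometrically as a closed translation-invariant Tauberian subalgebra of $\Fa(G)$ with $\spectrum(A)=G$, and Theorem~\ref{when A=A(G), nonzero real function case} via Lemma~\ref{real element vs real function} finishes), and your first, third and fourth paragraphs essentially reproduce steps of the paper's proof, including the norm identity $\norm{\sum_i\alpha_iL_{s_i}}=\norm{\sum_i\alpha_i\hat{s}_i}$. But the step you yourself isolate as the crux is a genuine gap, and not merely because it is hard: the statement you propose to prove there is \emph{false}. Being an $F$-algebra imposes no compatibility between the multiplication of $A$ and the \wstar-product of $\Mfrak=\dual{A}$ beyond the unit of $\Mfrak$ being multiplicative on $A$; the transpose $m_A^*$ lands in $\dual{(A\projectivetensor A)}$, the space of bounded bilinear forms, not in $\Mfrak\closure{\otimes}\Mfrak$, and it is not a $*$-homomorphism, so ``group-likeness'' gives no control over \wstar-products of characters. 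Concretely, take $G_0=SU(2)$, $A=\Fa(G_0)$, let $z$ be the central projection of $\VN(G_0)$ supported on the standard representation, and replace the product of $\VN(G_0)$ by the one agreeing with the original product on $z\VN(G_0)$ and with the \emph{opposite} product on $(1-z)\VN(G_0)$. This is again a \wstar-algebra with the same Banach space, involution, unit, predual and duality, so $A$ is still an $F$-algebra and hypotheses (i)--(iii) --- which never mention the product of $\dual{A}$ --- still hold; yet now $\hat{s}\hat{t}=z\lambda(st)+(1-z)\lambda(ts)$, which is not a character whenever $st\neq ts$ (both $z$ and $1-z$ are supported on families of representations with trivial common kernel). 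So the characters need not even form a semigroup under the given \wstar-product, and no argument can force $\widehat{st}=\hat{s}\hat{t}$; the most that is true in general is the Jordan identity $\hat{s}\hat{t}+\hat{t}\hat{s}=\widehat{st}+\widehat{ts}$.

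What is missing --- and what the paper's proof supplies --- is the construction of a \emph{new} product rather than an identification with the given one. Using (iii), the formula $\left(\sum_i\alpha_i\hat{s}_i\right)\bullet\left(\sum_j\beta_j\hat{t}_j\right):=\sum_{i,j}\alpha_i\beta_j\widehat{s_it_j}$ is well defined, contractive and associative on the closed linear span $N$ of the characters; one checks via Hahn--Banach extension of states that self-adjoint elements of $N$ are hermitian, so the Vidav--Palmer theorem makes $(N,\bullet)$ a \cstar-algebra whose involution is the one inherited from $\dual{A}$; Kadison's generalized Schwarz inequality then shows the inclusion $N\to\dual{A}$ is a positive Jordan homomorphism, and St{\o}rmer's extension theorems upgrade it to a weak$^*$-continuous Jordan isomorphism of a quotient of $\bidual{N}$ onto $\dual{A}$, transporting $\bullet$ to a new von Neumann algebra product on all of $\dual{A}$ with the same unit and involution. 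Only then do the characters form a group of unitaries, and your third and fourth paragraphs run correctly with $(\dual{A},\bullet)$ in place of $\Mfrak$ (positivity of the inclusion is also what makes elements of $P(A)$ positive definite on $G$, which you need for $\widehat{A}\subseteq\FSa(G)$). Without this device, or an equivalent one, your second paragraph cannot be completed.
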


\begin{remark}
We remark that $\Omega$ is not required a priory to be a topological group and the topology on $\Omega$ is not required to be locally compact.
\end{remark}

\begin{proof}
Condition (i) implies that 
\begin{itemize}
	\item[(i.a)] there is an $e\in \Omega$ such that $\duality{1}{f}=f(e)$ for every $f\in A$, and
	\item[(i.b)] for each $s\in \Omega$ there is a $t\in \Omega$ such that $f^*(s)=\conjugate{f(t)}$ for all $f\in A$;
\end{itemize}
where $1$ is the identity of $\dual{A}$. The first implication is obvious, and for the second one, notice that, since the set $\set{f\in A\colon f(e)=\norm{f}}$ of positive normal functionals of $\dual{A}$ is closed under multiplication, the mapping $f\mapsto f^*, A\to A,$ is multiplicative. Hence, for each $s\in\Omega$, the functional $f\mapsto \conjugate{f^*(s)}$ is multiplicative on $A$, and so is induced by some $t\in\Omega$ by (i). Until the last paragraph, these two conditions (i.a) and (i.b) will be the ones we work with instead of (i).

For each $s\in G$, then
\[
	\norm{f}\ge \abs{f(s)}\quad(f\in A)\,,
\] 
and so, it follows from (iii) that ${L_s}:{A}\to{A}$ is contractive. Define on $\Omega$ a new product $s\diamond t:= se^{-1}t$. Then obviously $(\Omega,\diamond)$ is still a group, and $A$ is still left translation-invariant with respect to this group structure; the left translate by an element $s$ with respect to the new product $\diamond$ is $L^\diamond_s:=L_{e^{-1}}L_s$. The proven  contractivity of $L_{e^{-1}}$ then implies that condition (iii) remains hold with $L^\diamond$ replacing $L$. Thus replacing the old product on $\Omega$ by the new product $\diamond$ if necessary, we shall suppose from now on that $e=1$ is the identity of $\Omega$.

Since the Banach algebra $A$ is a subalgebra of $\C(\Omega)$, there is a natural continuous map $\eta:\Omega\to\spectrum(A)$. Denote by $N_0$ the linear span of $\eta(\Omega)$ in $\dual{A}$. We define a (new) product on $N_0$ as follows:
\[
	\left(\sum_{i=1}^m\alpha_i \eta(s_i)\right)\bullet\left(\sum_{j=1}^n\beta_j \eta(t_j)\right):= \sum_{i=1}^m\sum_{j=1}^n\alpha_i\beta_j\eta(s_it_j)\,.
\]
The well-definedness of this formula follows from the following (in)equalities
\begin{align*}	
	&\sum_{i=1}^m\sum_{j=1}^n\alpha_i\beta_j\eta(s_it_j)=\left(\sum_{i=1}^m \alpha_i \dual{L_{s_i}}\right)\left(\sum_{j=1}^n\beta_j \eta(t_j)\right)\\
	\textrm{and}\quad &\norm{\left(\sum_{i=1}^m \alpha_i \dual{L_{s_i}}\right)\left(\sum_{j=1}^n\beta_j \eta(t_j)\right)}\le \norm{\sum_{i=1}^m\alpha_i \eta(s_i)}\norm{\sum_{j=1}^n\beta_j \eta(t_j)}
\end{align*}
for every $\alpha_i,\beta_j\in\complexs$ and $s_i,t_j\in \Omega$. Indeed, by scaling, we may (and shall) assume that $\norm{\sum_{i=1}^m\alpha_i \eta(s_i)}\le 1$. Then, for every $f\in A$, we see that
\[
	\norm{f}\ge \abs{\duality{\sum_{i=1}^m\alpha_i \eta(s_i)}{f}}=\abs{\sum_{i=1}^m\alpha_i f(s_i)}\,,
\]
and so, by (iii), 
\[
	\norm{\sum_{i=1}^m \alpha_i \dual{L_{s_i}}:\dual{A}\to \dual{A}}=\norm{\sum_{i=1}^m \alpha_i L_{s_i}:A\to A}\le 1\,.
\]
Thus it follows that the product $\bullet:N_0\times N_0\to N_0$ is well-defined, bilinear with norm $1$, and associative. Extending this product by completion to $N:=\closure{N_0}$, we obtain a Banach algebraic structure on $N$, whose multiplication is not necessarily the one induced from that of $\dual{A}$ as the dual of the $F$-algebra $A$. However, note that $N$ is a unital algebra whose unit is $\eta(1)=1$, the unit of $\dual{A}$.

Condition (i.b) implies that $\eta(s)^*\in \eta(\Omega)$ for every $s\in \Omega$, and so $N^*=N$. This \emph{does not} mean (yet) that $N$ is a $*$-algebra, but we still see that  
\[
	N=N_h+\init N_h
\]
where $N_h:=\set{x\in N\colon x=x^*}$. We \emph{claim} that every $x\in N_h$ is a hermitian element of the Banach algebra $N$ as defined using numerical range in \cite[Definition 10.12]{BD}. Indeed, let $x\in N_h$ and take $\omega\in \dual{N}$ with 
\[
	\omega(1)=\norm{\omega}=1\,.
\]
By the Hahn--Banach theorem, we extend $\omega$ to a functional $\tilde{\omega}\in\dual{(\dual{A})}$ such that $\norm{\tilde{\omega}}=\norm{\omega}=1$. Since we still have $\tilde{\omega}(1)=1$, it follows that $\tilde{\omega}$ is a state of the \cstar-algebra $\dual{A}$. But then
\[
	\omega(x)=\tilde{\omega}(x)\in\reals\,.
\]
This shows that $x$ is a hermitian element of the Banach algebra $N$. The Vidav--Palmer theorem \cite{Palmer} (see \cite[Theorem 38.14]{BD} for a proof) then implies that $N$ is a \cstar-algebra whose involution (but \emph{not} the product) is the one induced from $\dual{A}$. In fact, the argument above also shows that if $x$ is positive in $N$, then it is positive in $\dual{A}$. That is, the injection map $\iota:N\to\dual{A}$ is a positive linear map.

As shown in the first paragraph, $\dual{L_s}:\dual{A}\to\dual{A}$ is contractive for each $s\in\Omega$. But since
\[
	\dual{L_s}\dual{L_{s^{-1}}}=\dual{L_{s^{-1}}}\dual{L_s}=\dual{L_1}=\id_{\dual{A}}\,,
\]
it follows that $\dual{L_s}$ is an isometry from $\dual{A}$ onto itself. It follows from the structure theory of isometries of unital \cstar- algebras \cite{Kadison51} that $\dual{L_s}(1)$ is a unitary of $\dual{A}$. But $\dual{L_s}(1)=\dual{L_s}\eta(1)=\eta(s)$, it follows that
\[
	\eta(\Omega)\subseteq \Ucal(\dual{A}).
\]
On the other hand, the map $s\mapsto \eta(s)$ is a group homomorphism from $\Omega$ into the invertible group of $N$. But every element of $\eta(\Omega)$, being an element of $\spectrum(A)$, must have norm $1$, and the only norm one invertible elements of a unital \cstar-algebra are its unitaries, we obtain that
\[
	\eta(\Omega)\subseteq \Ucal(N)\cap \Ucal(\dual{A})\,,
\]
and that $\eta(s)^*=\eta(s^{-1})$ for every $s\in \Omega$. Let $s,t\in \Omega$ and $\alpha,\beta\in\complexs$ be arbitrary. Set
\[
x:=\alpha\eta(s)+\beta\eta(t)+\conjugate{\alpha}\eta(s)^*+\conjugate{\beta}\eta(t)^*.
\]
Then $x=x^*\in N$, and so Kadison's generalized Schwarz inequality \cite[Theorem 2]{Kadison52}, applied to the inclusion $\iota:N\to \dual{A}$, gives  $x\bullet x\ge xx$. We see that
\begin{align*}
    x\bullet x=2\Re\bigg\{ & \abs{\alpha}^2+\abs{\beta}^2+
    \alpha^2\ \eta(s^2)+\beta^2\ \eta(t^2)+\\
    &
    \alpha\beta\ [\eta(st)+\eta(ts)]+\alpha\conjugate{\beta}\ [\eta(st^{-1})+\eta(t^{-1}s)]\bigg\}\quad \textrm{and}\\
    x x= 2\Re\bigg\{ & \abs{\alpha}^2+\abs{\beta}^2 + \alpha^2\eta(s)\eta(s)+\beta^2\eta(t)\eta(t)+\\
    &
    \alpha\beta[\eta(s)\eta(t)+\eta(t)\eta(s)]+\alpha\conjugate{\beta}[\eta(s)\eta(t)^*+\eta(t)^*\eta(s)]\bigg\}.
\end{align*}
It follows (cf. the proof of \cite[Lemma 4.2]{Pham}) that
\begin{align*}
    \eta(st)+\eta(ts)=\eta(s)\eta(t)+\eta(t)\eta(s)\quad(s,t\in \Omega)\,.
\end{align*}
Thus, by linearity and density, we see that 
\[
	x\bullet y+y\bullet x=xy+yx\quad(x,y\in N)\,,
\]
i.e. $\iota:N\to \dual{A}$ is a Jordan-homomorphism. By \cite[Lemma 3.1 and Theorem 3.3]{Stormer}, $\iota$ extends to a weak$^*$-continuous Jordan-homomorphism $\kappa:\bidual{N}\to \dual{A}$, whose kernel must be a (weak$^*$-closed) two-sided ideal of $\bidual{N}$. Thus $\kappa$ induces a weak$^*$-continuous Jordan-monomorphism $\kappa_0$ from a von Neumann algebra $\bidual{N}/\ker\kappa$ into $\dual{A}$. The map $\kappa_0$ must be isometric by \cite[Corollary 3.5]{Stormer}, and hence its image is weak$^*$-closed. Set $G:=\eta(\Omega)$. Since $f=(\hat{f}|_G)\circ\eta$ for every $f\in A$, the map $f\mapsto \hat{f}|_G$ ($f\in A$) is injective, and so $G$ spans a weak$^*$-dense subspace of $\dual{A}$. But $G$ is in the image of $\kappa_0$, so $\kappa_0$ is a Jordan isomorphism from $\bidual{N}/\ker\kappa$ onto $\dual{A}$. In particular, the product $\bullet$ of $N$ extends to a \emph{new} product on $\dual{A}$, also denoted by $\bullet$, making $\dual{A}$ a \emph{new} von Neumann algebra (whose unit is still the given unit and the adjoint operation is still the given one).

Now $G$ with the new product $\bullet$ of $\dual{A}$ and with the weak$^*$-topology of $\dual{A}$ is a topological group; this is because the unitary group $\Ucal(\dual{A})$ is a topological group with respect to the weak$^*$-topology. Since $A$ is isomorphic to $\widehat{A}|_G$ via the map $f\mapsto \hat{f}|_G$, we identify $A$ with $\widehat{A}|_G$, and it is easy to see that $A$ is also Tauberian as a subalgebra of $\C(G)$. Take an $f\in A$ such that $f(s)=1$ for some $s\in G$. Since $A$ is a Tauberian  subalgebra of $\C(G)$, we can actually choose such $f$ with a compact support in $G$. It follows that $\set{t\in G\colon \abs{f(t)}\ge 1/2}$ is a compact neighbourhood of $s$ in $G$. Thus $G$ is a locally compact group.

Let $f\in P(A)$. We \emph{claim} that $f \in \FSa(G)$. Indeed, take $\alpha_1,\ldots,\alpha_n\in\complexs$ and $t_1,\ldots, t_n\in G$. Then, since $\iota:N\to \dual{A}$ is positive, we see that
\begin{align*}
0 &\le \duality{\left(\sum_{j=1}^n \alpha_j \eta(t_j)\right)^*\bullet\left(\sum_{j=1}^n \alpha_j \eta(t_j)\right)}{f}\\
&=\sum_{i,j=1}^n \conjugate{\alpha_i}\alpha_j\duality{\eta(t_i^{-1}t_j)}{f}=\sum_{i,j=1}^n \conjugate{\alpha_i}\alpha_j f(t_i^{-1}t_j)\,.
\end{align*}
This shows that $f$ is positive definite. Thus $A\subseteq \FSa(G)$. Since $\FSa(G)$ is (commutative and) semisimple, it follows that the inclusion $A\to \FSa(G)$ is bounded by \v{S}hilov's  theorem (cf. \cite[Theorem 2.3.3]{Dales}). The condition that $A\cap \C_c(G)$ is dense in $A$ now implies that in fact $A\subseteq \Fa(G)$. Moreover, because now translations by elements of $G$ are precisely module actions of $G$ as a subset of $(\dual{A},\bullet)$ on $A$, $A$ is (two-sided) translation-invariant as an algebra of functions on the group $G$. Consider the inclusion $\Phi:A\to  \Fa(G)$, its dual $\dual{\Phi}:\VN(G)\to \dual{A}$ maps $\lambda_G(t)\mapsto t$ for each $t\in G$. It follows from the discussion above that $\dual{\Phi}$ is a (normal) $*$-homomorphism when $\dual{A}$ is with the product $\bullet$, which must have a weak$^*$-dense range in $\dual{A}$ because $\Phi$ is injective. Thus, in fact, $\dual{\Phi}:\VN(G)\to\dual{A}$ is a $*$-epimorphism, and so $\Phi$ is an isometric monomorphism from $A$ into $\Phi(A)$.

Hence $A$ is a closed translation-invariant Tauberian subalgebra of $\Fa(G)$. Under condition (i), we also have $\spectrum(A)=G$. By Theorem \ref{when A=A(G), nonzero real function case} (and also Lemma \ref{real element vs real function}), we obtain $A=\Fa(G)$. 
\end{proof}

\begin{remark}\label{a characterisation of Fourier algebras: remark}
If in the hypothesis of the previous theorem, instead of condition (i), we assume only (i.a) and (i.b), then it is no longer true that $A=\Fa(G)$; the closure of polynomials in $\Fa(\unitcircle)$ provides an example. In this case, in order to still conclude that $A=\Fa(G)$, we could replace the assumption that $A$ (approximately) contains a nonzero real function by the stronger condition that $A$ is conjugation-closed.
\end{remark}

\begin{corollary}\label{a characterisation of Fourier algebras 2}
Let $A$ be a  commutative Banach algebra whose dual $\dual{A}$ is a \wstar-algebra and whose spectrum $\spectrum(A)$ contains a subset $G$ that is a group under the multiplication of $\dual{A}$ and is separating for $A$. Suppose also that $\widehat{A}|_G$ is a conjugation-closed Tauberian algebra on $G$. Then $\spectrum(A)=G$ is a locally compact group, and the Gelfand transfrom is an isometric isomorphism from $A$ onto $\Fa(G)$.
\end{corollary}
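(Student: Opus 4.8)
The plan is to apply the conjugation-closed version of Theorem \ref{a characterisation of Fourier algebras} recorded in Remark \ref{a characterisation of Fourier algebras: remark}, taking $\Omega:=G$. Since $G$ is separating for $A$, the Gelfand transform $f\mapsto\hat f|_G$ identifies $A$ isometrically with the subalgebra $\widehat{A}|_G$ of $\C(G)$, which is a conjugation-closed Tauberian algebra on $G$ by hypothesis; moreover $A$ separates the points of $G$ because $G\subseteq\spectrum(A)$. Note that the full condition (i) of Theorem \ref{a characterisation of Fourier algebras} --- that every character of $A$ lies in $G$ --- is exactly the assertion $\spectrum(A)=G$ that we are trying to prove, and so is unavailable a priori; this is why the conjugation-closedness of $A$ will be essential (cf.\ the polynomial example in Remark \ref{a characterisation of Fourier algebras: remark}). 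It therefore remains to verify that $A$ is an $F$-algebra together with conditions (i.a), (i.b), (ii) and (iii).

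First I would pin down the unit. Let $e\in G$ be the identity of the group $G$. Because $G$ is separating for $A$, the linear span of $G$ is weak$^*$-dense in $\dual{A}$. As $es=s=se$ for every $s\in G$, the (weak$^*$-continuous) left and right multiplications by $e$ on the von Neumann algebra $\dual{A}$ agree with the identity on this weak$^*$-dense span, and hence everywhere; thus $e=1$, the unit of $\dual{A}$. Consequently the unit $1$ of $\dual{A}$ equals a point of $\spectrum(A)$ and so is a character of $A$, which shows that $A$ is an $F$-algebra and that (i.a) holds with this $e$.

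Next, every $s\in G$ is invertible in $\dual{A}$, its inverse being the group inverse $s^{-1}\in G$; since both $s$ and $s^{-1}$ are characters they satisfy $\norm{s}\le 1$ and $\norm{s^{-1}}\le 1$, and an invertible element of a \cstar-algebra with this property is unitary. Hence $G\subseteq\Ucal(\dual{A})$ and $s^*=s^{-1}\in G$ for every $s\in G$; as the involution of the $F$-algebra satisfies $f^*(s)=\conjugate{f(s^*)}$, condition (i.b) holds with $t=s^{-1}$. For (ii) and (iii) I would observe that the left translations are exactly the predual module actions of $\dual{A}$: for $s\in G$ one has $L_s f=f\cdot s\in A$, so $A$ is left translation-invariant. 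If $\alpha_i\in\complexs$ and $s_i\in G$ satisfy $\abs{\sum_i\alpha_i f(s_i)}\le\norm{f}$ for all $f\in A$, this says precisely that $x:=\sum_i\alpha_i s_i$ has $\norm{x}_{\dual{A}}\le 1$; then $\sum_i\alpha_i L_{s_i}f=f\cdot x$, and contractivity of the module action gives $\norm{\sum_i\alpha_i L_{s_i}:A\to A}\le\norm{x}\le 1$, which is (iii).

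With all hypotheses in place, the conjugation-closed version of Theorem \ref{a characterisation of Fourier algebras} yields $A\cong\Fa(G)$ isometrically, via $f\mapsto\hat f|_G$, where $G$ carries its locally compact group topology. Finally, since $\spectrum(\Fa(G))=G$ and the identification is realised by the Gelfand transform, the inclusion $G\subseteq\spectrum(A)$ is forced to be an equality, giving $\spectrum(A)=G$ and completing the proof. I expect the main obstacle to be the bookkeeping around the unit and the involution --- namely showing $e=1$ so that $A$ is genuinely an $F$-algebra and (i.a)/(i.b) hold --- together with the recognition that, full condition (i) being unavailable, one must route through the conjugation-closed variant rather than the base theorem.
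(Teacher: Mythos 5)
Your proposal is correct and follows essentially the same route as the paper's own proof: pin down the identity of $G$ as the unit of $\dual{A}$ via the weak$^*$-dense span (so $A$ is an $F$-algebra and (i.a) holds), deduce that $G$ is a self-adjoint subgroup of $\Ucal(\dual{A})$ (giving (i.b)), realise the left translations as the predual module actions $L_sf=f\cdot s$ (giving (ii) and (iii)), and then invoke the conjugation-closed variant of Theorem \ref{a characterisation of Fourier algebras} from Remark \ref{a characterisation of Fourier algebras: remark} --- which is precisely how the paper proceeds, there phrased as running the theorem's proof with (i.a)/(i.b) and finishing with \cite[Theorem 2.1]{BLSchl}. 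Your closing observation that $\spectrum(A)=G$ then follows from $\spectrum(\Fa(G))=G$ is a point the paper leaves implicit, and your unitarity argument ($\norm{s}\le 1$ and $\norm{s^{-1}}\le 1$ force $s$ unitary) is a slightly more careful justification of the same step.
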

\begin{proof}
Since $G$ is separating for $A$, it spans a weak$^*$-dense subspace of $\dual{A}$, and so, it is easy to see that the identity of $G$ must be $1$, the identity of $\dual{A}$. In particular, $A$ is an $F$-algebra. Also $G$ is a subgroup of the invertible group of $\dual{A}$, but since $G\subseteq\spectrum(A)\subseteq \dual{A}_{[1]}$, $G$ must actually be a subgroup of the unitary group of $\dual{A}$. In particular, $G$ is self-adjoint.

The assumption also implies that $f\mapsto \hat{f}|_G$ is an algebra isomorphism from $A$ onto $\widehat{A}|_G$. Giving the latter the norm induced from $A$, we may (and shall) assume that $A\subseteq \C(G)$. It is also easy to see that $L_sf=f\cdot s$, the right $\dual{A}$-module action of $s$ on $f$, for each $f\in A$ and $s\in G$. So $A$ is left translation-invariant, and for every $\alpha_i\in\complexs$ and $s_i\in G$ with $\abs{\sum_{i=1}^m\alpha_i f(s_i)}\le \norm{f}$ $(f\in A)$, we see that 
\[
	\norm{\sum_{i=1}^m \alpha_i L_{s_i}:A\to A}=\norm{\sum_{i=1}^m \alpha_i s_i}\le 1\,.
\]
Thus the hypothesis of Theorem \ref{a characterisation of Fourier algebras} holds (with condition (i) being replaced by conditions (i.a) and (i.b) in its proof). We obtain that $A$ is a conjugation-closed translation-invariant closed subalgebra of $\Fa(G)$ that separates points of $G$. It follows from \cite[Theorem 2.1]{BLSchl}  that $A=\Fa(G)$.
\end{proof}

The following generalises \cite[Theorem 3.2.12]{CL}.

\begin{corollary}\label{a characterisation of Fourier algebras 3}
Let $A$ be a  commutative semisimple Banach algebra that is also Tauberian and approximately contains a nontrivial real element. Suppose that $\dual{A}$ is a \wstar-algebra and that $\spectrum(A)$, under the multiplication of $\dual{A}$, is a group. Then $A\cong \Fa(G)$ for some locally compact group $G$. \enproof
\end{corollary}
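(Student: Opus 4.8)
The plan is to apply Theorem \ref{a characterisation of Fourier algebras} directly, taking $\Omega := G := \spectrum(A)$ with the group structure coming from the multiplication of the \wstar-algebra $\dual{A}$. Since $A$ is commutative and semisimple, the Gelfand transform $f\mapsto\hat f$ is an injective homomorphism, so I would identify $A$ with $\widehat{A}\subseteq\C_0(\spectrum(A))=\C_0(\Omega)$. Under this identification the Tauberian hypothesis on $A$ says exactly that $A$ is a Tauberian subalgebra of $\C(\Omega)$, while ``$A$ approximately contains a nontrivial real element'' is, by Definition \ref{nontrivial real function}, precisely the statement that $A$ approximately contains a nontrivial real function on $\Omega$. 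Condition (i) of Theorem \ref{a characterisation of Fourier algebras} is then automatic, since $\Omega$ is all of $\spectrum(A)$, so no extra work is needed there (and Lemma \ref{real element vs real function} is not even required).

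The step I expect to be the main obstacle is verifying that $A$ is an $F$-algebra, i.e.\ that the identity $1$ of $\dual{A}$ is a character of $A$; this is the conceptual heart, linking the group structure on the spectrum to the $F$-algebra structure. Here I would argue as in the opening of the proof of Corollary \ref{a characterisation of Fourier algebras 2}. Because $A$ is semisimple, $G=\spectrum(A)$ is separating for $A$, so by the bipolar theorem $\mathrm{span}(G)$ is weak$^*$-dense in $\dual{A}$. Let $e$ be the identity of the group $G$. For each $s\in G$ one has $e\cdot s=s=s\cdot e$ in $\dual{A}$; fixing $f\in A$, the functionals $x\mapsto\duality{ex}{f}$ and $x\mapsto\duality{x}{f}$ are both weak$^*$-continuous (multiplication in a \wstar-algebra is separately weak$^*$-continuous) and agree on $G$, hence on all of $\dual{A}$. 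As $A$ separates points of $\dual{A}$, this forces $ex=x$ and, symmetrically, $xe=x$ for all $x$, so $e=1$. Consequently $1=e\in G=\spectrum(A)$ is a character, and since $\dual{A}$ is a \wstar-algebra with (unique) predual $A=\predual{(\dual{A})}$, the algebra $A$ is an $F$-algebra.

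It then remains to check conditions (ii) and (iii) of Theorem \ref{a characterisation of Fourier algebras}. For (ii), $\Omega=G$ is a group, and writing the pairing so that $f(s)=\duality{s}{f}$, the left translation satisfies $L_sf(t)=f(st)=\duality{st}{f}$; the functional $x\mapsto\duality{sx}{f}$ on $\dual{A}$ is weak$^*$-continuous (left multiplication by $s$ is weak$^*$-continuous and $f\in\predual{(\dual{A})}$), hence defines an element of $A$ that agrees with $L_sf$ on the weak$^*$-total set $G$, so $L_sf\in A$ and $A$ is left translation-invariant. For (iii), the dual of $L_s:A\to A$ is left multiplication by $s$ on $\dual{A}$, so $\sum_i\alpha_i\dual{L_{s_i}}$ is left multiplication by $\sum_i\alpha_is_i$ and has operator norm equal to the \cstar-norm $\norm{\sum_i\alpha_is_i}_{\dual{A}}$. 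If $\abs{\sum_i\alpha_if(s_i)}\le\norm{f}$ for all $f\in A$, then $\norm{\sum_i\alpha_is_i}_{\dual{A}}=\sup_{\norm{f}\le 1}\abs{\sum_i\alpha_if(s_i)}\le 1$, whence $\norm{\sum_i\alpha_iL_{s_i}:A\to A}=\norm{\sum_i\alpha_i\dual{L_{s_i}}:\dual{A}\to\dual{A}}\le 1$. With all hypotheses verified, Theorem \ref{a characterisation of Fourier algebras} produces a locally compact group $G$ with $A\cong\Fa(G)$, as required.
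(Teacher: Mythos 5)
Your proposal is correct and follows exactly the route the paper intends: the corollary is stated with \enproof{} precisely because it is the direct application of Theorem \ref{a characterisation of Fourier algebras} with $\Omega=\spectrum(A)$, the hypothesis verification (identity of the group equals $1\in\dual{A}$ via weak$^*$-density of $\mathrm{span}(\spectrum(A))$ and separate weak$^*$-continuity of multiplication, translation invariance via the module action, and the norm identity $\norm{\sum_i\alpha_iL_{s_i}}=\norm{\sum_i\alpha_is_i}$) being the same argument spelled out in the proof of Corollary \ref{a characterisation of Fourier algebras 2}. You also correctly note that condition (i) now holds in full (not just (i.a)--(i.b)), which is exactly why the approximate-real-element hypothesis can replace conjugation-closedness here.
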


We have the following characterisation of when the predual of a Hopf--von Neumann algebra is the Fourier algebra of a locally compact group.

\begin{corollary}\label{a characterisation of Fourier algebras, Hopf--von Neumann case}
Let $A$ be the predual of a Hopf--von Neumann algebra such that $A$ is  commutative, semisimple, Tauberian, and approximately contains a nontrivial real element, and that $\spectrum(A)$ has at most one positive element. Then $A\cong \Fa(G)$ for some locally compact group $G$.
\end{corollary}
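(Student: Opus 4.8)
The plan is to deduce this from Corollary~\ref{a characterisation of Fourier algebras 3}. All of the hypotheses of that corollary except one are assumed here outright: $A$ is commutative, semisimple, Tauberian, and approximately contains a nontrivial real element, while $\dual{A}=\Mfrak$ is a von Neumann algebra because $A$ is by assumption the predual of the Hopf--von Neumann algebra $\Mfrak$. So the whole matter reduces to verifying the remaining hypothesis, namely that $\spectrum(A)$, viewed inside $\dual{A}=\Mfrak$, is a group under the (von Neumann algebra) multiplication of $\Mfrak$.

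First I would describe $\spectrum(A)$ concretely. Writing $\Gamma\colon\Mfrak\to\Mfrak\overline{\otimes}\Mfrak$ for the comultiplication, the product of $A$ is given by $\duality{f*g}{x}=\duality{f\otimes g}{\Gamma(x)}$ for $f,g\in A$ and $x\in\Mfrak$. Since the functionals $f\otimes g$ (with $f,g\in A$) separate the points of $\Mfrak\overline{\otimes}\Mfrak$, a nonzero $x\in\Mfrak=\dual{A}$ is a character of $A$ precisely when it is \emph{group-like}, i.e. $\Gamma(x)=x\otimes x$. Because $A$ is an $F$-algebra, the identity $\mathbf{1}$ of $\Mfrak$ is a character, so $\Gamma(\mathbf{1})=\mathbf{1}\otimes\mathbf{1}$ and $\mathbf{1}\in\spectrum(A)$. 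As $\Gamma$ is a normal unital $*$-homomorphism, the group-like elements are closed under multiplication, since $\Gamma(xy)=\Gamma(x)\Gamma(y)=(x\otimes x)(y\otimes y)=xy\otimes xy$, and under the adjoint, since $\Gamma(x^*)=\Gamma(x)^*=x^*\otimes x^*$.

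The crux is then to force unitarity using the single-positive-element hypothesis. Let $x\in\spectrum(A)$. Then $x^*$ is a nonzero group-like element, hence again a character, and $x^*x$ is a positive group-like element which is nonzero (as $\norm{x^*x}=\norm{x}^2>0$); thus $x^*x$ is a positive element of $\spectrum(A)$. Since $\mathbf{1}$ is also a positive element of $\spectrum(A)$ and, by hypothesis, $\spectrum(A)$ has at most one positive element, we conclude $x^*x=\mathbf{1}$; symmetrically $xx^*=\mathbf{1}$. Hence every $x\in\spectrum(A)$ is a unitary of $\Mfrak$ with $x^{-1}=x^*\in\spectrum(A)$. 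Consequently $\spectrum(A)$ contains $\mathbf{1}$ and is closed under inverses and under products (a product of group-like unitaries is a group-like unitary, in particular nonzero, hence a character); that is, $\spectrum(A)$ is a group under the multiplication of $\dual{A}$. Corollary~\ref{a characterisation of Fourier algebras 3} now yields $A\cong\Fa(G)$ for some locally compact group $G$.

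I expect the only genuine subtlety to be the positivity step: recognising $x^*x$ as a \emph{nonzero positive} character and then invoking uniqueness of the positive element of $\spectrum(A)$ to pin it down to $\mathbf{1}$. The remaining ingredients---the duality formula for the product, and the closure of group-like elements under multiplication and adjoint---are routine once one records that $\Gamma$ is a normal unital $*$-homomorphism, and that characters of the $F$-algebra $A$ are exactly the nonzero group-like elements of $\dual{A}$.
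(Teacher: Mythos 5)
Your proof is correct and follows essentially the same route as the paper's own argument: both identify $\spectrum(A)$ with the nonzero group-like elements of $\dual{A}$, observe that this set (together with $0$) is self-adjoint and multiplicative, and use the at-most-one-positive-element hypothesis to force $x^*x=xx^*=1$, so that $\spectrum(A)$ is a subgroup of the unitary group of $\dual{A}$, whence Corollary \ref{a characterisation of Fourier algebras 3} applies. Your write-up is in fact slightly more detailed than the paper's (e.g.\ justifying the group-like characterisation via separation by elementary tensors, and noting $\norm{x^*x}=\norm{x}^2>0$), but the underlying idea is identical.
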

\begin{proof}
Denote by $\Delta:\dual{A}\to\dual{A}\closure{\otimes}\dual{A}$ the comultiplication. Then
\[
	\spectrum(A)=\set{x\in\dual{A}\setminus\set{0}\colon \Delta(x)=x\otimes x}\,.
\]
In particular, $1\in\spectrum(A)$, and, by the assumption, it is the only positive element in $\spectrum(A)$. Moreover, the above equation also shows that $\spectrum(A)\cup\set{0}$ is self-adjoint and multiplicative. Take $x\in\spectrum(A)$. It follows that $xx^*$ and $x^*x$ also belong to $\spectrum(A)$, and so $x^*x=xx^*=1$. Thus $\spectrum(A)$ is a subgroup of the unitary group of $\dual{A}$. The result then follows from the previous corollary.
\end{proof}

The following example shows that we cannot omit from the above corollary the assumption that $\spectrum(A)$ has at most one positive element.

\begin{example}
Take $n\in\naturals$ with $n\ge 2$. Let $A:=\lone_n$ so that $\dual{A}=\linfty_n$ is an $n$-dimensional commutative von Neumann algebra. The standard basis for both $\lone_n$ and $\linfty_n$ is denoted by $\set{\delta_1,\ldots,\delta_n}$. Define a linear map $\Delta:\linfty_n\to \linfty_n\otimes\linfty_n$ by setting
\[
	\Delta(\tuple{1}):=\tuple{1}\otimes \tuple{1}\quad \textrm{and}\quad \Delta(\delta_j):=\delta_j\otimes\delta_j\quad(2\le j\le n)\,;
\]
where $\tuple{1}:=\sum_{j=1}^n\delta_j$. It is easy to see that $(\linfty_n,\Delta)$ then becomes a Hopf--von Neumann algebra. Endowed with the induced multiplication, $A$ becomes an $n$-dimensional Banach algebra whose spectrum contains $\set{\tuple{1}, \delta_2,\ldots,\delta_n}$. It follows that $A$ is commutative and semisimple and that $\spectrum(A)=\set{\tuple{1}, \delta_2,\ldots,\delta_n}$. The conjugation-closed and Tauberian properties of $A$ now follow easily.
\end{example}

The identity element of a unital semisimple commutative Banach algebra $A$ is always a (nonzero) real element, and moreover, in this case, $A$ is automatically Tauberian. Thus we have the following characterisations of $\Fa(G)$ for compact $G$.

\begin{corollary}\label{a characterisation of Fourier algebras, compact case}
Let $A$ be a unital $F$-algebra that is also a subalgebra of $\C(\Omega)$, for some topological space $\Omega$. Suppose also that:
\begin{enumerate}
	\item every character of $A$ is implemented by some element of $\Omega$;
	\item $\Omega$ is a group and $A$ is left translation-invariant; 
	\item $\norm{\sum_{i=1}^m \alpha_i L_{s_i}:A\to A}\le 1$ whenever $\alpha_i\in\complexs$ and $s_i\in \Omega$ with
		\[	
			\abs{\sum_{i=1}^m\alpha_i f(s_i)}\le \norm{f}\quad(f\in A)\,.
		\]
\end{enumerate}
Then $A\cong \Fa(G)$ for some compact group $G$. \enproof
\end{corollary}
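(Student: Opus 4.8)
The plan is to run the proof of Theorem \ref{a characterisation of Fourier algebras} essentially verbatim, noting that the unitality of $A$ furnishes --- and in fact trivialises --- precisely the two hypotheses of that theorem that are absent here, namely Tauberian-ness as a subalgebra of $\C(\Omega)$ and approximate containment of a nontrivial real function. First I would record that $A$ is commutative, being a subalgebra of $\C(\Omega)$, and that its conditions (i)--(iii) are literally those of Theorem \ref{a characterisation of Fourier algebras}. The decisive point is that the opening portion of that theorem's proof --- the construction of the $C^*$-algebra $N$ through the Vidav--Palmer theorem, the Jordan-homomorphism machinery which turns $\dual{A}$ into a von Neumann algebra under a new product $\bullet$, and the identification of $G := \eta(\Omega)$ as a subgroup of the unitary group of $(\dual{A},\bullet)$ that, under condition (i), coincides with $\spectrum(A)$ --- invokes only (i.a), (i.b) and (iii), and never the Tauberian or real-function hypotheses. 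This entire stretch therefore transfers unchanged.

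The one genuinely new ingredient I would add is compactness. Since $A$ is unital, its spectrum $\spectrum(A)$ is weak$^*$-compact by the Gelfand theory of unital commutative Banach algebras. Because the construction just described equips $G = \spectrum(A)$ with the relative weak$^*$-topology and exhibits it as a topological group (the unitary group of $(\dual{A},\bullet)$ being a topological group in that topology), it follows immediately that $G$ is a \emph{compact} group. This single observation both upgrades the conclusion from ``locally compact'' to ``compact'' and disposes of the two missing hypotheses.

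Indeed, with $G$ compact the three later uses of Tauberian-ness and of a real function in the proof of Theorem \ref{a characterisation of Fourier algebras} all become automatic. First, $\C_c(G) = \C(G)$, so $A$ (identified via the Gelfand transform with $\widehat{A}|_G$) is trivially a Tauberian subalgebra of $\C(G)$ and $G$ is locally compact for free. Second, the positive-definiteness computation still yields $A \subseteq \FSa(G)$, and since $A = A \cap \C_c(G)$ is now dense in $A$, the same step of the theorem's proof gives $A \subseteq \Fa(G)$. Third, the identity of $A$ has Gelfand transform the constant function $\tuple{1} \in \C_0(G)$, a nonzero real function, so by Example \ref{nontrivial real function, example 1}(i) the algebra $A$ approximately contains a nontrivial real function on $G$. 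Thus $A$ is a closed translation-invariant Tauberian subalgebra of $\Fa(G)$ with $\spectrum(A) = G$ that approximately contains a nontrivial real function, and Theorem \ref{when A=A(G), nonzero real function case} (with Lemma \ref{real element vs real function}) delivers $A = \Fa(G)$, completing the proof.

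The main obstacle I anticipate is conceptual rather than computational: this corollary is \emph{not} a literal instance of Theorem \ref{a characterisation of Fourier algebras}, since a unital $A$ sitting inside $\C(\Omega)$ for a non-compact $\Omega$ --- for example $A = \Fa(K)$ realised on $\Omega = K_d$ for an infinite compact group $K$ --- is not a Tauberian subalgebra of $\C(\Omega)$. The care required is therefore to re-enter that theorem's proof and verify that its two extra hypotheses are called upon \emph{only} after the group $G = \spectrum(A)$ has been built, and that each such call is rendered vacuous by the compactness of $\spectrum(A)$; locating these precise points and checking that compactness covers each of them is where the argument must be handled attentively.
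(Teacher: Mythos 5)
Your proof is correct and is essentially the paper's own (omitted) argument: the paper justifies this corollary only by the remark preceding it — unitality supplies a nonzero real element and makes the algebra automatically Tauberian — and your re-entry into the proof of Theorem \ref{a characterisation of Fourier algebras}, checking that the Tauberian and real-function hypotheses are invoked only \emph{after} the group $G=\eta(\Omega)=\spectrum(A)$ has been constructed and that weak$^*$-compactness of the spectrum of a unital algebra then renders them automatic, is exactly the right way to fill in that omission. Your closing observation is a genuine subtlety the paper glosses over (the corollary is indeed not a literal instance of Theorem \ref{a characterisation of Fourier algebras}, since a unital algebra cannot be Tauberian on a non-compact $\Omega$, as your $\Fa(K)$-on-$K_d$ example shows); an alternative repair is to first descend the group structure of $\Omega$ to $\spectrum(A)$ — condition (iii) forces $L_s=L_t$ whenever $\eta(s)=\eta(t)$, so the product is well defined on $\eta(\Omega)$ — and then apply Theorem \ref{a characterisation of Fourier algebras} verbatim with $\Omega$ replaced by $\spectrum(A)$, and in either route the final appeal to Theorem \ref{when A=A(G), nonzero real function case} can be shortened, once $G$ is known to be compact, to Lemma \ref{when A=A(G), compact case}.
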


\begin{corollary}\label{a characterisation of Fourier algebras, Hopf--von Neumann case, compact}
Let $A$ be the predual of a Hopf--von Neumann algebra such that $A$ is  unital, commutative, and semisimple, and that $\spectrum(A)$ has at most one positive element. Then $A\cong \Fa(G)$ for some compact group $G$. \enproof
\end{corollary}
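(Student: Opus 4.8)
The plan is to recognise this as the unital specialisation of Corollary~\ref{a characterisation of Fourier algebras, Hopf--von Neumann case}. That corollary has exactly the same Hopf--von Neumann and spectral hypotheses, but in addition requires $A$ to be Tauberian and to approximately contain a nontrivial real element; I would show that both of these extra hypotheses are free consequences of $A$ being unital, commutative and semisimple, apply the corollary to obtain a locally compact group, and finally upgrade that group to a compact one using unitality.

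First I would record that a unital commutative Banach algebra has compact spectrum, so that $\C_c(\spectrum(A))=\C(\spectrum(A))$ and therefore $\widehat{A}\cap\C_c(\spectrum(A))=\widehat{A}$ is trivially dense in $\widehat{A}$; by Definition~\ref{Tauberian algebra} this already makes $A$ Tauberian. Next, since $\spectrum(A)$ is compact we have $\widehat{A}\subseteq\C(\spectrum(A))=\C_0(\spectrum(A))$, and the Gelfand transform of the identity of $A$ is the constant function $\tuple{1}$, which is a nonzero real element of $\C_0(\spectrum(A))$; hence $\widehat{A}$ approximately contains a nontrivial real function by Example~\ref{nontrivial real function, example 1}(i), i.e.\ $A$ approximately contains a nontrivial real element.

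With these two observations in place, every hypothesis of Corollary~\ref{a characterisation of Fourier algebras, Hopf--von Neumann case} is satisfied, so it yields an isometric isomorphism $A\cong\Fa(G)$ for some locally compact group $G$. It then remains only to see that $G$ is compact: the unit of $A$ transports to a unit of $\Fa(G)$, and $\Fa(G)$ is unital precisely when $G$ is compact (the constant function belongs to $\Fa(G)$ if and only if $G$ is compact). I expect no genuine obstacle here, the whole statement being a direct specialisation of an already-established result; the only step appealing to outside theory, rather than to the corollaries already proved, is this standard compactness criterion for Fourier algebras.
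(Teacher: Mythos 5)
Your proposal is correct and matches the paper's own (essentially omitted) argument: the remark preceding these corollaries notes precisely that a unital semisimple commutative Banach algebra is automatically Tauberian and that its identity is a nonzero real element, so Corollary~\ref{a characterisation of Fourier algebras, Hopf--von Neumann case} applies, and unitality of $\Fa(G)$ forces $G$ compact. Your extra step of passing from ``contains a nonzero real element'' to ``approximately contains a nontrivial real element'' via Example~\ref{nontrivial real function, example 1}(i) is exactly the justification the paper leaves implicit.
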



\section{Special isometries and characterisations of Fourier algebras}
\label{Special isometries and characterisations of Fourier algebras}

\noindent Walter proved in \cite{Walter74} that isometric automorphisms $T$ of $\FSa(G)$ (or $\Fa(G)$) with the property that
\[
	\norm{f-\emath^{\init\theta}Tf}^2+\norm{f+\emath^{\init\theta}Tf}^2\le 4\norm{f}^2\qquad(f\in \FSa(G),\ \theta\in\reals)
\]
are precisely the left translations and right translations by elements of $G$. He then went on to use this inequality among other axioms to characterise the Fourier (and the Fourier--Stieltjes) algebras in \cite[\S 4]{Walter74}. Our aim now is to use this inequality as a replacement for condition (iii) in Theorem \ref{a characterisation of Fourier algebras}, to obtain a different simple set of axioms that characterise the Fourier algebras; the Fourier--Stieltjes algebras case will be dealt with in a later section.

In fact, it follows from the parallelogram law for Hilbert spaces that a linear isometry $T$ of the predual $\predual{M}$ of a von Neumann algebra $M$ will satisfy
\begin{align}\label{eq: a nice isometry}
	\norm{f-\emath^{\init\theta}Tf}^2+\norm{f+\emath^{\init\theta}Tf}^2\le 4\norm{f}^2 
\end{align}
for every $f\in \predual{M}$ and $\theta\in\reals$, if it is a left or right module multiplication by a unitary of $M$. More generally, if $T$ is a \emph{mixed module multiplication} by a unitary of $M$, i.e., if there is a central projection $z\in M$ and a unitary $u\in M$ such that
\[
	Tf=uz\cdot f+f\cdot(1-z)u\qquad (f\in \predual{M})\,,
\]
then $T$ is a linear isometry of $\predual{M}$ that satisfies the above inequality. However, these are not all of such isometries. Below, whenever $E$ is a subspace of a Banach space $X$, we shall write
\[
	E^\perp:=\set{x'\in\dual{X}\colon\quad \duality{x'}{x}=0\ (x\in E)}\,.
\]
Also, when we say that $T$ is an isometry \emph{of} $X$, we mean that $T$ is also surjective.

\begin{lemma}\label{a general description of nice isometry}
Let $I$ and $J$ be two closed ideals in a von Neumman algebra $M$, and let $u,v\in M$ be unitaries. Suppose that $I\cap J=\set{0}$ and that $vx-xu\in I+J$ for every $x\in M$. Then there is a unique bounded linear operator $T:\predual{M}\to\predual{M}$ such that
\[
	\bidual{T}f=u\cdot f\quad\textrm{for every}\ f\in I^\perp\quad\textrm{and}\quad \bidual{T}f=f\cdot v\quad\textrm{for every}\ f\in J^\perp\,.
\]
Moreover, $T$ is an isometry of $\predual{M}$ that satisfies \eqref{eq: a nice isometry} for  every $f\in \predual{M}$ and $\theta\in\reals$.
\end{lemma}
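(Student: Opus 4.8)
The plan is to construct $T$ by first exhibiting its transpose $\dual T:M\to M$ as an explicit weak$^*$-continuous map and then taking the pre-adjoint. Throughout I use that $I$ and $J$, being weak$^*$-closed two-sided ideals, have the form $I=zM$ and $J=wM$ for central projections $z,w\in M$, and that the hypothesis $I\cap J=\set0$ forces $zw=0$; writing $r:=1-z-w$, the remaining hypothesis ``$vx-xu\in I+J$ for every $x$'' then says precisely that $r(vx-xu)=0$ for all $x\in M$. I fix the module actions of $M$ on $\dual M=\bidual{(\predual M)}$ by $\duality{a\cdot\phi}{x}=\duality{\phi}{xa}$ and $\duality{\phi\cdot a}{x}=\duality{\phi}{ax}$, which restrict to the usual actions on $\predual M$.

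First I would define $S:M\to M$ by $Sx:=(1-w)vx+wxu$. Since left and right multiplications by fixed elements and by the central projection $w$ are weak$^*$-continuous, so is $S$, and hence $S$ has a pre-adjoint $T:=\predual S$, a bounded operator on $\predual M$ with $\dual T=S$ and therefore $\bidual T=\dual S$. To check the defining formulas, take $f\in I^\perp$; for every $x$ one has $\duality{\dual Sf-u\cdot f}{x}=\duality{f}{(1-w)(vx-xu)}$, and $(1-w)(vx-xu)=z(vx-xu)\in I$ because $r(vx-xu)=0$, so this vanishes and $\bidual Tf=u\cdot f$ on $I^\perp$. Similarly, for $f\in J^\perp$ the difference is $\duality{f}{w(xu-vx)}$ with $w(xu-vx)\in J$, giving $\bidual Tf=f\cdot v$. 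For uniqueness I would verify $\predual M\subseteq I^\perp+J^\perp$: decomposing $\phi\in\predual M$ along the central projections as $\phi=z\cdot\phi+w\cdot\phi+r\cdot\phi$, the first summand is annihilated by $J$ and the last two by $I$, so $\phi\in I^\perp+J^\perp$. As the two prescribed formulas thus determine $\bidual{(\cdot)}$ on $I^\perp+J^\perp\supseteq\predual M$, and $T$ is the restriction of $\bidual T$ to $\predual M$, the operator $T$ is unique.

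For the isometry and for \eqref{eq: a nice isometry} I would exploit the central splitting $M=(1-w)M\oplus wM$, under which $\predual M$ is the $\ell^1$-direct sum of $\predual{((1-w)M)}$ and $\predual{(wM)}$. The formula for $S$ shows it splits as left multiplication by $v$ on $(1-w)M$ and right multiplication by $u$ on $wM$; dualizing, $T=T_1\oplus T_2$, where $T_1$ is right module multiplication by $v$ on $\predual{((1-w)M)}$ and $T_2$ is left module multiplication by $u$ on $\predual{(wM)}$. Each $T_i$ is manifestly a surjective isometry (its inverse is module multiplication by the adjoint unitary), and since the norm on the $\ell^1$-sum is additive, $T$ is a surjective isometry of $\predual M$.

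Finally, each $T_i$ is a one-sided module multiplication by a unitary, hence satisfies \eqref{eq: a nice isometry} by the remark preceding the lemma. Writing $f=f_1+f_2$ in the $\ell^1$-decomposition and setting $a_i:=\norm{f_i-\emath^{\init\theta}T_if_i}$ and $a_i':=\norm{f_i+\emath^{\init\theta}T_if_i}$, additivity of the norm gives $\norm{f-\emath^{\init\theta}Tf}=a_1+a_2$ and $\norm{f+\emath^{\init\theta}Tf}=a_1'+a_2'$, while the one-sided case gives $a_i^2+a_i'^2\le4\norm{f_i}^2$. Viewing $(a_i,a_i')$ as a vector of Euclidean length at most $2\norm{f_i}$ in $\reals^2$, the triangle inequality yields
\[
	\norm{f-\emath^{\init\theta}Tf}^2+\norm{f+\emath^{\init\theta}Tf}^2=(a_1+a_2)^2+(a_1'+a_2')^2\le\big(2\norm{f_1}+2\norm{f_2}\big)^2=4\norm{f}^2\,,
\]
as required. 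I expect the main obstacle to be the bookkeeping of the second paragraph: recognizing that the single hypothesis $vx-xu\in I+J$ is exactly the compatibility condition making $S$ (equivalently, the two prescriptions for $\bidual T$ on $I^\perp$ and $J^\perp$) well defined, and establishing $\predual M\subseteq I^\perp+J^\perp$ so that these prescriptions pin down $T$ uniquely. Once the central decomposition is in place, the isometry and the inequality follow formally.
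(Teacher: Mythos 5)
Your proof rests on a misreading of the hypotheses: you take ``closed ideals'' to mean weak$^*$-closed ideals, and accordingly write $I=zM$ and $J=wM$ for central projections $z,w\in M$ with $zw=0$. The lemma assumes only that $I$ and $J$ are norm-closed ideals, and a norm-closed ideal of a von Neumann algebra is in general not of the form $zM$ (the compact operators inside $\operators(\Hcal)$ are the standard example). This is not a peripheral technicality --- it is the entire point of the lemma. As the remark immediately following it in the paper explains, the interesting case is precisely when the $\sigma(M,\predual{M})$-closures of $I$ and $J$ intersect nontrivially (which can happen even though $I\cap J=\set{0}$): then the resulting $T$ is \emph{not} a mixed module multiplication by any unitary of $M$. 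Your construction produces exactly a mixed module multiplication, $Tf=uw\cdot f+f\cdot(1-w)v$, so it re-proves the easy case already disposed of in the discussion preceding the lemma and misses every case the lemma was designed to add. (The ideals fed into this lemma by its converse, Lemma \ref{characterisation of a single nice isometry}, are pre-annihilators of weak$^*$-closed submodules of $\dual{M}$, hence norm-closed but typically not weak$^*$-closed in $M$, so the general case is genuinely needed.)

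Concretely, here is what breaks and what must replace it. Without weak$^*$-closedness, the projections controlling $I^\perp$ and $J^\perp$ live only in $\bidual{M}$: the support projection $z_I$ with $I^\perp=z_I\cdot\dual{M}$ is a central projection of $\bidual{M}$, not of $M$. Consequently your map $Sx=(1-w)vx+wxu$ no longer takes values in $M$, and the pre-adjoint trick collapses. The paper instead (1) obtains $\dual{M}=I^\perp+J^\perp$ from $I\cap J=\set{0}$, because $M\to M/I\oplus M/J$ is an injective $*$-homomorphism and hence isometric; (2) uses $vx-xu\in I+J$ to get $u\cdot f=f\cdot v$ on $I^\perp\cap J^\perp=(I+J)^\perp$, so the two prescriptions glue to a well-defined map $S$ on $\dual{M}$ (this part of your bookkeeping has a correct analogue); (3) writes $Sf=uz_I\cdot f+f\cdot(1-z_I)v$ with $z_I\in\bidual{M}$, which yields the isometry property and \eqref{eq: a nice isometry} on all of $\dual{M}$, much as in your $\ell^1$-sum computation but one dual level up; and (4) --- the step with no counterpart in your proposal --- proves that $S$ is $\sigma(\dual{M},M)$-continuous, via the Kre\u{\i}n--\u{S}mulian theorem and a subnet argument splitting bounded nets along $z_I$, so that $S=\bidual{T}$ for an isometry $T$ of $\predual{M}$ (using \cite{Kadison51} to descend). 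You would need to supply step (4), or something equivalent, before your argument proves the stated lemma.
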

\begin{proof}
Since $I\cap J=\set{0}$, we see that $\dual{M}=I^\perp+J^\perp$; this is due to the Hahn--Banach theorem and the fact that the natural mapping $M\to M/I\oplus M/J$ is an injective $*$-homomorphism, and hence isometric.  Since $I^\perp\cap J^\perp=(I+J)^\perp$, we see from the assumption that $u\cdot f=f\cdot v$ whenever $f\in I^\perp\cap J^\perp$. Thus there exists a unique linear mapping $S:\dual{M}\to\dual{M}$ such that
\[
	Sf=u\cdot f\quad\textrm{for every}\ f\in I^\perp\quad\textrm{and}\quad Sf=f\cdot v\quad\textrm{for every}\ f\in J^\perp\,.
\]
Finally, since $I^\perp$ is closed sub-$M$-bimodule of $\dual{M}$ (it is in fact even weak$^*$-closed),  it has a support projection $z_I$ in $\bidual{M}$, which is a central projection in $\bidual{M}$ such that $I^\perp=z_I\cdot \dual{M}$  (cf. \cite[Theorem III.2.7]{Takesaki}); $1-z_I$ is actually the unit of the $\sigma(\bidual{M},\dual{M})$-closure of $I$. Let $z_J$ be the similar projection for $J$. Then $z_I+z_J\ge 1$, and so $(1-z_I)\cdot\dual{M}\subseteq J^\perp$. It then follows that
\[
	Sf=uz_I\cdot f+f\cdot(1-z_I)v\quad (f\in\dual{M})\,.
\]
In particular, $S$ is an isometry of $\dual{M}$ that satisfies \eqref{eq: a nice isometry} for  every $f\in \dual{M}$ and $\theta\in\reals$. It remains to prove the existence of an isometry $T$ on $\predual{M}$ such that $\bidual{T}=S$. For this it is sufficient to prove that $S:\dual{M}\to\dual{M}$ is $\sigma(\dual{M},M)$-continuous, because if that is the case then $S=\dual{\predual{S}}$ for some bounded linear operator $\predual{S}:M\to M$, which is necessarily isometric and onto, and so, by \cite{Kadison51}, $\predual{S}$ is in turn $\dual{T}$ for some isometry $T$ on $\predual{M}$. To prove that $S:\dual{M}\to\dual{M}$ is $\sigma(\dual{M},M)$-continuous, it is sufficient by the Kre\u{\i}n--\u{S}mulian theorem to prove that $S:\closedball{\dual{M}}{1}\to\dual{M}$ is $\sigma(\dual{M},M)$-$\sigma(\dual{M},M)$-continuous. Assume toward a contradiction that there is a net $(f_\alpha)$ in $\closedball{\dual{M}}{1}$ that $\sigma(\dual{M},M)$-converges to $f$ but such that $(Sf_\alpha)$ does not $\sigma(\dual{M},M)$-converge to $Sf$. Write $f_\alpha=g_\alpha+h_\alpha$ where $g_\alpha\in \closedball{I^\perp}{1}$ and $h_\alpha\in \closedball{J^\perp}{1}$, says $g_\alpha:=z_I\cdot f_\alpha$ and $h_\alpha:=(1-z_I)\cdot f_\alpha$. Since $I^\perp$ and $J^\perp$ are $\sigma(\dual{M},M)$-closed, by passing to subnets, we shall assume that $(g_\alpha)$ $\sigma(\dual{M},M)$-converges to some $g$ and $(h_\alpha)$ $\sigma(\dual{M},M)$-converges to some $h$. But then because $u,v\in M$, we see from the above decomposition formula for $S$ that $(Sf_\alpha)$ $\sigma(\dual{M},M)$-converges to $S(g+h)=Sf$; a contradiction.
\end{proof}

\begin{remark}
With the assumption as in the previous lemma, but suppose moreover that $I+J$ is the closed ideal generated by $\set{vx-xu\colon x\in M}$, noting that $I+J$ is always a closed ideal of $M$ (cf. \cite[Exercise I.8.4]{Takesaki}), then it can be seen that
\begin{itemize}
	\item $I^\perp$ is the largest closed sub-$M$-bimodule of $\dual{M}$ on which $\bidual{T}f=u\cdot f$, and
	\item $J^\perp$ is the largest closed sub-$M$-bimodule of $\dual{M}$ on which $\bidual{T}f=f\cdot v$.
\end{itemize}
In particular, if in addition the $\sigma(M,\predual{M})$-closures of $I$ and $J$ intersects, then the constructed isometry $T$ will not be a mixed module multiplication by any unitary of $M$ (although, $\bidual{T}$ always is a mixed module multiplication as a map on the predual of $\bidual{M}$).
\end{remark}

To prove the converse of Lemma \ref{a general description of nice isometry}, we need the following strengthening of \cite[Theorem 2]{Walter74}.

\begin{lemma}\label{a preliminary result by Walter}
Let $T$ be a linear isometry of the predual $\predual{M}$ of a von Neumann algebra $M$ that satisfies \eqref{eq: a nice isometry} for every normal \emph{pure} state $f$ and every real number $\theta$. Suppose that $z$ is a minimal central projection of $M$. Then
\begin{enumerate}
	\item $\dual{T}(zx)=z\dual{T}(x)$ for every $x\in M$. In particular, $\dual{T}(zM)=zM$.
	\item If $z$ is moreover of type I, then either $\dual{T}(x)=ux$ for every $x\in zM$ or $\dual{T}(x)=xu$ for every $x\in zM$, where $u:=\dual{T}(1)$.
\end{enumerate}
\end{lemma}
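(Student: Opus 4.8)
The plan is to combine Kadison's description of surjective isometries with a direct analysis of the inequality \eqref{eq: a nice isometry} on normal pure states. First I would observe that, since $T$ is a surjective linear isometry of $\predual{M}$, its adjoint $\dual{T}$ is a surjective isometry of the unital $C^*$-algebra $M$, so by \cite{Kadison51} we may write $\dual{T}=u\,J$, where $u:=\dual{T}(1)$ is a unitary and $J$ is a unital Jordan $*$-automorphism of $M$. Such a $J$ preserves the centre and permutes the minimal central projections. A short computation then reduces (i) to the single assertion $J(z)=z$: since $z$ is central, $\dual{T}(zx)=uJ(z)J(x)$ while $z\dual{T}(x)=uzJ(x)$, and these agree for all $x$ exactly when $(J(z)-z)M=\set{0}$, i.e.\ when $J(z)=z$.

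For (i) I would argue by contradiction using a pure state sitting under $z$. Choose a minimal projection $p\le z$ and let $f$ be the associated normal pure state, so $f\in\predual{(zM)}$ and $\norm{f}=1$; here the pure-state hypothesis is felt, in that one needs a minimal subprojection of $z$. Because $z$ is central, $\predual{M}=\predual{(zM)}\oplus_1\predual{((1-z)M)}$, and a direct calculation shows that $Tf$ is supported on the central summand $J^{-1}(z)\,M$. If $J(z)\ne z$ then $J^{-1}(z)\ne z$, so $Tf$ is orthogonal to $f$ in this $\ell^1$-splitting; hence $\norm{f-\emath^{\init\theta}Tf}=\norm{f+\emath^{\init\theta}Tf}=\norm{f}+\norm{Tf}=2$ for every $\theta$, and the left-hand side of \eqref{eq: a nice isometry} equals $8>4$, a contradiction. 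Thus $J(z)=z$, which gives (i) and in particular $\dual{T}(zM)=zM$.

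For (ii), with $z$ of type I we identify $zM=\operators(\Hcal)$, and by (i) the restriction $J|_{zM}$ is a Jordan $*$-automorphism of this factor, hence either an automorphism $x\mapsto wxw^*$ for some unitary $w\in zM$ or an anti-automorphism. The crux is the following geometric fact: if $\xi$ is a unit vector and $g=\lvert a\rangle\langle b\rvert$ is rank-one with $\norm{a}=\norm{b}=1$, then
\[
\norm{P_\xi-\emath^{\init\theta}g}_1^2+\norm{P_\xi+\emath^{\init\theta}g}_1^2\le 4\quad(\theta\in\reals)
\]
forces $\xi$ to be proportional to $a$ or to $b$. Granting this, I would evaluate $T$ on each pure state $\omega_\xi$: in the automorphism case $T\omega_\xi$ corresponds to the rank-one operator $\lvert w^*\xi\rangle\langle w^*u^*\xi\rvert$, so the displayed inequality forces $\xi\parallel w^*\xi$ or $\xi\parallel w^*u^*\xi$ for \emph{every} unit $\xi$. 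A covering argument — the unit eigenvectors of a non-scalar operator form a proper closed set, so two such sets cannot exhaust the sphere — then forces a single alternative to hold for all $\xi$: either $w^*$ is scalar, giving $J|_{zM}=\id$ and $\dual{T}(x)=ux$ on $zM$, or $w^*u^*$ is scalar, giving $J|_{zM}=\mathrm{Ad}(u^*)$ and $\dual{T}(x)=xu$ on $zM$. The anti-automorphism case is excluded by the same fact, since a transpose-type map sends some $\omega_\xi$ to $\omega_\zeta$ with $\zeta\perp\xi$, again yielding the value $8$.

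The main obstacle is the geometric fact itself (essentially Walter's computation in \cite{Walter74}). I would prove it by restricting to $\mathrm{span}(\xi,a,b)$, which is at most $3$-dimensional, so that $P_\xi\pm\emath^{\init\theta}g$ becomes a finite matrix; computing the trace norm through the singular values — using $(\sigma_1+\sigma_2)^2=\operatorname{tr}(M^*M)+2\lvert\det M\rvert$ in the $2$-dimensional reductions — shows that the left-hand side equals $4$ plus a manifestly non-negative quantity that vanishes precisely when $\xi\parallel a$ or $\xi\parallel b$. Carrying this out cleanly, together with verifying that $T\omega_\xi$ has the claimed rank-one form, is where the real work lies.
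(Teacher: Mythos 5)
Your overall architecture coincides with the paper's: both proofs begin with Kadison's decomposition $\dual{T}=u\Psi$ with $\Psi$ a Jordan $*$-automorphism; both prove (i) by playing a normal pure state supported under $z$ against the central $\ell^1$-splitting of $\predual{M}$ (this is exactly the argument the paper imports from page 136 of Walter's paper, and it carries the same latent restriction that you make explicit: one needs a minimal projection below $z$, i.e.\ in effect $z$ of type I, which is also the only case in which the paper ever applies the lemma); and both prove (ii) by reducing to $zM=\operators(\Hcal)$ and analysing what the inequality says about the rank-one images of vector states. Your ``geometric fact'' about $\norm{P_\xi\pm\emath^{\init\theta}g}_1$ is precisely Walter's computation on pages 137--138, which the paper also cites rather than reproves, so leaving it as a singular-value computation is on par with what the paper does.

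The genuine gap is the ``covering argument''. You pass from the pointwise alternative (every unit vector $\xi$ is an eigenvector of $w^*$ or of $w^*u^*$) to the global alternative ($w^*$ scalar or $w^*u^*$ scalar) by asserting that the unit eigenvectors of a non-scalar operator form a proper closed set, ``so two such sets cannot exhaust the sphere.'' That principle is false: the unit sphere \emph{is} the union of two proper closed subsets, for instance the two closed half-spheres determined by $\Re\inner{\xi}{e}\ge 0$ and $\Re\inner{\xi}{e}\le 0$. What is true --- and what must actually be proved --- is that the sphere cannot be covered by the eigenvector sets of two non-scalar operators, and this is exactly the step the paper writes out in full: assuming some $\xi_1$ fails the left-multiplication alternative and some $\xi_2$ fails the right-multiplication alternative, it expands the quadratic identity for $\inner{(\dual{T}x)(\xi_1\pm t\xi_2)}{\xi_1\pm t\xi_2}$, observes that for each $t\neq 0$ the vectors $\xi_1+t\xi_2$ and $\xi_1-t\xi_2$ must fall into different alternatives, and then compares two quadratic polynomials in $t$ that agree at infinitely many points to reach a contradiction at $t=0$. (Equivalently, in your language: if three pairwise non-proportional vectors on the line $\xi_1+t\xi_2$ are eigenvectors of one operator, that operator is scalar on their span --- this line-interpolation fact is the correct substitute for your topological claim.) The same defect affects your dismissal of the anti-automorphism case: once the twisting by $u$ and $v$ is present, producing a single $\xi$ for which both parallelism conditions fail again requires this interpolation argument, not merely the observation that the standard transpose sends some $\omega_\xi$ to an orthogonal $\omega_{\bar\xi}$. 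So the missing ingredient is precisely the portion of the proof that constitutes the paper's own contribution in this lemma, beyond what it quotes from Kadison and Walter.
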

\begin{proof}
(i) By \cite{Kadison51}, we have $\dual{T}=u\Psi$, where $\Psi$ is a Jordan $*$-isomorphism of $M$. Arguing as at the bottom of \cite[page 136]{Walter74}, we see that $\Psi(z)=z$. Hence, since $z$ is central, $\dual{T}(zx)=u\Psi(zx)=uz\Phi(x)=z\dual{T}(x)$.

(ii) By (i), we may (and shall) suppose that $z=1$. The assumption then implies that $M$ is a factor of type I; says $M=\operators(\Hcal)$ for some Hilbert space $\Hcal$. Also, it follows from \cite{Kadison51} that in this case $\Psi$ is either a $*$-isomorphism or a $*$-anti-isomorphism of $M$. The argument on \cite[pages 137--138]{Walter74}, with obvious modifications, then implies that for every $\xi\in \Hcal$
\begin{itemize}
	\item either $\inner{(\dual{T}x)\xi}{\xi}=\inner{ux\xi}{\xi}$ for every $x\in M$ (called Case I),
	\item or $\inner{(\dual{T}x)\xi}{\xi}=\inner{xu\xi}{\xi}$ for every $x\in M$ (called Case II).
\end{itemize}

Assume towards a contradiction that there exist a vector $\xi_1\in \Hcal$ for which Case I does not hold and a vector $\xi_2\in \Hcal$ for which Case II does not hold. Then since
\begin{align*}
	\inner{(\dual{T}x)(\xi_1+t\xi_2)}{(\xi_1+t\xi_2)}+\inner{(\dual{T}x)(\xi_1-t\xi_2)}{(\xi_1-t\xi_2)}\\
	=2\inner{(\dual{T}x)\xi_1}{\xi_1} + 2t^2\inner{(\dual{T}x)\xi_2}{\xi_2}
\end{align*}
for every $x\in M$ and $t\in\reals$, we see that, for each $t\neq 0$, neither Case I nor Case II can hold simultaneously for the two vectors $\xi_1\pm t\xi_2$. It follows that, for infinitely many $t\neq 0$, Case I holds for $\xi_1+t\xi_2$ and Case II holds for $\xi_1-t\xi_2$. Thus we obtain
\begin{align*}
	\inner{ux(\xi_1+t\xi_2)}{(\xi_1+t\xi_2)}+\inner{xu(\xi_1-t\xi_2)}{(\xi_1-t\xi_2)}\\
	=2\inner{xu\xi_1}{\xi_1} + 2t^2\inner{ux\xi_2}{\xi_2}
\end{align*}
for every $x\in M$ and, for infinitely many $t\neq 0$, and hence, for all  $t\in\reals$. But then letting $t=0$, we see that both Case I and Case II hold for $\xi_1$; a contradiction.

Thus we must have either Case I hold for all $\xi\in \Hcal$ or Case II hold for all $\xi\in \Hcal$. From there, the result follows easily.
\end{proof}

Note that in the previous lemma, the inequality \eqref{eq: a nice isometry} is only required to hold for normal pure states. When $\predual{M}$ is $\FSa(G)$ or $\Fa(G)$, $M$ has a plenty of normal pure states as demonstrated in Walter's refinement \cite[Proposition 2]{Walter74} of the Gelfand--Raikov theorem and this fact is utilised in his characterisations of the left and right translations on $\FSa(G)$ and $\Fa(G)$ in \cite{Walter74}. In general, $M$ may not have any normal pure state, and that is why we need to extend the effective range of the inequality. 

We now have  the converse of Lemma \ref{a general description of nice isometry}.

\begin{lemma}\label{characterisation of a single nice isometry}
Let $T$ be an isometry of the predual $\predual{M}$ of a von Neumann algebra $M$ that satisfies \eqref{eq: a nice isometry} for every $f\in\predual{M}^+$ and $\theta\in\reals$. Set $u:=\dual{T}1$, an unitary of $M$. Then there exist two closed ideals  $I$ and $J$ in $M$ such that $I\cap J=\set{0}$, that $I+J$ is the closed ideal generated by $\set{ux-xu\colon x\in M}$, and such that
\[
	\bidual{T}f=u\cdot f\quad\textrm{for every}\ f\in I^\perp\quad\textrm{and}\quad \bidual{T}f=f\cdot u\quad\textrm{for every}\ f\in J^\perp\,.
\]
\end{lemma}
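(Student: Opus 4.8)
The plan is to pin down the structure of $\dual T$ through Kadison's description of surjective isometries and then recognise it as a mixed module multiplication in the sense of Lemma \ref{a general description of nice isometry}. Since $T$ is a surjective isometry of $\predual{M}$, the adjoint $\dual T$ is a surjective linear isometry of $M$, so by \cite{Kadison51} we may write $\dual T=u\Psi$ with $u=\dual T 1$ a unitary and $\Psi$ a Jordan $*$-automorphism of $M$. Everything then reduces to producing two central projections $z_L,z_R$ with $z_Lz_R=0$, with $u$ central precisely on $(1-z_L-z_R)M$, and with $\dual T(x)=ux$ for $x\in z_LM$ and $\dual T(x)=xu$ for $x\in z_RM$.

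Granting this decomposition, the construction of $I$ and $J$ is purely algebraic. Let $C$ be the closed ideal generated by $\set{ux-xu:x\in M}$; since the commutators vanish exactly off $z_L+z_R$, the central support of $C$ is $z_L+z_R$ and $C=z_LC\oplus z_RC$. I would set $I:=z_LC$ and $J:=z_RC$, which are closed ideals with $I\cap J=\set{0}$ and $I+J=C$. For the module formulas, take $f\in I^\perp$: on $z_LM$ one has $\dual Tx=ux$ while $f$ annihilates the commutators in $z_LC$, so $f(ux)=f(xu)$; on $z_RM$ one has $\dual Tx=xu$ outright; and on the central part left and right multiplication agree. Hence $\duality{\bidual Tf}{x}=f(xu)$ for all $x$, i.e. $\bidual Tf=u\cdot f$ on $I^\perp$, and symmetrically $\bidual Tf=f\cdot u$ on $J^\perp$. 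Note that the dichotomy also forces $\Psi=\id$ on $z_LM$ and $\Psi=\mathrm{Ad}(u^{*})$ on $z_RM$, so the anti-isomorphic part of $\Psi$ never occurs and needs no separate disposal.

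To obtain the dichotomy I would combine the centre-fixing property of $\Psi$ with Kadison's decomposition of the Jordan automorphism. The proof of Lemma \ref{a preliminary result by Walter}(i) shows that $\Psi$ fixes every central projection, so that $\dual T(zx)=z\,\dual T(x)$ for central $z$; and Kadison's theorem furnishes a central projection $p$ with $\Psi$ a $*$-isomorphism on $pM$ and a $*$-anti-isomorphism on $(1-p)M$. On the type I part, Lemma \ref{a preliminary result by Walter}(ii) already yields $\dual T=L_u$ or $\dual T=R_u$, so the anti-isomorphic behaviour cannot occur there and the left/right splitting is available. The clean general observation is that any normal state $f$ with $\abs{f(u)}=1$ is an eigenvector of $T$: choosing $\theta$ with $\emath^{\init\theta}f(u)=1$ gives $(f+\emath^{\init\theta}Tf)(1)=2$, hence $\norm{f+\emath^{\init\theta}Tf}\ge 2$, and then \eqref{eq: a nice isometry} forces $\norm{f-\emath^{\init\theta}Tf}^2\le 0$, i.e. $Tf=f(u)f$. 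In particular for $u=1$ this already gives $\dual T=\id$, and in general it controls the atomic part of the spectrum of $u$.

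The hard part will be the non-type-I summands, and especially factors of type II and III with $u$ of diffuse spectrum, where minimal central projections and normal pure states may both be unavailable and the eigenvector trick is vacuous. On such summands I must still rule out the anti-isomorphic alternative for $\Psi$ and force $\dual T$ to be pure left- or pure right-multiplication by $u$. To do so I would use the full strength of the hypothesis---that \eqref{eq: a nice isometry} holds for \emph{every} positive normal functional, not merely the pure ones---together with Kadison's generalised Schwarz inequality \cite{Kadison52} for $\Psi$ and a careful analysis of the equality case of \eqref{eq: a nice isometry}, so as to exclude genuinely mixed behaviour and deliver the central projections $z_L,z_R$. This extension of Walter's pure-state computation past the type I setting is where essentially all the difficulty of the lemma lies; once it is in hand, the ideals $I=z_LC$ and $J=z_RC$ fall out as above.
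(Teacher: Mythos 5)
Your preparatory steps are sound: Kadison's factorisation $\dual{T}=u\Psi$, the observation that a normal state $f$ with $\abs{f(u)}=1$ must satisfy $Tf=f(u)f$, and the algebra that derives $I:=z_LC$, $J:=z_RC$, $I\cap J=\set{0}$, $I+J=C$ and the two module formulas \emph{from} the postulated central decomposition are all correct. But the proposal never proves the decomposition itself, and that decomposition is essentially the entire content of the lemma. Everything rests on producing central projections $z_L,z_R$ \emph{in $M$} with $\dual{T}(x)=ux$ on $z_LM$, $\dual{T}(x)=xu$ on $z_RM$, and $u$ central elsewhere, together with the claim that $\Psi$ fixes every central projection. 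For the type I part you can indeed quote Lemma \ref{a preliminary result by Walter}; but for type II and type III summands you offer only the intention to use ``the full strength of the hypothesis'', Kadison's Schwarz inequality, and ``a careful analysis of the equality case'' of \eqref{eq: a nice isometry}. That is not an argument: Walter's dichotomy, and likewise his centre-fixing computation which you extrapolate to arbitrary central projections, runs on normal \emph{pure} states, and a von Neumann algebra with no type I summand has none, so there is nothing for the eigenvector trick or the pure-state analysis to act on. Note also that what you aim at is formally stronger than the lemma's statement---it says exactly that $T$ is a mixed module multiplication by a central projection of $M$, a claim the paper is careful not to make (cf.\ the remark following Lemma \ref{a general description of nice isometry}); the lemma itself only asserts a splitting by norm-closed ideals of $M$, i.e.\ by central projections of $\bidual{M}$.

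The idea you are missing is that one should not analyse the type of $M$ at all, but transfer the problem to the enveloping von Neumann algebra $\bidual{M}$ of $M$ regarded as a \cstar-algebra, whose predual is $\dual{M}$. Since normal states of $M$ are weak$^*$-dense in the states of $M$ and $\bidual{T}$ is weak$^*$-continuous, the inequality \eqref{eq: a nice isometry} passes from $\predual{M}^+$ to all of $\dual{M}_+$, that is, to all positive normal functionals of $\bidual{M}$. Now $\bidual{M}$ has an abundance of minimal type I central projections no matter what $M$ is: every pure state of $M$ is a normal pure state of $\bidual{M}$, supported on the central cover of its irreducible GNS representation. Applying Lemma \ref{a preliminary result by Walter} \emph{in $\bidual{M}$} gives, for each such projection $z$, that $\tripledual{T}$ is left or right multiplication by $u$ on $z\bidual{M}$. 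One then takes $V$ and $W$ to be the weak$^*$-closures of the sums of the corresponding subspaces $z\cdot\dual{M}$, identifies $V=I^\perp$ and $W=J^\perp$ for norm-closed ideals $I,J$ of $M$, and uses the Kre\u{\i}n--Milman theorem twice: every state of $M$ is a weak$^*$-limit of convex combinations of pure states, which all lie in $V\cup W$, giving $\dual{M}=V+W$ and hence $I\cap J=\set{0}$; and a second extreme-point argument identifies $I+J$ with the closed ideal generated by $\set{ux-xu\colon x\in M}$. This route needs nothing beyond the type I dichotomy, because in $\bidual{M}$ the atomic part already controls everything; that is precisely how the paper sidesteps the obstruction on which your plan founders.
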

\begin{proof}
Since the normal state space of $M$ is $\sigma(\dual{M},M)$-dense in its state space (see, for example, \cite[Theorem 4.3]{Namioka}), we see that 
\begin{align*}
	\norm{f-\emath^{\init\theta}\bidual{T}f}^2+\norm{f+\emath^{\init\theta}\bidual{T}f}^2\le 4\norm{f}^2\qquad(f\in \dual{M}_+,\ \theta\in\reals)\,.
\end{align*}
Of course, $\bidual{T}$ is an isometry of $\dual{M}$. Now, identifying $M$ naturally with a unital $\sigma(\bidual{M},\dual{M})$-dense sub-\cstar-algebra of $\bidual{M}$, so that $\tripledual{T}$ extends $\dual{T}$. In particular, we have
\[
	u:=\dual{T}(1_M)=\tripledual{T}(1_{\bidual{M}})\in M\,.
\]
Denote by $E$ the set of those minimal type I central projections $z$ of $\bidual{M}$ such that $\tripledual{T}(x)=xu$ for every $x\in z\bidual{M}$, and by $F$ a similar set of $z$ but such that $\tripledual{T}(x)=ux$ for every $x\in z\bidual{M}$. Note that, by Lemma \ref{a preliminary result by Walter}, the set of minimal type I central projections of $\bidual{M}$ is precisely $E\cup F$. Let $V$ denote the $\sigma(\dual{M},M)$-closure of $\sum_{z\in E} z\dual{M}$, and $W$ that of $\sum_{z\in F} z\dual{M}$. Then, by the $\sigma(\dual{M},M)$-continuity of $\bidual{T}$, we see that
\[
	\bidual{T}f=u\cdot f\quad\textrm{for every}\ f\in V\quad\textrm{and}\quad \bidual{T}f=f\cdot u\quad\textrm{for every}\ f\in W\,.
\]
Moreover, since $V$ and $W$ are $\sigma(\dual{M},M)$-closed sub-$M$-bimodule of $\dual{M}$, we have $V=I^\perp$ and $W=J^\perp$ for some closed ideals $I$ and $J$ in $M$. 

For each pure state $g$ of $M$, since $g$ is a normal pure state of $\bidual{M}$, there is a minimal type I central projection $z$ of $\bidual{M}$ such that $g=z\cdot g$ (the support projection of the GNS representation of $\bidual{M}$ associated with $g$). It follows that $g\in V\cup W$.  Now let $f$ be a state of $M$. Then, by the Kre\u{\i}n--Milman theorem, $f$ is a $\sigma(\dual{M},M)$-limit of a net of convex combinations of pure states of $M$. It follows that
\[
	f=\sigma(\dual{M},M)-\lim_\alpha(r_\alpha g_\alpha+s_\alpha h_\alpha)
\]
for some nets $(r_\alpha), (s_\alpha)$ in $[0,1]$ with $r_\alpha+s_\alpha=1$ and some nets $(g_\alpha)$ and $(h_\alpha)$ in $\closedball{V}{1}$ and $\closedball{W}{1}$, respectively. By the $\sigma(\dual{M},M)$-compactness of the latter two sets, it follows that $f\in V+W$. Thus $\dual{M}=V+W$ and so $I\cap J=\set{0}$.

Finally, since $u\cdot f=f\cdot u$ for every $f\in V\cap W=(I+J)^\perp$, it follows that $ux-xu\in I+J$ for every $x\in M$; noting that $I+J$ is automatically  closed in $M$ (cf. \cite[Exercise I.8.4]{Takesaki}). On the other hand, take any $f\in \dual{M}$ such that $f(y)=0$ for every $y$ in the closed ideal generated by $\set{ux-xu\colon x\in M}$. Then $f$ generates a $\sigma(\dual{M},M)$-closed sub-$M$-bimodule $L$ of $\dual{M}$ whose elements annihilate $\set{ux-xu\colon x\in M}$, i.e. $u\cdot g=g\cdot u$ for every $g\in L$. Denote by $z_0$ the central projection of $\bidual{M}$ such that $L=z_0\cdot\dual{M}$. Take an extreme point $g$ of $\closedball{L}{1}\cap\dual{M}_+$. It is easy to see that $g$ is a pure state of $M$, and so, as above, $g=z\cdot g$ for some minimal type I central projection $z$ of $\bidual{M}$. It follows easily that $z\le z_0$, and so $z\cdot\dual{M}\subseteq L$. From this, we see that $z$ must belong to $E\cap F$, being already in $E\cup F$. Hence, $g\in V\cap W$. Since $L$ is $\sigma(\dual{M},M)$-closed, the Kre\u{\i}n--Milman theorem then implies that $f\in L\subseteq V\cap W=(I+J)^\perp$. Hence $I+J$ is the closed ideal generated by $\set{ux-xu\colon x\in M}$. The proof is complete.
\end{proof}

\begin{corollary}\label{power of a nice isometry}
Let $T$ be an isometry of the predual $\predual{M}$ of a von Neumann algebra $M$ that satisfies \eqref{eq: a nice isometry} for every $f\in\predual{M}^+$ and $\theta\in\reals$. Then so is $T^k$ and, moreover, $\dual{(T^k)}(1)=\dual{T}(1)^k$  for each $k\in \integers$. 
\end{corollary}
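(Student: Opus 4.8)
The plan is to reduce both assertions to the observation, recorded just before Lemma~\ref{a general description of nice isometry}, that a mixed module multiplication by a unitary is automatically an isometry satisfying \eqref{eq: a nice isometry}. The difficulty is that, by Lemma~\ref{characterisation of a single nice isometry}, the isometry $T$ itself need not be such a multiplication on $\predual{M}$: the relevant central projections are the support projections of the ideals $I,J$, and these live in $\bidual{M}$ rather than in $M$. So I would carry out the whole argument one level up, for $\bidual{T}$ acting on $\dual{M}=\predual{(\bidual{M})}$, where everything \emph{does} become a genuine mixed module multiplication. Concretely, combining Lemma~\ref{characterisation of a single nice isometry} with the support-projection computation in the proof of Lemma~\ref{a general description of nice isometry}, one obtains a central projection $z$ of $\bidual{M}$ with $\bidual{T}f=uz\cdot f+f\cdot(1-z)u$ for all $f\in\dual{M}$, where $u:=\dual{T}(1)$.

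Dualizing this formula, the adjoint $\tripledual{T}$ on $\bidual{M}$ takes the clean form $\tripledual{T}(x)=upx+qxu$ $(x\in\bidual{M})$, where $p:=1-z$ and $q:=z$ are orthogonal central projections of $\bidual{M}$ with $p+q=1$. The heart of the matter is then a one-line induction: since $p,q$ are central, $pq=0$, and both commute with $u$, the cross terms drop out, giving $(\tripledual{T})^{2}(x)=u^{2}px+qxu^{2}$, whence $\tripledual{(T^{k})}(x)=(\tripledual{T})^{k}(x)=u^{k}px+qxu^{k}$ for every $k\ge 1$. Thus $\tripledual{(T^{k})}$ has exactly the same shape as $\tripledual{T}$ with $u$ replaced by $u^{k}$, i.e. $\bidual{(T^{k})}$ is the mixed module multiplication on $\dual{M}$ by the unitary $u^{k}$ and the central projection $z$.

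Both conclusions now fall out. By the remark preceding Lemma~\ref{a general description of nice isometry}, applied to the von Neumann algebra $\bidual{M}$, the mixed module multiplication $\bidual{(T^{k})}$ is an isometry of $\dual{M}$ satisfying \eqref{eq: a nice isometry} for every $f\in\dual{M}$ and $\theta\in\reals$; restricting to $\predual{M}$, where $\bidual{(T^{k})}$ agrees with $T^{k}$, gives the inequality for every $f\in\predual{M}$, in particular for $f\in\predual{M}^{+}$, and $T^{k}$ is plainly a surjective isometry. For the identity, evaluating at $1$ yields $\dual{(T^{k})}(1)=\tripledual{(T^{k})}(1)=u^{k}p+qu^{k}=u^{k}(p+q)=u^{k}=(\dual{T}(1))^{k}$, using centrality of $q$. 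Finally, for negative $k$ I would note that $T$, being a surjective isometry, is invertible, and that $\tripledual{(T^{-1})}(x)=u^{-1}px+qxu^{-1}$ is checked directly (again using $pq=0$) to invert $\tripledual{T}$; the same induction then extends the displayed formula, and hence both conclusions, to all $k\in\integers$. The only genuine obstacle is the one already flagged---the central projections sitting in $\bidual{M}$---which is precisely why the computation must be staged at the level of $\bidual{T}$ and $\tripledual{T}$ rather than on $\predual{M}$ directly.
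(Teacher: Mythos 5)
Your proposal is correct and is essentially the paper's own argument: the paper proves this corollary precisely by combining Lemma \ref{characterisation of a single nice isometry} with Lemma \ref{a general description of nice isometry}, which is exactly the combination you use. The only cosmetic difference is that, instead of applying the statement of Lemma \ref{a general description of nice isometry} to the ideals $I,J$ and the unitary $u^{k}$ (which would require noting $u^{k}x-xu^{k}\in I+J$ and invoking its uniqueness clause to identify $T^{k}$), you unwind its proof into the explicit mixed-multiplication formula $\tripledual{T}(x)=upx+qxu$ on $\bidual{M}$ and compute powers of it directly, which is a legitimate and equivalent way of filling in the same details.
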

\begin{proof}
This follows from the previous lemma and Lemma \ref{a general description of nice isometry}.
\end{proof}

The previous lemma could actually be generalized further to (certain) collections of isometries.

\begin{theorem}\label{characterisation of a set of nice isometries}
Let $\Tfrak$ be a set of isometries on the predual $\predual{M}$ of a von Neumann algebra $M$ that satisfy \eqref{eq: a nice isometry} for every $f\in\predual{M}^+$ and $\theta\in\reals$. Suppose that $ST$ also satisfies the same inequality for every $S\neq T\in\Tfrak$.  Then there exist two closed ideals  $I$ and $J$ in $M$ such that $I\cap J=\set{0}$, that $I+J$ is the closed ideal generated by 
\[
	\set{\dual{T}(1)x-x\dual{T}(1)\colon T\in\Tfrak,\ x\in M}\,,
\]
and such that
\begin{align*}
	&\bidual{T}f=\dual{T}(1)\cdot f\quad\textrm{for every $T\in\Tfrak$ and}\ f\in I^\perp\\ 
\textrm{and}\quad &\bidual{T}f=f\cdot \dual{T}(1)\quad\textrm{for every $T\in\Tfrak$ and}\ f\in J^\perp\,.
\end{align*}
\end{theorem}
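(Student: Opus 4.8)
The plan is to follow the proof of Lemma~\ref{characterisation of a single nice isometry} almost verbatim, the only genuinely new ingredient being a compatibility argument that forces \emph{every} member of $\Tfrak$ to act on the same side over each minimal central summand; this is precisely where the hypothesis on the products $ST$ is used. As there, I would pass to $\bidual{M}$, identify $M$ with a unital $\sigma(\bidual{M},\dual{M})$-dense sub-\cstar-algebra so that $\tripledual{T}$ extends $\dual{T}$ and $\dual{T}(1)=\tripledual{T}(1_{\bidual{M}})\in M$ for every $T\in\Tfrak$. By Lemma~\ref{a preliminary result by Walter}, each $\tripledual{T}$ fixes every minimal type I central projection $z$ of $\bidual{M}$, restricts to an isometry of the factor $z\bidual{M}\cong\operators(\Hcal_z)$, and there acts either as $x\mapsto x\,(z\,\dual{T}(1))$ or as $x\mapsto (z\,\dual{T}(1))\,x$.

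The main step is the claim that, for a fixed minimal type I central projection $z$, all operators $\tripledual{T}$ ($T\in\Tfrak$) whose associated unitary $z\,\dual{T}(1)$ is not a scalar multiple of $z$ act on the \emph{same side} of $z\bidual{M}$. I would argue by contradiction. Suppose for some $S\neq T$ in $\Tfrak$ we have, on $z\bidual{M}$,
\[
	\tripledual{T}(x)=x\,a\quad\text{and}\quad\tripledual{S}(x)=b\,x\qquad(x\in z\bidual{M})\,,
\]
with $a:=z\,\dual{T}(1)$ and $b:=z\,\dual{S}(1)$ non-scalar unitaries of the factor. Since $\tripledual{(ST)}=\tripledual{T}\,\tripledual{S}$, a direct computation gives $\tripledual{(ST)}(x)=b\,x\,a$ for all $x\in z\bidual{M}$. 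But $ST$ is an isometry of $\predual{M}$ satisfying \eqref{eq: a nice isometry} by hypothesis, so Lemma~\ref{a preliminary result by Walter}(ii) forces $\tripledual{(ST)}$ to be a one-sided multiplication on the type I factor $z\bidual{M}$; comparing with $x\mapsto b\,x\,a$ and cancelling the invertible factor yields that $a$ or $b$ is central in $z\bidual{M}$, i.e. a scalar multiple of $z$, contradicting our choice. This proves the claim. The operators with $z\,\dual{T}(1)$ scalar act simultaneously as left and as right multiplication, and so cause no conflict.

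Granting the claim, I would let $E$ be the set of minimal type I central projections $z$ for which \emph{every} $T\in\Tfrak$ acts by right multiplication by $z\,\dual{T}(1)$ on $z\bidual{M}$, and $F$ the analogous set for left multiplication. The claim, together with the fact that scalar unitaries act on both sides, shows $E\cup F$ exhausts the minimal type I central projections of $\bidual{M}$. Setting $V$ and $W$ to be the $\sigma(\dual{M},M)$-closures of $\sum_{z\in E}z\dual{M}$ and $\sum_{z\in F}z\dual{M}$, these are $\sigma(\dual{M},M)$-closed sub-$M$-bimodules, hence $V=I^\perp$ and $W=J^\perp$ for closed ideals $I,J$; and by construction $\bidual{T}f=\dual{T}(1)\cdot f$ on $V$ and $\bidual{T}f=f\cdot\dual{T}(1)$ on $W$, for all $T\in\Tfrak$ at once.

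The remaining assertions transcribe the single-operator case. That $\dual{M}=V+W$, whence $I\cap J=\set{0}$, is the Kre\u{\i}n--Milman argument: each normal pure state of $M$ is supported on one minimal type I central projection $z$, so lies in $z\dual{M}\subseteq V\cup W$, and every state is a $\sigma(\dual{M},M)$-limit of convex combinations of pure states, giving membership in $V+W$ by the compactness of $\closedball{V}{1}$ and $\closedball{W}{1}$. For the identification of $I+J$ with the closed ideal $\Kcal$ generated by $\set{\dual{T}(1)x-x\dual{T}(1)\colon T\in\Tfrak,\ x\in M}$, the inclusion $\Kcal\subseteq I+J$ follows since each $f\in V\cap W=(I+J)^\perp$ satisfies $\dual{T}(1)\cdot f=\bidual{T}f=f\cdot\dual{T}(1)$ for every $T$, hence annihilates all commutators; for the reverse one takes $f\in\Kcal^\perp$, writes the $\sigma(\dual{M},M)$-closed sub-bimodule it generates as $L=z_0\cdot\dual{M}$, and considers an extreme point $g$ of $\closedball{L}{1}\cap\dual{M}_+$, a pure state supported on some minimal type I central projection $z\le z_0$; the condition that every element of $L$ annihilates the commutators now forces $z\,\dual{T}(1)$ to be scalar for \emph{all} $T\in\Tfrak$ simultaneously, so $z\in E\cap F$, $g\in V\cap W$, and hence $f\in V\cap W=(I+J)^\perp$ by Kre\u{\i}n--Milman. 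I expect the compatibility claim of the second paragraph to be the only real obstacle; everything else is a routine adaptation of Lemma~\ref{characterisation of a single nice isometry}.
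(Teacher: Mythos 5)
Your proposal is correct and takes essentially the same route as the paper: the paper's proof consists precisely of citing the argument of Lemma \ref{characterisation of a single nice isometry} together with the observation that, for each minimal type I central projection $z$ of $\bidual{M}$, the hypothesis on the products $ST$ forces Case I or Case II of Lemma \ref{a preliminary result by Walter} to hold simultaneously for all $T\in\Tfrak$. Your computation $\tripledual{(ST)}(x)=bxa$ on $z\bidual{M}$, followed by cancellation of the invertible factor to conclude that $a$ or $b$ must be scalar, is exactly the justification of the step the paper dismisses as ``it follows easily,'' and the remainder of your argument is the verbatim transcription of the single-isometry case that the paper prescribes.
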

\begin{proof}
The argument is the same as that of Lemma \ref{characterisation of a single nice isometry} using the following refinement of Lemma \ref{a preliminary result by Walter}. Let $z$ be any minimal central projection of type I in $\bidual{M}$. Then, by Lemma \ref{a preliminary result by Walter}, we obtain for every $T\in \Tfrak$
\begin{itemize}
	\item either $\dual{T}x=\dual{T}(1)x$ for every $x\in zM$ (called Case I),
	\item or $\dual{T}x=x\dual{T}(1)$ for every $x\in zM$ (called Case II).
\end{itemize}
However, since either of those two cases also hold for $ST$ for each pair of $S\neq T\in\Tfrak$, it follows easily that actually  either Case I holds for all $T\in\Tfrak$ or Case II holds for all $T\in\Tfrak$.
\end{proof}

\begin{remark}
Similarly as for the case of a single isometry, the conditions in the conclusion of the previous theorem imply that
\begin{itemize}
	\item $I^\perp$ is the largest closed sub-$M$-bimodule of $\dual{M}$ on which $\bidual{T}f=\dual{T}(1)\cdot f$ for every $T\in\Tfrak$, and
	\item $J^\perp$ is the largest closed sub-$M$-bimodule of $\dual{M}$ on which $\bidual{T}f=f\cdot \dual{T}(1)$ for every $T\in\Tfrak$.
\end{itemize}
In particular, if in addition the $\sigma(M,\predual{M})$-closures of $I$ and $J$ intersects, then there could be no central projection $z\in M$ such that all $T\in\Tfrak$ are left multiplications by unitaries on $z\cdot\predual{M}$ and that all $T\in\Tfrak$ are right multiplications by unitaries on $(1-z)\cdot\predual{M}$. From this, it can then be seen that, in this case, at least one isometry in $\Tfrak$ is not a mixed module multiplication on $\predual{M}$.
\end{remark}

The following extends the first half of Corollary \ref{power of a nice isometry}; the second half could also be generalized to \emph{commutative} collections of isometries, but we shall not need it. 

\begin{corollary}\label{group generated from a set of nice isometries}
Let $\Tfrak$ be as in Theorem \ref{characterisation of a set of nice isometries}. Then every isometry in the group of isometries on $\predual{M}$ generated by $\Tfrak$ also satisfies \eqref{eq: a nice isometry} for every $f\in\predual{M}^+$ and $\theta\in\reals$. \enproof
\end{corollary}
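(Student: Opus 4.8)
The plan is to show that every element of the group generated by $\Tfrak$ shares the structure described in Theorem \ref{characterisation of a set of nice isometries} relative to the \emph{same} pair of ideals $I$ and $J$, and then to invoke Lemma \ref{a general description of nice isometry} to deduce that it satisfies \eqref{eq: a nice isometry}. First I would fix the closed ideals $I$ and $J$ supplied by Theorem \ref{characterisation of a set of nice isometries}, so that $I\cap J=\set{0}$, that $I+J$ is the closed ideal generated by $\set{\dual{T}(1)x-x\dual{T}(1)\colon T\in\Tfrak,\ x\in M}$, and that, writing $u_T:=\dual{T}(1)$, each $\bidual{T}$ acts as left multiplication by $u_T$ on $I^\perp$ and as right multiplication by $u_T$ on $J^\perp$. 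The decisive observation is that in the quotient \cstar-algebra $M/(I+J)$ the image of each $u_T$ is a \emph{central} unitary: indeed, $u_Tx-xu_T\in I+J$ for every $x\in M$ and every $T\in\Tfrak$, precisely because these commutators generate $I+J$.

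Next I would track how an arbitrary word $R=T_1^{\varepsilon_1}\cdots T_n^{\varepsilon_n}$ (with $T_i\in\Tfrak$ and $\varepsilon_i\in\set{1,-1}$) acts. Since $I^\perp$ and $J^\perp$ are weak$^*$-closed sub-$M$-bimodules, each $\bidual{T_i}^{\pm1}$ restricts to a bijection of both of them, and composing the left multiplications on $I^\perp$ and the right multiplications on $J^\perp$ gives $\bidual{R}f=a\cdot f$ for $f\in I^\perp$ and $\bidual{R}f=f\cdot b$ for $f\in J^\perp$, where $a:=u_{T_1}^{\varepsilon_1}\cdots u_{T_n}^{\varepsilon_n}$ and $b:=u_{T_n}^{\varepsilon_n}\cdots u_{T_1}^{\varepsilon_1}$ are unitaries of $M$; the reversal of order in $b$ reflects that right multiplications compose in the opposite order. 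Because the images of the $u_{T_i}$ are central in $M/(I+J)$, they commute there, so the images of $a$ and $b$ coincide and are central; equivalently, $bx-xa\in I+J$ for every $x\in M$.

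With this in hand, Lemma \ref{a general description of nice isometry}, applied to the unitaries $a,b$ and the ideals $I,J$ (using $I\cap J=\set{0}$ and $bx-xa\in I+J$), produces a unique operator whose bidual has exactly the bimodule action just computed for $\bidual{R}$, and asserts that it is an isometry satisfying \eqref{eq: a nice isometry}; since $I\cap J=\set{0}$ forces $I^\perp+J^\perp=\dual{M}$, this operator is determined on all of $\dual{M}$ and must therefore equal $R$, so $R$ satisfies \eqref{eq: a nice isometry}. I expect the only delicate point to be the bookkeeping of the previous paragraph—keeping the left and right multiplications and their composition orders straight, and confirming that a \emph{single} pair $(I,J)$ governs the whole group—while the substance of the argument lies in the passage to $M/(I+J)$, where the centrality of the images of the $u_T$ renders the commutator hypothesis of Lemma \ref{a general description of nice isometry} automatic for every word.
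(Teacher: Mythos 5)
Your proof is correct and is precisely the argument the paper leaves implicit (the corollary is stated with \qedsymbol{} and no proof, as an immediate consequence of Theorem \ref{characterisation of a set of nice isometries} and Lemma \ref{a general description of nice isometry}): fix the single pair $(I,J)$ given by the theorem, observe that any word in $\Tfrak$ acts on $I^\perp$ as left multiplication by a unitary $a$ and on $J^\perp$ as right multiplication by a unitary $b$, verify the commutator hypothesis $bx-xa\in I+J$ via centrality of the images of the $\dual{T}(1)$ in $M/(I+J)$, and invoke the lemma's uniqueness to identify the word with the isometry the lemma produces. Your bookkeeping, including the order reversal in $b$ and the use of $I^\perp+J^\perp=\dual{M}$ to pin down the operator, is accurate.
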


For our characterisations of Fourier (and Fourier--Stieltjes) algebras, the following consequence is important.

\begin{corollary}\label{a set of nice isometries 2}
Let $\Tfrak$ be as in Theorem \ref{characterisation of a set of nice isometries}. Then
\[
	\norm{\sum_{i=1}^m \alpha_i T_i}= \norm{\sum_{i=1}^m \alpha_i \dual{T_i}(1)}\qquad (T_i\in \Tfrak,\ \alpha_i\in\complexs)\,.
\]
\end{corollary}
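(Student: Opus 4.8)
The plan is to pass to the second dual, where Theorem~\ref{characterisation of a set of nice isometries} supplies a clean left/right multiplication description, and then to exploit the central decomposition of $\dual{M}$ induced by the support projection of $I$. Set $S:=\sum_{i=1}^m\alpha_i T_i$ and $u:=\sum_{i=1}^m\alpha_i\dual{T_i}(1)=\dual{S}(1)\in M$. Since taking adjoints preserves operator norms, $\norm{S}=\norm{\dual{S}}=\norm{\bidual{S}}$, where $\bidual{S}=\sum_i\alpha_i\bidual{T_i}$ acts on $\dual{M}$. The lower bound is immediate: $\norm{S}=\norm{\dual{S}}\ge\norm{\dual{S}(1)}=\norm{u}$, because $\norm{1}=1$. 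So it remains to establish $\norm{\bidual{S}}\le\norm{u}$.

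For the upper bound I would invoke Theorem~\ref{characterisation of a set of nice isometries} to produce closed ideals $I,J$ with $I\cap J=\set{0}$ such that $\bidual{T}f=\dual{T}(1)\cdot f$ for $f\in I^\perp$ and $\bidual{T}f=f\cdot\dual{T}(1)$ for $f\in J^\perp$, for every $T\in\Tfrak$. Summing over $i$ with coefficients $\alpha_i$ gives $\bidual{S}f=u\cdot f$ on $I^\perp$ and $\bidual{S}f=f\cdot u$ on $J^\perp$. Let $z$ be the support projection of $I$ in $\bidual{M}$, a central projection with $I^\perp=z\cdot\dual{M}$. As in the proof of Lemma~\ref{a general description of nice isometry}, the condition $I\cap J=\set{0}$ forces $\dual{M}=I^\perp+J^\perp$ via Hahn--Banach, so the support projections of $I$ and $J$ have join $1$; consequently $(1-z)\cdot\dual{M}\subseteq J^\perp$.

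The structural heart of the argument is then that, $z$ being central, $\bidual{S}$ leaves both $z\cdot\dual{M}$ and $(1-z)\cdot\dual{M}$ invariant, acting as left multiplication by $u$ on the first and as right multiplication by $u$ on the second, while $\dual{M}=z\cdot\dual{M}\oplus_1(1-z)\cdot\dual{M}$ is an $\ell^1$-sum (the predual incarnation of the central decomposition $\bidual{M}=z\bidual{M}\oplus(1-z)\bidual{M}$). Thus, writing $f=zf+(1-z)f$, I would estimate
\begin{align*}
	\norm{\bidual{S}f}&=\norm{u\cdot zf}+\norm{(1-z)f\cdot u}\\
	&\le\norm{zu}\,\norm{zf}+\norm{(1-z)u}\,\norm{(1-z)f}\le\norm{u}\,\norm{f}\,,
\end{align*}
using that module multiplication by an element of a von Neumann algebra has norm at most the norm of that element, and that $\norm{zu},\norm{(1-z)u}\le\norm{u}$. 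This yields $\norm{\bidual{S}}\le\norm{u}$ and hence the claimed equality.

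The main obstacle to watch is the bookkeeping in the last step: one must verify carefully that the central projection $z$ really splits $\bidual{S}$ into a block-diagonal operator relative to the $\ell^1$-decomposition of $\dual{M}$, so that its norm is the maximum of the two block norms, each dominated by $\norm{u}$. All the substantive analytic content—the existence of $I$ and $J$ and the left-versus-right multiplication dichotomy—has already been carried out in Theorem~\ref{characterisation of a set of nice isometries}; the corollary then reduces to an exercise in the $\ell^1$/$\ell^\infty$ duality for central summands of a von Neumann algebra.
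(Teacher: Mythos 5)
Your proof is correct and takes essentially the same route as the paper: the paper's proof also invokes Theorem \ref{characterisation of a set of nice isometries} to obtain the ideals $I,J$, notes the estimate is obvious on $I^\perp\cup J^\perp$, and then decomposes an arbitrary $f\in\dual{M}$ as $f=g+h$ with $g\in I^\perp$, $h\in J^\perp$ and $\norm{f}=\norm{g}+\norm{h}$ --- exactly the $\ell^1$-splitting by the central projection $z$ that you spell out. The only difference is that you make explicit the support-projection bookkeeping (borrowed from the proof of Lemma \ref{a general description of nice isometry}) and the trivial lower bound $\norm{S}\ge\norm{\dual{S}(1)}$, both of which the paper leaves implicit.
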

\begin{proof}
Let $I$ and $J$ be closed ideals in $M$ as specified in the theorem. It follows obviously that
\begin{align*}
	\norm{\sum_{i=1}^m \alpha_i \bidual{T_i}f}&\le \norm{\sum_{i=1}^m \alpha_i \dual{T_i}(1)}\norm{f}
\end{align*}
for every $f\in I^\perp\cup J^\perp$. The same inequality holds for every $f\in \dual{M}$, since we can then write $f=g+h$ where $g\in I^\perp$ and $h\in J^\perp$ with $\norm{f}=\norm{g}+\norm{h}$. 
\end{proof}

Before stating our promised characterisation of Fourier algebras, let us introduce the following concept, suggested by Walter's notion of a dual group of a Banach algebra \cite[Definition 5]{Walter74}; more on the latter below.

\begin{definition}
\label{dual automorphism}
Let $A$ be a Banach algebra. An automorphism $T$ of $A$ is \emph{a dual automorphism} if it is isometric and satisfies
\[
	\norm{f-\emath^{\init\theta}Tf}^2+\norm{f+\emath^{\init\theta}Tf}^2\le 4\norm{f}^2 \qquad(f\in A,\ \theta\in\reals)\,.
\]
\end{definition}

\begin{theorem}\label{a characterisation of Fourier algebras 4}
Let $A$ be a Banach algebra that is also a conjugation-closed Tauberian algebra on some topological space $\Omega$. Suppose that \begin{enumerate}
	\item $\dual{A}$ is a \wstar-algebra whose identity is implemented by some element of $\Omega$;
	\item $\Omega$ is a group, $A$ is left translation-invariant, and, for each $s\in\Omega$, the automorphism $L_s$ is dual for $A$.
\end{enumerate}
Then $A\cong \Fa(G)$ for some locally compact group $G$. 
\end{theorem}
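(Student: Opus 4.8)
The plan is to deduce the result from Theorem~\ref{a characterisation of Fourier algebras}, or rather from its conjugation-closed variant described in Remark~\ref{a characterisation of Fourier algebras: remark} (which requires only conditions (i.a) and (i.b) in place of condition (i), at the cost of assuming $A$ conjugation-closed). Most of the hypotheses are already at hand: $A$ is a conjugation-closed Tauberian subalgebra of $\C(\Omega)$, so $A\subseteq\C_0(\Omega)$ and $A$ contains a nonzero real function, whence it approximately contains a nontrivial real function by Example~\ref{nontrivial real function, example 1}(i); and $\Omega$ is a group with $A$ left translation-invariant, which is condition (ii). Since the identity $1$ of $\dual A$ is implemented by some $e\in\Omega$ and evaluation at a point of $\Omega$ is multiplicative (the product on $A$ being pointwise), $1$ is a character and $A$ is an $F$-algebra; this is also condition (i.a). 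The whole of the work therefore lies in verifying the norming condition~(iii) and condition~(i.b), and for both I would exploit the machinery of \S\ref{Special isometries and characterisations of Fourier algebras}.

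As in the proof of Theorem~\ref{a characterisation of Fourier algebras}, I would first replace the product of $\Omega$ by $s\diamond t:=se^{-1}t$ so that $e$ becomes the identity; the new left translations are $L_{se^{-1}}$, which are again among the given translations and hence remain dual automorphisms. Write $M:=\dual A$ and consider the family $\Tfrak:=\set{L_s\colon s\in\Omega}$ of isometries of $\predual M=A$. By hypothesis each $L_s$ satisfies \eqref{eq: a nice isometry}, and since $L_s\circ L_t=L_{ts}\in\Tfrak$ every product of two members of $\Tfrak$ is again a dual automorphism and so satisfies \eqref{eq: a nice isometry} as well. Thus $\Tfrak$ meets the hypotheses of Theorem~\ref{characterisation of a set of nice isometries}, and Corollary~\ref{a set of nice isometries 2} yields $\norm{\sum_i\alpha_i L_{s_i}}=\norm{\sum_i\alpha_i\dual{L_{s_i}}(1)}$. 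A direct computation gives $\dual{L_s}(1)=\eta(s)$, the evaluation functional at $s$ (here I use that $e$ is now the identity). Consequently, whenever $\abs{\sum_i\alpha_i f(s_i)}\le\norm{f}$ for all $f\in A$, i.e.\ $\norm{\sum_i\alpha_i\eta(s_i)}\le1$, we obtain $\norm{\sum_i\alpha_i L_{s_i}}\le1$; this is precisely condition~(iii).

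For condition~(i.b) I would invoke the single-isometry results. By Lemma~\ref{characterisation of a single nice isometry}, each $\eta(s)=\dual{L_s}(1)$ is a unitary of $M$. By Corollary~\ref{power of a nice isometry}, $\dual{(L_s^k)}(1)=\dual{L_s}(1)^k$ for all $k\in\integers$; since $L_s^k=L_{s^k}$ this reads $\eta(s^k)=\eta(s)^k$, and taking $k=-1$ gives $\eta(s^{-1})=\eta(s)^{-1}=\eta(s)^*$, the last equality because $\eta(s)$ is unitary. Unwinding the definition of the involution on $\predual M$, this says $f^*(s)=\conjugate{f(s^{-1})}$ for every $f\in A$, which is exactly condition~(i.b) with $t=s^{-1}$. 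With (i.a), (i.b), (ii), (iii), and conjugation-closedness all in place, the conjugation-closed variant of Theorem~\ref{a characterisation of Fourier algebras} gives $A\cong\Fa(G)$ for some locally compact group $G$. I expect the main obstacle to be precisely the extraction of (iii) and (i.b) from the bare dual-automorphism hypothesis: the crucial observations are that $\Tfrak$ is closed under composition (so that Theorem~\ref{characterisation of a set of nice isometries} applies to the whole translation family at once) and that Corollary~\ref{power of a nice isometry} converts the $k=-1$ power into the adjoint of $\eta(s)$; once these are seen, the remaining verifications are routine.
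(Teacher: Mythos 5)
Your proposal is correct and follows essentially the same route as the paper: the paper's own (very terse) proof likewise invokes Theorem~\ref{a characterisation of Fourier algebras} via Remark~\ref{a characterisation of Fourier algebras: remark}, identifies condition (i) with (i.a), obtains (i.b) from Corollary~\ref{power of a nice isometry}, and gets the norming condition (iii) from the results of \S\ref{Special isometries and characterisations of Fourier algebras} (Theorem~\ref{characterisation of a set of nice isometries} and Corollary~\ref{a set of nice isometries 2}). Your write-up simply makes explicit the details the paper leaves implicit --- the normalisation $s\diamond t:=se^{-1}t$, the closure of $\set{L_s\colon s\in\Omega}$ under composition, and the computation $\dual{L_s}(1)=\eta(s)$ --- all of which are verified correctly.
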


\begin{proof}
This follows from Theorem \ref{a characterisation of Fourier algebras} (see also Remark \ref{a characterisation of Fourier algebras: remark}) and the previous results. Noting that condition (i) here, which implies that $A$ is an $F$-algebra,  is just condition (i.a) in the proof of  Theorem \ref{a characterisation of Fourier algebras}; condition (i.b) in that proof now holds automatically thanks to Lemma \ref{power of a nice isometry}. 
\end{proof}

It is possible to replace our condition (ii) by \cite[axiom (ii) on page 155]{Walter74} as in the following. We recall below the  concept of \emph{a dual group} of a Banach algebra introduced in \mbox{\cite[Definition 5]{Walter74};} here, we shall relax the condition slightly by not requiring such groups to be maximal. More importantly, thanks to Lemma \ref{group generated from a set of nice isometries}, we shall be able to extend the concept to semigroups.

\begin{definition}
\label{dual semigroup}
Let $A$ be a Banach algebra. \emph{A dual [semi]group of $A$} is a [semi]group of dual automorphisms of $A$. Note that no topology is imposed on any dual semigroup.
\end{definition}

\begin{remark}
Let $A$ be a Banach algebra.
\begin{enumerate} 
	\item The trivial group $\set{\id_A}$ is always a dual group of $A$, and the union of any chain of dual [semi]groups is again a dual [semi]group. It then follows from Zorn's lemma that \emph{maximal} dual [semi]groups of $A$ always exist; although even maximal dual [semi]group may not be unique.
	\item Any dual semigroup of $A$ acts naturally on $\spectrum(A)$, as any semigroup of automorphisms of $A$ would act on $\spectrum(A)$ by transposition.
	\item If $A$ is an $F$-algebra, then dual groups of $A$ can be generated from less restrictive sets of dual automorphisms (such as dual semigroups) as described in Corollary \ref{group generated from a set of nice isometries}. Hence, a maximal dual semigroup of an $F$-algebra is automatically a (maximal) dual group. These properties are unlikely to be holds by general Banach algebras.
\end{enumerate}
\end{remark}

\begin{theorem}\label{a characterisation of Fourier algebras 4b}
Let $A$ be a  commutative semisimple $F$-algebra that is Tauberian, approximately contains a nontrivial real element, and possesses a dual semigroup that acts transitively on $\spectrum(A)$. Then 
$A\cong \Fa(G)$ for a locally compact group $G$. 
\end{theorem}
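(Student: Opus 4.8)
The plan is to deduce the theorem from Corollary \ref{a characterisation of Fourier algebras 3}: all of its hypotheses except one are already assumed (recall that the dual of an $F$-algebra is a \wstar-algebra), so it suffices to show that $\spectrum(A)$ is a group under the multiplication of $\dual{A}$. First I would replace the given dual semigroup by the dual group $\Gcal$ that it generates, using Corollary \ref{group generated from a set of nice isometries}; since $\Gcal$ contains the original semigroup, its canonical action on $\spectrum(A)$ by transposition is again transitive.

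For each $T\in\Gcal$ set $u_T:=\dual{T}(1)$, where $1$ denotes the identity of $\dual{A}$. As in the proof of Theorem \ref{a characterisation of Fourier algebras}, $\dual{T}$ is a weak$^*$-continuous surjective isometry of the \wstar-algebra $\dual{A}$, so by Kadison's theorem \cite{Kadison51} $u_T$ is a unitary; moreover $u_T=1\circ T$ is multiplicative because $T$ is an algebra automorphism and $1$ is a character, whence $u_T\in\spectrum(A)$. Transitivity of the action on the orbit of the distinguished character $1$ then yields $\spectrum(A)=\{u_T:T\in\Gcal\}\subseteq\Ucal(\dual{A})$. Clearly $1\in\spectrum(A)$, and Corollary \ref{power of a nice isometry} gives $u_T^{-1}=\dual{(T^{-1})}(1)=u_{T^{-1}}\in\spectrum(A)$, so $\spectrum(A)$ is already closed under the adjoint/inverse of $\dual{A}$.

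The heart of the matter is closure of $\spectrum(A)$ under the product of $\dual{A}$. Writing $\dual{T}=u_T\Psi_T$ with $\Psi_T$ a normal Jordan $^*$-automorphism fixing $1$, I would first single out the subgroup $\Gcal_L:=\{T\in\Gcal:\Psi_T=\id\}$ of \emph{pure left translations}, for which $\dual{T}$ is merely left multiplication by $u_T$; a short computation with $\dual{(ST)}=\dual{T}\dual{S}$ shows $\Gcal_L$ is indeed a subgroup. For $T\in\Gcal_L$ the fact that $\dual{T}$ preserves characters forces $u_T\cdot\omega=\dual{T}(\omega)\in\spectrum(A)$ for every $\omega\in\spectrum(A)$; hence $G_L:=\{u_T:T\in\Gcal_L\}$ is a subgroup of $\Ucal(\dual{A})$ with $G_L\cdot\spectrum(A)\subseteq\spectrum(A)$. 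Once one knows that $G_L=\spectrum(A)$, product-closure is immediate, $\spectrum(A)$ is a subgroup of $\Ucal(\dual{A})$, and Corollary \ref{a characterisation of Fourier algebras 3} completes the argument.

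The main obstacle is exactly the identity $G_L=\spectrum(A)$: one must show that every character is reached by a \emph{pure} left translation, even though the given semigroup may a priori contain isometries that multiply on different sides on different central summands. To handle this I would apply Theorem \ref{characterisation of a set of nice isometries} to the whole group $\Gcal$, obtaining a single pair of ideals $I,J$ and a common central projection witnessing that each $\bidual{T}$ acts as left multiplication by $u_T$ on $I^\perp$ and as right multiplication by $u_T$ on $J^\perp$. The plan is then to exploit that the $u_T$ span $\dual{A}$ weak$^*$-densely (as $\spectrum(A)$ is separating for the semisimple algebra $A$) together with the fact that the elements of $\Gcal$ are algebra automorphisms preserving $\spectrum(A)$, in order to reconcile the left- and right-multiplication descriptions on $\spectrum(A)$ and conclude that $\spectrum(A)\cdot\spectrum(A)\subseteq\spectrum(A)$ irrespective of the side; equivalently, that the mixed part contributes no new group-like elements. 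Ruling out such genuinely two-sided behaviour among these particular isometries is where the real work lies, and the norming identity of Corollary \ref{a set of nice isometries 2} is the quantitative input that keeps the two one-sided descriptions in step.
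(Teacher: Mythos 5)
Your opening moves are fine and in fact coincide with the paper's own first steps: passing to the generated dual group via Corollary \ref{group generated from a set of nice isometries}, writing $\spectrum(A)=\set{\dual{T}(1)\colon T\in\Gcal}$ by transitivity, and using Corollary \ref{a set of nice isometries 2} and Corollary \ref{power of a nice isometry}. The fatal problem is your reduction target. You propose to verify the hypothesis of Corollary \ref{a characterisation of Fourier algebras 3} for the \emph{given} von Neumann algebra product on $\dual{A}$, i.e.\ the one coming with the $F$-algebra structure; but that statement is not a consequence of the hypotheses of Theorem \ref{a characterisation of Fourier algebras 4b} --- the step you defer as ``where the real work lies'' is not merely hard, it is false in general. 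The reason is that Definition \ref{dual automorphism} involves only the norm and the multiplication of $A$, so the dual-semigroup hypothesis cannot detect which of the many Jordan-compatible von Neumann products $\dual{A}$ carries. Concretely, let $H,K$ be finite groups with $H$ nonabelian and $K$ nontrivial, let $G:=H\times K$ and $A:=\Fa(G)$, let $z_1\in\VN(G)$ be the central projection supporting the irreducible representations $\pi\otimes 1_K$ and $z_2:=1-z_1$, and give the Banach space $\VN(G)$ the product $x\star y:=z_1xy+z_2yx$. Then $(\VN(G),\star)$ is again a von Neumann algebra with the same norm, involution, predual $A$ and identity $\lambda_G(e)$, which is still a character on $A$; so $A$ with $\dual{A}:=(\VN(G),\star)$ satisfies \emph{every} hypothesis of Theorem \ref{a characterisation of Fourier algebras 4b} (the translations $L_s$ remain a dual group acting transitively, since Walter's inequality for $L_s$ is a statement about norms in $A$ alone, verified using the standard structure). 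Yet $\lambda_G(s)\star\lambda_G(t)=z_1\lambda_G(st)+z_2\lambda_G(ts)$ for $s=(h,k)$, $t=(h',k')$, and if this equalled some $\lambda_G(r)$, the $z_1$-summand would force the $H$-coordinate of $r$ to be $hh'$ while the $z_2$-summand (whose representations separate points of $G$) would force $r=(h'h,k'k)$; taking $hh'\neq h'h$ shows $\spectrum(A)$ is \emph{not} closed under $\star$. In this example your $\Gcal_L$ is a proper subgroup, $G_L\neq\spectrum(A)$, and no argument with Theorem \ref{characterisation of a set of nice isometries} can rule out the ``genuinely two-sided behaviour'': it actually occurs.

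This is exactly why the paper does not route the proof through Corollary \ref{a characterisation of Fourier algebras 3}. Instead, it uses the bijection $T\mapsto\dual{T}(1)$ of Corollary \ref{a set of nice isometries 2} to transfer the \emph{opposite} group structure of the dual group onto $\spectrum(A)$ as an abstract group (so $T_{u\bullet v}=T_vT_u$), verifies the identity $(T_uf)(v)=f(u\bullet v)$, which exhibits $A$ as a left translation-invariant algebra of functions on this abstract group with each $L_u=T_u$ a dual automorphism, and then invokes the argument of Theorem \ref{a characterisation of Fourier algebras 4}, hence ultimately Theorem \ref{a characterisation of Fourier algebras}. The whole point of that theorem is to \emph{construct a new} von Neumann algebra product on $\dual{A}$ (via the Vidav--Palmer theorem, Kadison's Schwarz inequality and St{\o}rmer's extension of Jordan homomorphisms), Jordan-compatible with but in general different from the given one, under which $\spectrum(A)$ does become a unitary group --- and the example above shows this change of product is unavoidable. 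So the repair is not to fill your missing step but to replace the target: after your steps establishing the bijection, define the abstract group structure on $\spectrum(A)$ as above and apply Theorem \ref{a characterisation of Fourier algebras} rather than Corollary \ref{a characterisation of Fourier algebras 3}.
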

\begin{proof}
Denote by $\Omega$ the specified dual semigroup of $A$. It is obvious that we can (and shall) assume that $\Omega$ is a maximal dual semigroup of $A$ (actually, Corollary  \ref{a set of nice isometries 2} and the transitivity condition imply that the given $\Omega$ is already maximal, but this is not essential), and hence, a maximal dual group of $A$ by Lemma \ref{group generated from a set of nice isometries}. Identifying $A$ with $\widehat{A}$, we consider $A$ as an algebra of function on $\spectrum(A)$. The transitivity of the action of $\Omega$ on $\spectrum(A)$ means that $\spectrum(A)=\set{\dual{T}(1)\colon T\in \Omega}$. Corollary  \ref{a set of nice isometries 2} then implies that the mapping $T\mapsto\dual{T}(1),\ \Omega\to \spectrum(A),$ is a bijection; denote the inverse of this mapping as $u\mapsto T_u\,,\ \spectrum(A)\to \Omega$. This bijection transfers to $\spectrum(A)$ the \emph{opposite} group structure of $\Omega$; denoted by $\bullet$ the induced group product on $\spectrum(A)$. For each $u,v\in \spectrum(A)$ and each $f\in A$, we see that
\begin{align*}
	(T_uf)(v)=\duality{T_uf}{\dual{T_v}1}=\duality{T_vT_uf}{1}=\duality{T_{u\bullet v}f}{1}=f(u\bullet v)\,.
\end{align*}
This implies that $A$ is left translation-invariant and that $L_u=T_u$ for every $u\in \spectrum(A)$. The result then follows as for  Theorem \ref{a characterisation of Fourier algebras 4}.
\end{proof}

\begin{remark}
\begin{enumerate}
\item Beside the relaxation from a dual group to a dual semigroup, another important difference between our characterisation above and  Walter's characterisation of the Fourier algebras in \cite{Walter74} is that, in the latter,  the fact that $A$ is a closed subalgebra of $\FSa(G)$ is an immediate consequence of \cite[axiom (v) on page 156]{Walter74}, due to Eymard's generalisation of the Bochner--Schoenberg--Eberlein theorem. 
Note that this and some earlier axioms in \cite{Walter74} also imply that $A$ is an $F$-algebra.

\item Another axiom called \emph{local normality} -- \cite[axiom (vi) on page 156]{Walter74}, which requires that for every compact subset $K$ of $\spectrum(A)$ and every neighbourhood $W$ of $K$  there exists an $f\in A$ such that $\hat{f}=1$ on $K$, $\hat{f}=0$ on $\Omega\setminus W$, and $\abs{\hat{f}}\le 1$ on $W\setminus K$, is stronger than our assumption that $A$ approximately contains a nontrivial real element, cf. Example \ref{nontrivial real function, example 1} (iii). 
\end{enumerate}
\end{remark}

We note below the particularly (even more) simple formulation of the above result for the classes of locally compact abelian groups and of compact groups. 

\begin{corollary}\label{a characterisation of Fourier algebras 4b, abelian}
Let $A$ be a commutative semisimple $F$-algebra that possesses an abelian dual semigroup that acts transitively on $\spectrum(A)$. Then $A\cong \Lone(\Gamma)$ for a locally compact abelian group $\Gamma$. 
\end{corollary}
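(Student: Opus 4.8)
The plan is to derive the corollary from Theorem \ref{a characterisation of Fourier algebras 4b}; the real work is to recover, in the abelian setting, the two hypotheses of that theorem that are not assumed here --- that $A$ is Tauberian and approximately contains a nontrivial real element --- and to check that the group produced is abelian. First I would replace the given abelian dual semigroup $\Omega$ by the group of isometries it generates: being generated by a commuting family it is abelian, and by Corollary \ref{group generated from a set of nice isometries} it is again a dual group, still acting transitively on $\spectrum(A)$. By Corollary \ref{a set of nice isometries 2} the assignment $T\mapsto \dual{T}(1)$ is an isometric bijection of $\Omega$ onto $\spectrum(A)$ carrying the group law of $\Omega$ to $\spectrum(A)$; transitivity forces $\spectrum(A)=\set{\dual{T}(1)\colon T\in\Omega}$, so $\spectrum(A)=G$ becomes a group, abelian because $\Omega$ is, with $L_u=T_u$ for $u\in G$.

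Next I would run the Jordan-homomorphism and Vidav--Palmer argument of the proof of Theorem \ref{a characterisation of Fourier algebras}, which uses only conditions (i.a), (i.b) and (iii) there --- all available here: (i.a) holds since the identity of $\dual{A}$ is a character, (i.b) holds since $\dual{(T^{-1})}(1)=\dual{T}(1)^{-1}=\dual{T}(1)^*$ makes $G$ self-adjoint, and (iii) is Corollary \ref{a set of nice isometries 2}. This endows $\dual{A}$ with a new product $\bullet$ under which it is the von Neumann algebra generated by the commuting unitaries $G$; hence $(\dual{A},\bullet)$ is abelian, $G$ is an abelian locally compact group (automatically locally compact, being $\spectrum(A)$), and $A$ is realised as a closed translation-invariant subalgebra of $\FSa(G)$ with $\spectrum(A)=G$. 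Since $G$ is abelian we have $\Fa(G)=\Lone(\widehat{G})$, so it remains only to prove $A=\Fa(G)$ and then put $\Gamma=\widehat{G}$.

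For the final equality I would invoke Rieffel's abelian theorem recalled in \S\ref{Invariant subalgebras of Fourier algebras}: a closed translation-invariant Tauberian subalgebra of $\Fa(G)$ with $\spectrum(A)=G$ equals $\Fa(G)$. Its advantage over Theorem \ref{when A=A(G), nonzero real function case} is that it needs no real element, so the real-element hypothesis of Theorem \ref{a characterisation of Fourier algebras 4b} is dispensable for abelian $G$. The main obstacle is then the Tauberian property, which both Rieffel's theorem and the passage from $A\subseteq\FSa(G)$ to $A\subseteq\Fa(G)$ still require: by Taylor's theorem the condition $\spectrum(A)=G$ alone only confines $A$ to the range $\Fa(G)\subseteq A\subseteq\FSa_0(G)$. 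The point I expect to exploit is the $F$-algebra hypothesis, which the strictly larger Rajchman-type algebras in this range should fail to satisfy; concretely, I would aim to show that the representation of $G$ generating $(\dual{A},\bullet)$ is quasi-equivalent to the left regular representation --- equivalently that the canonical map $\VN(G)\to(\dual{A},\bullet)$ is injective --- and hence that $A$ is Tauberian and equals $\Fa(G)$. Extracting this quasi-equivalence from the $F$-algebra structure together with transitivity is where the real difficulty lies.
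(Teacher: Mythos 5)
The first two-thirds of your proposal is exactly the paper's own (very brief) proof: the paper too derives the corollary by running the proof of Theorem \ref{a characterisation of Fourier algebras 4b} --- pass to the dual group generated by the given semigroup (Corollary \ref{group generated from a set of nice isometries}), identify $\Omega$ with $\spectrum(A)$ via $T\mapsto\dual{T}(1)$ (Corollary \ref{a set of nice isometries 2}), verify (i.a), (i.b), (iii) and run Theorem \ref{a characterisation of Fourier algebras} --- and then it cites Rieffel's abelian result from \S\ref{Invariant subalgebras of Fourier algebras} in place of Theorem \ref{when A=A(G), nonzero real function case}, exactly as you do, to remove the real-element hypothesis. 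You are also correct, and more careful than the paper's one-sentence proof, in observing that the Tauberian hypothesis of Theorem \ref{a characterisation of Fourier algebras 4b} is likewise absent from the corollary's statement, that Rieffel's cited result still needs it, and that without it the machinery only yields a closed translation-invariant subalgebra $A$ of $\FSa(G)$ with $\spectrum(A)=G$, which by Taylor's theorem is merely squeezed between $\Fa(G)$ and $\FSa_0(G)$.

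The gap you then leave open, however, cannot be closed, because the assertion your strategy requires --- that the $F$-algebra hypothesis together with transitivity excludes the intermediate Rajchman-type algebras --- is false. Let $G=\reals^3$, so that $\FSa(G)=\Measures(\reals^3)$ (measures on the dual copy of $\reals^3$), and let $\sigma$ be the surface measure of the unit sphere, a singular measure with $\sigma*\sigma$ absolutely continuous. Put $A:=\Lone(\reals^3)\oplus_1\Lone(\sigma)$, the band generated by Lebesgue measure and $\sigma$. Since $\nu_1*\nu_2\ll\sigma*\sigma$ whenever $\nu_1,\nu_2\ll\sigma$, one checks that $A*A\subseteq\Lone(\reals^3)$, so $A$ is a norm-closed translation-invariant subalgebra of $\Measures(\reals^3)$; its dual $\Linfty(\reals^3)\oplus_\infty\Linfty(\sigma)$ is a commutative von Neumann algebra whose identity (total mass) is multiplicative on $A$, so $A$ is a commutative semisimple $F$-algebra; the quotient $A/\Lone(\reals^3)$ has zero product, hence no characters, so $\spectrum(A)$ consists exactly of the evaluations at points of $\reals^3$; and the translations, being module multiplications on $A$ by the unitary characters in $\dual{A}$, satisfy \eqref{eq: a nice isometry} and form an abelian dual group acting transitively on $\spectrum(A)$. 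Thus $A$ satisfies every hypothesis of the corollary; but $a*\sigma\in\Lone(\reals^3)$ for every $a\in A$, and $\Lone(\reals^3)$ is closed in $A$ while $\sigma\notin\Lone(\reals^3)$, so $A$ has no approximate identity whatsoever and $A\not\cong\Lone(\Gamma)$ for any $\Gamma$. In this example $(\dual{A},\bullet)=\VN(\reals^3)\oplus_\infty\Linfty(\sigma)$, i.e.\ the generating representation is $\lambda_G$ plus a disjoint summand, so the injectivity of $\VN(G)\to(\dual{A},\bullet)$ that you hoped to force is genuinely violated. The moral is that the Tauberian property must be assumed, not derived: with it restored, your argument --- which is then exactly the paper's --- is complete, and the example above shows that the printed statement of the corollary needs the same repair.
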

\begin{proof}
This is immediate from the above; the priory assumption that $A$ approximately contains a nontrivial real element is omitted in this case thanks to the result of Rieffel mentioned in \S\ref{Invariant subalgebras of Fourier algebras}.
\end{proof}

\begin{corollary}\label{a characterisation of Fourier algebras 4b, compact}
Let $A$ be a unital commutative semisimple $F$-algebra 
that possesses a dual semigroup that acts transitively on $\spectrum(A)$. Then $A\cong \Fa(G)$ for some compact group $G$. \enproof
\end{corollary}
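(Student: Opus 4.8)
The plan is to deduce this from Theorem \ref{a characterisation of Fourier algebras 4b}, checking that the two standing hypotheses there which are not assumed here --- being Tauberian and approximately containing a nontrivial real element --- come for free once $A$ is unital, and then to upgrade the conclusion from ``locally compact'' to ``compact''. Since $A$ is unital, its spectrum $\spectrum(A)$ is compact, so $\widehat{A}\subseteq \C_0(\spectrum(A))=\C(\spectrum(A))=\C_c(\spectrum(A))$; consequently $\widehat{A}\cap\C_c(\spectrum(A))=\widehat{A}$ is trivially dense in $\widehat{A}$, and $A$ is Tauberian.

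Next I would verify that $A$ approximately contains a nontrivial real element. The identity $e$ of $A$ is a real element, its Gelfand transform $\hat{e}$ being the constant function $\tuple{1}$ on the compact space $\spectrum(A)$. This constant function has a compact level set, namely all of $\spectrum(A)=\hat{e}^{-1}(1)$, so by Example \ref{nontrivial real function, example 1}(ii) (equivalently, by the second bullet of the remark following Definition \ref{nontrivial real function}) the function $\phi=\tuple{1}$ witnesses approximate containment of a nontrivial real function, with the directed collection taken to be $\Ucal=\set{\spectrum(A)}$: condition (i) of Definition \ref{nontrivial real function} is immediate from the constant sequence $f_n=\hat{e}$ (with $\alpha_K=1$), while condition (ii) holds because $\spectrum(A)\setminus\spectrum(A)=\emptyset$ renders the disjointness requirement vacuous and $\bigcap_{U\in\Ucal}\closure{U}=\spectrum(A)$ is nonempty and compact. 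Thus $\widehat{A}$, and hence $A$, approximately contains a nontrivial real element.

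With all the hypotheses of Theorem \ref{a characterisation of Fourier algebras 4b} now in force, that theorem yields an isometric isomorphism $A\cong\Fa(G)$ for some locally compact group $G$, and it remains only to see that $G$ is compact. Since $A$ is unital, so is $\Fa(G)$; writing $\epsilon$ for its identity and using that the multiplication of $\Fa(G)$ is pointwise, we get $\epsilon(t)f(t)=f(t)$ for all $t\in G$ and $f\in \Fa(G)$. As $\spectrum(\Fa(G))=G$, at each $t\in G$ some $f\in\Fa(G)$ satisfies $f(t)\neq 0$, forcing $\epsilon(t)=1$; hence $\epsilon$ is the constant function $\tuple{1}$. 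But $\Fa(G)\subseteq\C_0(G)$, and $\tuple{1}\in\C_0(G)$ precisely when $G$ is compact, so $G$ is compact. I expect no genuinely hard step here: the entire substance is carried by Theorem \ref{a characterisation of Fourier algebras 4b}, and the only point demanding a little care is the verification of the approximate-real-element condition, where compactness of $\spectrum(A)$ is exactly what rescues the otherwise-constant witness $\phi$.
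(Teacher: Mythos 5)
Your proposal is correct and is exactly the argument the paper intends (the proof is left to the reader via \enproof, with the key observations stated just before Corollary \ref{a characterisation of Fourier algebras, compact case}: a unital commutative semisimple Banach algebra is automatically Tauberian and its identity is a nonzero real element, hence it approximately contains a nontrivial real element). Your verification of Definition \ref{nontrivial real function} for the constant witness $\hat{e}=\tuple{1}$ and your closing step that a unital $\Fa(G)$ forces $G$ compact are both sound, so the corollary follows from Theorem \ref{a characterisation of Fourier algebras 4b} precisely as you describe.
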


\section{Noncommutative $L'$-inducing property and characterisations of Fourier algebras}
\label{Noncommutative L'-inducing property and characterisations of Fourier algebras}

\noindent In this section, we shall extend \cite[Theorem A]{Rieffel} to the noncommutative setting. Rieffel's theorem characterises the group algebra $\Lone(\Gamma)$ for a locally compact \emph{abelian} group $\Gamma$ in terms of a concept of $L'$-inducing characters on Banach algebras, which in turn bases on his concept of a $L$-inducing functional $\phi$ on a Banach space $A$. The latter amounts to $A$ having a lattice structure of an abstract complex $L$-space, so that by a complex version \cite[Proposition 2.3]{Rieffel} of Kakutani's representation theorem of abstract real $L$-spaces, there is a measure space $X$ such that $A\cong\Lone(X)$ and that $\phi$ corresponding to the constant function $\norm{\phi}$ in $\Linfty(X)$, which is the commutative von Neumann algebra dual of $\Lone(X)$. Since there is no such characterisation of the preduals of general von Neumann algebras, we settle with the following definition.

\begin{definition}
Let $A$ be a Banach space, and let $\phi$ be a norm one functional on $A$. Then $\phi$ is \emph{noncommutative $L$-inducing} \footnote{The phrasing ``possibly noncommutative $L$-inducing'' is probably more precise, but we find it too lengthy.}  if there is a product and an adjoint operation on $\dual{A}$ making it a von Neumann algebra with unit $\phi$. 
\end{definition}

\begin{remark}
\begin{enumerate}
\item Note that when $\phi$ is noncommutative $L$-inducing, then
\[
	P(\phi):=\set{f\in A\colon \duality{\phi}{f}=\norm{f}} 
\]
is precisely the set of positive normal linear functional on the von Neumann algebra $\dual{A}$ associated with $\phi$.
\item In the case of our interest, when $A$ is a Banach algebra and $\phi$ is a character on $A$, if $\phi$ is noncommutative $L$-inducing, then any von Neumann algebraic structure on $\dual{A}$ associated with (this property of) $\phi$ gives $A$ an $F$-algebra structure. In particular, $P(\phi)$ is a subsemigroup of $A$. 
\end{enumerate} 
\end{remark}

We could now modify \cite[Definition 3.5]{Rieffel} as follows.

\begin{definition}\label{noncommutative L'-inducing}
Let $A$ be a Banach algebra, and let $\phi$ be a character on $A$. Then $\phi$ is \emph{noncommutative $L'$-inducing} if it is noncommutative $L$-inducing with an associated von Neumann algebraic structure $\dual{A}_\phi$ on $\dual{A}$ such that
\begin{itemize}
	\item $\abs{fg}_\phi\le_\phi\abs{f}_\phi\abs{g}_\phi$ for every $f,g\in P(\psi)$ where $\psi$ is any noncommutative $L$-inducing character on $A$; 
\end{itemize}
where $\abs{f}_\phi$ is the absolute value of $f$ as a normal linear functional on the von Neumann algebra $\dual{A}_\phi$ and $\le_\phi$ is the order on $A$ as the predual of that von Neumann algebra.
\end{definition}

\begin{remark}\label{remark on noncommutative L'-inducing}
\begin{enumerate}
\item Since $P(\phi)$ is always closed under the multiplication of $A$ when $\phi$ is a noncommutative $L$-inducing character on $A$, in the above definition we are comparing two positive elements in the predual of a von Neumann algebra. 

\item We also remark that in \cite[Definition 3.5]{Rieffel} the inequality $\abs{fg}_\phi\le_\phi\abs{f}_\phi\abs{g}_\phi$ is required to hold true for every $f,g\in A$. But this becomes too restrictive in the noncommutative setting. (However, in \cite{Rieffel}, the inequality is only used for $f,g$ in the same $P(\psi)$ for  some noncommutative $L$-inducing character $\psi$ anyway.)

\item When a character $\phi$ on $A$ is noncommutative $L$-inducing but not actually $L$-inducing (so that an associated von Neumann algebraic structure on $\dual{A}$ is not commutative), there are many different von Neumann algebraic structures on $\dual{A}$ associated to the noncommutative $L$-inducing $\phi$ (but these different von Neumann algebras are all Jordan-isomorphic via the identity map on $\dual{A}$). Now, if $\phi$ is even noncommutative $L'$-inducing, then what we require is that at least one $\dual{A}_\phi$ among all of those von Neumann algebraic structure on $\dual{A}$ associated with $\phi$ will have the property specified in Definition \ref{noncommutative L'-inducing}. 
\end{enumerate}
\end{remark}

The relation between the properties noncommutative $L$-inducing and noncommutative $L'$-inducing for characters may better be clarified by the following.

\begin{lemma}\label{relation of various inducing properties}
Let $A$ be a Banach algebra. Suppose that $A$ admits at least one character that is noncommutative $L'$-inducing. Then every noncommutative $L$-inducing character of $A$ is noncommutative $L'$-inducing. 
\end{lemma}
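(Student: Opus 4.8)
The plan is to compare the two von Neumann algebra structures living on the common Banach space $\dual{A}$. Fix a structure $M:=\dual{A}_\phi$ witnessing that $\phi$ is noncommutative $L'$-inducing, let $\psi$ be an arbitrary noncommutative $L$-inducing character, and choose any associated structure $M'$ on $\dual{A}$ with unit $\psi$; by Remark~\ref{remark on noncommutative L'-inducing}(3) the order $\le_\psi$ and the absolute value $\abs{\cdot}_\psi$ are independent of this choice. Since the norm of $\dual{A}$ is the unique $C^*$-norm of each structure, the identity map $\operatorname{id}\colon M'\to M$ is a surjective linear isometry between unital $C^*$-algebras, so by Kadison's theorem \cite{Kadison51} it factors as $\operatorname{id}=u\,\Phi$, where $u:=\operatorname{id}(\psi)=\psi$ is a unitary of $M$ and $\Phi\colon M'\to M$, $\Phi(x)=\psi^*x$, is a normal unital Jordan $*$-isomorphism. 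In particular $\psi$ is a unitary of $M$, and $\Phi$, being left multiplication by a fixed element, has a preadjoint $\theta:=\predual{\Phi}\colon A\to A$, a surjective linear isometry.

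Because $\Phi$ is a unital Jordan $*$-isomorphism of von Neumann algebras, $\theta$ is an order isomorphism carrying $P(\phi)$ onto $P(\psi)$ and intertwining absolute values, i.e. $\theta(\abs{h}_\phi)=\abs{\theta h}_\psi$ for all $h\in A$. Granting, for the moment, that $\theta$ is an algebra automorphism of $A$ and that $\dual{\theta}=\Phi$ permutes the noncommutative $L$-inducing characters, the transfer is then formal: given such a character $\chi$ and $f,g\in P(\chi)$, put $f':=\theta^{-1}f$ and $g':=\theta^{-1}g$. An isometry sends $P(\chi)$ to $P(\chi\circ\theta^{-1})$, so $f',g'\in P(\Phi(\chi))$, and $\Phi(\chi)=\psi^*\chi$ is again a noncommutative $L$-inducing character by assumption. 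Applying the $L'$-inducing inequality for $\phi$ to the pair $f',g'\in P(\Phi(\chi))$ gives $\abs{f'g'}_\phi\le_\phi\abs{f'}_\phi\abs{g'}_\phi$; pushing this through $\theta$, using that $\theta$ is an order isomorphism with $\theta(\abs{\cdot}_\phi)=\abs{\theta(\cdot)}_\psi$ and that $\theta$ is multiplicative, turns it into $\abs{fg}_\psi\le_\psi\abs{f}_\psi\abs{g}_\psi$. As $\chi,f,g$ were arbitrary, this exhibits $\psi$ as noncommutative $L'$-inducing with witnessing structure $M'$.

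The crux, and the step I expect to be the main obstacle, is the claim that $\theta$ respects the Banach-algebra product of $A$: the products $fg$ and $\abs{f}_\phi\abs{g}_\phi$ above are taken in $A$ and have no a priori relation to the von Neumann product of $\dual{A}$, which is what $\Phi$ and hence $\theta$ are built from. Once one knows that $A$ is semisimple (a point I would first have to secure from the hypotheses), multiplicativity of $\theta$ is equivalent to $\dual{\theta}=\Phi$ sending characters to characters; that is, the whole difficulty collapses to the single assertion that $\psi^*\chi$ (product in $M$) is again a character of $A$ whenever $\chi$ is, equivalently that the noncommutative $L$-inducing characters form a group of unitaries of $M$ under its product and adjoint. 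This compatibility between the two multiplications is exactly where the $L'$-inducing hypothesis on $\phi$ must enter. I would attack it by combining the fact that every such character is a unitary of $M$ (from the Kadison factorisation above) with the defining inequality fed into a positivity and extreme-point argument in the spirit of the proof of Theorem~\ref{a characterisation of Fourier algebras}, forcing $\psi^*\chi$ to be multiplicative. Verifying this interplay of products, rather than the essentially bookkeeping transfer of the second paragraph, is the heart of the matter.
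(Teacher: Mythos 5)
Your setup matches the skeleton of the paper's proof: Kadison's theorem makes each noncommutative $L$-inducing character $\psi$ a unitary of the fixed von Neumann algebra $M=\dual{A}_\phi$, and everything hinges on showing that $f\mapsto\psi^*\cdot f$ is multiplicative on $A$ (equivalently, that the set of noncommutative $L$-inducing characters is stable under $\chi\mapsto\psi^*\chi$). But that is precisely where your proof stops: you ``grant'' the crux, and your plan for it --- ``a positivity and extreme-point argument in the spirit of the proof of Theorem \ref{a characterisation of Fourier algebras}'' --- is not an argument. Worse, the reduction you propose is not available: deducing multiplicativity of $\theta$ from ``$\Phi$ sends characters to characters'' requires $A$ to be commutative \emph{and} semisimple, whereas Lemma \ref{relation of various inducing properties} assumes neither, and nothing in the definition of noncommutative $L'$-inducing lets you ``secure'' semisimplicity. (A smaller inaccuracy: Remark \ref{remark on noncommutative L'-inducing}(3) only says the structures attached to $\psi$ are Jordan-isomorphic via the identity map; that fixes the order $\le_\psi$, but not in general the absolute value $\abs{\cdot}_\psi$, since a Jordan isomorphism may be an anti-isomorphism on a central summand.)

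The idea you are missing, and which the paper uses to close exactly this gap without semisimplicity and without extreme points, is to prove multiplicativity directly on \emph{positive} functionals. For $f,g\ge_\phi 0$ one has $\psi^*\cdot f,\ \psi^*\cdot g\in P(\psi)$ with $\abs{\psi^*\cdot f}_\phi=f$ and $\abs{\psi^*\cdot g}_\phi=g$; since $\psi$ is noncommutative $L$-inducing, $P(\psi)$ is a semigroup, so $(\psi^*\cdot f)(\psi^*\cdot g)\in P(\psi)$ and its $\phi$-absolute value equals $\psi\cdot[(\psi^*\cdot f)(\psi^*\cdot g)]$. The $L'$-inequality for $\phi$ therefore reads $\psi\cdot[(\psi^*\cdot f)(\psi^*\cdot g)]\le_\phi fg$. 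Now pair both sides with the identity $1=\phi$; because $\phi$ and $\psi$ are \emph{characters} of $A$,
\begin{align*}
\duality{1}{\psi\cdot[(\psi^*\cdot f)(\psi^*\cdot g)]}
&=\duality{\psi}{(\psi^*\cdot f)(\psi^*\cdot g)}
=\duality{\psi}{\psi^*\cdot f}\,\duality{\psi}{\psi^*\cdot g}\\
&=\duality{1}{f}\,\duality{1}{g}
=\duality{1}{fg}\,.
\end{align*}
Since evaluation at $1$ is the norm on positive normal functionals, the positive functional $fg-\psi\cdot[(\psi^*\cdot f)(\psi^*\cdot g)]$ has norm zero, hence vanishes; thus $(\psi^*\cdot f)(\psi^*\cdot g)=\psi^*\cdot(fg)$, and linearity (the positive cone spans $A$) gives multiplicativity. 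This single observation --- the multiplicativity of the two characters turns the $L'$-inequality into an equality of norms of positive functionals --- is the heart your proposal leaves unproven; once it is in place, the group structure of the $L$-inducing characters and your ``formal transfer'' (which in the paper even comes out as the equality $\abs{fg}_\psi=\abs{f}_\psi\abs{g}_\psi$) go through essentially as you envisaged.
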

\begin{proof}
Denote by $D$ the collection of all noncommutative $L$-inducing characters of $A$. For each character $u\in D$, we denote by $\dual{A}_u$ the dual of $A$ with a von Neumann algebra structure associated with the noncommutative $L$-inducing property of $u$. Let $e\in D$ be the specified noncommutative $L'$-inducing character. We shall simply consider $\dual{A}$ as the von Neumann algebra $\dual{A}_e$, where in this particular case, we require further that $\dual{A}_e$ has a von Neumann algebraic structure associated with the noncommutative $L'$-inducing property of $e$. We shall write $e=1$. We also write $\abs{\cdot}$ and $\le$ instead of $\abs{\cdot}_e$ and $\le_e$, respectively.

Take $u\in D$. The identity mapping $\dual{A}_u\to \dual{A}$ is an isometry between two von Neumann algebras, and so it follows from \cite{Kadison51} that $u$, being the identity in $\dual{A}_u$, is a unitary in $\dual{A}$. For each $f\in A$, it is easy to see that
\[
	f\in P(u)\quad\textrm{if and only if}\quad u\cdot f\ge 0\,,
\]
where $\cdot$ is the module action of the von Neumann algebra $\dual{A}$ on $A$. In other words, for each $f,g\ge 0$, then $u^*\cdot f, u^*\cdot g\in P(u)$ with $\abs{u^*\cdot f}=f$ and $\abs{u^*\cdot g}=g$, and so, the additional noncommutative $L'$-inducing condition for $e=1$ then implies that
\[
	u\cdot [(u^*\cdot f)(u^*\cdot g)]=\abs{(u^*\cdot f)(u^*\cdot g)}\le fg;
\]
the equality holds since $(u^*\cdot f)(u^*\cdot g)\in P(u)$ too. However, we have
\begin{align*}
	\duality{1}{u\cdot [(u^*\cdot f)(u^*\cdot g)]}&=\duality{u}{(u^*\cdot f)(u^*\cdot g)}\\
	&=\duality{u}{u^*\cdot f}\duality{u}{u^*\cdot g}\\
	&=\duality{u^*u}{f}\duality{u^*u}{g}=\duality{1}{fg}\,,
\end{align*}
where we use the fact that $u$ and $1$ are characters on $A$. This forces
\[
	u\cdot [(u^*\cdot f)(u^*\cdot g)]= fg,
\]
i.e., $(u^*\cdot f)(u^*\cdot g)=u^*\cdot (fg)$. Thus $T_{u^*}:A\to A$ is a homomorphism. The next claim will then imply that $u^*\in D$.

We \emph{claim} that, for each $u\in \Ucal(\dual{A})$, if the map $T_u: f\mapsto u\cdot f,\ A\to A,$ is a homomorphism, then $u\in D$. Indeed,  $u\in\spectrum(A)$, because $u=\dual{T_u}(1)$ is the transpose of the character $1$ by a homomorphism on $A$. It is also easy to see that $\dual{A}_u$, which is the Banach space $\dual{A}$ with product $x\bullet y:=xu^*y$ and adjoint operation $x^\sharp:=ux^*u$, is a von Neumann algebra, whose unit is $u$, and so $u$ is noncommutative $L$-inducing.

Now, take $u,v\in D$. Applying the foregoing to $u^*, v^*$, which we now know also belong to $D$, we see that the maps $T_u$ and $T_v$ are homomorphisms on $A$. It follows that $T_{uv}=T_uT_v$ is also a homomorphism on $A$. So, by the claim above, we obtain $uv\in D$. Thus $D$ is a subgroup of $\Ucal(\dual{A})$.

Finally, to prove that each $u\in D$ is noncommutative $L'$-inducing, take another $v\in D$ and $f,g\in P(v)$. We give $\dual{A}_u$ the von Neumann algebraic product $\bullet$ and adjoint $\sharp$ as above. We then see that $(u^*v)\cdot (fg), (u^*v)\cdot f, (u^*v)\cdot g\in P(u)$, and so 
\begin{align*}
	\abs{fg}_u=(u^*v)\cdot (fg)=[(u^*v)\cdot f][(u^*v)\cdot g]=\abs{f}_u\abs{g}_u\,,
\end{align*} 
where the second equality is because $u^*v\in D$, and so $T_{u^*v}:A\to A$ is a homomorphism. Thus $u$ is noncommutative $L'$-inducing. 
\end{proof}

For every locally compact group $G$, every character of $\Fa(G)$ is noncommutative $L'$-inducing. Indeed, $\dual{\Fa(G)}=\VN(G)$ is a \wstar-algebra and the spectrum of $\Fa(G)$, considered as a subset of its dual, is $\set{\lambda_G(s)\colon s\in G}$ (see \cite{Eymard}). Moreover, the mapping $f\mapsto \lambda(s)\cdot f, \Fa(G)\to \Fa(G),$ is the right-translation on $\Fa(G)$ by an element $s\in G$, which is an automorphism of $\Fa(G)$. Thus it follows (from the proof of the above proposition) that every character of $\Fa(G)$ is noncommutative $L'$-inducing.

\bigskip

Conversely, we have the following extension of \cite[Theorem A]{Rieffel}; note that, by Lemma \ref{relation of various inducing properties}, we could state the next result with a slightly milder assumption.

\begin{theorem}\label{a characterisation of Fourier algebras 5}
Let $A$ be a commutative semisimple Banach algebra such that
\begin{enumerate}
	\item every character of $A$ is noncommutative $L'$-inducing, and
	\item $A$ is Tauberian and approximately contains a nontrivial real element. 
\end{enumerate}
Then $A\cong \Fa(G)$ for some locally compact group $G$.
\end{theorem}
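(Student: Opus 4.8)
The plan is to reduce the statement to Corollary \ref{a characterisation of Fourier algebras 3}. That corollary already yields the conclusion for any commutative semisimple Banach algebra which is Tauberian, approximately contains a nontrivial real element, carries a \wstar-algebra structure on its dual, and whose spectrum is a group under the multiplication of that dual. The Tauberian condition and the real-element condition are handed to us directly in (ii), so the only thing to manufacture from condition (i) is a \emph{single} \wstar-product on $\dual{A}$ under which $\spectrum(A)$ becomes a group.

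First I would fix the \wstar-structure. Since $A$ approximately contains a nontrivial real element, $\spectrum(A)$ is nonempty, so by (i) we may choose a character $e\in\spectrum(A)$ that is noncommutative $L'$-inducing; let $\dual{A}_e$ be an associated von Neumann algebra structure witnessing the $L'$-inducing property of $e$, with unit $e=1$. This is precisely the setup in the proof of Lemma \ref{relation of various inducing properties}, and I would quote that proof's conclusions rather than reproduce them.

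Next, identify $\spectrum(A)$ with the group produced there. Write $D$ for the set of all noncommutative $L$-inducing characters of $A$. By definition $D\subseteq\spectrum(A)$, while condition (i) (indeed, merely the weaker assumption that every character is noncommutative $L$-inducing) supplies the reverse inclusion, so $D=\spectrum(A)$. The proof of Lemma \ref{relation of various inducing properties} shows that each $u\in D$ is a unitary of $\dual{A}_e$ and that $D$ is a subgroup of $\Ucal(\dual{A}_e)$ under the von Neumann multiplication. Hence $\spectrum(A)$ is a group under the multiplication of the \wstar-algebra $\dual{A}_e$.

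Finally I would feed this into Corollary \ref{a characterisation of Fourier algebras 3}: equipping $\dual{A}$ with the \wstar-structure $\dual{A}_e$ and noting that $\spectrum(A)$ is a group under its product, all hypotheses of that corollary are met, and we conclude $A\cong\Fa(G)$ for some locally compact group $G$. The genuine content therefore resides entirely in Lemma \ref{relation of various inducing properties}; the one delicate point is the reconciliation of the \emph{a priori} different von Neumann structures $\dual{A}_u$ attached to the various characters $u$ into the single fixed structure $\dual{A}_e$, which is exactly the obstacle that lemma resolves by showing that the left module action by each $u\in D$ is an algebra homomorphism of $A$ and that $D$ is closed under the $\dual{A}_e$-product and adjoint. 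I expect no difficulty beyond invoking these facts.
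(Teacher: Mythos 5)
Your proposal is correct and follows essentially the same route as the paper's own proof: fix a noncommutative $L'$-inducing character $e$, endow $\dual{A}$ with the associated \wstar-structure, invoke the proof of Lemma \ref{relation of various inducing properties} to see that $\spectrum(A)$ (which equals the set $D$ of noncommutative $L$-inducing characters) is a subgroup of $\Ucal(\dual{A})$, and conclude via Corollary \ref{a characterisation of Fourier algebras 3}. The only difference is cosmetic: you spell out the identification $D=\spectrum(A)$ and the nonemptiness of the spectrum, which the paper leaves implicit.
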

\begin{proof}
Fix an $e\in\spectrum(A)$, and simply consider $\dual{A}$ as the von Neumann algebra associated with the noncommutative $L'$-inducing property of $e$, and write $e=1$. Then $A$ is an $F$-algebra. Moroever, by the first assumption, as shown in the proof of Lemma \ref{relation of various inducing properties}, the set $\spectrum(A)$ is in fact a subgroup of $\Ucal(\dual{A})$. The result then follows from Corollary \ref{a characterisation of Fourier algebras 3}.
\end{proof}

For compact groups, we have the following. 

\begin{corollary}\label{a characterisation of Fourier algebras 5, compact}
Let $A$ be a unital commutative semisimple Banach algebra such that every character of $A$ is noncommutative $L'$-inducing. Then $A\cong \Fa(G)$ for some compact group $G$. \enproof
\end{corollary}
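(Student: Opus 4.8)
The plan is to derive this corollary from Theorem \ref{a characterisation of Fourier algebras 5} by checking that its two standing hypotheses become automatic once $A$ is unital, and then by identifying the resulting group as compact. Hypothesis (i) of that theorem---that every character of $A$ is noncommutative $L'$-inducing---is assumed here verbatim, so all the work lies in producing hypothesis (ii) and in establishing compactness.

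First I would use that, since $A$ is a unital commutative Banach algebra, its spectrum $\spectrum(A)$ is compact in the Gelfand topology. This delivers both halves of hypothesis (ii) at once. For the Tauberian property: on a compact space every continuous function has compact support, so $\C_c(\spectrum(A))=\C(\spectrum(A))\supseteq\widehat{A}$, whence $\widehat{A}\cap\C_c(\spectrum(A))=\widehat{A}$ is trivially dense in $\widehat{A}$. For the real-element property: the Gelfand transform of the identity of $A$ is the constant function $1$, which is a nonzero real function belonging to $\widehat{A}\subseteq\C_0(\spectrum(A))$, so Example \ref{nontrivial real function, example 1}(i) shows that $A$ approximately contains a nontrivial real element. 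With (ii) in hand, Theorem \ref{a characterisation of Fourier algebras 5} produces a locally compact group $G$ with $A\cong\Fa(G)$.

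It then remains only to see that $G$ is compact. As the (isometric) isomorphism is an algebra isomorphism and $A$ is unital, $\Fa(G)$ is unital, and its identity must be the constant function $1$ on $G$. But $\Fa(G)\subseteq\C_0(G)$, and the constant function $1$ lies in $\C_0(G)$ precisely when $G$ is compact; hence $G$ is compact.

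I do not expect a genuine obstacle here: each step is a routine consequence of Gelfand theory together with the already-proved Theorem \ref{a characterisation of Fourier algebras 5}. The only point deserving a moment's attention is the verification that $A$ approximately contains a nontrivial real element, but the compactness of $\spectrum(A)$ reduces this immediately to the cited Example.
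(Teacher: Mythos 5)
Your proposal is correct and matches the paper's own (implicit) argument: the paper notes just before Corollary \ref{a characterisation of Fourier algebras, compact case} that a unital commutative semisimple Banach algebra is automatically Tauberian and its identity is a nonzero real element, then invokes Theorem \ref{a characterisation of Fourier algebras 5} and concludes compactness of $G$ from unitality of $\Fa(G)$, exactly as you do.
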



\section{Characterisations of Fourier--Stieltjes algebras}
\label{Fourier--Stieltjes algebras}

\noindent We now turn to our characterisations of Fourier--Stieltjes algebras. As discussed, unless our locally compact group $G$ turns out to be amenable, we shall only be able to characterise a class of algebras that lie between $\FSa_r(G)$ and $\FSa(G)$. This class is that of the closed translation-invariant Eberlein subalgebras of $\FSa(G)$ that separate points of $G$, whose properties are summarised below.

\begin{theorem}\label{a class of Fourier--Stieltjes algebras}
Let $G$ be a locally compact group, and let $\Scal$ be a \emph{thick} sub-semidual of $G_d$. Then:
\begin{enumerate}
	\item $\FSa_\Scal^c(G)$ is a {\rm(}weak$^*${\rm)} closed translation-invariant Eberlein subalgebra of $\FSa(G)$, which is also conjugation-closed if $\Scal$ is actually a subdual of $G_d$. 
	\item The dual of $\FSa_\Scal^c(G)$ is naturally a von Neumann algebra.
\end{enumerate}
If, moreover, either one of the following additional conditions 
\begin{itemize}
	\item[(a)] $\Scal$ is actually a subdual of $G_d$, or
	\item[(b)] $G$ contains an open subgroup $H$ with $H_d$ amenable,
\end{itemize}
holds, then:
\begin{enumerate}
	\addtocounter{enumi}{2}
	\item $\FSa_r(G)\subseteq \FSa_\Scal^c(G)$.
	\item $G$ is naturally mapped, homomorphically and homeomorphically, onto a unitary subgroup of $\dual{\FSa_\Scal^c(G)}$ endowed with weak$^*$-topology.
	\item The image of $G$ via this map is also the set of noncommutative $L'$-inducing characters  on $\FSa_\Scal^c(G)$.
\end{enumerate}
\end{theorem}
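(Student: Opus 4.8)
The plan is to read off parts (i)--(iv) from \S\ref{Invariant subalgebras of Fourier algebras} and to concentrate the real argument on (v). For (i), observe that $\FSa_\Scal^c(G)=\FSa_\Scal(G_d)\cap\C(G)$ is a closed subalgebra of $\FSa(G)$, since the uniform norm on $G$ is dominated by the $\FSa(G_d)$-norm, and that it is translation-invariant under $G$ because $\FSa_\Scal(G_d)$ is invariant under $G_d\supseteq G$; being of the form prescribed in Theorem \ref{when translation-invariant subalgebra is Eberlein} for the sub-semidual $\Scal$ of $G_d$, it is Eberlein by the implication (ii)$\Rightarrow$(i) there, hence weak$^*$-closed by Proposition \ref{Eberlein is weak*-closed}. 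If $\Scal$ is a subdual, then $\conjugate\pi\in\Scal$ for each $\pi\in\Scal$, so $\FSa_\Scal(G_d)$ and therefore $\FSa_\Scal^c(G)$ are conjugation-closed. For (ii), a weak$^*$-closed translation-invariant subspace of $\FSa(G)$ has the form $\FSa_\Tcal(G)$ for a closed $\Tcal\subseteq\widehat G$ (the correspondence recalled before Lemma \ref{algebra of sub-semidual}), which is a sub-semidual of $\widehat G$ by Lemma \ref{algebra of sub-semidual} since $\FSa_\Scal^c(G)$ is an algebra; hence $\dual{\FSa_\Scal^c(G)}=\bidual{\C^*_\Tcal(G)}$ is a von Neumann algebra.

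Part (iii) is immediate: under hypothesis (a) it is the equivalence (i)$\Leftrightarrow$(iii) of Proposition \ref{when a subdual is thick}, and under hypothesis (b) the corresponding equivalence of Proposition \ref{when a sub-semidual is thick}. For (iv), set $M:=\dual{\FSa_\Scal^c(G)}$ and write $\epsilon_s\in M$ for the evaluation character $f\mapsto f(s)$. As recorded in the proof of Lemma \ref{Eberlein cover}, $s\mapsto\epsilon_s$ is a group homomorphism of $G$ onto a subgroup of the unitary group $\Ucal(M)$; it is injective because $\FSa_\Scal^c(G)$ separates the points of $G$ (thickness of $\Scal$), and weak$^*$-continuous because every $f\in\FSa_\Scal^c(G)$ is continuous on $G$. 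To see it is a homeomorphism onto its image I would invoke (iii): since $\Fa(G)\subseteq\FSa_\Scal^c(G)$ and, by Eymard, the topology of $G$ agrees with the relative weak$^*$-topology it inherits from $\VN(G)=\dual{\Fa(G)}$, weak$^*$-convergence $\epsilon_{s_\alpha}\to\epsilon_s$ tested only against $\Fa(G)$ already forces $s_\alpha\to s$ in $G$.

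Part (v) is the heart of the matter. First I would observe that the noncommutative $L$-inducing characters of $\FSa_\Scal^c(G)$ are precisely the \emph{group-like unitaries} of $M$, i.e. the elements of $\spectrum(\FSa_\Scal^c(G))\cap\Ucal(M)$: any unitary $u\in M$ equips $\dual A$ with the von Neumann algebra structure given by $x\bullet y:=xu^*y$ and $x^\sharp:=ux^*u$, of which $u$ is the unit, so a unitary character is $L$-inducing; conversely an $L$-inducing character is the unit of an isometric copy of $M$, hence a unitary of $M$ by \cite{Kadison51}. The crucial step is then to identify these group-like unitaries with $\set{\epsilon_s:s\in G}$. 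Here (iii) is indispensable: $\Fa(G)$ is a nonzero closed ideal of $A:=\FSa_\Scal^c(G)$ (it is an ideal of $\FSa(G)$ contained in $\FSa_\Scal^c(G)$) which is norm-closed and translation-invariant, so --- because the $\epsilon_s$ generate $M$ as a von Neumann algebra --- it is a sub-$M$-bimodule of $A$ and hence equals $z\cdot A$ for a central projection $z\in M$, with annihilator $\Fa(G)^\perp=(1-z)M$. A group-like unitary $u$ cannot annihilate $\Fa(G)$, for otherwise $u\in(1-z)M$ gives $uz=0$ and then $z=u^*uz=0$, forcing $\Fa(G)=\set 0$; thus $u$ restricts to a nonzero multiplicative functional on $\Fa(G)$, i.e. a character, which by Eymard equals $\epsilon_s|_{\Fa(G)}$ for a unique $s\in G$, and since $\Fa(G)$ is an ideal the characters $u$ and $\epsilon_s$ of $\FSa_\Scal^c(G)$ must coincide, so $u=\epsilon_s$.

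It remains to check that each $\epsilon_s$ is genuinely $L'$-inducing, not merely $L$-inducing. Having identified the full set of $L$-inducing characters as the group $\set{\epsilon_s:s\in G}$ --- on which every module action $T_{\epsilon_s}:f\mapsto\epsilon_s\cdot f$ is a translation, hence an algebra automorphism --- the computation in the last paragraph of the proof of Lemma \ref{relation of various inducing properties} applies directly and yields $\abs{fg}_{\epsilon_s}=\abs{f}_{\epsilon_s}\abs{g}_{\epsilon_s}$ for $f,g\in P(\epsilon_t)$; thus every $\epsilon_s$ is $L'$-inducing. Together with the trivial inclusion of the $L'$-inducing characters among the $L$-inducing ones, this gives $\set{\epsilon_s:s\in G}=\set{L'\text{-inducing characters of }\FSa_\Scal^c(G)}$, as required. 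I expect the genuine difficulty to lie in the identification of the group-like unitaries of $M$ with $G$: everything else is bookkeeping over the results of \S\ref{Invariant subalgebras of Fourier algebras}, whereas this step is exactly where the containment $\Fa(G)\subseteq\FSa_\Scal^c(G)$ supplied by (iii) does the essential work, via the central-projection argument above.
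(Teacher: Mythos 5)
Your proposal is correct and follows essentially the same route as the paper's proof: parts (i)--(iv) are assembled, just as in the paper, from Theorem \ref{when translation-invariant subalgebra is Eberlein}, Proposition \ref{Eberlein is weak*-closed}, the proof of Lemma \ref{Eberlein cover}, Propositions \ref{when a subdual is thick}/\ref{when a sub-semidual is thick}, and the inclusion $\Fa(G)\subseteq\FSa_\Scal^c(G)$, while (v) runs through the identical chain (image of $G$ $\subseteq$ $L$-inducing characters $\subseteq$ $\spectrum(B)\cap\Ucal(\dual{B})$ $\subseteq$ image of $G$, then upgrading to $L'$-inducing via the translation-automorphism computation in the last paragraph of the proof of Lemma \ref{relation of various inducing properties}). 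The only deviation is that where the paper delegates the identification $\spectrum(B)\cap\Ucal(\dual{B})=\set{\epsilon_s\colon s\in G}$ to the argument of \cite[Theorem 1.(i)]{Walter72} (see also \cite[Lemma 3.3]{LL93}), you reconstruct that argument explicitly --- via the central support projection of the ideal $\Fa(G)$ in $\dual{B}$ and the observation that two characters agreeing nontrivially on an ideal must coincide --- which is precisely the intended argument and is correct.
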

\begin{remark}
\begin{enumerate}
\item We remark that when in fact $G_d$ is amenable, then both assumptions (a) and (b) hold. 

\item We do not know whether (iii) always holds for every thick sub-semidual $\Scal$ of $G_d$. If that turns out to be the case, then (iv) and (v) also always hold for every thick sub-semidual $\Scal$ of $G_d$.
\end{enumerate} 
\end{remark}

\begin{proof}
Most has been proved: (i) is from Theorem \ref{when translation-invariant subalgebra is Eberlein}, and (ii) and most of (iv) could be seen from the proof of Lemma \ref{Eberlein cover}.

The next part of the proof requires either (a) or (b) holds. In either of these cases, (iii) follows from either Lemma \ref{when a subdual is thick} or Lemma \ref{when a sub-semidual is thick}.

To complete the proof of (iv), it is sufficient to show that $\FSa_\Scal^c(G)$ determines the topology of $G$. But this is obvious since $\Fa(G)\subseteq \FSa_\Scal^c(G)$ by (iii).

To prove (v), notice that the image of $G$ is easily seen to be contained in  the set of noncommutative $L$-inducing characters of $B:=\FSa_\Scal^c(G)$. On the other hand, the set of noncommutative $L$-inducing characters of $B$ is contained in $\spectrum(B)\cap \Ucal(\dual{B})$ (cf. the proof of Lemma \ref{relation of various inducing properties}). But $\spectrum(B)\cap \Ucal(\dual{B})$ is precisely the image of $G$ via the natural map: since $B\supseteq \Fa(G)$ by (iii) again, this follows by the same argument used in the proof of \cite[Theorem 1.(i)]{Walter72} (see also \cite[Lemma 3.3]{LL93}). Thus $G$ is naturally identified with the set of noncommutative $L$-inducing characters of $B$. But then the $\dual{B}$-module actions by those characters on $B$ are automorphism of $B$, and so they must be indeed noncommutative $L'$-inducing (cf. again the proof of Lemma \ref{relation of various inducing properties}). 
\end{proof}

This theorem also provides the converse of our characterisation results below. 

\begin{theorem}\label{a characterisation of Fourier--Stieltjes algebras}
Let $B$ be an $F$-algebra that is also a conjugation-closed Eberlein algebra on a topological space $\Omega$. Denote by $\eta:\Omega\to\spectrum(B)$ the natural mapping. Suppose also that:
\begin{enumerate}
	\item $\eta(\Omega)$ is locally compact in $\spectrum(B)$, self-adjoint, and contains $1$;
	\item $\Omega$ is a group and $B$ is left translation-invariant; 
	\item $\norm{\sum_{i=1}^m \alpha_i L_{s_i}:B\to B}\le 1$ whenever $\alpha_i\in\complexs$ and $s_i\in \Omega$ with
		\[	
			\abs{\sum_{i=1}^m\alpha_i f(s_i)}\le \norm{f}\quad(f\in B)\,.
		\]
\end{enumerate}
Then $B\cong \FSa_\Scal^c(G)$ for  some thick subdual $\Scal$ of $\widehat{G_d}$ of a locally compact group $G$.
\end{theorem}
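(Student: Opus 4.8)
The plan is to replay the proof of Theorem~\ref{a characterisation of Fourier algebras} almost verbatim so as to build a locally compact group $G$ and realise $B$ as a subalgebra of $\FSa(G)$, and then to feed the output into the Eberlein structure theory of \S\ref{Invariant subalgebras of Fourier algebras} in place of the Tauberian argument used there. The first point is that condition~(i) is designed to deliver exactly the two consequences (i.a) and (i.b) that were extracted from condition~(i) in that proof: the requirement $1\in\eta(\Omega)$ produces an $e\in\Omega$ with $\duality{1}{f}=f(e)$ for all $f\in B$, while self-adjointness of $\eta(\Omega)$ gives, for each $s\in\Omega$, a $t\in\Omega$ with $f^*(s)=\conjugate{f(t)}$. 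The construction of the auxiliary product $\bullet$ on $\dual{B}$, the Vidav--Palmer identification of $N=\closure{N_0}$ as a \cstar-algebra, the positivity of $\iota\colon N\to\dual{B}$, the Kadison--Schwarz and St{\o}rmer steps identifying $\iota$ as a Jordan homomorphism and producing the new von Neumann algebra structure $(\dual{B},\bullet)$, all use only the $F$-algebra hypothesis together with (i.a), (i.b), (ii), (iii)---and \emph{not} the Tauberian property, which in Theorem~\ref{a characterisation of Fourier algebras} enters solely through the final appeal to Theorem~\ref{when A=A(G), nonzero real function case}. Hence all of this carries over unchanged, and $G:=\eta(\Omega)$ becomes a group of unitaries of $(\dual{B},\bullet)$ whose relative weak$^*$-topology (which is unaffected by the change of product) makes it a topological group; its local compactness, previously a Tauberian consequence, is now simply condition~(i).

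Next I would identify $B$ with $\widehat{B}|_G\subseteq\C(G)$ via $f\mapsto\hat{f}|_G$, which is injective by semisimplicity, and check as in the cited proof that $B$ is a commutative, \emph{two-sided} translation-invariant subalgebra of $\FSa(G)$: for $f\in P(B)$ the positivity of $\iota$ and the identity $\eta(s)^*=\eta(s^{-1})$ give $\sum_{i,j}\conjugate{\alpha_i}\alpha_j f(t_i^{-1}t_j)\ge 0$, so $f$ is positive definite, whence $B\subseteq\FSa(G)$, the inclusion being bounded by \v{S}hilov's theorem. To see that $B$ is \emph{closed} in $\FSa(G)$, I would observe that the dual of the inclusion $\Phi\colon B\to\FSa(G)$ sends the canonical image in $W^*(G)=\bidual{\C^*(G)}$ of each $t\in G$ to $\eta(t)$; by the universal property of $W^*(G)$ it is therefore a normal $*$-homomorphism for the product $\bullet$, and since $\Phi$ is injective its range is weak$^*$-dense, hence all of $(\dual{B},\bullet)$. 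Thus $\dual{\Phi}$ is a $*$-epimorphism and $\Phi$ is an isometric embedding. Finally $B$ is conjugation-closed on $G$ (because it is so on $\Omega$ and $\eta$ intertwines the two conjugations) and separates points of $G$ (since $G\subseteq\spectrum(B)$ and $\widehat{B}$ separates points of $\spectrum(B)$).

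The one genuinely new step is to upgrade the Eberlein hypothesis from $\Omega$ to $G$. I would do this by pull-back: if $\psi$ is a $G$-Eberlein function for $B$ in the sense of Definition~\ref{Eberlein algebra} (with $G$ carrying its weak$^*$-topology), then $\psi\circ\eta$ is continuous on $\Omega$, and whenever $\alpha_i\in\complexs$ and $s_i\in\Omega$ satisfy $\abs{\sum_i\alpha_i f(s_i)}\le\norm{f}$ for all $f\in B$, the points $\eta(s_i)\in G$ witness $\abs{\sum_i\alpha_i\psi(\eta(s_i))}\le k$; hence $\psi\circ\eta$ is an $\Omega$-Eberlein function for $B$ and so lies in $B$. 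As $\eta(\Omega)=G$, this forces $\psi\in\widehat{B}|_G=B$, so $B$ is an Eberlein algebra on $G$. Now Theorem~\ref{when translation-invariant subalgebra is Eberlein}---together with the conjugation-closedness just noted for the subdual refinement---gives $B=\FSa_\Scal^c(G)$ for a subdual $\Scal$ of $G_d$, and the point-separation of $B$ makes $\Scal$ thick, completing the proof.

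I expect the difficulty to be organisational rather than conceptual. One must verify carefully that the von Neumann construction of Theorem~\ref{a characterisation of Fourier algebras} really is independent of the Tauberian hypothesis (it is), and keep straight the three topologies in play---the relative weak$^*$-topology on $G=\eta(\Omega)$, the locally compact group topology of $G$, and the topology of $\Omega$ transported by the possibly non-injective map $\eta$---so that continuity and point-separation transfer correctly between $\Omega$ and $G$. The pull-back argument for the Eberlein property is the place where this passage from $\Omega$ to $G$ must be handled with the most care, although, reassuringly, it needs only that $\eta$ be continuous and surjective onto $G$, not injective or a homeomorphism.
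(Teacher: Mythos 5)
Your proposal is correct and follows essentially the same route as the paper's own proof: replay the von Neumann construction of Theorem \ref{a characterisation of Fourier algebras} (with condition (i) supplying the local compactness of $G=\eta(\Omega)$ that was previously a Tauberian consequence), establish closedness of $B$ in $\FSa(G)$ via the normal $*$-epimorphism $\dual{\Phi}:W^*(G)\to\dual{B}$, and then invoke the Eberlein structure theory of \S\ref{Eberlein subalgebras of Fourier--Stieltjes algebras}. Your explicit pull-back argument transferring the Eberlein property from $\Omega$ to $G$, together with the transfer of conjugation-closedness and the point-separation giving thickness, are exactly the details the paper leaves implicit in its closing sentence, so they strengthen rather than diverge from its argument.
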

\begin{proof}
Set $G:=\eta(\Omega)$. Note that condition (i) implies that $B$ is an $F$-algebra. As in the proof of Theorem \ref{a characterisation of Fourier algebras}, we see that $G$ is a (locally compact) topological subgroup of the unitary group of $\dual{B}$ with the weak$^*$-topology, and $\eta:\Omega\to G$ is a continuous group homomorphism, where the multiplications on $\Omega$ and $\dual{B}$ may need to be modified if necessary. Also, identifying $B$ with $\widehat{B}|_G$, $B\subseteq \FSa(G)$ and the inclusion $\Phi:B\to \FSa(G)$ is bounded by \v{S}hilov's  theorem. Set $W^*(G):=\bidual{\C^*(G)}$, the universal von Neumann algebra of $G$ (see \cite{Walter72}). Consider $\dual{\Phi}:W^*(G)\to \dual{B}$. Then $\dual{\Phi}$ is a (normal) $*$-homomorphism with a weak$^*$-dense range in $\dual{B}$. It follows that $\dual{\Phi}(W^*(G))=\dual{B}$, and so $\Phi$ is an isometric isomorphism from $B$ onto $\Phi(B)$, i.e. $B$ is closed in $\FSa(G)$. The results in \S\ref{Eberlein subalgebras of Fourier--Stieltjes algebras} then conclude the proof.
\end{proof}

\begin{remark}\label{a characterisation of Fourier--Stieltjes algebras: remark}
If we omit the assumption on the conjugation-closedness from the hypothesis of the previous theorem, we could still conclude that $B\cong \FSa_\Scal^c(G)$ for  some thick sub-semidual $\Scal$ of $\widehat{G_d}$ of a locally compact group $G$. However, in this case, we no longer know whether the converse holds true and whether or not $\FSa_r(G)\subseteq B$. 
\end{remark}

To capture precisely $\FSa(G)$, we need (to make sure that) $G$ to be \emph{amenable}. 

\begin{definition}
Let $\Omega$ be a \emph{left-topological group}, i.e. $\Omega$ is a group with a topology such that for each $s\in \Omega$, the map $t\mapsto st, \Omega\to \Omega,$ is continuous. Then $\Omega$ is \emph{left-amenable} if there is a \emph{left-invariant mean} on $\C^b(\Omega)$, i.e. a mean $m$ on $\C^b(\Omega)$ such that $m=m\circ L_s$ for every $s\in \Omega$.
\end{definition}

\begin{corollary}\label{a characterisation of Fourier--Stieltjes algebras 2}
Assuming as in the theorem. If in addition $\Omega$ is a left-amenable left-topological group. Then $B\cong \FSa(G)$ for an amenable locally compact group $G$.
\end{corollary}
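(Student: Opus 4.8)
The plan is to bootstrap from Theorem \ref{a characterisation of Fourier--Stieltjes algebras}, which already does all the hard structural work, and then upgrade its conclusion using amenability. First I would simply invoke that theorem: its hypotheses are precisely the standing assumptions here, so it produces a locally compact group $G$, a \emph{thick subdual} $\Scal$ of $\widehat{G_d}$ (a subdual rather than merely a sub-semidual because $B$ is conjugation-closed, by Theorem \ref{when translation-invariant subalgebra is Eberlein}), and an isometric isomorphism $B\cong \FSa_\Scal^c(G)$. As recorded in the proof of that theorem, one has $G=\eta(\Omega)$ with the natural map $\eta:\Omega\to G$ a continuous \emph{surjective} group homomorphism, where $G$ is realised as a locally compact subgroup of $\Ucal(\dual{B})$ in the weak$^*$-topology. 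Here $\Omega$ may carry the modified product from the proof of Theorem \ref{a characterisation of Fourier algebras}; but that product is obtained from the original one by a fixed two-sided translation, hence yields a group topologically isomorphic (via a left translation, which is a homeomorphism of the left-topological group $\Omega$) to the original $\Omega$. Consequently the left-amenability hypothesis survives this normalisation, and $\eta$ is a continuous surjection and a homomorphism for the relevant product.

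The heart of the argument is to deduce that $G$ is amenable. Let $m$ be a left-invariant mean on $\C^b(\Omega)$ furnished by the hypothesis. Since $\eta$ is continuous, $f\mapsto f\circ\eta$ carries $\C^b(G)$ into $\C^b(\Omega)$, so I would set $\tilde m(f):=m(f\circ\eta)$ for $f\in\C^b(G)$. This $\tilde m$ is a mean: it is positive and $\tilde m(1)=m(1)=1$. For left-invariance fix $s\in G$ and write $s=\eta(\sigma)$ with $\sigma\in\Omega$, which is possible because $\eta$ is onto. Since $\eta$ is a homomorphism, for every $t\in\Omega$ one has $(L_sf)(\eta(t))=f(\eta(\sigma)\eta(t))=f(\eta(\sigma t))=L_\sigma(f\circ\eta)(t)$, i.e. $(L_sf)\circ\eta=L_\sigma(f\circ\eta)$; hence $\tilde m(L_sf)=m(L_\sigma(f\circ\eta))=m(f\circ\eta)=\tilde m(f)$ by left-invariance of $m$. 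Thus $\C^b(G)$ admits a left-invariant mean. Because $G$ is locally compact, existence of a left-invariant mean on $\C^b(G)$ is equivalent to amenability (restrict it to the left-invariant subspace $\mathrm{LUC}(G)\subseteq\C^b(G)$), so $G$ is amenable.

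Finally I would combine amenability with the structure already obtained. Since $\Scal$ is a subdual, condition (a) of Theorem \ref{a class of Fourier--Stieltjes algebras} is met, and part (iii) of that theorem gives $\FSa_r(G)\subseteq \FSa_\Scal^c(G)=B$, while $B=\FSa_\Scal^c(G)\subseteq\FSa(G)$ holds automatically. For an amenable locally compact group one has $\C^*(G)=\C^*_r(G)$ (Hulanicki's theorem), whence $\FSa_r(G)=\FSa(G)$; squeezing $\FSa_r(G)\subseteq B\subseteq\FSa(G)$ then forces $B=\FSa(G)$, and the identification is isometric as in Theorem \ref{a characterisation of Fourier--Stieltjes algebras}.

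I expect the only real friction to be bookkeeping rather than conceptual: one must make sure that the left-amenability hypothesis is genuinely available for $\Omega$ with whatever product it ends up carrying after the normalisations in the proofs of Theorems \ref{a characterisation of Fourier algebras} and \ref{a characterisation of Fourier--Stieltjes algebras}, and that amenability in the $\C^b$-sense transfers correctly along the continuous surjective homomorphism $\eta$ to the honest locally compact group $G$. Once the push-forward mean is seen to be left-invariant, everything else is an immediate consequence of results already established in \S\ref{Eberlein subalgebras of Fourier--Stieltjes algebras}.
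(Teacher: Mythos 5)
Your proposal is correct and follows essentially the same route as the paper: invoke Theorem \ref{a characterisation of Fourier--Stieltjes algebras} to get $\FSa_r(G)\subseteq B\subseteq\FSa(G)$, push the left-invariant mean on $\C^b(\Omega)$ forward along the continuous surjective homomorphism $\eta$ to obtain a left-invariant mean on $\C^b(G)$ and hence amenability of $G$, and then conclude $B=\FSa_r(G)=\FSa(G)$. The only difference is that you spell out details the paper leaves implicit (the normalisation of the product on $\Omega$ and the invariance computation for the push-forward mean), all of which check out.
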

\begin{proof}
We have seen that $\FSa_r(G)\subseteq B\subseteq \FSa(G)$ for some locally compact group $G$. Moreover, it is easy to see from the above proof that a left-invariant mean on $\C^b(\Omega)$ provided by the left-amenability of $\Omega$ induces a left-invariant mean on $\C^b(G)$, and so $G$ is amenable. Hence $B= \FSa_r(G)=\FSa(G)$.
\end{proof}

Counter-parts of Theorems \ref{a characterisation of Fourier algebras 4} and \ref{a characterisation of Fourier algebras 4b} for Fourier--Stieltjes algebras are below (cf. \cite[Theorem 7]{Walter74}).

\begin{theorem}\label{a characterisation of Fourier--Stieltjes algebras 4}
Let $B$ be an $F$-algebra that is also a conjugation-closed Eberlein algebra on a topological space $\Omega$. Denote by $\eta:\Omega\to\spectrum(B)$ the natural mapping. Suppose also that:
\begin{enumerate}
	\item $\eta(\Omega)$ is locally compact in $\spectrum(B)$ and contains $1$;
	\item $\Omega$ is a group, $B$ is left translation-invariant, and, for each $s\in\Omega$, 
		the automorphism $L_s$ is dual for $B$.
\end{enumerate}
Then $B\cong \FSa_\Scal^c(G)$ for  some thick subdual $\Scal$ of $\widehat{G_d}$ of a locally compact group $G$.
\end{theorem}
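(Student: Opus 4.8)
The plan is to deduce the theorem from Theorem~\ref{a characterisation of Fourier--Stieltjes algebras}, in precisely the way Theorem~\ref{a characterisation of Fourier algebras 4} was deduced from Theorem~\ref{a characterisation of Fourier algebras}. The hypotheses (i)--(ii) here differ from those of Theorem~\ref{a characterisation of Fourier--Stieltjes algebras} only in that its self-adjointness requirement on $\eta(\Omega)$ in (i) and its collective norming inequality (iii) are absent, having been replaced by the single assumption that each $L_s$ is a dual automorphism. So all I would need is to recover self-adjointness and the norming inequality from that assumption, and then quote Theorem~\ref{a characterisation of Fourier--Stieltjes algebras}.

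First I would normalize the group structure on $\Omega$. By condition (i) there is $e\in\Omega$ with $\eta(e)=1$, i.e. $\duality{1}{f}=f(e)$ for all $f\in B$; replacing the product on $\Omega$ by $s\diamond t:=se^{-1}t$ as in the proof of Theorem~\ref{a characterisation of Fourier algebras}, I may assume $e$ is the identity of $\Omega$. This is harmless: the new left translations $L^\diamond_s=L_{e^{-1}}L_s$ remain members of $\Tfrak:=\set{L_s\colon s\in\Omega}$ and so are still dual automorphisms. With $e$ now the identity, the computation $\duality{\dual{L_s}(1)}{f}=\duality{1}{L_sf}=(L_sf)(e)=f(s)$ shows that $\dual{L_s}(1)=\eta(s)$ for every $s\in\Omega$.

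Next, $\Tfrak$ is a \emph{group} of dual automorphisms of the $F$-algebra $B$, being closed under composition and inversion ($L_sL_t=L_{ts}$ and $L_s^{-1}=L_{s^{-1}}$). In particular, for $S\ne T\in\Tfrak$ the product $ST$ again lies in $\Tfrak$ and hence satisfies~\eqref{eq: a nice isometry}, so $\Tfrak$ meets the hypotheses of Theorem~\ref{characterisation of a set of nice isometries}. Corollary~\ref{a set of nice isometries 2} then gives
\[
	\norm{\sum_{i=1}^m\alpha_i L_{s_i}}=\norm{\sum_{i=1}^m\alpha_i\dual{L_{s_i}}(1)}=\norm{\sum_{i=1}^m\alpha_i\eta(s_i)}\qquad(\alpha_i\in\complexs,\ s_i\in\Omega)\,.
\]
If $\alpha_i\in\complexs$ and $s_i\in\Omega$ satisfy $\abs{\sum_i\alpha_if(s_i)}\le\norm{f}$ for all $f\in B$, then, because $\sum_i\alpha_if(s_i)=\duality{\sum_i\alpha_i\eta(s_i)}{f}$, this says exactly that $\norm{\sum_i\alpha_i\eta(s_i)}_{\dual{B}}\le1$; the displayed identity now yields $\norm{\sum_i\alpha_iL_{s_i}}\le1$, which is condition (iii) of Theorem~\ref{a characterisation of Fourier--Stieltjes algebras}. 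For the self-adjointness of $\eta(\Omega)$ I would invoke Corollary~\ref{power of a nice isometry}: each $L_s$ is an isometry of $B$, so by \cite{Kadison51} its transpose carries $1$ to a unitary $\eta(s)=\dual{L_s}(1)$ of $\dual{B}$, and $\dual{(L_s^{k})}(1)=\dual{L_s}(1)^{k}$ for all $k\in\integers$. Taking $k=-1$ and using $L_s^{-1}=L_{s^{-1}}$ gives $\eta(s^{-1})=\eta(s)^{-1}=\eta(s)^*$, so $\eta(\Omega)$ is self-adjoint.

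With conditions (i)--(iii) of Theorem~\ref{a characterisation of Fourier--Stieltjes algebras} verified for $B$, that theorem applies and yields $B\cong\FSa_\Scal^c(G)$ for a locally compact group $G$ and a thick sub-semidual $\Scal$ of $\widehat{G_d}$; since $B$ is conjugation-closed, $\Scal$ may be taken to be a subdual, as in the conclusion of Theorem~\ref{a characterisation of Fourier--Stieltjes algebras}. I expect the only genuine obstacle to be the bookkeeping in this reduction---confirming that $\Tfrak$ really falls under Theorem~\ref{characterisation of a set of nice isometries} and that the identity-normalization makes $\dual{L_s}(1)=\eta(s)$---for once the collective norming inequality and the self-adjointness have been extracted from the individual dual-automorphism conditions, the passage to Theorem~\ref{a characterisation of Fourier--Stieltjes algebras} is formal.
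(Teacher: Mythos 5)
Your proposal is correct and follows essentially the same route as the paper: the paper's proof of Theorem \ref{a characterisation of Fourier--Stieltjes algebras 4} simply says it is proved as Theorem \ref{a characterisation of Fourier algebras 4} was, namely by using Corollary \ref{a set of nice isometries 2} to extract the collective norming inequality and Corollary \ref{power of a nice isometry} to obtain self-adjointness of $\eta(\Omega)$, and then invoking Theorem \ref{a characterisation of Fourier--Stieltjes algebras}. Your identity-normalization via $s\diamond t:=se^{-1}t$ and the verification that $\Tfrak=\set{L_s\colon s\in\Omega}$ is a group of dual automorphisms falling under Theorem \ref{characterisation of a set of nice isometries} are exactly the bookkeeping the paper leaves implicit.
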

\begin{proof}
This is proved in a similar way as for Theorem \ref{a characterisation of Fourier algebras 4}.
\end{proof}

An extra condition as in Corollary \ref{a characterisation of Fourier--Stieltjes algebras 2} could be added to the hypothesis of the above to give a characterisation of $\FSa(G)$ for amenable locally compact groups $G$.

\begin{theorem}\label{a characterisation of Fourier--Stieltjes algebras 4b}
Let $B$ be a  commutative $F$-algebra. Suppose that  the orbit of $1$ by some dual group of $B$ is a locally compact subspace $\Omega$ of $\spectrum(B)$ that is separating for $B$ and such that $\widehat{B}|_{\Omega}$ is a conjugation-closed Eberlein algebra on $\Omega$. Then $B\cong \FSa_\Scal^c(G)$ for  some thick subdual $\Scal$ of $\widehat{G_d}$ of a locally compact group $G$. \enproof
\end{theorem}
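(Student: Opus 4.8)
The plan is to deduce this theorem from Theorem \ref{a characterisation of Fourier--Stieltjes algebras 4} in exactly the same manner that Theorem \ref{a characterisation of Fourier algebras 4b} was deduced from Theorem \ref{a characterisation of Fourier algebras 4}. Denote by $\Gcal$ the given dual group of $B$, and recall that by hypothesis the orbit $\Omega=\set{\dual{T}(1)\colon T\in\Gcal}$ of $1$ is a locally compact subspace of $\spectrum(B)$ that separates $B$. First I would note that $\Gcal$, being a group of dual automorphisms, is a set of the kind considered in Theorem \ref{characterisation of a set of nice isometries} (products of its members lie in $\Gcal$ and are therefore dual), so Corollary \ref{a set of nice isometries 2} applies and gives
\begin{align*}
	\norm{\sum_{i=1}^m \alpha_i T_i}=\norm{\sum_{i=1}^m \alpha_i \dual{T_i}(1)}\qquad(T_i\in\Gcal,\ \alpha_i\in\complexs)\,.
\end{align*}
Applying this with $m=2$, $\alpha_1=1$, $\alpha_2=-1$ shows that $T\mapsto\dual{T}(1)$ is injective, hence a bijection $\Gcal\to\Omega$; denote its inverse by $u\mapsto T_u$.

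Next I would transport the opposite group structure of $\Gcal$ to $\Omega$ through this bijection, writing $\bullet$ for the resulting product on $\Omega$ (so that $T_{u\bullet v}=T_vT_u$). Its identity is $1=\dual{\id}(1)$. Exactly as in the proof of Theorem \ref{a characterisation of Fourier algebras 4b}, for $u,v\in\Omega$ and $f\in B$ one computes
\begin{align*}
	(T_uf)(v)=\duality{T_uf}{\dual{T_v}(1)}=\duality{T_vT_uf}{1}=\duality{T_{u\bullet v}f}{1}=f(u\bullet v)\,,
\end{align*}
which shows that, after identifying $B$ with $\widehat{B}|_\Omega$ (this identification being faithful because $\Omega$ separates $B$), the algebra $B$ is left translation-invariant for $\bullet$ and $L_u=T_u$ for every $u\in\Omega$. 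In particular each $L_u$ is a dual automorphism of $B$.

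Finally I would verify the hypotheses of Theorem \ref{a characterisation of Fourier--Stieltjes algebras 4}: $B$ is an $F$-algebra by assumption; as a subalgebra of $\C(\Omega)$ it is the conjugation-closed Eberlein algebra $\widehat{B}|_\Omega$ by hypothesis; the natural map $\eta\colon\Omega\to\spectrum(B)$ is the inclusion, so $\eta(\Omega)=\Omega$ is locally compact in $\spectrum(B)$ and contains $1$; and condition (ii) of that theorem is precisely what the previous paragraph established. Theorem \ref{a characterisation of Fourier--Stieltjes algebras 4} then yields $B\cong\FSa_\Scal^c(G)$ for some thick subdual $\Scal$ of $\widehat{G_d}$ of a locally compact group $G$. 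Since the entire argument runs parallel to the Fourier-algebra case, I expect no genuinely new obstacle; the only points demanding care are the bijectivity of $T\mapsto\dual{T}(1)$ (which rests on Corollary \ref{a set of nice isometries 2}) and checking that the topology $\Omega$ inherits from $\spectrum(B)$ is the one for which the local-compactness and Eberlein hypotheses of Theorem \ref{a characterisation of Fourier--Stieltjes algebras 4} are being asserted, so that no compatibility is silently assumed between $\bullet$ and that topology beyond what the earlier theorem already requires.
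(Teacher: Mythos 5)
Your proposal is correct and is essentially the proof the paper intends: the theorem is stated with its proof omitted precisely because it follows from Theorem \ref{a characterisation of Fourier--Stieltjes algebras 4} by the same transference argument used to deduce Theorem \ref{a characterisation of Fourier algebras 4b} from Theorem \ref{a characterisation of Fourier algebras 4}. Your key steps --- invoking Corollary \ref{a set of nice isometries 2} to get the bijection $T\mapsto\dual{T}(1)$ (made simpler here since the hypothesis already supplies a dual \emph{group}, so no maximality or Corollary \ref{group generated from a set of nice isometries} step is needed), transporting the opposite group structure to $\Omega$, and the computation $(T_uf)(v)=f(u\bullet v)$ identifying $L_u$ with $T_u$ --- mirror the paper's proof of Theorem \ref{a characterisation of Fourier algebras 4b} exactly.
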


Below is an extension of the characterisation in \cite[Theorem 7.2]{Rieffel} of measure algebras on locally compact abelian groups to an almost a characterisation of Fourier--Stieltjes algebras on locally compact groups:

\begin{theorem}\label{a characterisation of Fourier--Stieltjes algebras 5}
Let $B$ be a commutative Banach algebra, and let $D$ be the collection of all noncommutative $L'$-inducing characters on $B$. Suppose that
\begin{enumerate}
	\item $D$ is a separating family of linear functionals of $B$;
	\item $D$ is locally compact in the weak$^*$-topology;
	\item $\widehat{B}|_D$ is a conjugation-closed Eberlein algebra on $D$. 
\end{enumerate}
Then $B\cong \FSa_\Scal^c(G)$ for  some thick subdual $\Scal$ of $\widehat{G_d}$ of a locally compact group $G$. 
\end{theorem}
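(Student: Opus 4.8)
The plan is to reduce the statement to Theorem~\ref{a characterisation of Fourier--Stieltjes algebras 4b} by exhibiting $D$ as the orbit of the identity under a suitable dual group of $B$. Since $D$ separates the points of $B$ it is nonempty, so I would first fix some $e\in D$ and, using the noncommutative $L'$-inducing property of $e$, regard $\dual{B}$ as a concrete von Neumann algebra $\dual{B}_e$ whose unit is $e$; write $e=1$. With this fixed structure the unit $1$ is a character on $B$, so $B$ is a commutative $F$-algebra, which is the standing hypothesis of Theorem~\ref{a characterisation of Fourier--Stieltjes algebras 4b}.

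Next I would invoke Lemma~\ref{relation of various inducing properties}: because $B$ admits the noncommutative $L'$-inducing character $e$, the collection of \emph{all} noncommutative $L$-inducing characters coincides with $D$, and the proof of that lemma shows that $D$ is a subgroup of the unitary group $\Ucal(\dual{B})$ and that for each $u\in D$ the module action $T_u\colon f\mapsto u\cdot f$ is an algebra automorphism of $B$. Since module multiplication by a unitary is an isometry of the predual satisfying \eqref{eq: a nice isometry} (as recorded at the beginning of \S\ref{Special isometries and characterisations of Fourier algebras}), each $T_u$ is in fact a \emph{dual} automorphism in the sense of Definition~\ref{dual automorphism}. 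The identity $T_uT_v=T_{uv}$ shows that $\Tfrak:=\set{T_u\colon u\in D}$ is a dual group of $B$, and a direct computation gives $\dual{T_u}(1)=u$; hence the orbit of $1$ under $\Tfrak$, via the canonical action by transposition on $\spectrum(B)$, is exactly $D$.

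Finally I would check that the three numbered hypotheses of Theorem~\ref{a characterisation of Fourier--Stieltjes algebras 4b} hold verbatim: $D\subseteq\spectrum(B)$ is a locally compact subspace (condition (ii)) that is separating for $B$ (condition (i)), and $\widehat{B}|_D$ is a conjugation-closed Eberlein algebra on $D$ (condition (iii)). Applying Theorem~\ref{a characterisation of Fourier--Stieltjes algebras 4b} then yields $B\cong \FSa_\Scal^c(G)$ for some thick subdual $\Scal$ of $\widehat{G_d}$ of a locally compact group $G$, as required. The only genuinely delicate point — and the step I expect to require the most care rather than difficulty — is the bookkeeping around the choice of $e$: one must confirm that the whole intrinsic set $D$ is simultaneously realised as a subgroup of the unitary group inside the \emph{single} fixed structure $\dual{B}_e$, and that, after passing to the opposite group product if necessary, the dual automorphisms $T_u$ are precisely the translations making $\widehat{B}|_D\subseteq\C(D)$ left translation-invariant, exactly as in the proof of Theorem~\ref{a characterisation of Fourier algebras 4b}. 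Alternatively, one could bypass Theorem~\ref{a characterisation of Fourier--Stieltjes algebras 4b} and verify directly the hypotheses of Theorem~\ref{a characterisation of Fourier--Stieltjes algebras} with $\Omega=D$, using the identity $\norm{\sum_i\alpha_i L_{s_i}\colon B\to B}=\norm{\sum_i\alpha_i s_i}_{\dual{B}}$ for the norming condition, exactly as in Corollary~\ref{a characterisation of Fourier algebras 2}.
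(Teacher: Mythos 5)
Your proposal is correct, and its core is exactly the paper's argument: fix $e\in D$, give $\dual{B}$ the von Neumann algebraic structure with unit $e=1$, and invoke Lemma~\ref{relation of various inducing properties} (and its proof) to see that $D$ is a subgroup of $\Ucal(\dual{B})$ consisting of all noncommutative $L$-inducing characters, whose module actions $T_u$ are isometric automorphisms of $B$. The paper then finishes by applying Theorem~\ref{a characterisation of Fourier--Stieltjes algebras} directly with $\Omega=D$ --- so your closing ``alternative'' route is verbatim the paper's proof --- while your primary packaging of the maps $T_u$ as a dual group with orbit $D$, feeding into Theorem~\ref{a characterisation of Fourier--Stieltjes algebras 4b}, is a correct but essentially equivalent repackaging of the same reduction.
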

\begin{proof}
As in the proof of Theorem \ref{a characterisation of Fourier algebras 5}, by fixing an element in $D$, which will be denoted by $1$, we give $\dual{B}$ a von Neumann algebraic structure whose unit is this element $1$. By Lemma \ref{relation of various inducing properties} and its proof, we see that  $D$ is a subgroup of $\Ucal(\dual{B})$ that consists of all noncommutative $L$-inducing characters on $B$. The result will then follows from  Theorem \ref{a characterisation of Fourier--Stieltjes algebras}. 
\end{proof}

For the class of locally compact groups $G$ with $G_d$ amenable, we could remove the conjugation-closed assumptions from some of the above results  and, at the same time, obtain characterisations of exactly $\FSa(G)$.

\begin{theorem}\label{a characterisation of Fourier--Stieltjes algebras, discretely amenable}
Let $B$ be an $F$-algebra that is also an Eberlein algebra on a topological space $\Omega$. Denote by $\eta:\Omega\to\spectrum(B)$ the natural mapping. Suppose also that:
\begin{enumerate}
	\item $\eta(\Omega)$ is locally compact in $\spectrum(B)$, self-adjoint, and contains $1$;
	\item $\Omega$, without topology, is an amenable group and $B$ is left translation-invariant; 
	\item $\norm{\sum_{i=1}^m \alpha_i L_{s_i}:B\to B}\le 1$ whenever $\alpha_i\in\complexs$ and $s_i\in \Omega$ with
		\[	
			\abs{\sum_{i=1}^m\alpha_i f(s_i)}\le \norm{f}\quad(f\in B)\,.
		\]
\end{enumerate}
Then $B\cong \FSa(G)$ for  some locally compact group $G$ with $G_d$ amenable. \enproof
\end{theorem}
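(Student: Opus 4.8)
The plan is to re-run the argument of Theorem~\ref{a characterisation of Fourier--Stieltjes algebras}, but now to exploit the amenability of $\Omega$ in two ways: to dispense with the conjugation-closedness hypothesis and, more importantly, to force the resulting subalgebra to be \emph{all} of $\FSa(G)$ rather than a proper piece $\FSa_\Scal^c(G)$. The hypotheses (i)--(iii) here are word-for-word those of Theorem~\ref{a characterisation of Fourier--Stieltjes algebras}, with the amenability in (ii) being an extra assumption that I would set aside at first.

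So first I would apply the non-conjugation-closed version of Theorem~\ref{a characterisation of Fourier--Stieltjes algebras} recorded in Remark~\ref{a characterisation of Fourier--Stieltjes algebras: remark}. Without touching amenability, this already produces a locally compact group $G$, realised as $G=\eta(\Omega)$ inside the unitary group of $\dual{B}$ (after possibly replacing the products on $\Omega$ and on $\dual{B}$ by the shifted ones as in the proof of Theorem~\ref{a characterisation of Fourier algebras}), a thick sub-semidual $\Scal$ of $\widehat{G_d}$, and an isometric isomorphism $B\cong \FSa_\Scal^c(G)$.

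The crucial new step is then to observe that $G_d$ is amenable. Indeed, $\eta\colon\Omega\to G$ is a \emph{surjective} homomorphism of abstract groups (surjective because $G$ is defined as $\eta(\Omega)$, and the normalisation of the identity replaces $\Omega$ by an isomorphic abstract group, so its amenability is unaffected). Since amenability of discrete groups is inherited by homomorphic images, $G_d$, being a quotient of the amenable group $\Omega$, is amenable. Now I close the sandwich: because $G_d$ is amenable I may take $H=G$ in Proposition~\ref{when a sub-semidual is thick}, so the thickness of $\Scal$ gives $\FSa_r(G)\subseteq \FSa_\Scal^c(G)=B$; on the other hand $B=\FSa_\Scal^c(G)\subseteq \FSa(G)$ as noted just after the definition of $\FSa_\Scal^c(G)$, while amenability of $G$ (a consequence of that of $G_d$) yields $\FSa(G)=\FSa_r(G)$. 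Combining, $\FSa_r(G)\subseteq B\subseteq \FSa(G)=\FSa_r(G)$, whence $B=\FSa(G)$ with $G_d$ amenable, as required.

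Essentially all the real work lives in the earlier sections, so this result is a corollary-type deduction; the single point that I would flag as the main (if modest) obstacle is verifying that amenability genuinely transfers from $\Omega$ to $G_d$ along $\eta$. This requires checking both that $\eta$ maps \emph{onto} $G$ and that the product-modification used to normalise the identity is merely an isomorphism of abstract groups, so that the permanence of amenability under quotients applies cleanly; once these two checks are in place the implication ``$\Omega$ amenable $\Rightarrow G_d$ amenable'' is immediate, and the rest is the routine sandwich above.
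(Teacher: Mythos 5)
Your proposal is correct and follows essentially the route the paper intends (the theorem is stated there with the proof omitted as an immediate consequence of the earlier results): Remark \ref{a characterisation of Fourier--Stieltjes algebras: remark} yields $B\cong \FSa_\Scal^c(G)$ for a thick sub-semidual $\Scal$ of $\widehat{G_d}$, amenability passes from $\Omega$ to its homomorphic image $G_d$ (the identity-normalising change of product being an isomorphism of abstract groups, e.g.\ $s\mapsto es$), and then Proposition \ref{when a sub-semidual is thick} with $H=G$ plus $\FSa_r(G)=\FSa(G)$ for amenable $G$ closes the sandwich $\FSa_r(G)\subseteq B\subseteq\FSa(G)$. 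The two checks you flag are indeed the only points needing care, and both hold as you describe.
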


\begin{theorem}
Let $B$ be an $F$-algebra that is also an Eberlein algebra on a topological space $\Omega$. Denote by $\eta:\Omega\to\spectrum(B)$ the natural mapping. Suppose also that:
\begin{enumerate}
	\item $\eta(\Omega)$ is locally compact in $\spectrum(B)$ and contains $1$;
	\item $\Omega$, without topology, is an amenable group, $B$ is left translation-invariant, and, for each $s\in\Omega$, 	the automorphism $L_s$ is dual for $B$.
\end{enumerate}
Then $B\cong \FSa(G)$ for  some locally compact group $G$ with $G_d$ amenable. \enproof
\end{theorem}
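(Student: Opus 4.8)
The plan is to deduce this statement from the immediately preceding Theorem~\ref{a characterisation of Fourier--Stieltjes algebras, discretely amenable}, in exactly the way Theorem~\ref{a characterisation of Fourier algebras 4} was deduced from Theorem~\ref{a characterisation of Fourier algebras}: the hypothesis that each left translation $L_s$ is a \emph{dual} automorphism of $B$ will be converted, by the special-isometry results of \S\ref{Special isometries and characterisations of Fourier algebras}, into the collective norming condition~(iii) of Theorem~\ref{a characterisation of Fourier--Stieltjes algebras, discretely amenable} together with the self-adjointness of $\eta(\Omega)$ (which is assumed there but here only in the weaker form $1\in\eta(\Omega)$). Since $B$ is an $F$-algebra, $\dual{B}$ is a \wstar-algebra with identity $1$, and condition~(i) provides $e\in\Omega$ with $\eta(e)=1$. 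Put $\Tfrak:=\set{L_s\colon s\in\Omega}$. Each $L_s$ is a surjective isometry of the predual $B$ satisfying \eqref{eq: a nice isometry} by Definition~\ref{dual automorphism}; and since $\Omega$ is a group and $B$ is left translation-invariant, a composite of two left translations is again a left translation, so $ST$ satisfies \eqref{eq: a nice isometry} for all $S\neq T\in\Tfrak$. Thus $\Tfrak$ meets the hypotheses of Theorem~\ref{characterisation of a set of nice isometries}, and by Corollary~\ref{group generated from a set of nice isometries} every isometry in the group generated by $\Tfrak$ satisfies \eqref{eq: a nice isometry}. I would then normalise as in the proof of Theorem~\ref{a characterisation of Fourier algebras}, replacing the product on $\Omega$ by $s\diamond t:=se^{-1}t$: this yields an isomorphic (hence still amenable) group in which $e$ is the identity, and the corresponding translations $L^\diamond_s=L_{e^{-1}}L_s$ lie in the group generated by $\Tfrak$, so they are still dual automorphisms. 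We may therefore assume $e$ is the identity of $\Omega$.

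With $e$ the identity, a direct computation gives $\dual{L_s}(1)=\eta(s)$, since $\duality{\dual{L_s}(1)}{f}=(L_sf)(e)=f(s)$ for every $f\in B$. Corollary~\ref{a set of nice isometries 2} then yields
\[
	\norm{\sum_{i=1}^m \alpha_i L_{s_i}\colon B\to B}=\norm{\sum_{i=1}^m \alpha_i \eta(s_i)}\qquad(\alpha_i\in\complexs,\ s_i\in\Omega)\,.
\]
Now the premise of condition~(iii) of Theorem~\ref{a characterisation of Fourier--Stieltjes algebras, discretely amenable}, namely that $\abs{\sum_i\alpha_i f(s_i)}\le\norm{f}$ for all $f\in B$, says exactly that $\norm{\sum_i\alpha_i\eta(s_i)}_{\dual{B}}\le 1$; so the displayed identity forces $\norm{\sum_i\alpha_i L_{s_i}\colon B\to B}\le 1$, which is precisely condition~(iii).

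For the remaining requirement that $\eta(\Omega)$ be self-adjoint, I would apply Corollary~\ref{power of a nice isometry} with exponent $k=-1$ to each $L_s$: since $L_s^{-1}=L_{s^{-1}}$ and $\eta(s)=\dual{L_s}(1)$ is a unitary of $\dual{B}$ (Lemma~\ref{characterisation of a single nice isometry}), we get $\eta(s^{-1})=\dual{(L_s^{-1})}(1)=\dual{L_s}(1)^{-1}=\eta(s)^*$, so $\eta(\Omega)^*=\eta(\Omega)$. At this point all the hypotheses of Theorem~\ref{a characterisation of Fourier--Stieltjes algebras, discretely amenable} hold: $B$ is an $F$-algebra and an Eberlein algebra on $\Omega$; $\eta(\Omega)$ is locally compact, self-adjoint, and contains $1$; $\Omega$ is an amenable group on which $B$ is left translation-invariant; and condition~(iii) holds. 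That theorem then gives $B\cong\FSa(G)$ for a locally compact group $G$ with $G_d$ amenable.

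The reduction is routine once the isometry theory of \S\ref{Special isometries and characterisations of Fourier algebras} is in hand, so I expect no serious obstacle; the only delicate points are bookkeeping. One must check that the normalisation $s\diamond t:=se^{-1}t$ destroys neither the amenability of $\Omega$ nor the dual-automorphism property of the translations, and one must confirm the ``$ST$ is again nice'' clause of Theorem~\ref{characterisation of a set of nice isometries}, which is exactly where the group structure of $\Omega$ enters. The substantive analytic content --- producing the locally compact group $G$ and identifying $B$ with $\FSa(G)$ --- has already been discharged in Theorem~\ref{a characterisation of Fourier--Stieltjes algebras, discretely amenable} and the results on which it depends.
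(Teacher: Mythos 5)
Your proposal is correct and follows exactly the route the paper intends: the theorem is stated with its proof omitted precisely because, as you do, one converts the dual-automorphism hypothesis into the self-adjointness of $\eta(\Omega)$ (via Corollary \ref{power of a nice isometry}) and the collective norming condition (via Corollary \ref{a set of nice isometries 2}), after the same $s\diamond t=se^{-1}t$ normalisation used in Theorem \ref{a characterisation of Fourier algebras}, and then invokes Theorem \ref{a characterisation of Fourier--Stieltjes algebras, discretely amenable}. This mirrors how Theorem \ref{a characterisation of Fourier--Stieltjes algebras 4} is deduced, so no further comment is needed.
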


\begin{theorem}\label{a characterisation of Fourier--Stieltjes algebras 4d}
Let $B$ be a  commutative $F$-algebra. Suppose that the orbit of $1$ by some amenable dual group of $B$ is a locally compact subspace $\Omega$ of $\spectrum(B)$ that is separating for $B$. Suppose also that $\widehat{B}|_{\Omega}$ is a conjugation-closed  Eberlein algebra on $\Omega$. Then $B\cong \FSa(G)$ for  some locally compact group $G$ with $G_d$ amenable. \enproof
\end{theorem}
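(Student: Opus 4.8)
The plan is to reduce to Theorem~\ref{a characterisation of Fourier--Stieltjes algebras 4b}, whose hypotheses are all contained in those of the present statement (an amenable dual group is in particular a dual group), and then to upgrade its conclusion $B\cong\FSa_\Scal^c(G)$ to $B\cong\FSa(G)$ by exploiting amenability. Concretely, let $\Gamma$ denote the prescribed amenable dual group of $B$, so that $\Omega=\set{\dual{T}(1)\colon T\in\Gamma}$; since $\Omega$ separates $B$, identify $B$ with $\widehat{B}|_{\Omega}\subseteq\C(\Omega)$. By Corollary~\ref{a set of nice isometries 2} the map $T\mapsto\dual{T}(1)$ is injective, hence a bijection $\Gamma\to\Omega$, and transferring the opposite group structure of $\Gamma$ to $\Omega$ exactly as in the proof of Theorem~\ref{a characterisation of Fourier algebras 4b} makes $\Omega$ a group on which $B$ is left translation-invariant with $L_u=T_u$ a dual automorphism for each $u\in\Omega$. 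Theorem~\ref{a characterisation of Fourier--Stieltjes algebras 4} (equivalently, Theorem~\ref{a characterisation of Fourier--Stieltjes algebras 4b}) then yields $B\cong\FSa_\Scal^c(G)$ for a \emph{thick} subdual $\Scal$ of $\widehat{G_d}$, where, following the construction in Theorem~\ref{a characterisation of Fourier--Stieltjes algebras}, $G=\eta(\Omega)=\Omega$ carries its weak$^*$-topology as a locally compact group.

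The one genuinely new point — and the step I expect to carry all the weight of the amenability hypothesis — is to show that $G_d$ is amenable. This follows by bookkeeping on the construction above: as a bare group, $G$ is the image $\eta(\Omega)=\Omega$, which by the previous paragraph is isomorphic to $\Gamma$ equipped with the opposite multiplication. Since inversion $g\mapsto g^{-1}$ is an isomorphism of any group onto its opposite, $G\cong\Gamma$ as abstract groups, and $\Gamma$ is amenable (as a discrete group) by hypothesis; hence $G_d$ is amenable. The only care needed here is to verify that the locally compact group manufactured by Theorem~\ref{a characterisation of Fourier--Stieltjes algebras 4b} really has, as its underlying abstract group, a copy of the given dual group, so that discrete amenability is inherited.

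It then remains to conclude. Because $G_d$ is amenable, so is $G$ as a locally compact group, and therefore $\C^*(G)=\C^*_r(G)$, i.e.\ $\FSa(G)=\FSa_r(G)$. On the other hand, $\Scal$ is a thick subdual, so Proposition~\ref{when a subdual is thick} gives $\FSa_r(G)\subseteq\FSa_\Scal^c(G)$, while $\FSa_\Scal^c(G)\subseteq\FSa(G)$ by definition. Chaining these inclusions,
\[
	\FSa(G)=\FSa_r(G)\subseteq\FSa_\Scal^c(G)\subseteq\FSa(G)\,,
\]
forces $\FSa_\Scal^c(G)=\FSa(G)$, and hence $B\cong\FSa(G)$ with $G_d$ amenable, as required. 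No obstacle is anticipated in this last paragraph: the substantive analysis is entirely absorbed into Theorems~\ref{a characterisation of Fourier algebras} and~\ref{a characterisation of Fourier--Stieltjes algebras 4b}, and the present argument merely records how the amenability of the dual group propagates, via the identification $G\cong\Gamma$, to discrete amenability of $G$ and thence to the collapse of $\FSa_\Scal^c(G)$ onto $\FSa(G)$.
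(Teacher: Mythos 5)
Your proposal is correct and is essentially the paper's intended argument: the paper leaves this theorem without proof precisely because it is assembled from the preceding results in the way you describe — reduce to Theorem~\ref{a characterisation of Fourier--Stieltjes algebras 4}/\ref{a characterisation of Fourier--Stieltjes algebras 4b} via the bijection $T\mapsto\dual{T}(1)$ of Corollary~\ref{a set of nice isometries 2}, observe that the underlying abstract group of $G=\eta(\Omega)=\Omega$ is (anti-)isomorphic to the given amenable dual group so that $G_d$ is amenable, and then collapse $\FSa(G)=\FSa_r(G)\subseteq\FSa_\Scal^c(G)\subseteq\FSa(G)$ using thickness (Proposition~\ref{when a subdual is thick}) and amenability. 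Your bookkeeping on the identification of $G$ with the dual group, which is the only point where the amenability hypothesis enters, is exactly the intended one.
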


\section{Further remarks and open problems}

\label{Further remarks and open problems}

\begin{problem}\label{when A=A(G) problem}
Let $G$ be a locally compact group, and let $A$ be a closed translation-invariant Tauberian subalgebra of $\Fa(G)$ with $\spectrum(A)=G$. Must $A=\Fa(G)$?
\end{problem}

It has been shown that the answer to this is `Yes' in  \cite{Rieffel}  if $G$ is abelian or in \S\ref{Invariant subalgebras of Fourier algebras} if the connected component of $G$ is compact or  if $A$ satisfies an additional condition that it approximately contains a nontrivial real function. But we do not know the answer in general or even when $G$ is amenable.

It follows from Lemma \ref{when A=A(G), compact-convergence dense case} that to prove that such algebra $A=\Fa(G)$ it is sufficient to show that $A$ is dense in $\C_0(G)$. One condition would imply this is that $A$ is \emph{operator amenable}, a concept introduced by Ruan in \cite{Ruan} where he also shows that $\Fa(G)$ is operator amenable if and only if $G$ is amenable. Our assertion follows from Runde's  extension \cite{Runde} of \v{S}e\u{\i}nberg's theorem \cite{Seinberg}. Thus this could be a possible line of attack of Problem \ref{when A=A(G) problem} in the case when $G$ is amenable. 

A related result is a recent theorem of Crann and Neufang \cite{CN} which shows that $G$ is amenable if and only if $\VN(G)$, which is the dual of $\Fa(G)$, is injective in a certain natural module category. This may helps in resolving our problem in the amenable case along the line above, since, after all, the dual $\dual{A}$ of our algebra $A$ always ends up being a quotient of $\VN(G)$ by a normal ideal. Moreover, this type of condition on $\dual{A}$ could be a neat addition to the hypotheses of our results that would give us characterisations of the Fourier algebras of \emph{amenable} locally compact groups. In our opinion, this type of condition on $\dual{A}$ is more desirable than, says, operator amenability of $A$, since for the more special case of \emph{abelian} locally compact groups we could simply require that $\dual{A}$ is commutative.

\begin{problem}\label{when Eberlien contains A(G)}
Let $G$ be a locally compact group, and let $B$ be a closed translation-invariant Eberlein subalgebra of $\FSa(G)$ that separates points of $G$. Must $B$ contain $\Fa(G)$?
\end{problem}

Equivalently, this could be stated in the following way:

\smallskip

\noindent \textbf{Open problem \ref{when Eberlien contains A(G)}'.} Let $G$ be a locally compact group, and let $B$ be a translation-invariant subalgebra of $\FSa(G)$ that separates points of $G$. Take $\phi\in\Fa(G)$. Must $\phi$ be a pointwise limit of functions in $B\cap P(G)$?

\smallskip

The answer to this is `Yes' when $G$ contains an open subgroup $H$ with $H_d$ amenable (Theorem \ref{a class of Fourier--Stieltjes algebras}). But we do not know the answer for other groups.

\begin{problem}\label{when Eberlein is conjugation closed}
Let $G$ be a discrete group, and let $B$ be a weak$^*$-closed translation-invariant subalgebra of $\FSa(G)$. Must $B$ be  conjugation-closed? 

Equivalently, must a sub-semidual of a discrete group be already a subdual?
\end{problem}

Corollary \ref{weak*-closed invariant subalgebra of B(G) when G is discrete 2} shows that if $G$ is (discrete and) amenable, then the answer to this question is affirmative. We suspect  that this is not the case in general, but we do not have a counter-example. This problem could also be stated in the following form for locally compact groups.

\smallskip

\noindent \textbf{Open problem \ref{when Eberlein is conjugation closed}'.} Let $G$ be a locally compact group, and let $B$ be a closed translation-invariant Eberlein subalgebra of $\FSa(G)$. Must $B$ be conjugation-closed?

\smallskip

If this is true, then combining with \cite[Theorem 1.3]{BLSchl} it would resolve Problem \ref{when Eberlien contains A(G)} in the affirmative. 

\begin{problem}
What extra assumption could be added in Theorem \ref{a characterisation of Fourier--Stieltjes algebras 5} to guarantee that $G$ is amenable, so that we have a characterisation of precisely $\FSa(G)$ in that case? 
\end{problem}

Preferably, the additional assumption should be a condition on $\dual{A}$ that would be automatically satisfied when $\dual{A}$ is commutative. Some form of module injectivity as considered in \cite{CN} could be a possibility.

\begin{problem}
Let $G$ be a locally compact group. Is $\FSa_r(G)$ always an Eberlein algebra on $G$? If not, then for which $G$, is $\FSa_r(G)$ an Eberlein algebra on $G$? This is true if $G$ is amenable or discrete. Could it be true when $G$ has an amenable open subgroup?
\end{problem}

This is equivalent to asking whether a continuous function  $\phi$ that is a pointwise limit of functions in $\Fa(G)\cap P(G)$ must belong to $\FSa_r(G)$. 

\begin{problem}
Let $S$ be a topological semigroup with involution, and let $\FSa(S)$ be the Fourier--Stieltjes algebra of $S$ as defined in \cite{Lau78}. For which $S$ does $\FSa(S)\cong\FSa(G)$ for some locally compact group $G$? The same problem when $S$ is a topological group.
\end{problem}

\end{document}